%% file: main.tex
\documentclass{amsart}
    \input{preamble}
    
\keywords{Deligne--Lusztig varieties, Galois module structure,
equivariant Euler characteristics, Lefschetz--Riemann--Roch theorem}

\begin{document}
        \addtocontents{toc}{\protect\setcounter{tocdepth}{1}} % Temporarily hide subsections
% \subsection*{This subsection is hidden from ToC}
\addtocontents{toc}{\protect\setcounter{tocdepth}{2}} % Restore standard depth

\addtocontents{toc}{\protect\setcounter{tocdepth}{0}} % Temporarily hide sections
% \section*{This section is hidden from ToC}
\addtocontents{toc}{\protect\setcounter{tocdepth}{2}} % Restore standard depth

    \raggedbottom
    
    \begin{abstract}
    Let \(\overline {\mathbf{Y}}(w)\to\overline {\mathbf{X}}(w)\) be the ramified Galois cover of compactified Deligne--Lusztig varieties. Our objects of study are the Lefschetz numbers for the action of the finite covering group \( T^{wF}\) on the coherent
    cohomology of the structure sheaf of \(\overline {\mathbf{Y}}(w)\).  The Lefschetz numbers for all elements of \(T^{wF}\)
    determine the virtual representation of \(T^{wF}\) given by the alternating sum of these cohomology groups. In the two cases of Deligne--Lusztig surfaces of type \(A_2\) and \(^{2}A_2\)% for
    % \( \mathbf{G}^F=\mathrm{GL}_3(\mathbb F_p)\) and
    % \( \mathbf{G}^F=\mathrm U_3(\mathbb F_p)\) respectively
    , we give explicit formulas for the Lefschetz numbers for a generator of the stabilizer of an irreducible component.
    It follows that the equivariant Euler characteristic of the compactification \(\overline{\mathbf{Y}}(w)\) is not virtually free as a virtual module over the covering group.
    \end{abstract}
    
    \maketitle
    %\clearpage
    \tableofcontents
    %\clearpage

\section{Introduction}\label{sec:introduction}
    
    Let \(k=\mathbb F_p\) and
    \(\bar{k}=\overline{\mathbb F}_p\).  Let \( \mathbf{G}\) be a connected
    reductive group over \(\bar{k}\), let \(F\colon \mathbf{G}\to \mathbf{G}\)
    be an isogeny such that some positive power of \(F\) is a Frobenius
    endomorphism, and let \(w\) belong to the Weyl group \(W\) of
    \( \mathbf{G}\).  Associated to this datum, Deligne and Lusztig defined  two varieties
    \( \mathbf{X}(w)\) and \(\mathbf{Y}(w)\) that fit into a finite \'{e}tale
    map
    \[
     \mathbf{Y}(w)\longrightarrow \mathbf{X}(w).
    \]
    
    Furthermore, these varieties are equipped with actions of a finite group of Lie type \( \mathbf{G}^F\) on the left and a finite group \( T^{wF}\) on the right such that, \(  \mathbf{X}(w) =  \mathbf{Y}(w) /  T^{wF} \)  
    \cite[\S1]{deligneRepresentationsReductiveGroups1976}. For a reduced expression \(w=s_1\cdots s_r\),  Deligne and Lusztig also constructed a Bott--Samelson-type smooth projective compactification
    \(\overline{ \mathbf{X}}(w)\) whose boundary is a simple
    normal-crossings divisor
    \cite[\S\S9.10--9.11]{deligneRepresentationsReductiveGroups1976}.
    
    \medskip
    
    Later, Bonnaf\'e and Rouquier constructed the normalization
    \(\overline{ \mathbf{Y}}(w)\) of
    \(\overline{ \mathbf{X}}(w)\) in \( \mathbf{Y}(w)\).  They proved
    that the right \( T^{wF}\)-action extends to the normalization and
    that its finite
    quotient is \(\overline{ \mathbf{X}}(w)\)
    \cite[Theorem~1.2]{bonnafeCompactificationVarietesDeligneLusztig2009}.
    Thus one has a commutative diagram
    \[
    \begin{tikzcd}
     \mathbf{Y}(w) \arrow[r,hook] \arrow[d] &
    \overline{ \mathbf{Y}}(w) \arrow[d] \\
     \mathbf{X}(w) \arrow[r,hook] &
    \overline{ \mathbf{X}}(w)
    \end{tikzcd}
    \]
    Although \( \mathbf{Y}(w)\to \mathbf{X}(w)\) is unramified, the compactified
    cover \(\overline{ \mathbf{Y}}(w)\to
    \overline{ \mathbf{X}}(w)\) is ramified
    along the boundary. Furthermore,
    \(p\nmid| T^{wF}|\) and Maschke's theorem
    implies that \(\bar{k}[ T^{wF}]\) is semisimple, and the boundary
    ramification is tame.
    
    \medskip

Deligne--Lusztig varieties were introduced in \cite{deligneRepresentationsReductiveGroups1976} to construct representations
of finite groups of Lie type through their compactly supported
$\ell$-adic \'etale cohomology. They also occur in the geometry of
Shimura varieties. In particular, certain generalized Deligne--Lusztig
varieties for unitary groups describe the closures of strata in the
supersingular loci of unitary Shimura varieties
\cite{vollaardSupersingularLocusShimura2011}.
Their intersection theory enters the study of special cycles
\cite{kudlaSpecialCyclesUnitary2011,kudlaSpecialCyclesUnitary2014}
and plays a role in Li and Zhang's proof of the local
Kudla--Rapoport conjecture~\cite{li_kudlarapoport_2021}.
% , which relates arithmetic intersection
% numbers on unitary Rapoport--Zink spaces to derivatives of local
% representation densities of hermitian forms

\medskip

In this paper, we study the equivariant Euler characteristic
of the structure sheaf of the compactified Deligne--Lusztig
cover \({\overline{\mathbf{Y}}}(w)\), with respect to the action
of its covering group. We begin by recalling a central question
in ``geometric Galois module theory'' that motivates our study.
\medskip

    Let \(G\) be a finite group acting on a proper
    \(\bar{k}\)-scheme \(Y\), and let \(\mathcal F\) be a \(G\)-equivariant
    coherent sheaf.  Its equivariant Euler characteristic is
    \begin{equation}
            \chi_G(Y,\mathcal F)
    :=\sum_i(-1)^i[H^i(Y,\mathcal F)]
    \in G_0(\bar{k}[G]) \label{eq:equivariant-euler-characteristic}
    \end{equation}
    When \(p\nmid |G|\), Maschke's theorem identifies the Grothendieck group of finitely generated \(\overline{k}[G]\)-modules
    \(G_0(\bar{k}[G])\) with the Grothendieck group of finitely generated projective \(\overline{k}[G]\)-modules \(K_0(\bar{k}[G])\).  We call this alternating class the virtual Galois module.
    %
    % \medskip
    %
    For a \(p\)-regular element \(\tau\in G\), we define its Lefschetz number \(L_{\mathrm{Br}}(\tau,Y,\mathcal F)\) to be
    \[
    L_{\mathrm{Br}}(\tau,Y,\mathcal F)
    :=\sum_i(-1)^i\operatorname{BrTr}
    \bigl(\tau\mid H^i(Y,\mathcal F)\bigr),
    \]
    where \( \operatorname{BrTr} \) is the Brauer trace.
    % We note that \(L_{\mathrm{Br}}(\tau,Y,\mathcal F)\) is the value at \(\tau\) of the Brauer character of
    % \(\chi_G(Y,\mathcal F)\).
    %; the coefficient ring and Teichm\"uller-lift
    %conventions are fixed in Section~\ref{sec:galois-modules-brauer}.
    
    \medskip
    
    One would like to compute the class
    \(\chi_G(Y,\mathcal F)\) in \(K_0(\bar{k}[G])\), or equivalently to determine all of its Brauer-character values.  A basic question is whether this class is \emph{virtually free}, that is, whether
    \[
    \chi_G(Y,\mathcal F)
    \in
    \mathbb Z\cdot[\bar{k}[G]]
    \subseteq K_0(\bar{k}[G]).
    \]
    
    \medskip
    
    Since the Brauer character of the regular representation \(\bar{k}[G]\) vanishes at every non-identity element of \(G\), if \(\chi_G(Y,\mathcal F)\) is virtually free, then \(L_{\mathrm{Br}}(\tau,Y,\mathcal F)=0\) for every non-identity \(p\)-regular element \(\tau\). Thus a single nonzero Lefschetz number at a non-identity element is enough to show that the equivariant Euler characteristic is not virtually free, which is what is done in this paper for some specific compactified Deligne-Lusztig covers.  
    % To determine the complete virtual representation, however, one must compute the Lefschetz numbers for all elements of \(G\), or for a collection of elements sufficient to recover all character multiplicities.
    
    \medskip
    
    The above discussed ``virtual freeness question'' has its origins in the classical \textit{normal integral basis problem} which we state below.  Let \(N/K\) be a tamely ramified Galois extension of number fields with Galois group \(G\). E. Noether proved that \(\mathcal O_N\) is a locally free \(\mathcal O_K[G]\)-module of rank one. It therefore determines a stable class
% \vspace{-0.5em}
\[
[\mathcal O_N]^{\mathrm{stab}}
:=
[\mathcal O_N]-[\mathcal O_K[G]]
\in
\operatorname{Cl}(\mathcal O_K[G])
:=
K_0(\mathcal O_K[G])/
\langle[\mathcal O_K[G]]\rangle.
% \vspace{-0.5em}
\]
If \(N/K\) admits a normal integral basis, then
\(\mathcal O_N\cong\mathcal O_K[G]\), and hence
\([\mathcal O_N]^{\mathrm{stab}}=0\). Conversely, the vanishing of
\([\mathcal O_N]^{\mathrm{stab}}\) means that \(\mathcal O_N\) is stably free, i.e., there is an integer \(m\geq 0\) such that
% \vspace{-0.5em}
\[
\mathcal O_N\oplus\mathcal O_K[G]^m
\cong
\mathcal O_K[G]^{m+1}.
% \vspace{-0.5em}
\]
 Thus, \([\mathcal O_N]^{\mathrm{stab}}\) is the obstruction to a stable, or virtual, normal integral basis. An additional cancellation result is required to obtain an actual normal integral basis.  Fr\"ohlich gave a character-theoretic description of the corresponding class over \(\mathbb Z[G]\) in terms of Artin root numbers, and his conjecture was proved by Taylor \cite{taylorFrohlichConjecture1981}.  

    \medskip
    
    Over a field, a particularly close geometric result is Nakajima's
    theorem for finite étale covers.  Let
    \(f\colon Z\longrightarrow W \) be a finite étale \(G\)-torsor between projective varieties over a
    field \(k\), and let \(\mathcal F\) be a coherent sheaf on \(W\).
    Nakajima proves that the derived complex
    \(R\Gamma(Z,f^*\mathcal F)
    \)
    is represented by a bounded complex \(L^\bullet\) of finitely
    generated free \(k[G]\)-modules~\cite[Theorem~1]{nakajimaGaloisModule1984}.
    Consequently, we have that the equivariant Euler characteristic
    \(    \chi_G(Z,f^*\mathcal F)
     =
     \sum_j(-1)^j
     \operatorname{rank}_{k[G]}(L^j)[k[G]]
    \) is an integral multiple of
    the regular representation class.
    % \medskip
    % (Equivalently, one may begin with a coherent \(G\)-equivariant sheaf
    % \(\mathcal E\) on \(Z\): since \(f\) is a \(G\)-torsor, faithfully
    % flat descent gives a coherent sheaf
    % \(\mathcal F=(f_*\mathcal E)^G\) on \(W\) and a canonical
    % isomorphism \(\mathcal E\simeq f^*\mathcal F\)). 
    Furthermore, he proves that if  
    \(  H^i(Z,f^*\mathcal F)=0\)  for every \(i\neq i_0,
    \)
    then \(H^{i_0}(Z,f^*\mathcal F)\) is itself a free
    \(k[G]\)-module~\cite[Theorem~2]{nakajimaGaloisModule1984}.
    
    \medskip
    
    In subsequent work on tamely ramified covers of varieties, Nakajima proves instead that the coherent cohomology complex can in general be represented by a bounded complex of finitely generated projective \(k[G]\)-modules \cite{nakajimaGaloisModule1986}.  
    %This guarantees perfectness, but it does not by itself imply that the resulting Euler class is a multiple of \([k[G]]\). 
    Thus, tameness guarantees that the derived cohomology is represented by a perfect \(k[G]\)-complex and hence defines an Euler class in \(K_0(k[G])\), but it does not imply virtual freeness.
    % Thus the passage from an \'{e}tale torsor to a ramified cover is exactly the passage from a ``free-complex theorem'' to a ``projective-complex theorem''.
    
    \medskip
    
Chinburg extended the study of Galois module structure
to tame covers of schemes by constructing invariants
from equivariant Euler characteristics of truncated
de Rham complexes
\cite{chinburgGaloisStructureRham1994b}.
For projective covers over finite fields, under
regularity and normal-crossings hypotheses, he proved
that an invariant, after restriction of scalars to
\(\mathbb F_p\), determines and is determined by the
\(p\)-adic absolute values of the associated
epsilon-constants. He also expressed its image in
\(\mathrm{Cl}(\mathbb Z[G])\) as the sum of a symplectic
root-number class and a ramification class, giving a
geometric counterpart to Fr\"ohlich's conjecture
\cite[Theorems~0.1--0.2]{chinburgGaloisStructureRham1994b}.
In subsequent joint work, 
Chinburg, Erez, Pappas, and Taylor expressed, under suitable hypotheses, the projective Euler characteristic of a
suitable de Rham complex for a tame cover over
\(\mathbb Z\) in terms of epsilon-constants of
Hasse--Weil--Artin \(L\)-functions, giving a
higher-dimensional counterpart to Fr\"ohlich's conjecture \cite{chinburgEConstantsGaloisStructure1997}.

  % A related question concerns coherent cohomology over
%\(\mathbb Z\). 
  
    % \medskip
    
    % In our setting, the proper map
    % \[
    % \overline{ \mathbf{Y}}(w)\longrightarrow
    % \overline{ \mathbf{X}}(w)
    % \]
    % is ramified along the boundary.  Nakajima's free-complex theorem therefore does not apply to the compactification.  Non-identity elements of the deck group \( T^{wF}\) may have fixed points on the boundary, and their Lefschetz numbers need not vanish.  
    % In the situation considered in this paper, the group \(G\) is
    % \( T^{wF}\), the variety is the normal compactification
    % \(\overline{ \mathbf{Y}}(w)\), and the equivariant sheaf is its structure
    % sheaf.  The boundary ramification of the map \(\overline{ \mathbf{Y}}(w)\to
    % \overline{ \mathbf{X}}(w)\) is precisely what makes this question of virtual freeness in the above defined sense non-trivial.

    \medskip
    
    In the situation considered in this paper, the boundary ramification of
    \(\overline{ \mathbf{Y}}(w)\to\overline{ \mathbf{X}}(w)\) is consequently the source of the non-vanishing of the reduced equivariant Euler class, and hence of the failure of virtual freeness in the sense defined above. We use the Lefschetz--Riemann--Roch theorem to answer the virtual freeness problem algebro-geometrically. 
    We first state the Lefschetz--Riemann--Roch theorem for surfaces below.
    
    \begin{theorem}(Lefschetz--Riemann--Roch)
        For a smooth projective surface \(Z\) and a finite-order automorphism \(h\)
    of order prime to \(p\), whose fixed locus consists of isolated points and curves,
    \begin{equation}\label{eq:intro-lefschetz-rr}
    \begin{aligned}
    L_{\mathrm{Br}}(h,Z, \mathcal{O}_Z)
    ={}&
    \sum_{P\subset Z^h}
    \frac{1}
    {(1-[\lambda_{P,1}])(1-[\lambda_{P,2}])}\\
    &+
    \sum_{D\subset Z^h}
    \left(
    \frac{1-g_D}{1-[\nu_D]^{-1}}
    -
    \frac{[\nu_D]^{-1}D^2}
    {(1-[\nu_D]^{-1})^2}
    \right).
    \end{aligned}
    \end{equation}
    \end{theorem}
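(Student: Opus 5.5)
We derive the formula from the general holomorphic Lefschetz--Riemann--Roch theorem (Atiyah--Singer in characteristic zero; Donovan, and Baum--Fulton--Quart/Thomason for tame actions in positive characteristic), applied to \(\mathcal F=\mathcal O_Z\) with the Brauer lift of eigenvalues. In this form,
\[
L_{\mathrm{Br}}(h,Z,\mathcal O_Z)=\sum_{C\subset Z^h}\int_C \frac{\operatorname{td}(T_C)}{\operatorname{ch}_h\bigl(\lambda_{-1}N_C^\vee\bigr)},
\]
where \(C\) runs over the connected components of the fixed locus and \(\operatorname{ch}_h\) is the twisted Chern character. Since \(h\) has order prime to \(p\), the fixed scheme \(Z^h\) is smooth. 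Each component is therefore a point \(P\) or a smooth curve \(D\), and \(h\) acts on the conormal bundle \(N_C^\vee\) by roots of unity; these are lifted to \(W(\bar k)\) or \(\mathbb C\) via Teichm\"uller. We then evaluate the two kinds of local contribution separately.

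\emph{Isolated points.} Here \(T_P\) is trivial and \(N_P^\vee=\mathfrak m_P/\mathfrak m_P^2\) is two-dimensional. We diagonalize the action of \(h\) on it with eigenvalues \(\lambda_{P,1},\lambda_{P,2}\), neither equal to \(1\) because \(P\) is isolated. Then
\[
\operatorname{ch}_h\bigl(\lambda_{-1}N_P^\vee\bigr)=(1-[\lambda_{P,1}])(1-[\lambda_{P,2}]).
\]
This gives the first sum. It can also be checked directly by the formal-neighbourhood computation \(\sum_{a,b}\lambda_1^a\lambda_2^b\) on \(\bar k[[x,y]]\).

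\emph{Fixed curves.} For a fixed curve \(D\), the conormal bundle \(N_D^\vee=\mathcal O_D(-D)\) is a line bundle on which \(h\) acts by a scalar. We write this scalar as \([\nu_D]^{-1}\) (we adopt the normalization in which \(\nu_D\) is the eigenvalue on \(N_D\)). Let \(x=c_1(N_D)\), so that \(\deg x=D^2\), and set \(\mu=[\nu_D]^{-1}\). Then
\[
\operatorname{ch}_h\bigl(\lambda_{-1}N_D^\vee\bigr)=1-\mu e^{-x}=(1-\mu)+\mu x+O(x^2).
\]
Inverting to first order,
\[
\frac1{1-\mu e^{-x}}=\frac1{1-\mu}-\frac{\mu x}{(1-\mu)^2}.
\]
Multiplying by \(\operatorname{td}(T_D)=1+\tfrac12c_1(T_D)\) and integrating over \(D\), using \(\int_D\tfrac12 c_1(T_D)=1-g_D\), gives
\[
\frac{1-g_D}{1-\mu}-\frac{\mu D^2}{(1-\mu)^2}.
\]
This is the second sum. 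Summing the point and curve contributions yields \eqref{eq:intro-lefschetz-rr}.

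\emph{Main obstacle.} The delicate point is justifying the Lefschetz--Riemann--Roch theorem itself in characteristic \(p\) with Brauer traces, and fixing the sign conventions, so that the final formula takes exactly the shape of \eqref{eq:intro-lefschetz-rr}. For the first part, we would use Thomason's equivariant Lefschetz--Riemann--Roch for tame diagonalizable actions. This localizes \(K_0^{\langle h\rangle}(Z)\) at the prime ideal of the character \(h\); the fixed scheme is regular, and the Brauer character reduces the statement to an identity in \(K_0(Z^h)\otimes W(\bar k)\). Alternatively, we would lift \((Z,h)\) to characteristic zero, when \(Z\) and the automorphism lift, and apply Atiyah--Singer. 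For the second part, the sign convention on \(\nu_D\) (normal versus conormal eigenvalue) must be checked on a test case. Our choice is \(Z=\mathbb P^1\times\mathbb P^1\) with \(h\) acting on one factor, where both sides can be computed by hand.
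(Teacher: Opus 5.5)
Your proposal is correct and takes the same route as the paper. The paper also just invokes the general Lefschetz--Riemann--Roch formula (Donovan, Baum--Fulton--Quart, Thomason) and notes that \(1-g_D\) and \(D^2\) are the degree-one parts of the Todd class and of the inverse conormal factor; your expansion of \((1-\mu e^{-x})^{-1}\operatorname{Td}(T_D)\) spells this out.

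One caveat: the test case \(\mathbb P^1\times\mathbb P^1\) cannot detect the normal-versus-conormal convention. There \(D^2=0\), the two fixed curves carry mutually inverse eigenvalues, and the Lefschetz number equals \(1\) under either choice. The convention is instead fixed by the general theorem's use of the induced action on \(N_D^\vee=I_D/I_D^2\), or equivalently by the formal-neighbourhood sum \(\sum_{k\geq0}\chi(D,(N_D^\vee)^{\otimes k})\mu^k\); this agrees with your choice and with the paper's.
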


    Here, \(P\) runs over the isolated fixed points, \(D\) runs over the
    fixed curves, \(\lambda_{P,1},\lambda_{P,2}\) are the cotangent
    eigenvalues at \(P\), \(\nu_D\) is the normal eigenvalue along \(D\),
    and the brackets \([\cdot]\) denote Teichm\"uller lifts.  
    %Thus a global alternating
    %trace on coherent cohomology is reduced to fixed-locus, character, and
   % intersection data. 
   A more general statement of this theorem and its background is given
    in Section~\ref{sec:lefschetz-rr}.
    
    \medskip
    
    % The normal compactification \(\overline{ \mathbf{Y}}(w)\) may be
    % singular, whereas the displayed formula is most convenient on a smooth
    % surface.  In each of the two cases \(A_2\) and \({}^2A_2\) considered here, we fix an
    % equivariant resolution of an irreducible component of
    % \(\overline{ \mathbf{Y}}(w)\).  Its singularities are rational, and the equivariant Leray spectral sequence
    % identifies the coherent cohomology of the component with that of its
    % resolution.  We may therefore compute the same Brauer--Lefschetz number
    % on the resolved surface.  A nonzero value at a nonidentity element does
    % not determine the full Brauer character, but it suffices to rule out
    % virtual freeness.

    % \medskip

For Deligne--Lusztig varieties, related results on Galois
module structure are known for the Drinfeld curve and its
smooth projective compactification. These results concern
the action of \(\mathrm{SL}_2(\mathbb F_q)\), whereas we study
the action of the torus covering group.
Laurent and K\"ock~\cite{laurentCanonicalRepresentationDrinfeld2024a} determine the canonical representation of
\(\mathrm{SL}_2(\mathbb F_q)\) on the space of holomorphic
differentials of the smooth projective Drinfeld curve
\(C\colon XY^q-X^qY=Z^{q+1}\).
For every prime power \(q=p^r\), they obtain the decomposition
\[
H^0(C,\Omega_C)
\simeq
\bigoplus_{j=0}^{q-2}
\operatorname{Sym}^j(\bar{k}^{\,2})
\]
into indecomposable modules, where \(\bar{k}^{\,2}\) carries
the standard representation. In particular, the canonical
representation is semisimple if and only if \(q=p\).

\medskip

Other related results include that of K\"ock and Marchment~\cite{marchmentGaloisStructureSpaces2026}, who study the action of \(\mathrm{SL}_2(\mathbb F_q)\) on the
spaces \(H^0(C,\Omega_C^{\otimes m})\), \(m\geq2\), of holomorphic
polydifferentials on the smooth projective Drinfeld curve
\(C\colon XY^q-X^qY=Z^{q+1}\), in odd characteristic \(p\).
For \(q=p\), they determine the complete decomposition into
indecomposable modules. For arbitrary powers \(q\) of \(p\),
they construct an explicit basis and obtain a partial
module decomposition.

\medskip

    Our main results are stated below, following which we explain the proof methology in Section~\ref{sec:proof-methodology}.
    % , and the
    % cohomological comparison with an equivariant resolution is established
    % in Section~\ref{sec:equivariant-resolution-comparison}.
    % Boldface symbols denote the general Deligne--Lusztig construction that appears in \cite{bonnafeCompactificationVarietesDeligneLusztig2009}. 
    After
    specializing to the (split) \(A_2\) and (non-split) \({}^2A_2\) cases, and fixing the Coxeter element \(w = s_1 s_2 \in W \), we write \(\mathbf{X},\mathbf{Y},\overline {\mathbf{X}},\overline {\mathbf{Y}}\) for the associated Deligne--Lusztig varieties and \( \Gamma \) for the covering group \( {T}^{wF}\).
    
    \subsection{Main results}\label{sec:intro-main-results}
    
    This paper answers the \textit{virtual freeness} problem discussed above in the negative for Deligne-Lusztig surfaces for \( \mathbf{G}=\mathrm{GL}_3\) of both split and non-split type.  The word \emph{split} refers to the
    standard \(\mathbb F_p\)-structure on \( \mathbf{G}=\mathrm{GL}_3\) given by
    \[
    F_{\mathrm{sp}}(g)=g^{(p)},
    \qquad
     \mathbf{G}^{F_{\mathrm{sp}}}=\mathrm{GL}_3(\mathbb F_p),
    \]
    for which the Frobenius acts trivially on the Dynkin diagram of type
    \(A_2\).  The \emph{non-split}, or unitary, structure is defined by
    \[
    F_{\mathrm{un}}(g)=J_+(g^{(p)})^{-t}J_+^{-1},
    \qquad
    J_+=\operatorname{antidiag}(1,1,1).
    \]
    Here \(F_{\mathrm{un}}^2\) is the \(p^2\)-Frobenius,
    \( \mathbf{G}^{F_{\mathrm{un}}}=\mathrm U_3(\mathbb F_p)\), and the
    induced automorphism on the Dynkin diagram interchanges the two simple reflections.
    Thus the two cases have the same group \(\mathrm{GL}_3\) after base
    change to \(\bar{k}\), but different Frobenius structures: the first is
    the untwisted type \(A_2\) and the second is the twisted type
    \({}^{2}A_2\). We fix the Weyl group element \(w\) in both the cases to be a Coxeter element \( w = s_1 s_2\).
    
    % In Section~\ref{part:split}, we discuss the split case with \(F=F_{\mathrm{sp}}\) and \(w=(123)\). And In Section~\ref{part:unitary}, \(F=F_{\mathrm{un}}\).  In each case we
    % write explicit equations for the compactified
    % cover, resolve its cyclic quotient singularities torically, determine the
    % fixed locus, and apply Theorem~\ref{thm:lefschetz-rr}.  The two main
    % calculations are as follows.
    
    \begin{theorem}[Split case]\label{thm:intro-split-brauer}
    % For the right \( T^{wF}\)-action, use the diagonal realization
    % \[
    %  T^{wF}
    % =
    % \left\{
    % \diag(a,a^p,a^{p^2}):a\in\mathbb F_{p^3}^{\times}
    % \right\}
    % \simeq\mathbb F_{p^3}^{\times}.
    % \]
    Choose \(\mu\in\mathbb F_{p^3}^{\times}\) of order \(p^3-1\), and set
    \[
    \tau:=\diag(\mu,\mu^p,\mu^{p^2}),
    \qquad
    N:=p^2+p+1,
    \qquad
    \sigma:=\tau^{p-1},
    \qquad
    \bar\zeta:=\mu^{1-p}.
    \]
    Then \(\tau\) generates \( T^{wF}\), while \(\sigma\) generates
    the stabilizer of each of the \((p-1)\)-many irreducible components \( \overline{\mathbf{Y}}_\ell\) of the compactified cover \(\overline{\mathbf{Y}}\),
    %and \(\bar\zeta\) is a primitive \(N\)-th root.  
    % \(\bar\zeta\) is the scalar by
    % which \(\sigma\) acts on the local covering coordinate.  
    Put
    \(\widehat\zeta=[\bar\zeta]\in W(\mathbb F_{p^3})^\times\).  Write
    \(\overline {\mathbf{Y}}=\bigsqcup_{\ell\in\mathbb F_p^\times}
    \overline {\mathbf{Y}}_\ell\).  For every such component,
    \begin{align}
    L_{\mathrm{Br}}(\sigma,\overline {\mathbf{Y}}_{\ell}, \mathcal{O}_{\overline {\mathbf{Y}}_{\ell}})
    ={}&N(p+1)\left(
    \frac{\widehat\zeta(\widehat\zeta^{p-1}-1)}
    {(1-\widehat\zeta)^2(1-\widehat\zeta^p)}
    \right)\nonumber\\
    &+N\frac{1+(p-1)\widehat\zeta}
    {(1-\widehat\zeta)^2}
    +N\frac{\widehat\zeta^{2p}}{(1-\widehat\zeta^p)^2}.
    \label{eq:intro-split-brauer}
    \end{align}
    %Moreover, \(\sigma\) preserves all \(p-1\) components and \(\tau\)
    %identifies their Brauer characters.  Hence 
    And, the Lefschetz number of the entire normal compactification is given by
    \[
    L_{\mathrm{Br}}(\sigma,\overline {\mathbf{Y}}(w))
    =(p-1)L_{\mathrm{Br}}(\sigma,\overline {\mathbf{Y}}_\ell(w)).
    \]
    Furthermore, both the Lefschetz number in \eqref{eq:intro-split-brauer} and the value
    for the entire normal compactification are nonzero.
    \end{theorem}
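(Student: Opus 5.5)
The plan is to make the split Coxeter surface completely explicit and then apply the Lefschetz--Riemann--Roch formula \eqref{eq:intro-lefschetz-rr} on an equivariant resolution.

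\emph{Explicit model.} For \(w=s_1s_2\) in \(\mathrm{GL}_3\), \(\mathbf X(w)\) is the complement in \(\mathbb P^2\) of all \(\mathbb F_p\)-rational lines. Its compactification \(\overline{\mathbf X}(w)\) is the blow-up of \(\mathbb P^2\) at the \(N=p^2+p+1\) rational points. The boundary consists of the strict transforms \(L_j\) of the \(N\) rational lines, with \(L_j^2=1-(p+1)=-p\), and the \(N\) exceptional curves \(E_i\), with \(E_i^2=-1\). The cover \(\mathbf Y\) is given by the Deligne--Lusztig equation \(\det(x,x^{(p)},x^{(p^2)})^{p-1}=1\), up to normalisation. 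Its normalisation over \(\overline{\mathbf X}\) splits into \(p-1\) components according to the value \(\ell\in\mathbb F_p^\times\) of \(\det(x,x^{(p)},x^{(p^2)})\). This shows that \(\tau\) permutes the components cyclically and that \(\sigma=\tau^{p-1}\), of order \(N\), stabilises each. Each \(\overline{\mathbf Y}_\ell\to\overline{\mathbf X}\) is then a cyclic \(\mu_N\)-cover, locally a Kummer cover \(z^N=f\), branched along the boundary. I would compute the ramification order along each \(E_i\) and each \(L_j\) from the orders of vanishing of the determinant on these curves. I expect total ramification along one family and partial ramification along the other, which would explain the exponents \(1\) and \(p\) on \(\widehat\zeta\).

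\emph{Singularities and resolution.} At the \(N(p+1)\) points \(E_i\cap L_j\) the cover is locally \(z^N=u^av^b\). Its normalisation has a cyclic quotient singularity there, which I resolve torically via the Hirzebruch--Jung continued fraction. Such singularities are rational, so the Leray spectral sequence gives \(H^*(\overline{\mathbf Y}_\ell,\mathcal O)=H^*(\widetilde{\mathbf Y}_\ell,\mathcal O)\) \(\sigma\)-equivariantly. Hence we may compute on the smooth resolution \(\widetilde{\mathbf Y}_\ell\).

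\emph{Fixed locus and LRR.} Since \(\mathbf Y\to\mathbf X\) is étale, the fixed locus of \(\sigma\) lies over the boundary. The reduced preimages \(\widetilde E_i\) and \(\widetilde L_j\) of the totally ramified curves are pointwise fixed curves. Their normal eigenvalues are \(\bar\zeta\) or \(\bar\zeta^{p}\), and their genera and self-intersections follow from the ramification data: for a fully ramified curve, \(\widetilde D^2=D^2/N\) after correcting by the resolution. The toric exceptional chains over the \(N(p+1)\) corner points contribute isolated fixed points or fixed curves, with cotangent weights read off from the cone. Summing these contributions, I expect the corner points to give the term \(N(p+1)\frac{\widehat\zeta(\widehat\zeta^{p-1}-1)}{(1-\widehat\zeta)^2(1-\widehat\zeta^p)}\), and the two families of \(N\) curves each to give the remaining terms in \eqref{eq:intro-split-brauer}.

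\emph{Conclusion.} The whole-variety formula follows because \(\overline{\mathbf Y}\) is the disjoint union of the components and each is \(\sigma\)-stable. Its cohomology is therefore the direct sum, and \(\tau\) conjugates \(\sigma\) to itself while permuting the components, so all \(p-1\) summands are equal. For nonvanishing, I would clear denominators and reduce to showing that a specific polynomial in \(\widehat\zeta\) is nonzero. Here \(\widehat\zeta\) is a primitive \(N\)-th root of unity in characteristic \(0\), so I would compare the degree of that polynomial with \(\varphi(N)\), or evaluate a norm or \(p\)-adic valuation, to rule out vanishing.

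The main obstacle is the local analysis at the corner points \(E_i\cap L_j\): getting the exact Kummer exponents, the Hirzebruch--Jung chains, and which exceptional curves are pointwise fixed, together with their normal weights. I would first work out the case \(p=2\) by hand to calibrate these signs and weights.
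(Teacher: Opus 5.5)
Your route matches the paper's: the blow-up model of \(\overline{\mathbf X}\), the Kummer equation \(z^N=\ell\theta\) splitting \(\mathbf Y\) into \(p-1\) components, and the local model \(z^N=v^{p+1}s\) at each of the \(N(p+1)\) incidences. It continues with the Hirzebruch--Jung resolution of \(\frac1N(1,p^2)\), rationality plus Leray, and Lefschetz--Riemann--Roch, with self-intersections obtained from \(\widetilde\pi_\ell^{\,*}\widetilde L=N\widetilde L'+\sum_jQ_j\) and the projection formula.

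\textbf{The ramification expectation is wrong, and it matters.} The order of \(\theta\) is \(1\) along each \(\widetilde L\) and \(p+1\) along each \(E_P\), since \(p+1\) rational lines pass through \(P\). Both orders are prime to \(N=p(p+1)+1\). So the cover is totally ramified along all \(2N\) boundary curves. That is exactly why both families are pointwise fixed by \(\sigma\): the inertia group is all of \(\langle\sigma\rangle\). Partial ramification along one family would make the inertia a proper subgroup, so that family could not contribute the curve term you expect. Your proposal is therefore internally inconsistent at this point.

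The exponents come instead from inverting \(a\) modulo \(N\), as in Lemma~\ref{lem:split-kummer-normal-character}. For \(a=1\), the parameter \(z\) is a local uniformizer. For \(a=p+1\), one gets \(b=-p\), since \(-p(p+1)\equiv1\pmod N\). In the normal-eigenvalue convention of \eqref{eq:brauer-curve} this gives \(\nu=\bar\zeta^{-1}\) on \(\widetilde L'\) and \(\nu=\bar\zeta^{p}\) on \(\widetilde E'\). Your pair \(\bar\zeta,\bar\zeta^{p}\) fits neither the normal nor the conormal convention, and using \(\nu=\bar\zeta\) on the line-type curves gives the wrong curve term.

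The rest of the data you must produce is as follows:
\begin{itemize}
\item \((\widetilde L')^2=-p\), \((\widetilde E')^2=-1\), and both curves have genus \(0\).
\item On each chain \([[2,\dots,2,p+2]]\) no \(E_i\) is pointwise fixed.
\item There are exactly \(p-1\) isolated points \(E_{i-1}\cap E_i\), \(2\le i\le p\), with cotangent weights \((\bar\zeta^{i},\bar\zeta^{1-i})\). Their contributions telescope to the corner term.
\end{itemize}

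\textbf{The nonvanishing step is not yet an argument.} The paper uses Remark~\ref{lem:cyclotomic-comparison} to specialize \(\widehat\zeta\mapsto e^{2\pi i/N}\). It then computes \(\frac1N\operatorname{Im}L_{\mathrm{Br}}=\frac{p-1}{4}\bigl(\cot(pA)-\cot A\bigr)\) with \(A=\pi/N\), which is negative because \(0<A<pA<\pi/2\). This avoids any arithmetic of \(N\).

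Your degree comparison can be completed, but it needs two inputs you have not supplied.
\begin{itemize}
\item Over the common denominator \((1-T)^3\Psi(T)^2\), where \(\Psi=(1-T^p)/(1-T)\), the numerator is an integer polynomial of degree at most \(2p+1\) with constant term \(N\neq0\).
\item You need \(\varphi(p^2+p+1)>2p+1\) for every prime \(p\). This is true but requires proof, for instance using that the prime factors of \(N\) are \(3\) or \(\equiv1\pmod 3\) and that \(9\nmid N\).
\end{itemize}
The \(p\)-adic route is unpromising. All denominators \(1-\widehat\zeta^{m}\) are units, so it would require the mod-\(p\) alternating trace to be nonzero. That is a strictly stronger claim, and you have no handle on it.
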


    \begin{theorem}[Non-split unitary case]\label{thm:intro-unitary-brauer}
    Choose \(\alpha\in\mathbb F_{p^2}^{\times}\) of order \(p^2-1\), and
    set
    \[
    \tau:=\diag(\alpha^{p-1},\alpha,\alpha^{-p}),
    \qquad
    \bar\xi:=\alpha^p,
    \qquad
    \widehat\xi:=[\bar\xi]\in W(\mathbb F_{p^2})^\times.
    \]
    Then, \( T^{wF}
    =
    \left\{
    \operatorname{diag}(\zeta,a,a^{-p})
    :
    \zeta^{p+1}=1,\quad a^{p^2-1}=1
    \right\} \simeq \mu_{p+1} \times \mu_{p^2-1} \).
    The normal compactification \(\overline {\mathbf{Y}}\) has exactly \(p+1\)
    irreducible components \(\overline{\mathbf{Y}}_{\ell}\), and \(\tau\) generates the stabilizer \(H\) of each of those components. Fix one of them and denote it by
    \(\overline {\mathbf{Y}}_{\ell}\).  
    % For the right \( T^{wF}\)-action,
    % use the standard diagonal realization
    % \[
    %  T^{wF}
    % =
    % \left\{
    % \diag(\zeta,a,a^{-p}):
    % \zeta^{p+1}=1,\ a^{p^2-1}=1
    % \right\}.
    % \]
    % The setwise stabilizer of \(\overline {\mathbf{Y}}^\circ(w)\) is
    % \[
    % H=
    % \left\{
    % \diag(a^{p-1},a,a^{-p}):a^{p^2-1}=1
    % \right\}
    % \simeq\mu_{p^2-1}.
    % \]
    Set
    \[
    c_0=\frac{(p+1)(p-2)}2,
    \qquad
    c_u=\frac{(p+1)(3p-2)}2,
    \qquad
    c_v=\frac{(p+1)(3p-4)}2.
    \]
    These are integers for every prime \(p\).  For this chosen component,
    \begin{align}
    L_{\mathrm{Br}}(\tau,\overline {\mathbf{Y}}_{\ell})
    ={}&
    \frac{c_u\widehat\xi-c_0}
    {(1-\widehat\xi)^2}
    -\frac{(p^3+1)\widehat\xi}
    {(1-\widehat\xi)(1-\widehat\xi^p)}\nonumber\\
    &+\frac{\widehat\xi^p(c_v-c_0\widehat\xi^p)}
    {(1-\widehat\xi^p)^2}.
    \label{eq:intro-unitary-brauer}
    \end{align}
    Furthermore, both the Lefschetz number in
    \eqref{eq:intro-unitary-brauer} and the Lefschetz number for the entire normal
    compactification are nonzero.
    \end{theorem}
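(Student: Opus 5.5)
The plan is to follow the same template as in the split case (Theorem~\ref{thm:intro-split-brauer}), now for \(F=F_{\mathrm{un}}\) and \(w=s_1s_2\). The first step is to make the cover explicit. The Coxeter variety \(\mathbf{X}(w)\) for \(\mathrm U_3\) is the complement in the Fermat (Hermitian) curve's ambient plane of the Hermitian curve \(C:\ x_0x_2^p+x_1^{p+1}+x_2x_0^p=0\) (for the form given by \(J_+\)). The compactification \(\overline{\mathbf{X}}(w)\) is the blow-up of \(\mathbb P^2\) at the \(p^3+1\) rational points of \(C\), with boundary consisting of the strict transform of \(C\) together with the exceptional lines. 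Computing \(T^{wF}\) as the \(wF\)-fixed points of the diagonal torus gives \(\mu_{p+1}\times\mu_{p^2-1}\) as stated. Writing \(\mathbf Y(w)\) by an explicit equation of the form \(f(x)=y^{p^3+1}\)-type, up to the \(\mu_{p+1}\) factor, should exhibit \(p+1\) components permuted simply transitively by the \(\mu_{p+1}\)-part. Each component has stabilizer \(H\simeq\mu_{p^2-1}\) generated by \(\tau=\diag(\alpha^{p-1},\alpha,\alpha^{-p})\). Since the components are permuted freely modulo \(H\), \(\tau\) preserves each one and the components are \(T^{wF}\)-conjugate. Hence \(L_{\mathrm{Br}}(\tau,\overline{\mathbf Y})=(p+1)L_{\mathrm{Br}}(\tau,\overline{\mathbf Y}_\ell)\), and it suffices to treat one component.

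Next I would resolve and reduce to a smooth surface. The normalization \(\overline{\mathbf Y}_\ell\) is a cyclic cover of \(\overline{\mathbf X}\) branched along the normal-crossings boundary, so its only singularities are cyclic quotient singularities over the double points where \(\tilde C\) meets an exceptional line \(E_i\). I would resolve these torically (Hirzebruch--Jung) and \(H\)-equivariantly to get \(Z\to\overline{\mathbf Y}_\ell\). Quotient singularities are rational, so \(R\pi_*\mathcal O_Z=\mathcal O_{\overline{\mathbf Y}_\ell}\) equivariantly, and the equivariant Leray spectral sequence gives \(L_{\mathrm{Br}}(\tau,\overline{\mathbf Y}_\ell)=L_{\mathrm{Br}}(\tau,Z,\mathcal O_Z)\).

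Then I would apply the Lefschetz--Riemann--Roch formula \eqref{eq:intro-lefschetz-rr} on \(Z\). The fixed locus of \(\tau\) should consist of two kinds of pieces:
\begin{itemize}
\item the curve \(\tilde D\) over \(\tilde C\), which is fully ramified and therefore pointwise fixed, with normal eigenvalue a power of \(\widehat\xi\);
\item contributions over the exceptional curves \(E_i\) and their toric chains in the resolution, occurring as fixed curves or isolated fixed points with cotangent eigenvalues among \(\widehat\xi^{\pm1},\widehat\xi^{\pm p}\).
\end{itemize}
The terms \(c_0,c_u,c_v\) should emerge as follows. The quantity \(1-g\) of the Hermitian curve is \(1-p(p-1)/2\), and the self-intersections of the fixed curves come from the branched-cover pullback, e.g. \(\tilde D^2\) computed from \(\tilde C^2=p^2+2p+1-(p^3+1)\) divided by the ramification index. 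These feed the denominators \((1-\widehat\xi)^2\) and \((1-\widehat\xi^p)^2\). The term with \((p^3+1)\widehat\xi/((1-\widehat\xi)(1-\widehat\xi^p))\) should be the sum over isolated fixed points lying above the \(p^3+1\) crossing points, or over the \(p^3+1\) exceptional lines. After assembling everything over a common denominator, I expect to recover \eqref{eq:intro-unitary-brauer}.

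The main obstacle is the local eigenvalue bookkeeping: determining precisely how \(\tau\) acts on the local covering coordinate near each stratum, on the components of the resolution chains, and on the normal bundles. Self-intersections in the chains also enter, and the ramification index along \(E_i\) versus \(\tilde C\) differs. For this I would write \(\tau\) in the local toric charts at a crossing point and read off characters of the weight lattice. One must also check that interior chain curves are not pointwise fixed, or else include them.

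Finally, to prove nonvanishing, I would treat \(\widehat\xi\) as a primitive \((p^2-1)\)-th root of unity in \(W(\mathbb F_{p^2})[1/p]\). I would clear denominators to get a polynomial identity in \(\widehat\xi\) and show that it is not divisible by the cyclotomic polynomial \(\Phi_{p^2-1}\). The simplest route is to reduce modulo \(p\), where the value becomes a rational function in \(\bar\xi\in\mathbb F_{p^2}\) using \(\bar\xi^{p^2}=\bar\xi\), and check that it is nonzero. Failing that, I would apply a complex embedding and an estimate. Nonvanishing for the whole compactification then follows from the factor \(p+1\).
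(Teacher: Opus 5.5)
\textbf{Gap: the model of the base surface is wrong.} Your first step misidentifies the base, and every later input to Lefschetz--Riemann--Roch (the fixed curves, their genera, their self-intersections) is read off from that misidentification.

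For the unitary Frobenius and \(w=s_1s_2\), unwind the chain description. It gives \(B_0=(L\subset P)\), \(B_1=(P^\perp\subset P)\) and \(B_2=F(B_0)=(P^\perp\subset L^\perp)\). Hence \(\overline{\mathbf X}(w)=\{(L\subset P):P^\perp\subset P\}\), which is a \(\mathbb P^1\)-bundle over the Hermitian curve \(C_y=\{P:P^\perp\subset P\}\). Consequences:
\begin{itemize}
\item It has irregularity \(p(p-1)/2>0\), so it is not birational to \(\mathbb P^2\) and is certainly not a blow-up of \(\mathbb P^2\). It is minimal, with no exceptional lines.
\item The map \(\mathbf X(w)\to\mathbb P^2\setminus C_x\), \((L\subset P)\mapsto L\), is finite of degree \(p+1\), not an isomorphism.
\item The boundary is exactly two curves, \(B_u=\overline{\mathbf X}(s_1)\) and \(B_v=\overline{\mathbf X}(s_2)\). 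Both are isomorphic to the Hermitian curve, and they meet transversally at the \(p^3+1\) points of \(\mathbf X(1)\).
\item Their self-intersections are \(B_u^2=p(p+1)(2-p)\) and \(B_v^2=(p+1)(2p-1)\). The first coincides with your \((p+1)^2-(p^3+1)\), but the second has no counterpart in your model.
\end{itemize}

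\textbf{Why your assembly cannot give the stated formula.} In the correct geometry the inputs are as follows.
\begin{itemize}
\item Each component is the normalization of the degree-\((p^2-1)\) Kummer cover \(y_2^{p^2-1}=uv^{-p}\), not a \(y^{p^3+1}\)-type equation. Near a crossing point it is \(z^{p^2-1}=uv^{p^2-p-1}\), and the cover is totally ramified along both \(B_u\) and \(B_v\).
\item The crossing points are \(\frac1{p^2-1}(1,p)\) singularities, each resolved by a chain of two \((-p)\)-curves.
\item The fixed locus of \(\tau\) is \(C_u\sqcup C_v\sqcup\{Q_1,\dots,Q_{p^3+1}\}\), where \(Q_j=E_{1,j}\cap E_{2,j}\) has cotangent eigenvalues \((\bar\xi^{-1},\bar\xi^{p})\).
\item Both fixed curves have genus \(p(p-1)/2\); their normal eigenvalues are \(\bar\xi^{-1}\) and \(\bar\xi^{p}\).
\item Their self-intersections are \(C_u^2=-p(p+1)\) and \(C_v^2=-(p^2-1)\). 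These come from the projection formula \(B_\bullet^2=(p^2-1)C_\bullet^2+p(p^3+1)\), which subtracts the exceptional-chain term; simply dividing by the ramification index does not give them.
\end{itemize}
The summand \(\widehat\xi^p(c_v-c_0\widehat\xi^p)/(1-\widehat\xi^p)^2\) is the contribution of the single curve \(C_v\), and the \(-c_0\) in it is \(1-g(C_v)\). In your model the corresponding contribution would come from \(p^3+1\) rational curves over exceptional lines. Their genus terms would sum to \(p^3+1\) rather than \(-c_0\), so "assembling everything" cannot produce the stated expression.

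\textbf{The nonvanishing step is fine.} For the given expression, reduction mod \(p\) does yield a nonzero element of \(\mathbb F_{p^2}\), although that test is only sufficient in general. The paper argues differently, taking imaginary parts at \(e^{2\pi i/(p^2-1)}\).
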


    \subsection{Proof methodology}\label{sec:proof-methodology}
    
    We fix the Deligne--Lusztig datum as discussed above restricting ourselves to the rank two case, \( {\mathbf{G}} = \mathrm{GL}_3\).
    The goal is to check whether the equivariant Euler characteristic \(\chi(\overline{\mathbf{Y}}, \mathcal{O}_{\overline{\mathbf{Y}}})\) of the normal compactification \(\overline{\mathbf{Y}}\) is a virtual multiple of the regular
    representation of its finite covering group \(\Gamma\).  To do so, in both the cases, we explicitly show that the Lefschetz number at a specific non-identity element of the covering group is non-zero, and employ the same methods discussed below.  
    
    \medskip
    
    The group \( \Gamma \) acts transitively on the irreducible components of \(\overline{\mathbf{Y}}\).  We therefore fix one normal
    component \(Z:=\overline{\mathbf{Y}}_{\ell}\), let \(\Gamma_0\subset \Gamma\) be its setwise
    stabilizer, and first compute the Lefschetz number of a generator \(h\) of \(\Gamma_0\)
    on \(Z\).  In the split case \(|\Gamma_0|=p^2+p+1\), and in the non-split
    case \(|\Gamma_0|=p^2-1\). 
    
    \medskip

    % Bonnafe-Rouquier \cite[Theorem 1.2]{bonnafeCompactificationVarietesDeligneLusztig2009} show that the singular locus of the variety \(\overline{\mathbf{Y}}\) is restricted to codimension \(2\). But, in our case, the associated Deligne--Lusztig varieties are surfaces, and 
    The singular locus of \(\overline{\mathbf{Y}}\) is a disconnected set of points. We explicitly construct the variety \(\overline{\mathbf{Y}}\) as the normalization of \(\overline{\mathbf{X}}\) in \(\mathbf{Y}\),  by first analyzing the geometry of \(\overline{\mathbf{X}}\). We would like to point out that Bonnafe and Rouquier \cite{bonnafeCompactificationVarietesDeligneLusztig2009} construct \(\overline{\mathbf{Y}}\) as a quotient of a smooth quasi-projective scheme \(\tilde{\mathbf{Y}}\) by \(\mathbb{G}_m^2\), but we do not employ those methods to construct \(\overline{\mathbf{Y}}\), and we work with smooth projective desingularizations of \(\overline{\mathbf{Y}}\) instead, for the applications considered in this paper.

    \medskip
    
   At a boundary point of \(\overline {\mathbf{X}}\), we construct an auxiliary
\'etale-local cover \(\mathbf{Y}'\) whose normalization gives a toric model
for the singularities of \(\overline {\mathbf{Y}}_{\ell}\). Using \'etale slice theorems, we verify that the resulting singularity
types match with those arising in the compactifications constructed by
Bonnaf\'e and Rouquier
\cite{bonnafeCompactificationVarietesDeligneLusztig2009}.
    
    \medskip
    
     Since it is convenient to apply the Lefschetz-Riemann-Roch theorem for non-singular varieties, we construct explicit \(\Gamma_0\)-equivariant resolution of singularities \(\widetilde Z\) of \(\overline{\mathbf{Y}}_{\ell}\).  We also determine the fixed curves,
    isolated fixed points, and their normal and tangent characters for the action of the generator \( \sigma \) of \(\Gamma_0\). The genera or self-intersections of the strict transforms of the boundary curves are later obtained from the incidence geometry of the compactified base \(\overline{\mathbf{X}}\). The results from toric geometry that we use are collected in Section~\ref{sec:toric-surface-preliminaries}.
    % Lefschetz--Riemann--Roch is most directly applied on a smooth projective
    % surface, while \(Z\) has cyclic quotient singularities.  
    For the equivariant minimal resolution
    \(f\colon\widetilde Z\longrightarrow Z,
    \)
    the quotient singularities are rational, so
    \[
    f_*\mathcal O_{\widetilde Z}=\mathcal O_Z, \text{ and }
    \quad
    R^jf_*\mathcal O_{\widetilde Z}=0\quad(j>0).
    \]
    The Leray spectral sequence
    \[
    E_2^{i,j}
    =H^i\!\left(Z,R^jf_*\mathcal O_{\widetilde Z}\right)
    \Longrightarrow
    H^{i+j}(\widetilde Z,\mathcal O_{\widetilde Z})
    \]
    therefore has only its \(j=0\) row and degenerates.  Because \(f\) is
    equivariant, its edge maps give isomorphisms of
    \(\bar{k}[\langle h\rangle]\)-modules
    \[
    H^i(Z,\mathcal O_Z)
    \simeq
    H^i(\widetilde Z,\mathcal O_{\widetilde Z})
    \qquad(i\geq0).
    \]
    Consequently, the resolution has the same equivariant coherent
    cohomology and hence the same Lefschetz numbers:
    \(
    L_{\mathrm{Br}}(h,Z)
    =L_{\mathrm{Br}}(h,\widetilde Z).
    \)

    \medskip
    
    We finally insert the required data about the fixed loci into the Lefschetz--Riemann--Roch formula of
    Section~\ref{sec:lefschetz-rr} to compute the Lefschetz numbers.  We show that these numbers are non-zero, thereby ruling out virtual freeness, first for the chosen
    component under \(\Gamma_0\) and, hence also for the entire
    normal compactification \(\overline{\mathbf{Y}}\).

    % \medskip

\begin{remark}
     As the singularities of \(\overline{\mathbf{Y}}(w)\) are at worst abelian quotient singularities, we would like to point out that our methods can be used to study the geometry and the Galois module structure of varieties \(\overline{\mathbf{Y}}(w)\) in other group types and general elements \(w \in W\) for which \(\overline{\mathbf{Y}}(w)\) is a surface.
\end{remark}

    \subsection{Structure of the paper}\label{sec:paper-plan}
    
   In Section~\ref{sec:preliminaries}, we fix notation, recall
Deligne--Lusztig varieties and their compactifications, and
introduce virtual Galois modules and Brauer traces. We also
state the Lefschetz--Riemann--Roch theorem, recall the results
on normalization and resolution of toric surfaces needed
below, and describe the Bonnaf\'e--Rouquier slice construction.

In Section~\ref{part:split}, we study the split case. We give
explicit equations, determine the local normalizations and
toric resolutions, and compute the fixed loci and intersection
numbers. In Section~\ref{sec:split-brauer-calculation}, we use
these calculations to prove Theorem~\ref{thm:intro-split-brauer}.
In Section~\ref{sec:split-nonvanishing}, we establish the
non-vanishing of the resulting expression and deduce the
Galois-module consequence.

In Section~\ref{part:unitary}, we study the unitary case and carry out similar calculations as in the previous section. In
Section~\ref{sec:unitary-brauer-calculation}, we prove
Theorem~\ref{thm:intro-unitary-brauer}, and in
Section~\ref{sec:unitary-nonvanishing}, we establish the
non-vanishing of the Lefschetz numbers and deduce the
corresponding Galois-module obstruction.

\section*{Acknowledgements}

The author thanks George Pappas for helpful discussions, insightful advice, and for valuable comments on an earlier version of this article. %The author was partially supported by Simons Foundation grant SFI-MPS-TSM-00013296.

%\section*{Disclosure of AI assistance}

The author used ChatGPT to help identify relevant references, check typographical errors, and improve the readability of some sentences. %The author verified the references and reviewed all suggested revisions, and takes full responsibility for the content of this article. 
    
    \section{Preliminaries}
    \label{sec:preliminaries}
    
    We collect the conventions and geometric facts used throughout the paper.
    Our presentation follows the original construction of Deligne and Lusztig
    \cite[\S\S1 and~9]{deligneRepresentationsReductiveGroups1976} and the modern treatment of Bonnafe-Rouquier \cite{bonnafeCompactificationVarietesDeligneLusztig2009} and G\"ortz-Schr\"oer \cite[\S1]{gortzDeligneLusztigVarietiesWhose2026}.
    Recall that \(k=\mathbb F_p\) and
    \(\bar{k}=\overline{\mathbb F}_p\).
    
    \subsection{Reductive groups, Frobenius, and Weyl-group data}
    
    Let \( \mathbf{G}\) be a connected reductive group over \(\bar{k}\).  Let
    \(F: \mathbf{G}\to \mathbf{G}\) be an isogeny such that \(F^d\) is a Frobenius
    endomorphism for some \(d\geq1\).  The fixed-point group
    \[
     \mathbf{G}^F=\{g\in \mathbf{G}(\bar{k}):F(g)=g\}
    \]
    is a finite group of Lie type.  By the Lang--Steinberg theorem, one may
    choose an \(F\)-stable Borel subgroup \( B\), an \(F\)-stable maximal
    torus \( T\subseteq B\), and the unipotent radical
    \( U\) of \( B\).  We use the standard conventions of
    \cite{digneMichelRepresentationsFiniteGroups2020,malleTestermanLinearAlgebraicGroups2011};
    for the Lang map and the fixed-point theory, see
    \cite{langAlgebraicGroupsFinite1956,steinbergEndomorphismsLinearAlgebraic1968}.
    
    \medskip
    
    Let
    \(
    W=N_{ \mathbf{G}}( T)/ T
    \)
    be the Weyl group.  
    The choice of \( B\) determines the set \(S\) of
    simple reflections, the length function \(\ell:W\to\mathbb Z_{\geq0}\), and
    the Bruhat order \(\leq\).  For \(w\in W\), we choose a representative
    \(\dot w\in N_{ \mathbf{G}}( T)\).  The induced endomorphism of the
    torus is denoted
    \[
    wF: T\longrightarrow T,
    \qquad
    wF(t)=\dot wF(t)\dot w^{-1},
    \]
    and its finite group of fixed points is \( T^{wF}\).  The actions of
    \(F\) on the character and cocharacter lattices are adjoint for the natural
    pairing, and \(F\) permutes the simple roots and simple reflections; see
    \cite{digneMichelRepresentationsFiniteGroups2020,malleTestermanLinearAlgebraicGroups2011}.
    
    \subsection{Relative position and Deligne--Lusztig varieties}
    
    Write \(\mathcal B=\operatorname{Bor}( \mathbf{G})\) for the variety of Borel
    subgroups of \( \mathbf{G}\).  The choice of the Borel subgroup \( B\)
    gives an identification
    \[
     \mathbf{G}/ B\xrightarrow{\sim}\mathcal B,
    \qquad
    g B\longmapsto g B g^{-1}.
    \]
    For every algebraically closed
    extension \(\bar{k}/k\), the Bruhat decomposition gives a disjoint union
    \[
    \mathcal B(\bar{k})\times\mathcal B(\bar{k})
    =
    \coprod_{w\in W}\mathcal O(w)(\bar{k}),
    \]
    where the diagonal \( \mathbf{G}\)-orbit indexed by \(w\) is
    \[
    \mathcal O(w)
    =
    \{(g_1 B,g_2 B):
    g_1^{-1}g_2\in B\dot w B\}.
    \]
    Consequently there is a well-defined relative-position map on geometric
    points
    \begin{equation}
    \operatorname{inv}\colon
    \mathcal B(\bar{k})\times\mathcal B(\bar{k})\longrightarrow W,
    \qquad
    \operatorname{inv}(g_1 B,g_2 B)=w
    \Longleftrightarrow
    g_1^{-1}g_2\in B\dot w B.
    \label{eq:relative-position-map}
    \end{equation}
    It is independent of the representatives \(g_1,g_2\).  Scheme-theoretically,
    \(\mathcal O(w)\) is the fiber over \(w\) of the quotient-stack map
    \[
    \mathcal B\times\mathcal B
    \longrightarrow
    [ \mathbf{G}\backslash(\mathcal B\times\mathcal B)]
    \simeq[ B\backslash \mathbf{G}/ B].
    \]
    Thus two Borel subgroups \(\mathcal B_1,\mathcal B_2\) are said to be
    \emph{in relative position \(w\)}, or equivalently that \(\mathcal B_2\) has
    relative position \(w\) with respect to \(\mathcal B_1\), if
    \[
    \operatorname{inv}(\mathcal B_1,\mathcal B_2)=w.
    \]
    Following the notation of Bonnaf\'e--Rouquier
    \cite[\S1.1]{bonnafeCompactificationVarietesDeligneLusztig2009}, we may also
    write this ordered relation as
    \[
    \mathcal B_1\xrightarrow{\,w\,}\mathcal B_2.
    \]
    The order is significant:
    \(\mathcal B_1\xrightarrow{w}\mathcal B_2\) if and only if
    \(\mathcal B_2\xrightarrow{w^{-1}}\mathcal B_1\).  Scheme-theoretically,
    the first relation means precisely that the ordered pair lies in the locally
    closed subscheme \(\mathcal O(w)\).  Bruhat closure gives
    \begin{equation}
    \overline{\mathcal O(w)}
    =
    \bigcup_{v\leq w}\mathcal O(v).
    \label{eq:prelim-orbit-closure}
    \end{equation}
    If \(w=w_1w_2\) and
    \(\ell(w)=\ell(w_1)+\ell(w_2)\), the intermediate Borel subgroup in a chain
    of relative positions \(w_1,w_2\) is unique.  Equivalently,
    \[
    \mathcal O(w_1)\times_{\mathcal B}\mathcal O(w_2)
    \xrightarrow{\sim}\mathcal O(w).
    \]
    These facts are recalled in \cite[\S1]{deligneRepresentationsReductiveGroups1976}
    and \cite[\S1.3]{gortzDeligneLusztigVarietiesWhose2026}.
    
    \subsubsection[Relative position in type A]{Relative position in type \(A\)}
    
    We only consider the root data of type \(A\) in this paper.  For
    \( \mathbf{G}=\mathrm{GL}_n\), with \( B\) upper triangular and
    \( T\) diagonal, one has \(W\simeq S_n\).  In this case, we identify
    \(\mathcal B\) with the full flag variety \(\mathrm{Fl}\); its
    \(\bar{k}\)-points are flags
    \[
    \mathcal F=(0=V_0\subset V_1\subset\cdots\subset V_n=K^n),
    \qquad \dim V_i=i.
    \]
    For \(w\in S_n\), put
    \[
    r_w(i,j)
    :=
    \#\{\nu:1\leq\nu\leq j,\ w(\nu)\leq i\}.
    \]
    If \(\mathcal F=(V_i)\) and \(\mathcal F'=(V'_j)\), then
    \begin{equation}
    \operatorname{inv}(\mathcal F,\mathcal F')=w
    \quad\Longleftrightarrow\quad
    \dim(V_i\cap V'_j)=r_w(i,j)
    \quad\text{for all }i,j.
    \label{eq:type-A-relative-position-ranks}
    \end{equation}
    Moreover, the closure condition is
    \begin{equation}
    \operatorname{inv}(\mathcal F,\mathcal F')\leq w
    \quad\Longleftrightarrow\quad
    \dim(V_i\cap V'_j)\geq r_w(i,j)
    \quad\text{for all }i,j.
    \label{eq:type-A-Bruhat-ranks}
    \end{equation}
    Thus, under this \(\mathrm{GL}_n\)-specific identification, relative position
    records all incidence dimensions between the two flags, while the Bruhat
    order records their possible degenerations; see
    \cite[\S\S1.3--1.4]{gortzDeligneLusztigVarietiesWhose2026}.

    \medskip

There are two Frobenius structures of type \(A_{n-1}\) relevant here.  In
    the split case,
    \[
    F_{\mathrm{sp}}(g)=g^{(p)},
    \qquad
     \mathbf{G}^{F_{\mathrm{sp}}}=\mathrm{GL}_n(\mathbb F_p),
    \]
    and \(F_{\mathrm{sp}}\) acts trivially on the Dynkin diagram.  In the
    non-split case, put
    \[
    J_+=\operatorname{antidiag}(1,\ldots,1),
    \qquad
    F_{\mathrm{un}}(g)
    =J_+(g^{(p)})^{-t}J_+^{-1}.
    \]
    Then \(F_{\mathrm{un}}^2\) is the \(p^2\)-Frobenius,
    \( \mathbf{G}^{F_{\mathrm{un}}}=\mathrm U_n(\mathbb F_p)\), and the induced
    diagram involution is
    \(
    s_i\longmapsto s_{n-i}.
    \)
    These are the untwisted type \(A_{n-1}\) and twisted type
    \({}^{2}A_{n-1}\), respectively.  For \(n=3\), they give exactly the
    split and non-split type-\(A_2\) surfaces treated in this paper.  
    %We also fix the
    %Coxeter element \(w=s_1s_2\) of length two.
    
    \subsubsection{Deligne--Lusztig strata}
    
    The graph morphism of \(F\) is
    \[
    \Gamma_F\colon\mathcal B\longrightarrow\mathcal B\times\mathcal B,
    \qquad
    \mathcal C\longmapsto(\mathcal C,F(\mathcal C)).
    \]
    The Deligne--Lusztig variety attached to \(w\) is the inverse image of the
    relative-position stratum \(\mathcal O(w)\):
    \begin{align}
     \mathbf{X}(w)
    &=
    \{g B\in\mathcal B:
    g^{-1}F(g)\in B\dot w B\}\nonumber\\
    &=
    \{\mathcal C\in\mathcal B:
    \operatorname{inv}(\mathcal C,F(\mathcal C))=w\}\nonumber\\
    &=
    \{\mathcal C\in\mathcal B:
    \mathcal C\xrightarrow{\,w\,}F(\mathcal C)\}.
    \label{eq:prelim-Xw}
    \end{align}
    Equivalently, it is the transverse intersection of \(\mathcal O(w)\) with
    the graph of \(F\).  Since every ordered pair of Borel subgroups has a unique
    relative position, the variety of Borel subgroups is partitioned into
    Deligne--Lusztig strata
    \[
    \mathcal B=\coprod_{w\in W} \mathbf{X}(w).
    \]
    The closed stratum \( \mathbf{X}(1)=\mathcal B^F\) is the finite scheme of
    \(F\)-stable Borel subgroups, whereas the stratum for the longest element
    is open.  Each \( \mathbf{X}(w)\) is stable under \( \mathbf{G}^F\), because
    simultaneous left translation by an \(F\)-fixed element does not change
    relative position.
    In particular, the Coxeter element \( w = s_1s_2\) produces the surfaces studied
    in this paper.
    
    \begin{proposition}[Basic geometry of \( \mathbf{X}(w)\)]
    \label{prop:prelim-basic-X}
    The scheme \( \mathbf{X}(w)\) is smooth and equidimensional of dimension
    \(\ell(w)\), and \( \mathbf{G}^F\) acts on it by left translation.  Its
    schematic closure in \(\mathcal B\) is normal and Cohen--Macaulay, and its
    underlying stratification is
    \[
    \overline{ \mathbf{X}(w)}^{\mathrm{naive}}
    =
    \bigcup_{v\leq w} \mathbf{X}(v).
    \]
    \end{proposition}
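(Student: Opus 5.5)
We deduce the statement from the transversality description \eqref{eq:prelim-Xw}: \(\mathbf{X}(w)=\Gamma_F^{-1}(\mathcal O(w))\). For smoothness and dimension, recall that \(\mathcal O(w)\) is a single \(\mathbf{G}\)-orbit in \(\mathcal B\times\mathcal B\). It is smooth of dimension \(\dim\mathcal B+\ell(w)\), since its fiber over the first factor is the Bruhat cell \(B\dot wB/B\simeq\mathbb A^{\ell(w)}\). The key point is that the differential of \(F\) vanishes, because some power of \(F\) is a Frobenius and hence \(dF^d=0\); for an isogeny of the type considered, \(dF\) is nilpotent, and in the split/unitary cases it is zero. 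Therefore the tangent space to the graph \(\Gamma_F(\mathcal B)\) at \((\mathcal C,F\mathcal C)\) is \(T_{\mathcal C}\mathcal B\oplus 0\). The tangent space of \(\mathcal O(w)\) contains \(0\oplus T_{F\mathcal C}\mathcal B\), because \(\mathcal O(w)\to\mathcal B\), projecting to the second factor, is a smooth surjection (a \(\mathbf{G}\)-equivariant map of homogeneous spaces). Hence \(\Gamma_F(\mathcal B)\) and \(\mathcal O(w)\) meet transversally. The intersection is then smooth of dimension
\[
\dim\mathcal B+(\dim\mathcal B+\ell(w))-2\dim\mathcal B=\ell(w)
\]
at every point, which gives equidimensionality. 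The \(\mathbf{G}^F\)-stability was already observed: left translation by \(g\in\mathbf{G}^F\) commutes with \(F\) and preserves relative position.

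For the closure, we use the same transversality argument applied to the Schubert-type closures \(\overline{\mathcal O(w)}=\bigcup_{v\le w}\mathcal O(v)\) from \eqref{eq:prelim-orbit-closure}. The preimage \(\Gamma_F^{-1}(\overline{\mathcal O(w)})\) is a closed subscheme whose underlying set is \(\bigcup_{v\le w}\mathbf{X}(v)\). Each stratum \(\mathbf{X}(v)\), \(v<w\), has dimension \(\ell(v)<\ell(w)\), so \(\mathbf{X}(w)\) is dense in this preimage. The preimage is therefore the schematic closure once we know it is reduced, and reducedness will follow from the normality proved next. This gives the stated underlying stratification.

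Normality and the Cohen--Macaulay property are the main step. The orbit closure \(\overline{\mathcal O(w)}\) is a locally trivial fibration over \(\mathcal B\) with fiber the Schubert variety \(\overline{BwB}/B\). Schubert varieties are normal and Cohen--Macaulay (Ramanathan, Mehta--Srinivas, Andersen; via Frobenius splitting). We then want to transfer these properties along \(\Gamma_F\), and the plan is to make this transversality argument étale-local. Locally around a point, write \(\overline{\mathcal O(w)}\cong \mathcal B\times S\) with \(S\) a Schubert variety, using an étale/Zariski trivialization of the second projection \(\overline{\mathcal O(w)}\to\mathcal B\) via a local section of \(\mathbf{G}\to\mathcal B\). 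In these coordinates, \(\Gamma_F^{-1}(\overline{\mathcal O(w)})\) becomes the graph of a morphism into \(S\) composed with \(F\). Because \(dF=0\), the resulting map \(\Gamma_F^{-1}(\overline{\mathcal O(w)})\to S\) is étale, or at least smooth. This is the standard argument of Deligne--Lusztig \cite[\S9]{deligneRepresentationsReductiveGroups1976}, also recalled in \cite[\S1]{gortzDeligneLusztigVarietiesWhose2026}. Since normality and Cohen--Macaulayness are local properties inherited along smooth morphisms, the closure acquires them. The obstacle I expect is making this étale-local comparison precise, namely verifying that the map to the Schubert variety is smooth scheme-theoretically and not just on tangent spaces at smooth points. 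I would handle this by the Lang-map trick: pull back along the Lang isogeny \(g\mapsto g^{-1}F(g)\), which is finite étale. This identifies the preimage of \(\overline{\mathbf{X}(w)}\) in \(\mathbf{G}\) with the preimage of \(\overline{B\dot wB}\) under an étale map. Normality and Cohen--Macaulayness then descend from \(\overline{B\dot wB}\), and finally along the smooth quotient \(\mathbf{G}\to\mathcal B\).
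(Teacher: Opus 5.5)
Your final step is the paper's argument. With \(\pi\colon\mathbf{G}\to\mathcal B\) and the Lang map \(L(g)=g^{-1}F(g)\) (a finite \'etale \(\mathbf{G}^F\)-torsor, not an isogeny), one has \(\pi^{-1}(\mathbf{X}(w))=L^{-1}(B\dot wB)\). Normality and the Cohen--Macaulay property then pass from \(\overline{B\dot wB}=\pi^{-1}(\overline{B\dot wB/B})\) to \(L^{-1}(\overline{B\dot wB})\), and descend along the faithfully flat \(\pi\).

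\textbf{Gap in the closure paragraph.} From \(\dim\mathbf{X}(v)=\ell(v)<\ell(w)\) for \(v<w\) it does not follow that \(\mathbf{X}(w)\) is dense in \(\Gamma_F^{-1}(\overline{\mathcal O(w)})\). A lower-dimensional stratum could a priori form its own irreducible component. You need a lower bound on the dimensions of components. The graph of \(F\) is locally cut out by \(\dim\mathcal B\) equations in the smooth \(\mathcal B\times\mathcal B\), and \(\overline{\mathcal O(w)}\) is irreducible of dimension \(\dim\mathcal B+\ell(w)\). By Krull's height theorem, every component of \(\Gamma_F^{-1}(\overline{\mathcal O(w)})\) then has dimension at least \(\ell(w)\), so it meets \(\mathbf{X}(w)\). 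Alternatively, and closer to the paper, use that \(L\) (\'etale) and \(\pi\) (smooth) are open maps, so preimages commute with closures. This gives at once \(\pi^{-1}(\overline{\mathbf{X}(w)})=L^{-1}(\overline{B\dot wB})=L^{-1}\bigl(\bigcup_{v\leq w}B\dot vB\bigr)=\pi^{-1}\bigl(\bigcup_{v\leq w}\mathbf{X}(v)\bigr)\). Since \(L^{-1}(\overline{B\dot wB})\) is reduced, it also identifies \(\pi^{-1}\) of the schematic closure with this scheme.

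\textbf{Smaller slips in the smoothness part.}
\begin{enumerate}
\item \(T\mathcal O(w)\) does not contain \(0\oplus T_{F\mathcal C}\mathcal B\). Its intersection with that summand is the tangent space of the Schubert cell in the fibre of the first projection, which has dimension \(\ell(w)\). For \(dF=0\), transversality follows instead from surjectivity of the second projection's differential: \((a,b)=(a-\alpha,0)+(\alpha,b)\) with \((\alpha,b)\in T\mathcal O(w)\).
\item This argument only covers \(dF=0\), whereas the proposition allows any \(F\) with a Frobenius power. The Lang map handles all cases. After translation, \(dL=-\operatorname{Ad}(h)^{-1}+dF\) with \(h=L(g)\), and this is invertible because \(\operatorname{Ad}(h)\circ dF\) is nilpotent. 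Smoothness and \(\dim\mathbf{X}(w)=\ell(w)\) then follow directly from \(\pi^{-1}(\mathbf{X}(w))=L^{-1}(B\dot wB)\).
\item Your local-trivialization paragraph should be dropped. The map \(\mathcal C\mapsto s(\mathcal C)^{-1}F(\mathcal C)\) need not be \'etale for an arbitrary local section \(s\); it is \'etale for an \(F\)-stable section, where it is just \(L\circ s\). So simply use the Lang argument.
\end{enumerate}
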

    
    \begin{proof}
    This follows by comparing the Lang morphism with the Bruhat cell
    \( B\dot w B/ B\); see
    \cite[Lemma~1.10]{gortzDeligneLusztigVarietiesWhose2026}, whose proof uses
    \cite{langAlgebraicGroupsFinite1956,steinbergEndomorphismsLinearAlgebraic1968}.
    \end{proof}
    
    \subsubsection*{The covering variety}
    
    Let \( U=R_u( B)\), and choose a representative
    \(\dot w\in N_{ \mathbf{G}}( T)\) of \(w\).  The corresponding
    Deligne--Lusztig covering variety is
    \begin{equation}
     \mathbf{Y}(w)
    =
    \{g U\in \mathbf{G}/ U:
    g^{-1}F(g)\in U\dot w U\}.
    \label{eq:prelim-Yw}
    \end{equation}
    Write
    \[
    wF(t)=\dot wF(t)\dot w^{-1}
    \qquad(t\in T),
    \]
    and let \( T^{wF}\) be the finite group of fixed points of this
    twisted Frobenius \(wF\).  Left multiplication by \( \mathbf{G}^F\) preserves
    \( \mathbf{Y}(w)\).  There is also a right action
    \[
    (g U)\cdot t=gt U
    \qquad(t\in T^{wF})
    \]
    because the condition \(wF(t)=t\) gives
    \((gt)^{-1}F(gt)\in U\dot w U\).  The left and right
    actions commute.
    
    The quotient map \( \mathbf{G}/ U\to \mathbf{G}/ B\) restricts
    to
    \begin{equation}
    \pi_w: \mathbf{Y}(w)\longrightarrow \mathbf{X}(w),
    \qquad
     \mathbf{Y}(w)/ T^{wF}\xrightarrow{\sim} \mathbf{X}(w).
    \label{eq:prelim-DL-cover}
    \end{equation}
    Indeed, changing a lift of a point of \( \mathbf{X}(w)\) from
    \( \mathbf{G}/ B\) to \( \mathbf{G}/ U\) amounts to right
    multiplication by an element of \( T= B/ U\), and the
    Frobenius condition restricts the possible changes exactly to
    \( T^{wF}\).  Thus every geometric fiber of \(\pi_w\) is a simply
    transitive \( T^{wF}\)-set.  In particular, \(\pi_w\) is a finite
     \'{e}tale right \( T^{wF}\)-torsor  %This construction, including the commuting
    % \( \mathbf{G}^F\)- and \( T^{wF}\)-actions, is the covering introduced
    % in 
    \cite[\S1]{deligneRepresentationsReductiveGroups1976}.
    % ; the sequence
    % version used for compactifications is recalled in
    % \cite[\S\S1.1--1.3]{bonnafeCompactificationVarietesDeligneLusztig2009}.
    
    % For the smooth compactification associated with a chosen reduced expression
    % \( w\), let \(\overline{ \mathbf{X}}( w)\) be the variety
    % constructed in the next section.  We define
    % \(\overline{ \mathbf{Y}}( w)\) to be the normalization of
    % \(\overline{ \mathbf{X}}( w)\) in
    % \({\mathbf{Y}}(w)\).  The action of \( T^{wF}\) extends uniquely to
    % this normalization, and the finite map
    % \[
    % \overline\pi_w:\overline{ \mathbf{Y}}(w)
    % \longrightarrow\overline{ \mathbf{X}}(w)
    % \]
    % restricts to \(\pi_w\) over the open stratum.  Hence it is \'{e}tale over
    % \( \mathbf{X}(w)\), while any ramification is supported on the boundary.
    % Since \( T^{wF}\) is the group of rational points of a torus, its
    % order is prime to \(p\), and the boundary ramification is therefore tame.  The
    % normalization need not be smooth, and its  singularities are
    % precisely the ones analyzed later in this chapter.
    
    \subsection{The smooth compactification and its boundary}
    \label{sec:smooth-compactification-boundary}
    
    Fix a reduced expression
    \(
    w=s_1\cdots s_r.
    \)
    Let \(\mathcal B=\operatorname{Bor}( \mathbf{G})\).  Following
    Deligne--Lusztig \cite[\S\S9.1--9.2]{deligneRepresentationsReductiveGroups1976},
    consider the Bott--Samelson--Demazure--Hansen variety
    \begin{equation}
    \overline{\mathcal O}(w)
    =
    \left\{
    (\mathcal B_0,\ldots,\mathcal B_r):
    \begin{array}{l}
    \operatorname{inv}(\mathcal B_{i-1},\mathcal B_i)
    \in\{1,s_i\}\quad(1\leq i\leq r)
    \end{array}
    \right\}.
    \label{eq:prelim-BSDH-orbit}
    \end{equation}
    Here each \(\mathcal B_i\) is a Borel subgroup.  The condition in
    \eqref{eq:prelim-BSDH-orbit} means that
    \((\mathcal B_{i-1},\mathcal B_i)\) lies in the closure
    \(\overline{\mathcal O(s_i)}=\mathcal O(1)\cup\mathcal O(s_i)\)
    of the Bruhat orbit corresponding to \(s_i\).  Projection to
    \(\mathcal B_0\) exhibits \(\overline{\mathcal O}(w)\) as an
    iterated \(\mathbb P^1\)-bundle over \(\mathcal B\).  In particular, it
    is smooth and projective.  Its endpoint morphism is
    \[
    \Gamma_w\colon
    \overline{\mathcal O}(w)\longrightarrow
    \mathcal B\times\mathcal B,
    \qquad
    (\mathcal B_0,\ldots,\mathcal B_r)\longmapsto
    (\mathcal B_0,\mathcal B_r).
    \]
    Let \(\Gamma_F\subset\mathcal B\times\mathcal B\) be the graph of
    \(F\).  The compactification used by Deligne and Lusztig is the fiber
    product
    \begin{equation}
    \overline{ \mathbf{X}}(w)
    =
    \overline{\mathcal O}(w)
    \times_{\mathcal B\times\mathcal B}\Gamma_F.
    \label{eq:prelim-BSDH}
    \end{equation}
    Equivalently, it is the closed subvariety of
    \eqref{eq:prelim-BSDH-orbit} defined by
    \(\mathcal B_r=F(\mathcal B_0)\).  
    % This fiber-product description is
    % the point at which the transversality argument of
    % \cite[Lemma~9.11]{deligneRepresentationsReductiveGroups1976} enters.
    
    \medskip
    
    For a subset \(I\subseteq\{1,\ldots,r\}\), put
    \[
    x_{I,i}=
    \begin{cases}
    1,&i\in I,\\
    s_i,&i\notin I,
    \end{cases}
    \]
    and define the locally closed stratum
    \[
     \mathbf{X}_I
    =
    \left\{
    (\mathcal B_0,\ldots,\mathcal B_r)
    \in\overline{ \mathbf{X}}(w):
    \operatorname{inv}(\mathcal B_{i-1},\mathcal B_i)=x_{I,i}
    \text{ for every }i
    \right\}.
    \]
    One then has the disjoint stratification
    \begin{equation}
    \overline{ \mathbf{X}}(w)
    =\bigsqcup_{I\subseteq\{1,\ldots,r\}}
     \mathbf{X}_I,
    \qquad
     \mathbf{X}(w)= \mathbf{X}_{\varnothing}.
    \label{eq:prelim-BSDH-stratification}
    \end{equation}
    For \(1\leq i\leq r\), put
    \begin{equation}
    D_i
    =
    \left\{
    \operatorname{inv}(\mathcal B_{i-1},\mathcal B_i)=1
    \right\}
    =
    \bigsqcup_{\substack{I\subseteq\{1,\ldots,r\}\\
    i\in I}}
     \mathbf{X}_I.
    \label{eq:prelim-boundary-divisor}
    \end{equation}
    The divisor \(D_i\) is the boundary condition obtained by replacing
    the \(i\)-th reflection in the chosen reduced expression for \(w\) by
    the identity.  It need not
    be connected after imposing \(\mathcal B_r=F(\mathcal B_0)\).
    
    \begin{proposition}[Smooth compactification]
    \label{prop:prelim-smooth-compactification}
    The variety \(\overline{ \mathbf{X}}(w)\) is smooth, projective, and
    of pure dimension \(r=\ell(w)\), and \( \mathbf{X}(w)\) is a dense open
    subvariety.  Its boundary
    \[
    \partial\overline{ \mathbf{X}}(w)
    =\overline{ \mathbf{X}}(w)\setminus \mathbf{X}(w)
    =D_1\cup\cdots\cup D_r
    \]
    is a simple normal-crossings divisor.  More precisely, for
    \(I\subseteq\{1,\ldots,r\}\), every nonempty intersection
    \[
    D_I:=\bigcap_{i\in I}D_i
    \]
    is smooth of codimension \(|I|\) in
    \(\overline{ \mathbf{X}}(w)\).
    
    Projection to the first Borel subgroup defines a proper morphism
    \[
    \rho_w\colon
    \overline{ \mathbf{X}}(w)
    \longrightarrow
    \mathcal B,
    \qquad
    (\mathcal B_0,\ldots,\mathcal B_r)\longmapsto\mathcal B_0.
    \]
    It restricts to the usual identification of the open stratum with
    \( \mathbf{X}(w)\), and its image is the closure of \( \mathbf{X}(w)\) in
    \(\mathcal B\).  Hence it gives a proper birational morphism from
    \(\overline{ \mathbf{X}}(w)\) to that naive closure.
    \end{proposition}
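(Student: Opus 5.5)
The plan is to follow the Deligne--Lusztig argument of \cite[\S9]{deligneRepresentationsReductiveGroups1976}: realize \(\overline{\mathbf{X}}(w)\) as a transverse intersection inside the smooth variety \(\overline{\mathcal O}(w)\times\mathcal B\), then read off the normal-crossings structure from the corresponding structure of \(\overline{\mathcal O}(w)\).

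\textbf{Step 1: the ambient variety.} By induction on \(r\) using the iterated \(\mathbb P^1\)-bundle description, \(\overline{\mathcal O}(w)\) is smooth and projective of dimension \(\dim\mathcal B+r\). The loci \(\{\operatorname{inv}(\mathcal B_{i-1},\mathcal B_i)=1\}\) are the sections of the successive \(\mathbb P^1\)-bundles, pulled back, so they form a simple normal-crossings divisor. Their partial intersections are Bott--Samelson varieties for the subwords obtained by deleting the indices in \(I\), and hence are smooth of codimension \(|I|\). The open complement maps isomorphically onto \(\mathcal O(w)\) by the uniqueness of intermediate Borels for reduced products.

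\textbf{Step 2: transversality.} The key step is to show that \(\Gamma_w\colon\overline{\mathcal O}(w)\to\mathcal B\times\mathcal B\) meets \(\Gamma_F\) transversally, and that the same holds for its restriction to every stratum closure \(\overline{\mathcal O}(w)_I\). Since \(F\) has zero differential (after passing to a power, it is a Frobenius), the tangent space of \(\Gamma_F\) at \((\mathcal C,F\mathcal C)\) is \(T_{\mathcal C}\mathcal B\oplus 0\). Transversality therefore reduces to surjectivity of the differential of the composite
\[
\overline{\mathcal O}(w)_I\to\mathcal B\times\mathcal B\to\mathcal B,\qquad (\mathcal B_0,\dots,\mathcal B_r)\mapsto\mathcal B_r .
\]
This composite is a smooth morphism, because it is again an iterated \(\mathbb P^1\)-bundle when read from the last coordinate. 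I expect this to be the main obstacle, specifically making precise the claim about the differential of \(F\) when \(F\) is only an isogeny with \(F^d\) a Frobenius. Following \cite[9.11]{deligneRepresentationsReductiveGroups1976}, the approach is to note that \(dF\) is nilpotent, which suffices for the same Lang-map argument.

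\textbf{Step 3: consequences.} Transversality gives that \(\overline{\mathbf{X}}(w)\) and each \(D_I\) are smooth, of codimension \(\dim\mathcal B\) in the respective ambient varieties. This yields pure dimension \(r\) for \(\overline{\mathbf{X}}(w)\) and codimension \(|I|\) for \(D_I\). Intersecting the simple normal-crossings divisor of Step 1 with a transverse slice keeps it simple normal crossings. The open part \(\mathbf{X}_\varnothing\) is identified via \(\rho_w\) with \(\mathbf{X}(w)\) by Step 1.

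Density of \(\mathbf{X}(w)\) follows from the dimension count: every \(D_I\) with \(I\neq\varnothing\) has dimension \(r-|I|<r\), while \(\overline{\mathbf{X}}(w)\) is pure of dimension \(r\). Projectivity is inherited from \(\overline{\mathcal O}(w)\). Properness of \(\rho_w\) is automatic, since its source is projective. Its image is closed and contains \(\mathbf{X}(w)\) as a dense subset, so it equals the closure described in Proposition~\ref{prop:prelim-basic-X}. Finally, \(\rho_w\) is birational because it is an isomorphism over the dense open \(\mathbf{X}(w)\).
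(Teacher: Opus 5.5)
Your overall route is the paper's. You write \(\overline{\mathbf{X}}(w)=\overline{\mathcal O}(w)\times_{\mathcal B\times\mathcal B}\Gamma_F\), get transversality to \(\Gamma_F\) on every closed stratum \(\overline{\mathcal O}(w)_I\), and read off smoothness, dimensions, the normal-crossings boundary, density, projectivity and birationality. The paper simply cites \cite[Lemma~9.11]{deligneRepresentationsReductiveGroups1976} for the transversality step, and your Steps~1 and~3 are fine.

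\textbf{The gap is the reduction in Step~2.} It is not true that \(F\) has zero differential just because a power of it is a Frobenius. One only gets \(d(F^d)=0\), and \(dF\neq0\) does occur: for \({}^2B_2(2)\), \(F\) is the exceptional isogeny of \(\mathrm{Sp}_4\) in characteristic~\(2\), and only \(F^2\) is a Frobenius. In that case \(T_{(\mathcal C,F\mathcal C)}\Gamma_F\) is the graph of \(dF\), not \(T_{\mathcal C}\mathcal B\oplus 0\). Let \(\mathrm{pr}_0,\mathrm{pr}_r\) send a chain to \(\mathcal B_0,\mathcal B_r\). Then transversality at \(z=(\mathcal B_0,\ldots,\mathcal B_r)\) is equivalent to surjectivity of
\[
T_z\overline{\mathcal O}(w)_I\longrightarrow T_{\mathcal B_r}\mathcal B,
\qquad
z'\longmapsto d\mathrm{pr}_r(z')-dF\bigl(d\mathrm{pr}_0(z')\bigr).
\]
Surjectivity of \(d\mathrm{pr}_r\) does not imply this. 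So smoothness of the last projection plus nilpotency of \(dF\) does not by itself close the argument, and the sentence ``the same Lang-map argument'' does not match what you actually wrote.

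\textbf{The fix is to use the diagonal \(\mathbf G\)-action.} This is what the Lang-map argument really uses. Each \(\overline{\mathcal O}(w)_I\) is \(\mathbf G\)-stable and \(\Gamma_w\) is equivariant. So for \(X\in\operatorname{Lie}\mathbf G\), the tangent vector \(X\cdot z\) is sent to
\[
X\cdot\mathcal B_r-dF(X\cdot\mathcal B_0)=(X-dF_1X)\cdot\mathcal B_r .
\]
This uses \(F(g\mathcal C g^{-1})=F(g)F(\mathcal C)F(g)^{-1}\) and \(F(\mathcal B_0)=\mathcal B_r\). Since \(dF_1\) is nilpotent on \(\operatorname{Lie}\mathbf G\), the map \(1-dF_1\) is invertible. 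Hence these vectors already fill \(\operatorname{Lie}(\mathbf G)\cdot\mathcal B_r=T_{\mathcal B_r}\mathcal B\).

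This gives transversality on every \(\overline{\mathcal O}(w)_I\) uniformly, and it does not even need smoothness of \(\mathrm{pr}_r\). When \(F\) is an honest Frobenius, \(dF=0\) and your original reduction is already correct. That covers both Frobenius structures on \(\mathrm{GL}_3\) used later in the paper.
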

    
    \begin{proof}
    The boundary divisors of
    \(\overline{\mathcal O}(w)\) and all of their intersections are
    smooth.  Deligne and Lusztig prove that \(\Gamma_F\) is transverse to
    \(\Gamma_w\) and to the restriction of
    \(\Gamma_w\) to every such intersection
    \cite[Lemma~9.11]{deligneRepresentationsReductiveGroups1976}.
    Taking the fiber product with \(\Gamma_F\) therefore preserves
    smoothness and the normal-crossings boundary.  The dimension is \(r\), and projectivity follows from the
closed embedding
\(\overline {\mathbf{X}}(w)\hookrightarrow\mathcal B^{r+1}\).  On the locus where every relative position is
    \(s_i\), the chain is uniquely determined by its first Borel subgroup;
    this identifies the open stratum with \( \mathbf{X}(w)\).  The final
    assertion follows by properness and density of this open stratum.  See
    also the formulation in
    \cite[\S1.2]{bonnafeCompactificationVarietesDeligneLusztig2009}.
    \end{proof}
    
    For the length-two reduced expression \(w=s_1s_2\), the deepest stratum is
    \[
     \mathbf{X}_{\{1,2\}}
    =D_1\cap D_2
    =\{(\mathcal B,\mathcal B,\mathcal B):F(\mathcal B)=\mathcal B\}.
    \]
    Thus its points are the \(F\)-fixed Borel subgroups.  
    % At every such point
    % there is an \'{e}tale neighborhood with regular parameters \(u,v\) for
    % which
    % \[
    % D_1=(u=0),
    % \qquad
    % D_2=(v=0).
    % \]
    % There is nevertheless a source-level description of the ramification at
    % the generic point of each \(D_i\).  
    Let \(\alpha_i^\vee\) be the co-root
    corresponding to \(s_i\), and put
    \[
    \beta_i=s_1\cdots s_{i-1}(\alpha_i^\vee)\in Y( T).
    \]
     \cite[Lemma~9.13]{deligneRepresentationsReductiveGroups1976} or equivalently \cite[Corollary~1.4]{bonnafeCompactificationVarietesDeligneLusztig2009} says that the right \( T^{wF}\)-torsor over the open
    stratum has, along \(D_i\), the same ramification as the pullback by
    \(\beta_i\colon\mathbb G_m\to T\) of the Lang covering
    \[
    L_{wF}\colon T\longrightarrow T,
    \qquad
    t\longmapsto t^{-1}wF(t),
    \]
    when that pullback is extended across \(0\). This determines the tame
    local monodromy along each boundary component.  
    %At a boundary crossing the
    % two monodromy directions
    % must be combined and the resulting finite algebra normalized; that is the
    % role of the Bonnaf\'e--Rouquier construction recalled in
    % Section~\ref{sec:br-slice-singularities}.
    
    \subsection{A description in terms of flags}
    
    For \( \mathbf{G}=\mathrm{GL}_3\), recall that the variety \(\mathcal{B}\) of Borel subgroups is the full flag variety, and its rational point is a complete flag
    \((L\subset H)\), where \(L\) is a line and \(H\) is a plane in \(k^3\).
    With the upper-triangular Borel convention and
    \(S=\{s_1,s_2\}\), the Bruhat conditions for two flags
    \((L\subset H)\) and \((L'\subset H')\) are
    \begin{center}
    \small
    \begin{tabular}{c|ccccc}
    \(w\) & \(1\) & \(s_1\) & \(s_2\) & \(s_1s_2\) & \(s_2s_1\) \\
    \hline
    \(\operatorname{inv}((L,H),(L',H'))\leq w\)
    & \(L=L',\ H=H'\)
    & \(H=H'\)
    & \(L=L'\)
    & \(L'\subset H\)
    & \(L\subset H'\)
    \end{tabular}
    \end{center}
    See \cite[Lemma~1.5]{gortzDeligneLusztigVarietiesWhose2026}.  In particular,
    for \(w=s_1s_2\), we compare the two flags \((L,H)\) and \((F(L), F(H)) \) and see that the naive compactification is cut out by the condition
    \(F(L)\subseteq H\).  
    % The two boundary conditions correspond to the two
    % equalities in the chain model \eqref{eq:prelim-BSDH}, and their intersections
    % are the \(F\)-fixed complete flags.
    
    \subsection{Galois modules and Brauer traces}
    \label{sec:galois-modules-brauer}

% \medskip

In our setting, the open Deligne--Lusztig cover
\(
\mathbf{Y}(w)\longrightarrow \mathbf{X}(w)
\)
is a finite \'etale right \( T^{wF}\)-torsor.
The right \( T^{wF}\)-action extends to the normal
compactification \(\overline {\mathbf{Y}}(w)\), and the resulting
finite morphism
\(
\overline {\mathbf{Y}}(w)\longrightarrow\overline {\mathbf{X}}(w)
\)
is the quotient by \( T^{wF}\) is ramified along the
boundary. 
Set
\[
\Gamma= T^{wF},
\qquad
Z=\overline {\mathbf{Y}}(w).
\]
The order of \(\Gamma\), and hence that of every component
stabilizer, is prime to \(p\). Maschke's theorem therefore
implies that \(\bar{k}[\Gamma]\) is semisimple and identifies
\(G_0(\bar{k}[\Gamma])\) with \(K_0(\bar{k}[\Gamma])\).
For a \(\Gamma\)-equivariant coherent sheaf \(\mathcal E\)
on \(Z\), we may thus write
\[
\chi_\Gamma(Z,\mathcal E)
=
\sum_i(-1)^i[H^i(Z,\mathcal E)]
\in K_0(\bar{k}[\Gamma])
\]
This is the equivariant Euler characteristic introduced
in \eqref{eq:equivariant-euler-characteristic}, and it
agrees with the projective Euler characteristic of
\(R\Gamma(Z,\mathcal E)\). The class is
\emph{virtually free} if
\[
\chi_\Gamma(Z,\mathcal E)
\in\mathbb Z\cdot[\bar{k}[\Gamma]]
\]
% Our main results concerns \(\mathcal E=\mathcal O_Z\) and the terminology ``Galois module'' refers to the action
% of the covering group \(\Gamma\) on the coherent cohomology
% of the structure sheaf.
% % 
% % \medskip
% 
Since \(Z\) is normal, its irreducible components are
disjoint. If \(\Gamma\) acts transitively on these components,
\(Z_0\) is one component, and \(\Gamma_0\) is its stabilizer,
then
\[
H^i(Z,\mathcal O_Z)
\simeq
\operatorname{Ind}_{\Gamma_0}^{\Gamma}
H^i(Z_0,\mathcal O_{Z_0})
\]
for every \(i\). Consequently,
\begin{equation}
\chi_\Gamma(Z,\mathcal O_Z)
=
\operatorname{Ind}_{\Gamma_0}^{\Gamma}
\chi_{\Gamma_0}(Z_0,\mathcal O_{Z_0})
\label{eq:component-induction-euler-class}
\end{equation}

% \medskip

Note that as \(\Gamma\) is given by the subgroup of rational points of a torus, every element of \(\Gamma\) is \(p\)-regular, and Brauer
characters determine the classes in \(K_0(\bar{k}[\Gamma])\).
Let
\(
[\,\cdot\,]\colon
\bar{k}^{\times}\longrightarrow W(\bar{k})^{\times}
\)
denote the Teichm\"uller lift, where \(W(\bar{k})\) is
the ring of \(p\)-typical Witt vectors.
If \(\tau\in\Gamma\) acts on a finite-dimensional
\(\bar{k}\)-representation \(V\) with eigenvalues
\(\lambda_1,\ldots,\lambda_m\), its Brauer trace is
\[
\operatorname{BrTr}(\tau\mid V)
=
\sum_{j=1}^{m}[\lambda_j]
\]
The eigenvalues are roots of unity of order prime to \(p\),
and their Teichm\"uller lifts are the corresponding roots
of unity in \(W(\bar{k})\).

\medskip

Recall that the Brauer--Lefschetz number is defined by
\begin{equation}
\begin{aligned}
L_{\mathrm{Br}}(\tau,Z,\mathcal E)
&:=
\sum_i(-1)^i
\operatorname{BrTr}\bigl(\tau\mid H^i(Z,\mathcal E)\bigr),\\
L_{\mathrm{Br}}(\tau,Z)
&:=
L_{\mathrm{Br}}(\tau,Z,\mathcal O_Z).
\end{aligned}
\label{eq:def-brauer-lefschetz}
\end{equation}
Thus the function
\(\tau\mapsto L_{\mathrm{Br}}(\tau,Z,\mathcal E)\)
is precisely the Brauer character of
\(\chi_\Gamma(Z,\mathcal E)\), and its values determine
this virtual module.

\medskip

If
\(\chi_\Gamma(Z,\mathcal E)=m[\bar{k}[\Gamma]]\)
for some \(m\in\mathbb Z\), its value at every
nonidentity element is zero, because the regular
representation has character zero away from the identity.
Consequently,
\begin{equation}
\tau\neq1
\quad\text{and}\quad
L_{\mathrm{Br}}(\tau,Z,\mathcal E)\neq0
\quad\Longrightarrow\quad
\chi_\Gamma(Z,\mathcal E)
\notin\mathbb Z\cdot[\bar{k}[\Gamma]].
\label{eq:nonzero-lefschetz-free-obstruction}
\end{equation}
% In this case, \(R\Gamma(Z,\mathcal E)\) cannot be
% represented by a bounded complex of finitely generated
% free \(\bar{k}[\Gamma]\)-modules.

% For \(a\neq1\), the fixed locus \(Z^a\) is contained
% in \(Z\setminus \mathbf{Y}(w)\), since the action on \(\mathbf{Y}(w)\)
% is free. The fixed-point calculations below therefore
% concern the boundary of the compactification.
% Determining the full virtual module requires enough
% character values to recover all irreducible multiplicities.
% The calculations below use only the criterion above:
% one nonzero value at a nonidentity element suffices
% to disprove virtual freeness.

% \medskip

\begin{remark}\label{lem:cyclotomic-comparison}
In our main results in Theorems \ref{thm:intro-split-brauer} and \ref{thm:intro-unitary-brauer}, the Brauer--Lefschetz expression
has the form \(R(\widehat\omega)\), where
\(R(T)\in\mathbb Q(T)\), \(M\) is prime to \(p\), and
\(\widehat\omega\) is the Teichm\"uller lift of a primitive
\(M\)-th root of unity in \(\bar{k}\).
Write \(R(T)=P(T)/Q(T)\), with
\(P(T),Q(T)\in\mathbb Q[T]\), and assume that
\(Q(\widehat\omega)\neq0\).
Set \(\zeta_M=e^{2\pi i/M}\). The assignment
\(
\zeta_M\longmapsto\widehat\omega
\)
defines a field embedding
\(
\mathbb Q(\zeta_M)
\hookrightarrow
\operatorname{Frac}W(\bar{k}).
\)
It follows that \(Q(\zeta_M)\neq0\) and
\(
R(\widehat\omega)\neq0
\Longleftrightarrow
R(\zeta_M)\neq0.
\)
It therefore suffices to show $R(\zeta_M)\neq0$ in $\mathbb C$.
\end{remark}

    \subsection{The Lefschetz--Riemann--Roch Theorem}\label{sec:lefschetz-rr}
    
    Lefschetz--Riemann--Roch expresses an alternating trace on coherent
    cohomology as a sum of contributions supported on the fixed locus.  
    % It
    % is the algebraic geometric analogues of the analytic fixed-point
    % formulas of Atiyah--Bott and Atiyah--Segal
    % \cite{atiyahLefschetzFixedPoint1968,atiyahIndexEllipticOperators1968}.  
    Donovan
    established the algebraic formula for periodic automorphisms of smooth
    projective varieties over an algebraically closed field
    \cite[Theorem~5.4 and Corollary~5.5]{donovanLefschetzRiemannRochFormula1969}.
    Baum, Fulton, and Quart subsequently developed a
    Lefschetz--Riemann--Roch transformation for singular varieties
    \cite{baumLefschetzriemannrochSingularVarieties1979}.  Thomason later
    formulated the theorem and its coherent trace formula in equivariant
    algebraic \(K\)-theory
    %, in which localization concentrates the relevant
    %equivariant class on the fixed subscheme
    \cite{thomasonLefschetzRiemannRochTheoremCoherent1986}. We now state the theorem for smooth projective varieties.
    
    \medskip
    
  Let \(Z\) be a smooth projective variety over \(\bar{k}\), let \(G\)
be a finite group acting on \(Z\), and let \(\mathcal E\) be a
\(G\)-equivariant vector bundle. Fix an element \(h\in G\) whose
order is prime to \(p\). The fixed subscheme \(Z^h\) is then smooth.
% Indeed, let \(z\in Z^h\) be a closed point, and denote by
% \(\mathfrak m_z\) the maximal ideal of the local ring
% \(\mathcal O_{Z,z}\). Averaging over \(\langle h\rangle\) gives an
% \(h\)-equivariant \(\bar{k}\)-linear section of
% \[
% \mathfrak m_z\longrightarrow\mathfrak m_z/\mathfrak m_z^2.
% \]
% An eigenbasis of the cotangent space
% \(\mathfrak m_z/\mathfrak m_z^2\) therefore lifts to a regular
% system of parameters \(t_1,\ldots,t_n\) satisfying
% \(h^*t_i=\lambda_i t_i\). After reordering, suppose that
% \(\lambda_i=1\) precisely for \(1\leq i\leq r\).
% In the completed local ring, the ideal of \(Z^h\) is generated
% by \(t_{r+1},\ldots,t_n\), so
% \[
% \widehat{\mathcal O}_{Z^h,z}
% \simeq \bar{k}[[t_1,\ldots,t_r]].
% \]
% Thus \(Z^h\) is smooth. 
Write its decomposition into connected
components as \(Z^h=\coprod_F F\).
    
    \begin{theorem}[Lefschetz--Riemann--Roch]
    \label{thm:lefschetz-rr}
    With the preceding notation,
    \begin{equation}\label{eq:lrr-general-fixed-component}
    L_{\mathrm{Br}}(h,Z,\mathcal E)
    =
    \sum_{F\subset Z^h}
    \int_F
    \frac{
    \operatorname{ch}_h(\mathcal E|_F)\operatorname{Td}(T_F)
    }{
    \operatorname{ch}_h\!\left(\lambda_{-1}(N_{F/Z}^{\vee})\right)
    },
    \end{equation}
    \end{theorem}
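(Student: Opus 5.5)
This is the classical Lefschetz--Riemann--Roch formula for smooth projective varieties, so the plan is to reduce it to the form proved by Donovan \cite[Theorem~5.4 and Corollary~5.5]{donovanLefschetzRiemannRochFormula1969}, or equivalently to Thomason's localization form \cite{thomasonLefschetzRiemannRochTheoremCoherent1986}, rather than reprove it from scratch. There are three steps: (i) set up the $\langle h\rangle$-equivariant $K$-theory and its localization; (ii) apply the concentration theorem to express $[\mathcal E]$ in terms of classes supported on $Z^h$; (iii) pass to cohomology with the twisted Chern character to get the integrals over the fixed components $F$.

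\emph{Step (i).} Since $|h|$ is prime to $p$, the group $C=\langle h\rangle$ is linearly reductive. The representation ring $R(C)\otimes W(\bar k)$ is identified, via Brauer characters and Teichm\"uller lifts, with functions on $C$. Fixed points of a linearly reductive group on a smooth variety are smooth, so $Z^h=\coprod F$ is a disjoint union of smooth closed subvarieties. Each normal bundle $N_{F/Z}$ splits into $h$-eigenbundles, and no eigenvalue equals $1$.

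\emph{Step (ii).} Thomason's concentration theorem says that, after localizing $K_0^C(-)$ at the prime ideal of $R(C)$ corresponding to $h$ (evaluation of characters at $h$), the pushforward $i_*\colon K^C_0(Z^h)\to K^C_0(Z)$ is an isomorphism. The self-intersection formula gives $i^*i_*(x)=x\cdot\lambda_{-1}(N^\vee)$, so
\[
[\mathcal E]=i_*\bigl(i^*[\mathcal E]\cdot\lambda_{-1}(N^\vee)^{-1}\bigr).
\]
The key input is that $\lambda_{-1}(N^\vee)$ is invertible after localization. This holds because, over each $F$, this class is $\prod_j(1-\nu_j^{-1}L_j)$ with $\nu_j\neq1$; it is a unit modulo nilpotents (the augmentation ideal of $K^0(F)$ is nilpotent), and its value at $h$ is nonzero. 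Applying the proper pushforward to a point is compatible with $i_*$, and taking the character at $h$ gives
\[
L_{\mathrm{Br}}(h,Z,\mathcal E)=\sum_F \operatorname{tr}_h\,\chi\bigl(F,\ \mathcal E|_F\otimes\lambda_{-1}(N_{F/Z}^\vee)^{-1}\bigr).
\]

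\emph{Step (iii).} Here $h$ acts trivially on $F$, so $C$-equivariant bundles on $F$ decompose into eigenbundles, $K_0^C(F)=K_0(F)\otimes R(C)$. The trace of $\chi$ at $h$ is then computed by the nonequivariant Hirzebruch--Riemann--Roch theorem on the smooth projective $F$, after applying $\mathrm{ch}_h$, which sends an eigenbundle $E_\lambda$ to $[\lambda]\,\mathrm{ch}(E_\lambda)$. Since $\mathrm{ch}_h$ is a ring homomorphism, this yields exactly \eqref{eq:lrr-general-fixed-component}. Riemann--Roch holds in characteristic $p$, with Chern characters valued in rational Chow groups tensored with $\operatorname{Frac}W(\bar k)$; this is where the Teichm\"uller lift is needed to land in characteristic zero coefficients.

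The main obstacle is Step (ii), specifically the invertibility of $\lambda_{-1}(N^\vee)$ in the localized ring and the compatibility of localization with Brauer characters in positive characteristic. I would handle this by citing Thomason's theorem directly, which is valid over any base when the group is diagonalizable of order prime to $p$. As a consistency check, I would specialize to surfaces and recover the displayed formula \eqref{eq:intro-lefschetz-rr}. For an isolated point $P$ the integrand is $1/\prod(1-[\lambda_{P,i}])$. For a fixed curve $D$, expanding $(1-[\nu_D]^{-1}e^{-c_1(N)})^{-1}$ to first order and using $\int_D\mathrm{Td}=1-g_D$ and $\deg N=D^2$ gives the stated curve contribution.
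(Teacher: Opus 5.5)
Your proposal is correct and matches how the paper treats this result. The paper gives no independent proof; it states the theorem and relies on Donovan \cite[Theorem~5.4 and Corollary~5.5]{donovanLefschetzRiemannRochFormula1969} and Thomason's equivariant $K$-theoretic formulation \cite{thomasonLefschetzRiemannRochTheoremCoherent1986}, which are exactly what you reduce to. Your outline of the standard argument is sound: concentration in $K_0^{\langle h\rangle}(-)_{\mathfrak p}$, then the self-intersection formula with $\lambda_{-1}(N^\vee)$ invertible as a unit plus a nilpotent, then Hirzebruch--Riemann--Roch on each fixed component via $\operatorname{ch}_h$. Your surface specialization also correctly recovers the point and curve contributions the paper uses later.
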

    
   % We now explain the terms in
 %   \eqref{eq:lrr-general-fixed-component}.  
 where for a connected component
    \(F\subset Z^h\), its normal bundle is
    \[
    N_{F/Z}=T_Z|_F/T_F,
    \]
    and its dual \(N_{F/Z}^{\vee}\) is the conormal bundle.  The class in
    the denominator is
    \begin{equation}\label{eq:lambda-minus-one-normal-bundle}
    \lambda_{-1}(N_{F/Z}^{\vee})
    :=
    \sum_{j=0}^{\operatorname{codim}(F,Z)}
    (-1)^j[\Lambda^jN_{F/Z}^{\vee}].
    \end{equation}
    % Here \(\Lambda^jN_{F/Z}^{\vee}\) is the \(j\)-th exterior power with the induced \(h\)-action in
    % equivariant \(K\)-theory.  Thus \(\lambda_{-1}\) is the alternating
    % total exterior-power class of the conormal bundle.  
    Put \(c=\operatorname{codim}(F,Z)\).  For \(x\in F\), let
    \(\rho_1,\ldots,\rho_c\) be the eigenvalues of \(h\) on the normal
    space
    \[
    N_{F/Z,x}=T_xZ/T_xF.
    \]
    The eigenvalues on the conormal space \(N_{F/Z,x}^{\vee}\) are then
    \(\rho_1^{-1},\ldots,\rho_c^{-1}\), and
    \[
    \operatorname{BrTr}\!\left(
    h\mid\lambda_{-1}(N_{F/Z,x}^{\vee})
    \right)
    =
    \prod_{i=1}^c(1-[\rho_i]^{-1}).
    \]
    % Since \(F\) is connected, this multiset of normal eigenvalues is
    % independent of \(x\).  
    % None of the \(\rho_i\) is one, otherwise the
    % normal space would contain an additional tangent direction to the fixed
    % locus.  
    % The displayed product is the fibrewise character, equivalently
    % the degree-zero term of
    % \(\operatorname{ch}_h(\lambda_{-1}(N_{F/Z}^{\vee}))\). The full class
    % also contains the Chern-class data of the normal eigenbundles.  Its
    % nonzero degree-zero term makes the denominator in
    % \eqref{eq:lrr-general-fixed-component} invertible in the localized
    % coefficient ring.  The bundle \(\mathcal E|_F\) is the restriction of
    % \(\mathcal E\) to \(F\).  
%   On the fixed locus \(Z^h\) the automorphism \(h\) acts fibrewise on
% \(E|_{X^h}\) with order prime to \(p\), so \(E|_{X^h}\) splits as a
% direct sum \(\bigoplus_{\zeta}E_\zeta\) of \(h\)-eigenbundles. The
% equivariant Chern character is the eigenvalue-weighted sum
% \[
% \operatorname{ch}_h(E)
% =\sum_{\zeta}\widehat\zeta\,\operatorname{ch}(E_\zeta)
% \in CH^*(X^h)\otimes\operatorname{Frac}W(\bar k).\]
 % \(\operatorname{Td}(T_F)\) is the Todd class of the
 %    tangent bundle of \(F\), and \(\int_F\) means taking the degree of the
 %    component of codimension \(\dim F\) in the resulting Chow class.  For
 %    the structure sheaf used in this paper,
 %    \(\operatorname{ch}_h(\mathcal O_Z|_F)=1\).
    
 %    \medskip
    
    We use the specialization of the theorem to \(\mathcal E=\mathcal O_Z\)
    when \(Z\) is a surface in which case its fixed locus is a disjoint union of
    isolated points \(P\) and smooth curves \(D\).  If
    \(\lambda_1,\lambda_2\) are the eigenvalues of \(h\) on the conormal
    space \(T_P^\vee Z=\mathfrak m_P/\mathfrak m_P^2\), then the contribution
    of \(P\) is
    \begin{equation}\label{eq:brauer-point}
    a_{\mathrm{Br}}(P)
    =
    \frac{1}{(1-[\lambda_1])(1-[\lambda_2])}.
    \end{equation}
    If \(D\) has genus \(g_D\), self-intersection \(D^2\), and normal
    eigenvalue \(\nu\), then its contribution is
    \begin{equation}\label{eq:brauer-curve}
    b_{\mathrm{Br}}(D)
    =
    \frac{1-g_D}{1-[\nu]^{-1}}
    -
    \frac{[\nu]^{-1}D^2}{(1-[\nu]^{-1})^2}
    =
    \frac{[\nu](1-g_D)}{[\nu]-1}
    -
    \frac{[\nu]D^2}{([\nu]-1)^2}.
    \end{equation}
    Consequently,
    \[
    L_{\mathrm{Br}}(h,Z)
    =
    \sum_P a_{\mathrm{Br}}(P)+\sum_D b_{\mathrm{Br}}(D).
    \]
    Here \(1-g_D=\chi(D,\mathcal O_D)\), while \(D^2\) is the degree of the
    normal line bundle \(N_{D/Z}\).  Thus these two terms are precisely the
    degree-one parts of the Todd class and the inverse conormal factor in
    \eqref{eq:lrr-general-fixed-component}.  All denominators are units
    because the displayed eigenvalues are nontrivial roots of unity of order
    prime to \(p\).
    
    \medskip
    
    In this paper, the above theorem is applied to the equivariant minimal
    resolution \(\widetilde {\mathbf{Y}}'\) of a component of the normal
    compactification.  The required input is therefore concrete: the
    isolated fixed points and their local eigenvalues, the fixed curves and
    their normal eigenvalues, and the genus and self-intersections of those curves.  
    %The toric calculations determine the resolution and the local
    % eigenvalues, while the global boundary geometry determines the incidence
    % numbers, genera, and intersection numbers.  The equivariant Leray
    % argument of Section~\ref{sec:equivariant-resolution-comparison}
    % identifies the coherent cohomology of \(Z\) with that of
    % \(\widetilde Z\), so the value computed on the smooth surface is the
    % Brauer--Lefschetz number of the original normal surface.
    
    \subsection{Toric geometry preliminaries}
    \label{sec:toric-surface-preliminaries}
    
    We recall the results from toric geometry used to normalize and resolve the local surface
    models in both the split and the non-split cases.  Our conventions follow
    \cite[Chapters~1--4 and~\S10.2]{coxToricVarieties2011a}.
    
    Let \(N\simeq\mathbb Z^2\) be a lattice, let
    \(M=\operatorname{Hom}(N,\mathbb Z)\) be its dual, and put
    \(N_{\mathbb R}=N\otimes_{\mathbb Z}\mathbb R\).  A strongly convex
    rational polyhedral cone \(\sigma\subset N_{\mathbb R}\) determines the
    normal affine toric surface
    \begin{equation}\label{eq:toric-affine-cone}
    U_\sigma
    =
    \operatorname{Spec}k[S_\sigma],
    \qquad
    S_\sigma:=\sigma^\vee\cap M.
    \end{equation}
    Conversely, suppose that we have a semigroup
    algebra \(k[S]\), where \(S\subset M\) is finitely generated and spans
    \(M\) as a group.  If \(C=\operatorname{Cone}(S)\subset M_{\mathbb R}\),
    then its normalization is
    \begin{equation}\label{eq:toric-semigroup-normalization}
    \operatorname{Spec}k[C\cap M].
    \end{equation}
    Normalization can also be described in terms of semigroups, see~\cite[Theorem~1.3.5]{coxToricVarieties2011a}.
Let \(S\) be a finitely generated submonoid of a lattice, put
\(M_S=\mathbb ZS\), and consider
\(V=\operatorname{Spec}\bar{k}[S]\).
The saturation of \(S\) in \(M_S\) is
\[
    S^{\mathrm{sat}}
    :=
    \{m\in M_S : nm\in S
      \text{ for some }n\in\mathbb Z_{>0}\}
    =
    \operatorname{Cone}(S)\cap M_S.
\]
The integral closure of \(\bar{k}[S]\) in its field of fractions
is \(\bar{k}[S^{\mathrm{sat}}]\). Hence the inclusion
\(S\subseteq S^{\mathrm{sat}}\) induces the normalization map
\[
    \operatorname{Spec}\bar{k}[S^{\mathrm{sat}}]
    \longrightarrow
    \operatorname{Spec}\bar{k}[S],
\]
which is finite, birational, and equivariant for the common
dense torus \(\operatorname{Spec}\bar{k}[M_S]\).
In particular, \(V\) is normal if and only if
\(S=S^{\mathrm{sat}}\); see
\cite[Theorem~1.3.5 and Proposition~1.3.8]{coxToricVarieties2011a}.
Thus, to normalize an affine toric model, one replaces its
defining semigroup by all lattice points of its cone in
the lattice generated by that semigroup.
    % Equivalently, normalization replaces \(S\) by its saturation.  This is
    % the passage from each binomial surface equation below to its normal
    % affine toric model; see
    
   The orbit--cone correspondence identifies the torus orbits of
\(U_\sigma\) with the faces of \(\sigma\): a face \(\tau\)
corresponds to an orbit of dimension \(2-\dim\tau\).
In particular, the zero face corresponds to the dense torus,
the rays correspond to one-dimensional orbits, and \(\sigma\)
itself corresponds to the unique closed orbit, a point denoted
by \(o_\sigma\).

Let \(u_0,u_{r+1}\) be the primitive generators of the two rays
of \(\sigma\). The surface \(U_\sigma\) is smooth if and only if
these generators form a basis of \(N\), or equivalently,
\begin{equation}\label{eq:toric-smoothness-determinant}
    |\det(u_0,u_{r+1})|=1,
\end{equation}
where the determinant is computed with respect to a lattice
basis of \(N\). More generally, choose primitive lattice vectors
\(u_1,\ldots,u_r\) in the interior of \(\sigma\), ordered from
\(u_0\) to \(u_{r+1}\), and let \(\Sigma\) be the fan whose maximal
cones are
\[
    \sigma_i=\operatorname{Cone}(u_{i-1},u_i),
    \qquad 1\leq i\leq r+1.
\]
The subdivision induces a proper birational toric morphism
\[
    \pi_\Sigma\colon X_\Sigma\longrightarrow U_\sigma,
\]
which is an isomorphism away from \(o_\sigma\).
If every consecutive pair satisfies
\[
    |\det(u_{i-1},u_i)|=1,
\]
then \(X_\Sigma\) is smooth, and \(\pi_\Sigma\) is a resolution
of singularities. Each maximal cone \(\sigma_i\) gives an
affine chart \(U_{\sigma_i}\simeq\mathbb A^2\).

For the singularities occurring in this paper, choose a basis
\(e_1,e_2\) of \(N\) in which
\begin{equation}\label{eq:toric-cox-normal-form}
    \sigma=\operatorname{Cone}(e_2,de_1-ae_2),
    \qquad
    0<a<d,
    \qquad
    \gcd(a,d)=1.
\end{equation}
The negative Hirzebruch--Jung continued fraction
\begin{equation}\label{eq:toric-hj-general}
    \frac da
    =[[b_1,\ldots,b_r]]
    :=
    b_1-\cfrac{1}{b_2-\cfrac{1}{\ddots-\cfrac{1}{b_r}}},
    \qquad b_i\geq2,
\end{equation}
determines a regular subdivision as follows. Starting with
\(u_0=e_2\) and \(u_1=e_1\), define the remaining vectors by
\begin{equation}\label{eq:toric-wall-relations}
    u_{i-1}+u_{i+1}=b_i u_i
    \qquad(1\leq i\leq r).
\end{equation}
Then \(u_{r+1}=de_1-ae_2\), the vectors
\(u_1,\ldots,u_r\) are primitive and lie in the interior of
\(\sigma\), and every consecutive pair forms a basis of \(N\).
Thus they give a smooth toric resolution as above; see
\cite[Theorems~10.2.3 and~10.2.5]{coxToricVarieties2011a}.

Write \(\rho_i=\mathbb R_{\geq0}u_i\). The exceptional divisors
\[
    E_i=V(\rho_i)\subset X_\Sigma,
    \qquad 1\leq i\leq r,
\]
are isomorphic to \(\mathbb P^1\) and form a chain.
Consecutive curves meet transversely at one point, and their
intersection numbers are
\begin{equation}\label{eq:toric-exceptional-intersections}
    E_i^2=-b_i,
    \qquad
    E_i\cdot E_{i+1}=1,
    \qquad
    E_i\cdot E_j=0\quad(|i-j|>1);
\end{equation}
see \cite[Theorem~10.4.4]{coxToricVarieties2011a}.
Since \(b_i\geq2\), no exceptional curve has self-intersection
\(-1\), so this is the minimal resolution.

The same fan describes the torus action on the resolution.
On the chart associated with
\(\sigma_i=\operatorname{Cone}(u_{i-1},u_i)\), let
\(m_i,n_i\in M\) be the basis dual to \(u_{i-1},u_i\).
The corresponding characters
\[
    x_i=\chi^{m_i},
    \qquad
    y_i=\chi^{n_i}
\]
are coordinates on \(U_{\sigma_i}\simeq\mathbb A^2\).
For a torus element \(t\), denote translation by \(t\) by
\(r_t\). Its pullback on character functions is
\[
    r_t^*(\chi^m)=\chi^m(t)\chi^m.
\]
Thus, on this chart,
\[
    r_t^*x_i=\chi^{m_i}(t)x_i,
    \qquad
    r_t^*y_i=\chi^{n_i}(t)y_i.
\]
In particular, the fixed subscheme is defined by
\[
    \bigl(\chi^{m_i}(t)-1\bigr)x_i=0,
    \qquad
    \bigl(\chi^{n_i}(t)-1\bigr)y_i=0.
\]
These equations determine the fixed points and fixed curves
directly from the character values.

The fan also determines the orders of characters along the
torus-invariant divisors. For \(m\in M\), the character
\(\chi^m\), regarded as a rational function on \(X_\Sigma\),
has divisor
\begin{equation}\label{eq:toric-character-divisor}
    \operatorname{div}_{X_\Sigma}(\chi^m)
    =
    \sum_{\rho\in\Sigma(1)}
    \langle m,u_\rho\rangle D_\rho,
\end{equation}
where \(\Sigma(1)\) is the set of rays of \(\Sigma\),
\(u_\rho\) is the primitive generator of \(\rho\), and
\(D_\rho=V(\rho)\) is the corresponding torus-invariant prime
divisor; see
\cite[Propositions~4.1.1--4.1.2]{coxToricVarieties2011a}.
The coefficient \(\langle m,u_\rho\rangle\) is the order of
vanishing, or the negative of the order of pole, of
\(\chi^m\) along \(D_\rho\). We use these orders in the
pullback and ramification calculations below.

\medskip

The exceptional curves \(E_i\) are complete, and their
self-intersections are determined by the local fan.
By contrast, the divisors associated with the two extremal
rays are noncomplete in \(X_\Sigma\). They are the strict
transforms of the two torus-invariant boundary divisors
of \(U_\sigma\), and describe only local portions of the
corresponding curves on the global surface. Their global
self-intersections therefore cannot be recovered from this
local fan alone. We compute those numbers using the global
boundary divisors and the projection formula in
Sections~\ref{sec:split-intersections}
and~\ref{sec:unitary-intersections}.
    % In the split case, the cone has parameters
    % \((d,a)=(p^2+p+1,p^2)\).  In the non-split case, it has parameters
    % \((d,a)=(p^2-1,p)\).  The case chapters compute the corresponding
    % continued fractions, rays, affine charts, and stabilizer characters.
    
    \subsection{The Bonnaf\'e--Rouquier normal compactification}
    \label{sec:br-slice-singularities}
    
    We first recall the main theorems and the precise
    ingredients from \cite[\S2.4]{bonnafeCompactificationVarietesDeligneLusztig2009}
    that enter the local calculations.
    
%     \medskip

%     We first recall the compactification theorem and the ingredients from
% \cite[\S2.4]{bonnafeCompactificationVarietesDeligneLusztig2009}
% that enter the local calculation. We begin by describing the boundary
% strata and fixing the notation.

\medskip

\noindent\textbf{Notation and boundary strata.}
% Bonnaf\'e--Rouquier distinguish the word
% \( w=(s_1,\ldots,s_r)\) from its product
% \(w=s_1\cdots s_r\) in the Weyl group. We fix a reduced expression
% \(
% w=s_1\cdots s_r
% \)
% and write \(\overline{ \mathbf{X}}(w)\) and
% \(\overline{ \mathbf{Y}}(w)\) for the associated compactifications.

% \medskip

Let \(\mathcal B\) be the variety of Borel subgroups of \( \mathbf{G}\),
and write \(\operatorname{pos}(B,B')\in W\) for the relative position
of two Borel subgroups. The compactification \(\overline{\mathbf{X}}(w)\) of \( \mathbf{X}(w)\)
has the following description:
\[
\overline{ \mathbf{X}}(w)
=
\left\{
(B_0,\ldots,B_r)\in\mathcal B^{r+1}
\;\middle|\;
\begin{array}{l}
B_r=F(B_0),\\
\operatorname{pos}(B_{i-1},B_i)\in\{1,s_i\}
\quad(1\leq i\leq r)
\end{array}
\right\}.
\]
The open locus on which all relative positions are \(s_i\)
is identified with \( \mathbf{X}(w)\) by projection to \(B_0\).
Allowing relative position \(1\), equivalently
\(B_{i-1}=B_i\), adds the boundary.

\medskip
 For words \( x=(x_1,\ldots,x_r)\) and \( w =(s_1,\ldots,s_r) \), we say \( x \preccurlyeq w\)
if \( x_i \preccurlyeq s_i \) in the Bruhat order. For \( x\preccurlyeq w \), we define
    \(I_{{x}}=\{i:x_i=1\}\subseteq\{1,\ldots,r\}\). 
For \(I\subseteq\{1,\ldots,r\}\), put
\[
x_i=
\begin{cases}
1,&i\in I,\\
s_i,&i\notin I,
\end{cases}
\qquad
x_I=x_1\cdots x_r,
\]
and define the locally closed stratum
\[
 \mathbf{X}_I
=
\left\{
(B_0,\ldots,B_r)\in\overline{ \mathbf{X}}(w)
\;\middle|\;
\operatorname{pos}(B_{i-1},B_i)=x_i
\quad(1\leq i\leq r)
\right\}.
\]
Thus
\[
\overline{ \mathbf{X}}(w)
=
\coprod_{I\subseteq\{1,\ldots,r\}} \mathbf{X}_I,
\qquad
 \mathbf{X}_{\varnothing}\simeq \mathbf{X}(w),
\]
and the nonempty subsets \(I\) index the boundary strata.

\medskip

For type \(A_{n-1}\), the simple reflection \(s_i\) is the
transposition \((i\ i+1)\). In the rank-two case considered in this paper,
\[
w=s_1s_2=(12)(23)=(123)
\]
Here every subexpression
is reduced after its identity terms are omitted, and
\[
 \mathbf{X}_{\varnothing}\simeq \mathbf{X}(s_1s_2),
\qquad
 \mathbf{X}_{\{1\}}\simeq \mathbf{X}(s_2),
\qquad
 \mathbf{X}_{\{2\}}\simeq \mathbf{X}(s_1),
\qquad
 \mathbf{X}_{\{1,2\}}\simeq \mathbf{X}(1).
\]

% Using fixed representatives of the simple reflections, we denote
% by \( \mathbf{Y}_I\to \mathbf{X}_I\) the Deligne--Lusztig covering
% associated with the sequence \((x_1,\ldots,x_r)\). It is a finite
% \'etale torsor under \( T^{x_IF}\).
% Let
% \[
% \overline\pi_w:
% \overline{ \mathbf{Y}}(w)\longrightarrow
% \overline{ \mathbf{X}}(w)
% \]
% be the finite morphism obtained by normalization. The strata
% upstairs are
% \[
% \bigl(\overline\pi_w^{-1}( \mathbf{X}_I)\bigr)_{\mathrm{red}}.
% \]
% These are generally quotients of the covering varieties
% \( \mathbf{Y}_I\), rather than the covering varieties themselves;
% the precise quotient is given in the theorem below.
    
    % \noindent\textbf{Translation of notation.}
    % Bonnaf\'e--Rouquier write
    % \( w=(s_1,\ldots,s_r)\) for a word in simple reflections and
    % write \(w=s_1\cdots s_r\) for its product in the Weyl group.  In this
    % paper we retain only the latter notation. That is, after choosing a reduced
    % expression, we write
    % \[
    % w=s_1\cdots s_r
    % \]
    % and denote the associated compactifications by boldfont symbols
    % \(\overline{ \mathbf{X}}(w)\) and \(\overline{ \mathbf{Y}}(w)\).  For type \(A_{n-1}\), the simple
    % reflection \(s_i\) is the transposition \((i\ i+1)\).  In the
    % rank-two case used below, with our multiplication convention,
    % \[
    % w=s_1s_2=(12)(23)=(123).
    % \]
   % We write \( \mathbf{X}_I\) and
   %  \( \mathbf{Y}_I\) for the corresponding stratum and covering variety, and put
   %  \(x_I=x_1\cdots x_r\) when the Weyl-group product is required.
    
    \subsubsection{The compactification \(\overline{\mathbf{Y}}(w)\) of Bonnaf\'e--Rouquier}
    
  Let \( \mathbf{G}\) be a connected reductive group over \(\bar{k}\), and
    let \(F: \mathbf{G}\to \mathbf{G}\) be an isogeny some positive power of which
    is a Frobenius endomorphism.  Fix an \(F\)-stable Borel subgroup
    \( B\), an \(F\)-stable maximal torus
    \( T\subseteq B\), and write \( U=R_u( B)\)
    for the unipotent radical.  Let
    \[
    W=N_{ \mathbf{G}}( T)/ T
    \]
    be the Weyl group.  The choice of \( B\) determines a set
    \(\Delta\) of simple roots and the corresponding set
    \(S=\{s_\alpha:\alpha\in\Delta\}\) of simple reflections.  For
    \(\alpha\in\Delta\), write \(\alpha^\vee\in Y( T)\) for the
    associated coroot, where
    \[
    Y( T)=\operatorname{Hom}(\mathbb G_m, T)
    \]
    is the cocharacter lattice, and fix a representative
    \(\dot s_\alpha\in N_{ \mathbf{G}}( T)\) of \(s_\alpha\).  The
    endomorphism induced by \(F\) on \(W\), \( T\), and
    \(Y( T)\) is again denoted by \(F\).  Thus, for \(v\in W\), the
    notation \(vF\) denotes the corresponding twisted endomorphism; on
    \( T\), it is
    \[
    t\longmapsto \dot vF(t)\dot v^{-1},
    \]
    where \(\dot v\in N_{ \mathbf{G}}( T)\) represents \(v\).  In
    particular, \( T^{vF}\) denotes its finite fixed-point group.
    
    Fix the reduced expression \(w=s_1\cdots s_r\).  For
    \(I\subseteq\{1,\ldots,r\}\), let \(x_i=1\) for \(i\in I\) and
    \(x_i=s_i\) for \(i\notin I\), and put
    \[
    Y_{w,I}
    =
    \sum_{i\in I}
    \mathbb Zs_1\cdots s_{i-1}(\alpha_i^\vee)
    \subseteq Y( T).
    \]
    Choose \(d\geq1\) and a power \(q\) of \(p\) such that
    \((vF)^d(t)=t^q\) for every \(v\in W\) and \(t\in T\), and fix a
    primitive \((q-1)\)-st root \(\epsilon\).  The norm map used below is
    \[
    N_w:Y( T)\longrightarrow T^{wF},
    \qquad
    N_w(\lambda)
    =
    \prod_{j=0}^{d-1}(wF)^j\bigl(\lambda(\epsilon)\bigr).
    \]
    It is surjective and induces
    \[
    Y( T)/(wF-1)Y( T)
    \xrightarrow{\sim} T^{wF}.
    \]
    For \(x_I=x_1\cdots x_r\), the notation \(N_{x_I}\) has the analogous
    meaning.
    
    % The symbol \(q\) is retained only in this general construction: it is an
    % auxiliary power of \(p\) determined by the common iterate \(d\), and need
    % not equal \(p\).  All explicit split and non-split calculations below are
    % written directly in terms of \(p\).
    
    \begin{theorem}[Bonnaf\'e--Rouquier, Theorem~1.2]
    \label{thm:bonnafe-rouquier}
    Let \(\overline{ \mathbf{Y}}(w)\) be the normalization of the compactification
    \(\overline{ \mathbf{X}}(w)\) in \( \mathbf{Y}(w)\), and let
    \(\overline\pi_w\) be the resulting finite morphism.
    Then the following hold.
    \begin{enumerate}
    \item[\textup{(a)}] The variety \(\overline{ \mathbf{Y}}(w)\) is projective,
    normal, and rationally smooth, and
    \[
    \operatorname{Sing}\overline{ \mathbf{Y}}(w)
    \subseteq
    \overline\pi_w^{-1}\!\left(
    \bigcup_{\substack{I\subseteq\{1,\ldots,r\}\\
    |I|\geq2}}
     \mathbf{X}_I
    \right).
    \]
    
    \item[\textup{(b)}] The action of \( T^{wF}\) on \( \mathbf{Y}(w)\)
    extends to \(\overline{ \mathbf{Y}}(w)\), and
    \[
    \overline{ \mathbf{Y}}(w)/ T^{wF}
    \xrightarrow{\sim}
    \overline{ \mathbf{X}}(w).
    \]
    
    \item[\textup{(c)}] For \(I\subseteq\{1,\ldots,r\}\), the stabilizer in
    \( T^{wF}\) of every point of
    \(\overline\pi_w^{-1}( \mathbf{X}_I)\) is
    \(N_w(Y_{w,I})\).
    
    \item[\textup{(d)}] For \(I\subseteq\{1,\ldots,r\}\), there is a canonical
    morphism
    \[
    i_I: \mathbf{Y}_I
    \longrightarrow
    \overline\pi_w^{-1}( \mathbf{X}_I)
    \]
    over \( \mathbf{X}_I\), and it induces an isomorphism
    \[
     \mathbf{Y}_I/N_{x_I}(Y_{w,I})
    \xrightarrow{\sim}
    \overline\pi_w^{-1}( \mathbf{X}_I).
    \]
    \end{enumerate}
    \end{theorem}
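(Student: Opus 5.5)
The theorem is quoted from \cite[Theorem~1.2]{bonnafeCompactificationVarietesDeligneLusztig2009}, so in the paper it suffices to cite it. Here is how I would reconstruct a proof. The plan is to reduce everything to an explicit local model of the ramification along the boundary, and then read off (a)--(d) from toric geometry.

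First I would record the global formal properties. Since \(\overline{\mathbf{X}}(w)\) is smooth and projective (Proposition~\ref{prop:prelim-smooth-compactification}) and \(\mathbf{Y}(w)\to\mathbf{X}(w)\) is finite, the normalization \(\overline{\mathbf{Y}}(w)\) of \(\overline{\mathbf{X}}(w)\) in \(\mathbf{Y}(w)\) is finite over \(\overline{\mathbf{X}}(w)\). It is therefore projective and normal. The right \(T^{wF}\)-action extends by functoriality of normalization, since each \(t\) acts by automorphisms of \(\mathbf{Y}(w)\) over \(\mathbf{X}(w)\). For (b), the quotient \(\overline{\mathbf{Y}}(w)/T^{wF}\) is normal and finite and birational over the normal variety \(\overline{\mathbf{X}}(w)\), because the open part is \(\mathbf{Y}(w)/T^{wF}=\mathbf{X}(w)\). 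Zariski's main theorem then gives the isomorphism.

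The heart of the argument is local. Near a point \(x\in\mathbf{X}_I\), choose étale coordinates \(u_i\) \((i\in I)\) with \(D_i=\{u_i=0\}\), together with coordinates along the stratum. By \cite[Lemma~9.13]{deligneRepresentationsReductiveGroups1976}, the torsor \(\mathbf{Y}(w)\) along each \(D_i\) is the pullback by \(\beta_i\colon\mathbb G_m\to T\) of the Lang torsor \(L_{wF}\). I would upgrade this to the statement that, near \(x\), the torsor is the pushout of a torsor over \(\mathbf{X}_I\) with the "Kummer" torsor obtained by pulling back \(L_{wF}\) along
\[
(u_i)_{i\in I}\longmapsto\prod_{i\in I}\beta_i(u_i).
\]
This is the slice construction of \cite[\S2.4]{bonnafeCompactificationVarietesDeligneLusztig2009}. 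Granting it, the normalization is étale-locally the product of a smooth germ with \(\operatorname{Spec}\) of a saturated affine semigroup algebra. That algebra is the quotient of affine space by the finite abelian group \(N_w(Y_{w,I})\), the image under the norm map of the lattice spanned by the \(\beta_i\).

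From this model the remaining claims follow. The inertia at every point over \(\mathbf{X}_I\) is the subgroup generated by the local monodromies \(N_w(\beta_i)\), which is (c). The singularities are abelian quotient singularities of prime-to-\(p\) order, hence rationally smooth. When \(|I|\le 1\) the group is cyclic and acts as a pseudo-reflection group in one variable, so the quotient is smooth, which is (a). For (d), the reduced preimage of \(\mathbf{X}_I\) is the quotient of the torsor over \(\mathbf{X}_I\) by inertia. Comparing with the torsor \(\mathbf{Y}_I\) attached to the word \(x\) identifies it with \(\mathbf{Y}_I/N_{x_I}(Y_{w,I})\). This requires a compatibility between the norm maps \(N_w\) and \(N_{x_I}\) on \(Y(T)\) modulo \(Y_{w,I}\).

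I expect the main obstacle to be this local product decomposition. One must show that the monodromies along the different \(D_i\) really combine multiplicatively near a crossing point, i.e. that the torsor near \(x\) is a twist of the pullback of \(L_{wF}\) along \(\prod_i\beta_i\). Codimension-one data alone do not determine the torsor at a crossing. I would first try to realize \(\overline{\mathbf{Y}}(w)\) as a quotient of a smooth variety \(\widetilde{\mathbf{Y}}\) by a torus \(\mathbb G_m^{|I|}\), where the torsor is visibly trivialized, and then descend.
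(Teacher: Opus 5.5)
\paragraph{Parts (a)--(c).} Your treatment of projectivity, normality, the extension of the action, and (b) is fine. For (a)--(c) you take a local route, which differs from Bonnaf\'e--Rouquier's. The paper gives no proof of its own and only cites them; in Propositions~\ref{prop:br-auxiliary-quotient} and~\ref{prop:br-boundary-stabilizers} it recalls the smooth variety $\widetilde{\mathbf{Y}}_{\mathrm{BR}}(w)$ with its $(\mathbb G_m)^r$-action and finite stabilizers $H_I$, from which (a) follows by Luna slices.

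Your route works, and the step you single out as the main obstacle is standard. Since $p\nmid|T^{wF}|$, the torsor is tame along the simple normal crossings boundary. By Abhyankar's lemma (SGA~1, Exp.~XIII, \S5), the tame fundamental group of $\Spec A\setminus D$, with $A$ the strict henselization of $\overline{\mathbf{X}}(w)$ at $x\in\mathbf{X}_I$, is $\prod_{i\in I}\widehat{\mathbb Z}'(1)$. This group is abelian and generated by the inertia groups of the branches $D_i$. So, \'etale-locally, codimension-one data (Deligne--Lusztig's Lemma~9.13) do determine the torsor at a crossing. The monodromies combine multiplicatively automatically, and your Kummer model holds with no torus quotient needed.

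One correction. The normalization germ is $\Spec\bigl(A[u_i^{1/n}]_{i\in I}\bigr)^{K}$, where $K$ is the kernel of the monodromy map $\phi\colon\prod_{i\in I}\mu_n\to T^{wF}$; this is the analogue of $H_I$, the paper's $K_{12}$. The group $N_w(Y_{w,I})=\operatorname{im}\phi$ is the Galois group of this germ over $A$, which is what gives (c). It is not the group by which you quotient affine space.

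\paragraph{The gap in (d).} Local analysis only shows that $\overline\pi_w^{-1}(\mathbf{X}_I)_{\mathrm{red}}\to\mathbf{X}_I$ is an \'etale torsor under $T^{wF}/N_w(Y_{w,I})$. Its class in $H^1(\mathbf{X}_I,T^{wF}/N_w(Y_{w,I}))$ is not a local invariant: it is the restriction to $\mathbf{X}_I$ of the extension, by purity of the branch locus, of $\mathbf{Y}(w)\times^{T^{wF}}\bigl(T^{wF}/N_w(Y_{w,I})\bigr)$ to a neighbourhood of $\mathbf{X}_I$. So your step ``comparing with the torsor $\mathbf{Y}_I$'' is exactly the statement to be proved.

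Two things are needed. The group comparison is easy. Telescoping $w-x_I$ through the products $s_1\cdots s_kx_{k+1}\cdots x_r$ shows $(w-x_I)Y(T)\subseteq Y_{w,I}$. Hence $(wF-1)Y(T)+Y_{w,I}=(x_IF-1)Y(T)+Y_{w,I}$, and $N_w$, $N_{x_I}$ identify both $T^{wF}/N_w(Y_{w,I})$ and $T^{x_IF}/N_{x_I}(Y_{w,I})$ with $Y(T)$ modulo this lattice.

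The essential missing ingredient is an actual morphism $i_I\colon\mathbf{Y}_I\to\overline\pi_w^{-1}(\mathbf{X}_I)$ over $\mathbf{X}_I$, because two torsors under the same group need not be isomorphic. Producing it requires a global construction linking $\mathbf{Y}_I$ to the boundary of $\overline{\mathbf{Y}}(w)$. Bonnaf\'e--Rouquier obtain $i_I$ from the $(\mathbb G_m)^r$-equivariant isomorphism $\mathbf{Y}_I\times\bigl((\mathbb G_m)^r/H_I\bigr)\simeq\widetilde{\mathbf{Y}}_I(w)=\widetilde\pi_w^{-1}(\mathbf{X}_I)$, followed by the quotient by $(\mathbb G_m)^r$. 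So the torus-quotient idea in your last paragraph is the right tool, but you need it for (d), not for the local structure at crossings.
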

    
    Part~\textup{(c)} identifies the inertia group along each boundary
    stratum. The finite stabilizer arising from the auxiliary torus quotient
    is described in Proposition~\ref{prop:br-boundary-stabilizers}; it is this
    group that acts on the normal slice.
    
    \subsubsection{The auxiliary construction and its stabilizers}
    
    For every \(i\), Bonnaf\'e--Rouquier choose the unique primitive
    cocharacter \(\lambda_i\in Y( T)\) and unique positive integer
    \(m_i\) satisfying
    \begin{equation}
    \lambda_i-wF(\lambda_i)
    =m_i s_1\cdots s_{i-1}(\alpha_i^\vee),
    \qquad
    Y( T)/\mathbb Z\lambda_i\text{ is torsion-free}.
    \label{eq:br-cocharacter-equation}
    \end{equation}
    The defining equality implies \(m_i\mid(q-1)\); in particular,
    \(p\nmid m_i\) \cite[Remark~2.4]{bonnafeCompactificationVarietesDeligneLusztig2009}.
    
    Let \(\varphi_{\alpha_i}: G_{\alpha_i} U\to\mathbb A^1\)
    be the regular function used in the construction; its zero locus is
    \( B\cap G_{\alpha_i} U\), and it takes the value
    one on \( U\dot s_i U\).  The auxiliary variety consists
    of the tuples
    \[
    (g_1 U,\ldots,g_{r+1} U;\xi_1,\ldots,\xi_r)
    \]
    such that, for every \(i\),
    \begin{equation}
    g_i^{-1}g_{i+1}\in G_{\alpha_i} U,
    \qquad
    \varphi_{\alpha_i}(g_i^{-1}g_{i+1})=\xi_i^{m_i},
    \qquad
    g_{r+1} U=F(g_1) U.
    \label{eq:br-auxiliary-variety}
    \end{equation}
    We denote it by \(\widetilde {\mathbf{Y}}_{\mathrm{BR}}(w)\).  This
    Bonnaf\'e--Rouquier auxiliary variety is distinct from both the toric surfaces used below and the minimal resolutions of the
    normal components \(\overline {\mathbf{Y}}_\ell(w)\).
    
    \begin{proposition}[Bonnaf\'e--Rouquier, (2.5), Proposition~2.6 and \S2.5]
    \label{prop:br-auxiliary-quotient}
    The variety \(\widetilde {\mathbf{Y}}_{\mathrm{BR}}(w)\) is smooth and pure
    of dimension \(2r\).  It carries a right action of
    \( T^{wF}\times(\mathbb G_m)^r\), and the natural morphism
    \(\widetilde\pi_w:\widetilde {\mathbf{Y}}_{\mathrm{BR}}(w)
    \to\overline{ \mathbf{X}}(w)\) induces
    \[
    \widetilde {\mathbf{Y}}_{\mathrm{BR}}(w)
    /\bigl( T^{wF}\times(\mathbb G_m)^r\bigr)
    \xrightarrow{\sim}
    \overline{ \mathbf{X}}(w).
    \]
    Moreover,
    \[
    \overline{ \mathbf{Y}}(w)
    =\widetilde {\mathbf{Y}}_{\mathrm{BR}}(w)/(\mathbb G_m)^r,
    \]
    and the locus on which every \(\xi_i\) is nonzero is the open part
    corresponding to \( \mathbf{Y}(w)\).
    \end{proposition}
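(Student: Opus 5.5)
This is a citation of Bonnafé--Rouquier, so the plan is to recall how their argument runs rather than build a new one. Everything is local on \(\overline{\mathbf X}(w)\) in the étale topology. First I would check that the equations \eqref{eq:br-auxiliary-variety} define a smooth variety of dimension \(2r\). The map \((g_1U,\dots,g_{r+1}U)\mapsto(g_1B,\dots,g_{r+1}B)\) presents the space of \(U\)-chains as a \(T^{r+1}\)-torsor over the Bott--Samelson variety \(\overline{\mathcal O}(w)\); the condition \(g_{r+1}U=F(g_1)U\) is then the pullback of the graph of \(F\).

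Adjoining \(\xi_i\) with \(\xi_i^{m_i}=\varphi_{\alpha_i}(g_i^{-1}g_{i+1})\) is a Kummer-type equation. Since \(p\nmid m_i\) and \(\varphi_{\alpha_i}\) is a local coordinate cutting out the divisor \(D_i\) transversally, the Jacobian criterion gives smoothness. Transversality of the Frobenius graph is supplied, as in Proposition~\ref{prop:prelim-smooth-compactification}, by \cite[Lemma~9.11]{deligneRepresentationsReductiveGroups1976}. This proves the first assertion.

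Next I would write down the action of \(T^{wF}\times\mathbb G_m^r\). Here \(t\in T^{wF}\) acts by right multiplication on all \(g_i\) simultaneously. A factor \(z_i\in\mathbb G_m\) acts by twisting \(g_{i+1},\dots,g_{r+1}\) by the cocharacter \(\lambda_i(z_i)\) and rescaling \(\xi_i\) by \(z_i\). The defining relation \eqref{eq:br-cocharacter-equation}, \(\lambda_i-wF(\lambda_i)=m_i s_1\cdots s_{i-1}(\alpha_i^\vee)\), is exactly what makes the Frobenius closing condition and the identity \(\varphi_{\alpha_i}=\xi_i^{m_i}\) invariant under this action. That \(\varphi_{\alpha_i}\) transforms by \(\alpha_i\) is standard.

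To identify the quotient by the full group with \(\overline{\mathbf X}(w)\), I would show that the fibres of \(\widetilde\pi_w\) are single orbits and that the stabilizers are finite of order prime to \(p\). Then \(\widetilde\pi_w\) is a geometric quotient, by reduction to the torsor structure over \(\overline{\mathcal O}(w)\).

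For the identity \(\overline{\mathbf Y}(w)=\widetilde{\mathbf Y}_{\mathrm{BR}}(w)/\mathbb G_m^r\), I would first treat the open locus \(\xi_i\neq0\) for all \(i\). There the \(\mathbb G_m^r\)-action lets one normalize every \(\xi_i=1\). The relations then force \(g_i^{-1}g_{i+1}\in U\dot s_iU\), which recovers \(\mathbf Y(w)\) as in \eqref{eq:prelim-Yw}. Next, \(\widetilde{\mathbf Y}_{\mathrm{BR}}(w)\) is smooth and the stabilizers are finite, so the quotient is normal; being a good quotient of a normal variety, it stays normal. The induced map to \(\overline{\mathbf X}(w)\) is finite, because it is the quotient by the finite group \(T^{wF}\). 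A finite normal cover that agrees with \(\mathbf Y(w)\) over the open stratum is, by the universal property, the normalization of \(\overline{\mathbf X}(w)\) in \(\mathbf Y(w)\).

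The main obstacle is proving that \(\mathbb G_m^r\) acts with finite stabilizers and that the quotient is good, in particular separated and finite over the base. I would handle this with the étale-local normal form near a point of \(\mathbf X_I\). There, in coordinates, the \(\xi_i\) with \(i\in I\) vanish and \(\mathbb G_m^{|I|}\) acts on them through the \(\lambda_i\). Explicitly computing the stabilizer gives the finite groups of Proposition~\ref{prop:br-boundary-stabilizers}, which settles this step.
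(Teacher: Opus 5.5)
The paper gives no argument of its own here (it defers to Bonnaf\'e--Rouquier), so your outline is measured against their construction. The step that fails is your description of the group action. Everything after it depends on that step: normalizing \(\xi_i=1\), the stabilizers, and the identification of the quotients.

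Multiplying every \(g_j\) on the right by the same \(t\in T^{wF}\) does not preserve the closing condition. Since \(g_{r+1}U=F(g_1)U\), one gets \(F(g_1t)U=g_{r+1}F(t)U\), so invariance would need \(F(t)=t\). It also rescales \(\varphi_{\alpha_j}(g_j^{-1}g_{j+1})\) by \(\alpha_j(t)\), breaking \(\varphi_{\alpha_j}=\xi_j^{m_j}\).

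Your \((\mathbb G_m)^r\)-action fails similarly. If \(z_i\) only moves \(g_{i+1},\dots,g_{r+1}\) by \(\lambda_i(z_i)\), then \(g_i^{-1}g_{i+1}\lambda_i(z_i)\) leaves \(G_{\alpha_i}U\) unless \(\lambda_i(z_i)\in\alpha_i^\vee(\mathbb G_m)\). This is false in general, e.g.\ for \(\lambda_1=(p+1,-1,-p)\) in the split case. Moreover, with \(g_1\) fixed, the closing condition would force \(\lambda_i(z_i)\in T\cap U=1\).

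The missing idea is that torus elements must be transported along the chain by the simple reflections:
\begin{itemize}
\item \(t\in T^{wF}\) acts by \(g_j\mapsto g_jt_j\) with \(t_1=t\) and \(t_{j+1}=s_j(t_j)\);
\item \(z\in(\mathbb G_m)^r\) acts by \(g_j\mapsto g_j\Lambda_j(z)\) with \(\Lambda_1(z)=\lambda_1(z_1)\cdots\lambda_r(z_r)\) and \(\Lambda_{j+1}=s_j(\Lambda_j)\,\alpha_j^\vee(z_j^{m_j})\), as displayed after the proposition.
\end{itemize}
The \(s_j\)-twist keeps \(g_j^{-1}g_{j+1}\) in \(G_{\alpha_j}U\) with \(\varphi_{\alpha_j}\) unchanged. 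The coroot factor multiplies \(\varphi_{\alpha_j}\) by \(z_j^{m_j}\), matching \(\xi_j\mapsto z_j\xi_j\). Going once around the chain produces \(w^{-1}(\cdot)\), which the closing condition compares with \(F(\cdot)\). For \(T^{wF}\) this gives \(t=wF(t)\). For \(z_i\) it reads \(w^{-1}\lambda_i+m_i\,s_r\cdots s_{i+1}(\alpha_i^\vee)=F(\lambda_i)\). Applying \(w\) and using \(s_i(\alpha_i^\vee)=-\alpha_i^\vee\), this is exactly \(\lambda_i-wF(\lambda_i)=m_i\,s_1\cdots s_{i-1}(\alpha_i^\vee)\). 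That is the actual role of the cocharacter equation.

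There are also two smaller errors.

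First, the torsor group is wrong. With the defining condition \(g_i^{-1}g_{i+1}\in G_{\alpha_i}U\) (needed for \(\varphi_{\alpha_i}\) to make sense), the space of \(U\)-chains is a torsor over \(\overline{\mathcal O}(w)\) under the torus \(\{(t_j)\in T^{r+1}:t_j^{-1}t_{j+1}\in\alpha_j^\vee(\mathbb G_m)\}\), of dimension \(\dim T+r\), not under \(T^{r+1}\). Only then does the Frobenius condition leave dimension \((\dim\mathcal B+r)+(\dim T+r)-\dim(\mathbf G/U)=2r\); a \(T^{r+1}\)-torsor would give \(r(1+\dim T)\).

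Second, finite stabilizers alone do not produce a good quotient. What does is that \(\widetilde\pi_w\) is affine: it is a torus torsor, cut by a closed Frobenius condition, followed by a finite Kummer cover. Hence \(\operatorname{Spec}\) of the \((\mathbb G_m)^r\)-invariants of \(\widetilde\pi_{w*}\mathcal O\) over \(\overline{\mathbf X}(w)\) is a good quotient. It is geometric because all orbits have dimension \(r\).

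With these repairs your outline is a fair reconstruction. Note that Bonnaf\'e--Rouquier adjoin the \(\xi_i\) before imposing the Frobenius condition, with fibres \(\mathbb A^2\setminus\{0\}\) in coordinates \((x,\xi)\), and then use their transversality Lemma~2.5. This spares you checking that the \(\varphi_{\alpha_i}\) remain independent local coordinates after the Frobenius cut.
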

    
    For \(z=(z_1,\ldots,z_r)\in(\mathbb G_m)^r\), set
    \[
    \Lambda_1(z)=\lambda_1(z_1)\cdots\lambda_r(z_r),
    \qquad
    \Lambda_{i+1}(z)
    =s_i\bigl(\Lambda_i(z)\bigr)\alpha_i^\vee(z_i^{m_i}).
    \]
    The auxiliary torus acts by
    \[
    (g_1 U,\ldots,g_{r+1} U;\xi_1,\ldots,\xi_r)*z
    =
    (g_1\Lambda_1(z) U,\ldots,
    g_{r+1}\Lambda_{r+1}(z) U;
    z_1\xi_1,\ldots,z_r\xi_r).
    \]
    For \(I\subseteq\{1,\ldots,r\}\), define
    \[
    \widetilde {\mathbf{Y}}_I(w)
    =
    \{(g;\xi)\in\widetilde {\mathbf{Y}}_{\mathrm{BR}}(w):
    \xi_i=0\Longleftrightarrow i\in I\}
    \]
    and
    \[
    H_I
    =
    \{z\in(\mathbb G_m)^r:
    \Lambda_1(z)=\cdots=\Lambda_{r+1}(z)=1,
    \ z_i=1\text{ for }i\notin I\}.
    \]
    
    \begin{proposition}[Bonnaf\'e--Rouquier, Proposition~2.7]
    \label{prop:br-boundary-stabilizers}
    For every \(I\subseteq\{1,\ldots,r\}\), the stabilizer in
    \((\mathbb G_m)^r\) of any point of \(\widetilde {\mathbf{Y}}_I(w)\) is
    exactly \(H_I\).  The group \(H_I\) is finite and is contained in
    \(H_{\{1,\ldots,r\}}\).  If \(|I|\leq1\), then \(H_I=1\).  Furthermore,
    for every \(I\subseteq\{1,\ldots,r\}\),
    \[
    \widetilde\pi_w^{-1}( \mathbf{X}_I)
    =\widetilde {\mathbf{Y}}_I(w).
    \]
    \end{proposition}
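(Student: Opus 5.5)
The plan is to read off all four assertions directly from the explicit formula for the $(\mathbb G_m)^r$-action on $\widetilde{\mathbf{Y}}_{\mathrm{BR}}(w)$ and from the defining equations \eqref{eq:br-auxiliary-variety}. No global geometry is needed; the only inputs are $T\cap U=1$, the structure of $G_{\alpha_i}U$, and the primitivity of $\lambda_i$ in \eqref{eq:br-cocharacter-equation}.

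\emph{Stabilizer equals $H_I$.} Let $(g;\xi)\in\widetilde{\mathbf{Y}}_I(w)$ and $z\in(\mathbb G_m)^r$ with $(g;\xi)*z=(g;\xi)$. Comparing the $\xi$-coordinates gives $z_i\xi_i=\xi_i$. Since $\xi_i\neq0$ for $i\notin I$, this forces $z_i=1$ for $i\notin I$, and it imposes nothing for $i\in I$.

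Comparing the flag coordinates gives $g_j\Lambda_j(z)U=g_jU$, that is, $\Lambda_j(z)\in T\cap U=\{1\}$ for every $j$. So the stabilizer is contained in $H_I$. Conversely, every $z\in H_I$ visibly fixes every point of $\widetilde{\mathbf{Y}}_I(w)$, so the stabilizer is exactly $H_I$, independently of the point. The inclusion $H_I\subseteq H_{\{1,\dots,r\}}$ is immediate, since the defining conditions of $H_I$ contain those of $H_{\{1,\dots,r\}}$.

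\emph{Finiteness.} Let $z\in H_{\{1,\dots,r\}}$. Then $\Lambda_i(z)=\Lambda_{i+1}(z)=1$, and the recursion $\Lambda_{i+1}(z)=s_i(\Lambda_i(z))\,\alpha_i^\vee(z_i^{m_i})$ gives $\alpha_i^\vee(z_i^{m_i})=1$. The kernel of the nonzero cocharacter $\alpha_i^\vee$ is finite (it is contained in $\mu_2$). Hence each $z_i$ lies in a finite group of roots of unity, so $H_{\{1,\dots,r\}}$, and therefore every $H_I$, is finite.

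\emph{Small $I$.} If $I=\varnothing$, all $z_i=1$ and $H_I=1$. If $I=\{i\}$, then $\Lambda_1(z)=\lambda_i(z_i)=1$. Since $\lambda_i$ is primitive, with $Y(T)/\mathbb Z\lambda_i$ torsion-free, it extends to a basis of $Y(T)\simeq\mathbb Z^n$. Hence $\lambda_i\colon\mathbb G_m\to T$ is a closed embedding, in particular injective, so $z_i=1$.

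\emph{Preimage of $\mathbf X_I$.} The map $\widetilde\pi_w$ sends a tuple to the chain of Borel subgroups $g_jBg_j^{-1}$; the index shift relative to $(\mathcal B_0,\dots,\mathcal B_r)$ is harmless. By \eqref{eq:br-auxiliary-variety}, $\xi_i=0$ if and only if $\varphi_{\alpha_i}(g_i^{-1}g_{i+1})=0$. This holds if and only if $g_i^{-1}g_{i+1}\in B\cap G_{\alpha_i}U$, that is, the $i$-th relative position is $1$. Otherwise $g_i^{-1}g_{i+1}\in G_{\alpha_i}U\setminus B\subseteq B\dot s_iB$, so the relative position is $s_i$. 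Thus the vanishing pattern of $\xi$ is exactly the index set $I$ of the stratum, which gives $\widetilde\pi_w^{-1}(\mathbf X_I)=\widetilde{\mathbf{Y}}_I(w)$.

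The step I expect to need the most care is this last one. It requires the precise properties of $\varphi_{\alpha_i}$ and the identity $G_{\alpha_i}U=(B\cap G_{\alpha_i}U)\sqcup(G_{\alpha_i}U\cap B\dot s_iB)$ from the rank-one Bruhat decomposition, which I would take from Bonnaf\'e--Rouquier's construction.
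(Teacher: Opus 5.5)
Your proposal is correct. Every assertion follows directly from the explicit $(\mathbb G_m)^r$-action together with four facts:
\begin{enumerate}
\item $T\cap U=1$;
\item $\ker\alpha_i^\vee\subseteq\mu_2$;
\item each $\lambda_i$ is primitive;
\item $\varphi_{\alpha_i}$ vanishes exactly on $B\cap G_{\alpha_i}U$, and $G_{\alpha_i}U\subseteq B\sqcup B\dot s_iB$. Both parts of this are already recorded in the paper's summary of the construction.
\end{enumerate}
The paper gives no proof of its own for this statement and only cites Bonnaf\'e--Rouquier. Your verification from the definitions of the action, of $H_I$ and of $\widetilde{\mathbf{Y}}_I(w)$ is therefore exactly the argument needed.
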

    
    For each \(I\subseteq\{1,\ldots,r\}\), the construction in the proof
    further gives a
    \((\mathbb G_m)^r\)-equivariant isomorphism
    \[
     \mathbf{Y}_I\times
    \bigl((\mathbb G_m)^r/H_I\bigr)
    \xrightarrow{\sim}
    \widetilde {\mathbf{Y}}_I(w).
    \]
    This identifies the tangent directions along a boundary stratum with the
    Deligne--Lusztig stratum directions together with the auxiliary-torus orbit
    directions.
    
    Thus only strata with at least two vanishing boundary coordinates can
    produce singularities in the quotient, exactly as asserted in
    Theorem~\ref{thm:bonnafe-rouquier}\textup{(a)}.
    
    \subsubsection{The \'{e}tale-slice in rank two}
    
    We first recall the form of Luna's \'{e}tale slice theorem that will be
    used.  If a smooth group \(G_0\) acts on a smooth variety \(V\), the
    tangent space to the orbit \(G_0\cdot v\) accounts for the directions
    along which the quotient removes the \(G_0\)-action.  The remaining
    first-order directions form the \emph{normal representation}
    \[
    N_v:=T_vV/T_v(G_0\cdot v)
    \]
    of the stabilizer \((G_0)_v\).  Luna's theorem asserts that, when this
    stabilizer is linearly reductive, the quotient is \'{e}tale-locally
    modeled on the quotient of \(N_v\) by \((G_0)_v\).
    
    \begin{theorem}[Luna's \'{e}tale slice theorem, quotient form]
    \label{thm:luna-etale-slice-quotient}
    Let \(V\) be a quasi-separated algebraic space locally of finite type over
    the algebraically closed field \(k\), and let a smooth affine group scheme
    \(G_0\) act on \(V\).  Fix \(v\in V(k)\), put \(H=(G_0)_v\), and suppose
    that \(H\) is linearly reductive.  Then there are an affine \(H\)-scheme
    \(S\), an \(H\)-fixed point \(s\in S(k)\), and an \(H\)-equivariant
    unramified morphism
    \[
    (S,s)\longrightarrow(V,v)
    \]
    such that the induced morphism from the contracted product
    \[
    G_0\times^H S\longrightarrow V
    \]
    is \'{e}tale.
    
  If, in addition, \(V\) admits a good quotient \(\pi\colon V\to V/\!/G_0\)
and the orbit \(G_0\cdot v\) is closed in \(V\), the slice can be chosen so
that \(S/\!/H\to V/\!/G_0\) is \'{e}tale and
\[
  G_0\times^H S\simeq V\times_{V/\!/G_0}(S/\!/H).
\]
  Suppose now that \(V\) admits a good quotient, that \(G_0\cdot v\) is
closed, and that \(V\) is smooth at \(v\). The \'{e}tale morphisms
\(V/\!/G_0\leftarrow S/\!/H\rightarrow N_v/\!/H\) identify the completed
local ring of \(V/\!/G_0\) at \(\bar v=\pi(v)\) with that of \(N_v/\!/H\)
at~\(0\). If moreover \(H\) is finite of order prime to
\(p=\operatorname{char}k\), then \(H\) is linearly reductive, and
completing \(k[N_v]^H\) at the origin gives \(k[[N_v^\vee]]^H\). Hence there
is a non-canonical isomorphism
\begin{equation}
  \widehat{\mathcal O}_{V/\!/G_0,\bar v}\simeq k[[N_v^\vee]]^H .
  \label{eq:luna-completed-quotient}
\end{equation}
In particular, the \'{e}tale-local singularity type of \(V/\!/G_0\) at
\(\bar v\) is determined by the representation of \(H\) on \(N_v\).
    \end{theorem}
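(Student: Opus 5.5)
The statement immediately above is Luna's étale slice theorem in quotient form, so the plan is to assemble it from standard sources rather than prove it from scratch. The first part, existence of an affine \(H\)-stable slice \(S\ni s\) with \(G_0\times^H S\to V\) étale, is the generalization of Luna's theorem to algebraic spaces with linearly reductive stabilizers due to Alper--Hall--Rydh. I will cite it in that form and check only the hypotheses: \(V\) quasi-separated and locally of finite type, \(G_0\) smooth affine, \(H\) linearly reductive. The second assertion also comes from that framework. When \(V\) has a good quotient and \(G_0\cdot v\) is closed, the slice can be chosen \emph{strongly étale}, i.e.\ \(S/\!/H\to V/\!/G_0\) is étale and the square with \(G_0\times^H S\to V\) is Cartesian. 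This is Luna's fundamental lemma, applied to the étale \(G_0\)-morphism \(G_0\times^H S\to V\): it is étale at the closed orbit, it maps closed orbits to closed orbits, and it induces an isomorphism on stabilizers. After shrinking \(S\) to an \(H\)-saturated affine neighbourhood of \(s\), these properties give the Cartesian identity \(G_0\times^H S\simeq V\times_{V/\!/G_0}(S/\!/H)\).

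The completed local ring statement needs one more construction: an étale \(H\)-equivariant map \((S,s)\to(N_v,0)\). Assuming \(V\) is smooth at \(v\), the slice \(S\) is smooth at \(s\), because \(G_0\times^H S\) is étale over \(V\) and is a fibration over \(G_0/H\) with fibre \(S\). Its tangent space is \(T_sS\simeq T_vV/T_v(G_0\cdot v)=N_v\), which follows from the étale map \(G_0\times^H S\to V\) at \([1,s]\). Linear reductivity of \(H\) lets me split \(\mathfrak m_s\to\mathfrak m_s/\mathfrak m_s^2\simeq N_v^\vee\) \(H\)-equivariantly. This gives an \(H\)-map \(S\to N_v\) that is étale at \(s\). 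Fundamental lemma again, since \(s\) and \(0\) are \(H\)-fixed and so have closed orbits, lets me shrink \(S\) so that \(S/\!/H\to N_v/\!/H\) is étale. Étale maps induce isomorphisms of completed local rings at corresponding points, giving \(\widehat{\mathcal O}_{V/\!/G_0,\bar v}\simeq\widehat{\mathcal O}_{N_v/\!/H,0}\).

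Finally, let \(H\) be finite of order prime to \(p\). By Maschke, \(H\) is linearly reductive, and the Reynolds operator is an exact projection onto invariants. So taking invariants commutes with \(\mathfrak m\)-adic completion: \(\widehat{k[N_v]^H}\) at the invariant maximal ideal equals \(k[[N_v^\vee]]^H\). To see this, note that \(k[N_v]\) is finite over \(k[N_v]^H\), the unique point over \(0\) is the origin, and completion is exact on finite modules. This yields \eqref{eq:luna-completed-quotient}, and the final sentence of the statement follows.

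The step I expect to need the most care is the strongly étale refinement. It requires verifying the hypotheses of the fundamental lemma, especially that the étale map \(G_0\times^H S\to V\) preserves closedness of orbits near \(s\) and is stabilizer-preserving, and this is where the closed-orbit assumption enters. I would handle it by quoting the corresponding refinement in Alper--Hall--Rydh rather than reproving it.
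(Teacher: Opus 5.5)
Your proposal is correct and follows the paper, which gives no argument beyond citing Alper--Hall--Rydh (Theorem~4.5 and Remark~4.6) for the slice, its strongly \'etale refinement and the comparison with \(N_v/\!/H\), together with the observation built into the statement that for finite \(H\) of order prime to \(p\), completing \(k[N_v]^H\) at the origin gives \(k[[N_v^\vee]]^H\). Your extra details supply the standard content behind that citation: the \(H\)-equivariant splitting of \(\mathfrak m_s\to\mathfrak m_s/\mathfrak m_s^2\), Luna's fundamental lemma at the \(H\)-fixed point \(s\), and the finiteness argument for \(\widehat{k[N_v]^H}=k[[N_v^\vee]]^H\). Note that the fundamental lemma's hypotheses hold automatically at \(s\), so no separate check that orbits near \(s\) stay closed is needed.
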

    
    For more details, see~\cite[Theorem~4.5 and Remark~4.6]{alperLunaEtaleSlice2020}. 
    % When \(H\)
    % is finite of order prime to \(p\), it is linearly reductive and invariants
    % commute with completion.  The theorem therefore gives a non-canonical
    % isomorphism
    % \begin{equation}
    % \widehat{\mathcal O}_{V/\!/G_0,\overline v}
    % \simeq
    % \bar{k}[[N_v^\vee]]^H,
    % \label{eq:luna-completed-quotient}
    % \end{equation}
    % where \(\overline v\) is the image of \(v\) in the quotient.  Thus both
    % the \'{e}tale singularity type and the completed local ring are determined
    % by the stabilizer representation on \(N_v\).

    \medskip
    
    We now specialize this statement to
    \( \mathbf{G}=\mathrm{GL}_3\), \(r=2\), and the reduced expression
    \(
    w=s_1s_2.
    \) Let
    \[
    Y( T)=\mathbb Ze_1\oplus\mathbb Ze_2\oplus\mathbb Ze_3,
    \qquad
    \alpha_1^\vee=e_1-e_2,
    \qquad
    \alpha_2^\vee=e_2-e_3,
    \]
    where \(s_1\) interchanges \(e_1,e_2\) and \(s_2\) interchanges
    \(e_2,e_3\).  Set
    \[
    V=\widetilde {\mathbf{Y}}_{\mathrm{BR}}(w),
    \qquad
    G_0=(\mathbb G_m)^2.
    \]
    By Proposition~\ref{prop:br-auxiliary-quotient}, \(V\) is smooth of
    dimension four and its geometric quotient by \(G_0\) is
    \(\overline {\mathbf{Y}}(w)\).  On the invariant affine neighborhoods occurring in
    the slice theorem this is a good quotient, so its quotient clause applies.
    
    Choose a point \(\widetilde P\) of the deepest boundary stratum
    \(\xi_1=\xi_2=0\).  Then the stabilizer \(H_{\{1,2\}}\) is given by
    \begin{equation}
    K_{12}
    =
    \left\{
    (z_1,z_2)\in(\mathbb G_m)^2:
    z_1^{m_1}=z_2^{m_2}=1,
    \ \lambda_1(z_1)\lambda_2(z_2)=1
    \right\}.
    \label{eq:br-deep-stabilizer}
    \end{equation}
    We write \(K_{12}\) to distinguish this auxiliary slice stabilizer from
    the component stabilizers inside \( T^{wF}\).  By
    Proposition~\ref{prop:br-boundary-stabilizers}, the stabilizers on the
    other strata are trivial.  Since \(m_i\mid(q-1)\), the finite group
    \(K_{12}\) has order prime to \(p\), so
    Theorem~\ref{thm:luna-etale-slice-quotient} applies.
    
    The \(G_0\)-orbit through \(\widetilde P\) has dimension two, and hence
    the normal representation
    \[
    N_{\widetilde P}
    =
    T_{\widetilde P}V/
    T_{\widetilde P}(G_0\cdot\widetilde P)
    \]
    has dimension two.  The two boundary parameters are transverse to the
    deepest stratum.  Indeed, for \(I=\{1,2\}\), the factor
    \( \mathbf{Y}_{\{1,2\}}\) in the preceding stratum description is
    zero-dimensional, so the tangent space to
    \(\widetilde {\mathbf{Y}}_{\{1,2\}}(w)\) at \(\widetilde P\) is the tangent space to
    the \(G_0\)-orbit.  The morphism \((\xi_1,\xi_2):V\to\mathbb A^2\) is smooth at
    \(\widetilde P\): in the proof of
    \cite[Proposition~2.3]{bonnafeCompactificationVarietesDeligneLusztig2009},
    the successive fibres have coordinates \((x,\xi)\) on
    \(\mathbb A^2\setminus\{0\}\), and fixing the \(\xi_i\) gives smooth
    fibres. The graph-transversality lemma
    \cite[Lemma~2.5]{bonnafeCompactificationVarietesDeligneLusztig2009}
    then gives smooth fibres of dimension two on \(V\). Thus the classes
    of \(\xi_1,\xi_2\) form a basis of \(N_{\widetilde P}^{\vee}\).  From the action in
    Proposition~\ref{prop:br-auxiliary-quotient}, one reads
    \begin{equation}
    (z_1,z_2)\colon(\xi_1,\xi_2)
    \longmapsto
    (z_1\xi_1,z_2\xi_2).
    \label{eq:br-normal-slice-weights}
    \end{equation}
    Consequently the two characters of \(K_{12}\) obtained by restricting
    the coordinate characters \(z_1,z_2\) are exactly the two slice weights.
    
    Suppose \(K_{12}\simeq\mu_d\) and a chosen generator is embedded as
    \[
    \eta\longmapsto(\eta^a,\eta^b).
    \]
    Then, for \(P\) the image of \(\widetilde P\) in \(\overline {\mathbf{Y}}(w)\),
    \eqref{eq:luna-completed-quotient} becomes
    \[
    \widehat{\mathcal O}_{\overline {\mathbf{Y}}(w),P}
    \simeq
    \bar{k}[[u,v]]^{\mu_d},
    \qquad
    \eta(u,v)=(\eta^a u,\eta^b v).
    \]
    When \(a\) is invertible modulo \(d\), replacing the generator by
    \(\eta^{a^{-1}}\) puts the action in the standard cyclic quotient form
    \[
    \frac1d(1,c),
    \qquad
    c\equiv a^{-1}b\pmod d.
    \]
    
    This description is evaluated in
    Section~\ref{sec:br-slice-split} for the split Frobenius and in
    Section~\ref{sec:br-slice-unitary} for the unitary Frobenius.  In each
    case the cocharacters \(\lambda_i\) determine \(K_{12}\), while
    \eqref{eq:br-normal-slice-weights} determines its two weights.  
    % The resulting cyclic quotient type is then compared with the direct toric
    % normalization and determines the local resolution fan.  The slice theorem
    % is used only for this \'{e}tale and completed local identification; the
    % global boundary intersections and the normal eigenvalues needed for
    % Lefschetz--Riemann--Roch are computed separately on the explicit surface
    % models in Sections~\ref{part:split} and~\ref{part:unitary}.

    \subsection{Equivariant comparison with the normal compactification}
    \label{sec:equivariant-resolution-comparison}
    
    \begin{proposition}[Cohomology under resolution of rational surface singularities]
    \label{ratsingprop}
    Let \(X\) be a normal projective surface with rational singularities, and
    let
    \[
    f\colon Y\longrightarrow X
    \]
    be a projective resolution of singularities.  Then the natural pullback
    maps are isomorphisms
    \begin{equation}
    H^i(X,\mathcal O_X)\xrightarrow{\ \sim\ }H^i(Y,\mathcal O_Y)
    \qquad(i\geq0).
    \label{eq:rational-singularity-cohomology}
    \end{equation}
    If a finite group \(A\) acts on \(X\), the action lifts to \(Y\), and
    \(f\) is \(A\)-equivariant, then these are isomorphisms of
    \(\bar{k}[A]\)-modules.
    \end{proposition}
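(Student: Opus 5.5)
The proof goes through the Leray spectral sequence for \(f\), after first identifying the higher direct images. Since \(X\) is normal and \(f\) is proper birational with connected fibres (Zariski's main theorem), we have \(f_*\mathcal O_Y=\mathcal O_X\). The definition of a rational surface singularity says that \(R^jf_*\mathcal O_{Y'}=0\) for \(j>0\) for some, equivalently every, resolution \(f'\colon Y'\to X\).

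If one only has this for one particular resolution, I would reduce to it as follows. Any two resolutions of a surface are dominated by a third, obtained by a sequence of point blow-ups. For a blow-up \(b\colon Y''\to Y'\) of a smooth surface at a point one has \(b_*\mathcal O=\mathcal O\) and \(R^1b_*\mathcal O=0\). The Grothendieck spectral sequence for the composite then transports the vanishing of the higher direct images from one resolution to another. The vanishing is local over \(X\) and concentrated at the finitely many singular points, where \(R^1f_*\mathcal O_Y\) is a skyscraper. I expect this reduction to be the only point needing care; I would simply cite Artin's and Lipman's treatment of rational surface singularities for the independence of the resolution.

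Next, consider the Leray spectral sequence
\[
E_2^{i,j}=H^i(X,R^jf_*\mathcal O_Y)\Longrightarrow H^{i+j}(Y,\mathcal O_Y).
\]
Only the row \(j=0\) is nonzero, and on that row \(E_2^{i,0}=H^i(X,\mathcal O_X)\). So the sequence degenerates at \(E_2\), and the edge maps
\[
H^i(X,\mathcal O_X)=H^i(X,f_*\mathcal O_Y)\longrightarrow H^i(Y,\mathcal O_Y)
\]
are isomorphisms. These edge maps coincide with the natural pullback maps induced by \(f^\#\colon\mathcal O_X\to f_*\mathcal O_Y\), which gives \eqref{eq:rational-singularity-cohomology}.

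For equivariance, let \(a\in A\) with lifts \(a_X\) on \(X\) and \(a_Y\) on \(Y\), so that \(f\circ a_Y=a_X\circ f\). Functoriality of pullback in cohomology gives \(a_Y^*\circ f^*=f^*\circ a_X^*\) on \(H^i\). Hence each isomorphism \(f^*\) intertwines the \(A\)-actions and is an isomorphism of \(\bar{k}[A]\)-modules. In particular Brauer traces agree, so \(L_{\mathrm{Br}}(h,X)=L_{\mathrm{Br}}(h,Y)\) for every \(h\in A\).
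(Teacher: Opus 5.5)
Your proposal is correct and follows essentially the same route as the paper. Both arguments get $f_*\mathcal O_Y=\mathcal O_X$ from normality and Zariski's Main Theorem, take $R^jf_*\mathcal O_Y=0$ for $j>0$ from the rationality hypothesis, let the Leray spectral sequence degenerate, and obtain equivariance by functoriality: the paper phrases this as equivariance of the Leray spectral sequence and its edge maps, whereas you use $a_Y^*\circ f^*=f^*\circ a_X^*$ directly. Your extra reduction showing independence of the chosen resolution is harmless but not needed, since the paper reads the hypothesis as vanishing for the given $f$ and checks it for every resolution separately in Lemma~\ref{lem:tame-cyclic-quotients-rational}.
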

    
    \begin{proof}
    Because \(X\) is normal and \(f\) is proper and birational, Zariski's
    Main Theorem gives
    \[
    f_*\mathcal O_Y=\mathcal O_X.
    \]
    The assumption that \(X\) has rational singularities gives
    \[
    R^jf_*\mathcal O_Y=0
    \qquad(j>0).
    \]
    Thus the Leray spectral sequence
    \[
    E_2^{i,j}=H^i\!\left(X,R^jf_*\mathcal O_Y\right)
    \Longrightarrow H^{i+j}(Y,\mathcal O_Y)
    \]
    has only its \(j=0\) row.  It therefore degenerates and its edge maps
    give
    \[
    H^i(Y,\mathcal O_Y)
    \cong H^i\!\left(X,f_*\mathcal O_Y\right)
    =H^i(X,\mathcal O_X)
    \]
    for every \(i\geq0\).  When \(f\) is \(A\)-equivariant, the derived
    pushforward, the Leray spectral sequence, and its edge maps are all
    \(A\)-equivariant.  The resulting isomorphisms therefore respect the
    \(\bar{k}[A]\)-module structures.
    \end{proof}

\begin{lemma}[Rational singularities]
\label{lem:tame-cyclic-quotients-rational}
Let \(X\) be a normal integral variety over a perfect field \(K\).
Suppose that there is a family of \'{e}tale morphisms
\(u_\alpha\colon U_\alpha\to X\) whose images cover \(X\), such that each
\(U_\alpha\) is a tame quotient in the sense of
\cite[Definition~4.2.6]{chatzistamatiouHigherDirectImages2011}, i.e.\
\(U_\alpha\) is integral and normal and admits a finite surjective morphism
from a smooth integral \(K\)-scheme whose degree is invertible in \(K\).
Then every resolution \(g\colon\widetilde X\to X\), that is, every proper
birational morphism with \(\widetilde X\) smooth and integral, satisfies
\[
g_*\mathcal O_{\widetilde X}=\mathcal O_X,
\qquad
R^ig_*\mathcal O_{\widetilde X}=0
\quad(i>0).
\]
In particular, \(X\) has rational singularities in the cohomological
sense used in Proposition~\ref{ratsingprop}.
\end{lemma}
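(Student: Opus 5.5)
The plan is to reduce to the local statement of Chatzistamatiou--R\"ulling on higher direct images for tame quotients, and then to globalize. The first step is to see that the two displayed identities are local on \(X\) for the \'etale topology. The identity \(g_*\mathcal O_{\widetilde X}=\mathcal O_X\) holds for any proper birational \(g\) with \(X\) normal, by Zariski's Main Theorem: \(g_*\mathcal O_{\widetilde X}\) is a finite \(\mathcal O_X\)-algebra, contained in the function field and integral over \(\mathcal O_X\), hence equal to \(\mathcal O_X\). So only the vanishing of \(R^ig_*\mathcal O_{\widetilde X}\) for \(i>0\) needs proof. Since the \(u_\alpha\) are \'etale and their images cover \(X\), the family is faithfully flat, and it suffices to show \(u_\alpha^*R^ig_*\mathcal O_{\widetilde X}=0\) for each \(\alpha\). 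By flat base change this sheaf equals \(R^ig_{\alpha*}\mathcal O_{\widetilde X_\alpha}\), where
\[
\widetilde X_\alpha=\widetilde X\times_X U_\alpha\longrightarrow U_\alpha
\]
and \(g_\alpha\) denotes the base-changed morphism.

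The second step checks that \(g_\alpha\) is again a resolution of \(U_\alpha\). It is proper because it is a base change of \(g\). The scheme \(\widetilde X_\alpha\) is \'etale over the smooth scheme \(\widetilde X\), so it is smooth. It is birational onto \(U_\alpha\) because \(g\) is an isomorphism over a dense open \(X^\circ\subseteq X\), so \(g_\alpha\) is an isomorphism over the dense open \(u_\alpha^{-1}(X^\circ)\) of the integral scheme \(U_\alpha\). Integrality of \(\widetilde X_\alpha\) can fail, because \(\widetilde X_\alpha\) may have several connected components. I would handle this by restricting to the connected component that dominates \(U_\alpha\). Any other component maps into the proper closed complement of \(u_\alpha^{-1}(X^\circ)\). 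But \(\widetilde X_\alpha\) is smooth and the open subscheme \(g_\alpha^{-1}(u_\alpha^{-1}(X^\circ))\) is dense in it, since \(\widetilde X_\alpha\to\widetilde X\) is \'etale and hence open and preserves density. So no such extra component exists, and \(\widetilde X_\alpha\) is integral.

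The third step applies the main theorem of \cite{chatzistamatiouHigherDirectImages2011} on tame quotients: if \(U\) is a tame quotient over a perfect field and \(h\colon\widetilde U\to U\) is a resolution, then \(R^ih_*\mathcal O_{\widetilde U}=0\) for \(i>0\). Their result is stated for any resolution, and its independence of the chosen resolution comes from their invariance of \(R h_*\mathcal O\) under proper birational maps between smooth schemes. Applying it to \(g_\alpha\) gives the required vanishing, and descent along the \'etale cover gives \(R^ig_*\mathcal O_{\widetilde X}=0\). The final sentence of the lemma is then just the definition used in Proposition~\ref{ratsingprop}.

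I expect the main obstacle to be matching hypotheses precisely. The cited theorem may be stated for schemes that are separated and of finite type over the base field, and it may ask for a specific notion of resolution. \'Etale-local charts of \(X\) may need to be shrunk to affine opens to meet these conditions, and shrinking preserves the tame-quotient property by restricting the finite cover. If the cited result is phrased only for the existence of a single resolution with vanishing higher direct images, I would add their independence-of-resolution statement, that \(Rf_*\mathcal O\) agrees for any two resolutions dominated by a third, to cover arbitrary \(g\).
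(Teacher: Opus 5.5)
Your proposal is correct and follows essentially the same route as the paper: normality gives \(g_*\mathcal O_{\widetilde X}=\mathcal O_X\); you base change along the \'etale \(u_\alpha\); a density argument shows that \(\widetilde X\times_X U_\alpha\to U_\alpha\) is again a resolution (in particular that \(\widetilde X\times_X U_\alpha\) is integral); and Chatzistamatiou--R\"ulling, flat base change and faithfully flat descent finish the proof. The only cosmetic difference is that the paper invokes \cite[Corollary~4.3.12]{chatzistamatiouHigherDirectImages2011} for the base-changed map viewed as a proper birational morphism between two tame quotients, noting that the smooth integral source is a tame quotient via the identity of degree one. This makes your independence-of-resolution fallback unnecessary.
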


\begin{proof}
Since \(X\) is integral and \(g\) is proper and birational, there is a dense
open subset \(V\subset X\) such that \(g^{-1}(V)\to V\) is an isomorphism.
Let \(u\colon U\to X\) be one of the morphisms \(u_\alpha\) and form the
cartesian square
\[
\begin{tikzcd}
\widetilde U:=\widetilde X\times_XU \arrow[r,"u'"] \arrow[d,"g_U"']
  & \widetilde X \arrow[d,"g"]\\
U \arrow[r,"u"'] & X.
\end{tikzcd}
\]
The morphism \(g_U\) is proper by base change, and \(\widetilde U\) is
smooth over \(K\) because \(u'\) is \'{e}tale and \(\widetilde X\) is smooth
over \(K\).

We claim that \(\widetilde U\) is integral and that \(g_U\) is birational.
Since \(u\) is open and \(X\) is irreducible, the nonempty open set \(u(U)\)
meets \(V\); hence \(U_V:=u^{-1}(V)\) is a nonempty, hence dense and
irreducible, open subset of \(U\), and \(g_U\) restricts to an isomorphism
\(\widetilde U_V:=g_U^{-1}(U_V)\xrightarrow{\sim}U_V\).  As \(\widetilde U\)
is smooth, it is the disjoint union of finitely many connected components,
each of which is open and integral.  For such a component \(W\), the
restriction \(u'|_W\) is \'{e}tale, so \(u'(W)\) is a nonempty open subset
of the irreducible scheme \(\widetilde X\) and therefore meets the dense
open subset \(g^{-1}(V)\).  Thus \(W\) meets
\(u'^{-1}(g^{-1}(V))=\widetilde U_V\).  Since \(\widetilde U_V\simeq U_V\) is
irreducible and the components are pairwise disjoint and open,
\(\widetilde U\) has exactly one component.  Hence \(\widetilde U\) is
integral, \(\widetilde U_V\) is dense in \(\widetilde U\), and \(g_U\) is
birational.

Consequently \(\widetilde U\) is a tame quotient (it is smooth, hence normal,
and integral; take the identity, of degree \(1\)), and \(U\) is a tame
quotient by hypothesis.  By
\cite[Corollary~4.3.12]{chatzistamatiouHigherDirectImages2011}, applied to
the proper birational morphism \(g_U\),
\[
R(g_U)_*\mathcal O_{\widetilde U}\simeq\mathcal O_U.
\]
Since \(u\) is flat and \(g\) is proper, flat base change gives
\[
u^*R^ig_*\mathcal O_{\widetilde X}
\simeq R^i(g_U)_*\mathcal O_{\widetilde U},
\]
which vanishes for \(i>0\).

Now fix \(i>0\) and put \(\mathcal F:=R^ig_*\mathcal O_{\widetilde X}\), a
coherent \(\mathcal O_X\)-module.  Given \(x\in X\), choose \(\alpha\) and
\(y\in U_\alpha\) with \(u_\alpha(y)=x\).  The local homomorphism
\(\mathcal O_{X,x}\to\mathcal O_{U_\alpha,y}\) is flat, hence faithfully
flat, and
\(\mathcal F_x\otimes_{\mathcal O_{X,x}}\mathcal O_{U_\alpha,y}
=(u_\alpha^*\mathcal F)_y=0\).  Therefore \(\mathcal F_x=0\) for every
\(x\in X\), i.e.\ \(R^ig_*\mathcal O_{\widetilde X}=0\) for \(i>0\).

Finally, \(g_*\mathcal O_{\widetilde X}\) is a coherent
\(\mathcal O_X\)-algebra, since \(g\) is proper, and it is contained in the
function field \(K(\widetilde X)=K(X)\), since \(g\) is birational and
\(\widetilde X\) is integral.  Its local sections are therefore rational
functions that are integral over \(\mathcal O_X\), and normality of \(X\)
gives \(g_*\mathcal O_{\widetilde X}=\mathcal O_X\).
\end{proof}

Let \(f\colon\widetilde Z\to Z\) denote either of the minimal resolutions
\[
\begin{aligned}
\widetilde{\overline {\mathbf{Y}}}_\ell(w)&\longrightarrow\overline {\mathbf{Y}}_\ell(w)
&&\text{in the split case},\\
\widetilde{\overline {\mathbf{Y}}}_{\delta_0}(w)&\longrightarrow\overline {\mathbf{Y}}_{\delta_0}(w)
&&\text{in the non-split case, with }\delta_0\in\mu_{p+1}\text{ fixed}.
\end{aligned}
\]
% NOTE: the argument below uses that Z is an integral proper surface over
% \bar{k}; add a reference to where this is established if it is not
% already stated nearby.
Recall that \(Z\) is an integral proper surface over \(\bar k\).
By the quotient clause of
Theorem~\ref{thm:luna-etale-slice-quotient}, every singular point
\(z\in Z\) has an \'{e}tale neighborhood
\[
u_z\colon U_z=S_z/\!/\mu_d\longrightarrow Z,
\]
where \(S_z\) is affine and
\[
d=N=p^2+p+1
\quad\text{in the split case},
\qquad
d=N_{\mathrm{un}}=p^2-1
\quad\text{in the non-split case}.
\]
% NOTE: check that the point of S_z over z is mu_d-fixed in
% Theorem~\ref{thm:luna-etale-slice-quotient}; this is what makes the
% connected component through it mu_d-stable.
After replacing \(S_z\) by a \(\mu_d\)-stable open neighborhood of the
point over \(z\) contained in its smooth locus, and then by the connected
component through that point (which is \(\mu_d\)-stable because the point
is \(\mu_d\)-fixed), we may assume that \(S_z\) is smooth, connected and
affine, hence integral.  Since \(p\nmid d\), the group scheme \(\mu_d\) is
the constant group \(\mathbb Z/d\) over \(\bar k\).  The quotient
morphism \(S_z\to U_z\) is therefore finite and surjective, \(U_z\) is
integral and normal, and the degree of \(S_z\to U_z\) equals the order of
the image of \(\mu_d\) in \(\operatorname{Aut}(S_z)\), which divides \(d\).
Since \(N\equiv1\) and \(N_{\mathrm{un}}\equiv-1\pmod p\), this degree is
invertible in \(\bar k\), so \(U_z\) is a tame quotient in the sense of
\cite[Definition~4.2.6]{chatzistamatiouHigherDirectImages2011}.  The smooth
locus \(Z^{\mathrm{sm}}\subset Z\) is a tame quotient via the identity, and
the open immersion \(Z^{\mathrm{sm}}\hookrightarrow Z\) is \'{e}tale.  Hence
the morphisms \(u_z\), together with \(Z^{\mathrm{sm}}\hookrightarrow Z\),
form an \'{e}tale cover of \(Z\) by tame quotients.  In particular \(Z\) is
normal, because normality is local in the \'{e}tale topology.
Lemma~\ref{lem:tame-cyclic-quotients-rational} therefore shows that \(Z\)
has rational singularities:
\begin{equation}\label{eq:common-derived-pushforward}
f_*\mathcal O_{\widetilde Z}=\mathcal O_Z,
\qquad
R^jf_*\mathcal O_{\widetilde Z}=0
\quad(j>0).
\end{equation}

Let \(h\) be the element under consideration, namely \(h=\sigma\) in the
split case and \(h=\tau\) in the non-split case; it acts on \(Z\) by
\(\bar k\)-automorphisms.  Since \(Z\) is a normal surface, its minimal
resolution is unique up to unique isomorphism.  Hence the automorphism
\(h\) of \(Z\) lifts uniquely to an automorphism of \(\widetilde Z\) with
\(f\circ h=h\circ f\), so \(f\) is \(\langle h\rangle\)-equivariant.
Proposition~\ref{ratsingprop}, applied to the equivariant resolution
\(f\) and combined with \eqref{eq:common-derived-pushforward}, gives
canonical isomorphisms of \(\bar k[\langle h\rangle]\)-modules
\begin{equation}\label{eq:equivariant-cohomology-comparison}
f^*\colon H^i(Z,\mathcal O_Z)
\xrightarrow{\ \sim\ }
H^i(\widetilde Z,\mathcal O_{\widetilde Z})
\qquad(i\geq0).
\end{equation}
(Explicitly, \eqref{eq:common-derived-pushforward} and the Leray spectral
sequence show that \(f^*\) is an isomorphism, and it commutes with the
action of \(h\) because \(f\circ h=h\circ f\).)
Since \(Z\) and \(\widetilde Z\) are proper surfaces over \(\bar k\), these
modules are finite-dimensional and vanish for \(i>2\).  Since the order of
\(h\) is prime to \(p\), their Brauer characters are defined at \(h\), and
taking alternating Brauer traces of
\eqref{eq:equivariant-cohomology-comparison} gives
\begin{equation}\label{eq:common-brauer-resolution-invariance}
\boxed{
L_{\mathrm{Br}}(h,Z)
=
L_{\mathrm{Br}}(h,\widetilde Z).
}
\end{equation}
Thus
\[
L_{\mathrm{Br}}(\sigma,\overline {\mathbf{Y}}_\ell(w))
=
L_{\mathrm{Br}}(\sigma,\widetilde{\overline {\mathbf{Y}}}_\ell(w))
\]
in the split case, and
\[
L_{\mathrm{Br}}(\tau,\overline {\mathbf{Y}}_{\delta_0}(w))
=
L_{\mathrm{Br}}(\tau,\widetilde{\overline {\mathbf{Y}}}_{\delta_0}(w))
\]
in the non-split case.  These equalities will be applied to the
fixed-locus calculations in
Sections~\ref{sec:split-brauer-calculation}
and~\ref{sec:unitary-brauer-calculation}, respectively, to obtain the
formulas for the normal compactifications.
    
    \section{Split Case}\label{part:split}
    
    \subsection{Split setup and main result}\label{sec:split-setup}
    Recall that \(k=\mathbb F_p\) and \(\bar{k}=\overline{\mathbb F}_p\).
    We take \( \mathbf{G}=\mathrm{GL}_3\) over
    \(\bar{k}\), with split Frobenius \(F\), upper triangular Borel
    subgroup \( B\), and diagonal torus
    \[
     T
    =\{\diag(t_1,t_2,t_3):t_i\in\bar{k}^\times\}
    \simeq(\mathbb G_m)^3.
    \]
    
    Its Weyl group is \(S_3\).
    All geometric fixed-locus and coherent-cohomology calculations are understood
    after base change to \(\bar{k}\). For the Coxeter element \(w=(123)\), choose
    the lift that cyclically permutes the diagonal entries.  Then
    \begin{equation}
    wF(t_1,t_2,t_3)=(t_3^p,t_1^p,t_2^p),
    \qquad
    (wF)^3(t_1,t_2,t_3)=(t_1^{p^3},t_2^{p^3},t_3^{p^3}).
    \label{eq:split-twisted-frobenius}
    \end{equation}
    Consequently,
    \[
     T^{wF}
    =\{\diag(a,a^p,a^{p^2}):a\in\mathbb F_{p^3}^{\times}\}
    \simeq\mathbb F_{p^3}^{\times},
    \]
    so the finite group acting on the right is cyclic of order \(p^3-1\).
    Equivalently, \(\mathbf{Y}(w)\to \mathbf{X}(w)\) is a right torsor under this cyclic group.
    Fix a primitive
    \((p^3-1)\)-st root \(\mu\in\mathbb F_{p^3}^{\times}\) and let
    \[
    \tau:=\diag(\mu,\mu^p,\mu^{p^2})
    \]
    be the corresponding generator.
    
    By Proposition~\ref{split:Ynotirrprop}, both \(\mathbf{Y}(w)\) and its normal
    compactification \(\overline {\mathbf{Y}}(w)\) are disjoint unions of \(p-1\)
    irreducible components.  The generator \(\tau\) permutes them
    transitively, whereas its \((p-1)\)-st power
    \[
    \sigma:=\tau^{p-1}
    \]
    preserves every component and has order
    \(N=p^2+p+1\).  If \(\mu\) is the primitive
    \((p^3-1)\)-st root used above, then
    \(\bar\zeta:=\mu^{1-p}\) is a primitive \(N\)-th root.  Thus the
    Teichm\"uller parameter relevant to the action on a chosen component is
    \[
    \widehat\zeta=[\bar\zeta]=[\mu^{1-p}]
    \in W(\mathbb F_{p^3})^\times.
    \]
    The Brauer--Lefschetz
    calculation for \(\sigma\) is stated in
    Theorem~\ref{thm:intro-split-brauer} and proved in
    Section~\ref{sec:split-brauer-calculation}.
    
    \subsection{Explicit description of the varieties}\label{sec:split-varieties}

    \bigskip
    
    Let \(V\) be a three-dimensional vector space over \(k=\mathbb F_p\),
    where \(p\) is prime.  Let \(\Omega_V\) denote the Drinfeld half-space,
    the complement of all \(k\)-rational lines in
    \(\mathbb P(V)=\mathbb P^2\).
    
    \begin{proposition} \label{split:Dellusdeflemma}
    The variety \(\Omega_V\) is isomorphic to the Deligne--Lusztig variety
    \(\mathbf{X}:=\mathbf{X}(w)\) corresponding to the Coxeter element \(w=(123)\in S_3\)
    for \( \mathbf{G}=\mathrm{GL}_3\).
    \end{proposition}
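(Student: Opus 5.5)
We use the flag description of relative position recorded above for \(\mathbf{G}=\mathrm{GL}_3\). A point of \(\mathcal B\) is a complete flag \((L\subset H)\) in \(\bar k^3=V\otimes_k\bar k\), with Frobenius image \((F(L),F(H))\). By \eqref{eq:prelim-Xw}, \(\mathbf{X}(w)\) consists of the flags with \(\operatorname{inv}((L,H),(F(L),F(H)))=s_1s_2\). By the table of Bruhat conditions, \(\operatorname{inv}\leq s_1s_2\) is equivalent to \(F(L)\subseteq H\). Equality \(\operatorname{inv}=s_1s_2\) means this holds while the smaller relative positions \(1\), \(s_1\), \(s_2\) are excluded. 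Those cases are \(F(L)=L\) or \(F(H)=H\) (note \(s_2s_1\not\leq s_1s_2\), so only these need be excluded). Hence
\[
\mathbf{X}(w)=\{(L\subset H): F(L)\subseteq H,\ F(L)\neq L,\ F(H)\neq H\}.
\]

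Next we show that the projection \((L\subset H)\mapsto L\in\mathbb P(V)\) is an isomorphism onto \(\Omega_V\). If \(L\) lies on no \(k\)-rational line, then \(F(L)\neq L\), since \(L\) is not a rational point. So \(H:=L+F(L)\) is the unique plane containing \(L\) and \(F(L)\). Then \(F(H)=F(L)+F^2(L)\). If \(F(H)=H\), the plane \(H\) would be \(F\)-stable, hence \(k\)-rational, and would contain \(L\), a contradiction. Conversely, given a point of \(\mathbf{X}(w)\), we have \(F(L)\neq L\), so \(H=L+F(L)\) is forced. If \(L\) lay in a rational plane \(H_0\), then \(F(L)\subset H_0\) as well, so \(H_0=L+F(L)=H\), giving \(F(H)=H\), which is excluded. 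So the projection is bijective on points onto \(\Omega_V\). Here "rational lines in \(\mathbb P^2\)" means rational projective lines, that is, planes in \(V\).

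To upgrade this to an isomorphism of varieties, we write the inverse morphism explicitly. On \(\Omega_V\), the map \(x\mapsto \langle x,F(x)\rangle\) is given in coordinates by the cross product \(x\times x^{(p)}\), viewed as a point of the dual \(\mathbb P^2\). This vector is nonvanishing on \(\Omega_V\), so the assignment \(x\mapsto(\langle x\rangle\subset\langle x,x^{(p)}\rangle)\) is a morphism \(\Omega_V\to\mathcal B\). It lands in \(\mathbf{X}(w)\) and is inverse to the restriction of the projection \(\mathcal B\to\mathbb P(V)\).

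The main subtlety is making sure the isomorphism is scheme-theoretic rather than merely a bijection on points. Exhibiting mutually inverse morphisms settles this. Alternatively, both sides are smooth (by Proposition~\ref{prop:prelim-basic-X} for \(\mathbf{X}(w)\)), so a bijective morphism with a morphism inverse suffices.
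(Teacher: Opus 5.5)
Your argument is correct, and it is essentially the standard argument of Deligne--Lusztig \S2.1, which the paper cites without reproducing: on $\mathbf{X}(w)$ the flag is forced to be $(L\subset L+F(L))$; the conditions $F(L)\neq L$ and $F(L+F(L))\neq L+F(L)$ hold exactly when $L$ avoids the $k$-rational lines; and $[x]\mapsto([x],[x\times x^{(p)}])$ is the inverse morphism. One small correction to your last remark: smoothness (really, reducedness) of $\mathbf{X}(w)$ is needed only to verify that the composite $\mathbf{X}(w)\to\Omega_V\to\mathbf{X}(w)$ is the identity by checking it on $\bar k$-points, and it would not by itself upgrade a bijective morphism to an isomorphism in characteristic $p$, as the Frobenius morphism shows.
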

    
    \begin{proof}
    See \cite[\S2.1]{deligneRepresentationsReductiveGroups1976}.
    \end{proof}
    
    To describe this embedding explicitly, fix homogeneous coordinates
    \(x_0,x_1,x_2\) on \(\mathbb P(V)\simeq\mathbb P^2\) and dual
    homogeneous coordinates \(y_0,y_1,y_2\) on
    \(\mathbb P(V^*)\simeq\mathbb P^2\).  We recall the definition of Moore determinant.
    
    \medskip
    
    \begin{definition}[\textbf{The Moore determinant}]
    For indeterminates \(w_1,\ldots,w_n\), define the Moore determinant
    \(\Delta_p(w_1,\ldots,w_n)\) by
    \[
    \begin{vmatrix}
    w_1 & w_2 & \cdots & w_n \\
    w_1^p & w_2^p & \cdots & w_n^p \\
    \vdots & \vdots & \ddots & \vdots \\
    w_1^{p^{n-1}} & w_2^{p^{n-1}} & \cdots & w_n^{p^{n-1}}
    \end{vmatrix}
    \]
    viewed as an element of \(\mathbb F_p[w_1,\ldots,w_n]\).
    \end{definition}
    
    \medskip
    
    \begin{lemma}
    The Moore determinant is the product of all normalized linear forms over
    \(\mathbb F_p\), where the last nonzero coefficient is normalized to
    one; explicitly,
    \[
    \Delta_p(w_1,\ldots,w_n)
    =
    \prod_{i=1}^n
    \prod_{(a_1,\ldots,a_{i-1})\in\mathbb F_p^{i-1}}
    \left(a_1w_1+\cdots+a_{i-1}w_{i-1}+w_i\right).
    \]
    This identity holds in \(\mathbb F_p[w_1,\ldots,w_n]\).
    \end{lemma}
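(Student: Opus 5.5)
We argue by induction on \(n\), comparing degrees and roots. Set \(P_n:=\prod_{i=1}^n\prod_{a\in\mathbb F_p^{i-1}}(a_1w_1+\cdots+a_{i-1}w_{i-1}+w_i)\). The case \(n=1\) is trivial, since \(\Delta_p(w_1)=w_1=P_1\). For the inductive step, regard \(\Delta_p(w_1,\ldots,w_n)\) as a polynomial in the last variable \(w_n\), with coefficients in \(A:=\mathbb F_p[w_1,\ldots,w_{n-1}]\). Expanding the determinant along its last column shows that it has the form
\[
\Delta_p(w_1,\ldots,w_n)=\Delta_p(w_1,\ldots,w_{n-1})\,w_n^{p^{n-1}}+(\text{terms of lower degree in }w_n).
\]
The leading coefficient is the cofactor of the bottom-right entry, and it is nonzero by induction: it equals \(P_{n-1}\), a product of nonzero linear forms in a domain.

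\textbf{Vanishing.} Next we show that every form \(\ell_a:=a_1w_1+\cdots+a_{n-1}w_{n-1}+w_n\) with \(a\in\mathbb F_p^{n-1}\) divides \(\Delta_p\). Substitute \(w_n=-(a_1w_1+\cdots+a_{n-1}w_{n-1})\). Because \(a_j^{p^k}=a_j\) and Frobenius is additive in characteristic \(p\), we get \(w_n^{p^k}=-\sum_j a_jw_j^{p^k}\) for every \(k\). Hence after the substitution the last column is an \(\mathbb F_p\)-linear combination of the other columns, and the determinant vanishes. Thus \(\ell_a\) divides \(\Delta_p\) in \(A[w_n]\) for each \(a\).

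\textbf{Comparing factorizations.} The \(p^{n-1}\) forms \(\ell_a\) are pairwise non-associate: they are monic in \(w_n\), of degree one, and have distinct constant terms in \(A\). Since \(A[w_n]\) is a UFD, their product divides \(\Delta_p\). Both \(\Delta_p\) and \(\Delta_p(w_1,\ldots,w_{n-1})\prod_a\ell_a\) have degree \(p^{n-1}\) in \(w_n\) and the same leading coefficient \(\Delta_p(w_1,\ldots,w_{n-1})\). Their quotient is therefore a degree-zero element of \(A\) with leading coefficient \(1\), so the two are equal. Applying the inductive hypothesis \(\Delta_p(w_1,\ldots,w_{n-1})=P_{n-1}\) then gives \(\Delta_p(w_1,\ldots,w_n)=P_n\).

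The only delicate point is the leading-coefficient comparison. One has to check the sign of the cofactor in the expansion along the last column. The bottom-right cofactor carries sign \((-1)^{n+n}=+1\), so no sign discrepancy arises, and the identity holds exactly rather than merely up to a unit. Everything else is routine.
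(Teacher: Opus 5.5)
Your proof is correct, and since the paper gives no argument of its own but simply cites Goss (Corollary~1.3.7), your induction is essentially the standard argument behind that reference: the leading coefficient in \(w_n\) is \(\Delta_p(w_1,\ldots,w_{n-1})\), the determinant vanishes on the \(\mathbb F_p\)-span of \(w_1,\ldots,w_{n-1}\), and a degree count finishes. Two cosmetic tightenings: the UFD step uses that each \(\ell_a\) is irreducible (it has degree one), and the last step reads more cleanly as \(\Delta_p=Q\prod_a\ell_a\), where the degree count forces \(Q\in A\) and comparing leading coefficients forces \(Q=\Delta_p(w_1,\ldots,w_{n-1})\).
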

    
    \begin{proof}
    See \cite[Corollary~1.3.7]{gossBasicStructuresFunction1996}.
    \end{proof}
    
    \medskip
    
    \begin{definition}
    Let \(\Pi\in k[X,Y,Z]\) be the product of the linear forms indexed by
    the nonzero vectors of \(k^3\):
    
    \begin{equation}
    \Pi(X,Y,Z)
    :=
    \prod_{(a_0,a_1,a_2)\in k^3\setminus\{0\}}
    (a_0X+a_1Y+a_2Z).
    \end{equation}
    
    For each point of \(\mathbb P^2(k)\), choose the unique representative
    whose last nonzero coordinate is one, and define
    \begin{equation}
    \Theta(X,Y,Z)
    :=
    \prod_{[a_0:a_1:a_2]\in\mathbb P^2(k)}
    (a_0X+a_1Y+a_2Z),
    \end{equation}
    using these representatives.  With this normalization, the Moore-product
    formula gives the identity
    \begin{equation}
    \Theta(X,Y,Z)=\Delta_p(X,Y,Z).
    \label{eq:split-theta-equals-moore}
    \end{equation}
    \end{definition}
    
    \medskip
    
    \begin{remark}
    The chosen representatives, whose last nonzero coordinate is one, make
    \(\Theta\) a specified polynomial.  If arbitrary representatives are
    used, then \(\Theta\) changes by a scalar in \(k^\times\); hence
    \(\Theta^{p-1}\), \(D_+(\Theta)\), and the localization at \(\Theta\)
    do not change, although the labels of the components \(\mathbf{Y}_\ell\) may be
    permuted.
    
    With the stated normalization, group the factors of \(\Pi\) by their
    projective classes.  If \(L\) is the normalized linear form for one
    class, then
    \[
    \prod_{c\in k^\times}cL
    =
    \left(\prod_{c\in k^\times}c\right)L^{p-1}
    =-L^{p-1}
    \]
    in \(k[X,Y,Z]\).
    Since \(|\mathbb P^2(k)|=p^2+p+1\) is odd, it follows that
    \begin{equation}
    \Pi=-\Theta^{p-1}=(-1)^p\Theta^{p-1}
    \qquad\text{in }k[X,Y,Z].
    \end{equation}
    \end{remark}
    
    The preceding description gives the following affine presentation.
    
    \begin{lemma}
    Put
    \[
    t=\frac{x_1}{x_0},\qquad v=\frac{x_2}{x_0},
    \qquad \theta:=\Theta(1,t,v).
    \]
    Then \(\mathbf{X}=D_+(\Theta)\) lies in the chart \(x_0\ne0\), and
    \[
    \mathbf{X}\simeq\operatorname{Spec}k[t,v,\theta^{-1}].
    \]
    \end{lemma}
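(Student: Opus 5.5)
The plan is to use the description of \(\mathbf{X}\) as the Drinfeld half-space \(\Omega_V\) from Proposition~\ref{split:Dellusdeflemma}. First I will show that \(\mathbf{X}\) lies in the chart \(x_0\neq0\). Then I will identify the complement of the \(k\)-rational lines inside that chart with the principal open set defined by \(\theta\). Throughout I read \(\mathbf{X}\subset\mathbb P^2=\mathbb P(V)\) as the complement of the union of the \(k\)-rational lines. A line with dual coordinates \([a_0:a_1:a_2]\in\mathbb P^2(k)\) is the zero locus of the linear form \(a_0x_0+a_1x_1+a_2x_2\). By the definition of \(\Theta\), the union of these lines is exactly the vanishing locus \(V_+(\Theta)\) of the product of one normalized form for each rational projective class. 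Hence \(\mathbf{X}=\mathbb P^2\setminus V_+(\Theta)=D_+(\Theta)\). This is insensitive to the choice of representatives, by the preceding remark.

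Next, \(x_0\) is one of the normalized linear forms: it is the form attached to \([1:0:0]\), whose last nonzero coordinate is \(1\). So \(x_0\) divides \(\Theta\), and therefore \(D_+(\Theta)\subseteq D_+(x_0)\). This shows that \(\mathbf{X}\) lies in the standard affine chart \(D_+(x_0)\simeq\operatorname{Spec}k[t,v]\), where \(t=x_1/x_0\) and \(v=x_2/x_0\).

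In this chart, \(\Theta\) is homogeneous of degree \(N=p^2+p+1\), and its dehomogenization is \(\Theta/x_0^{N}=\Theta(1,t,v)=\theta\). On the affine scheme \(D_+(x_0)\) the open subscheme \(D_+(\Theta)=D_+(x_0)\cap D_+(\Theta)\) is therefore the principal open set \(D(\theta)\). This gives
\[
\mathbf{X}\simeq\operatorname{Spec}k[t,v][\theta^{-1}]=\operatorname{Spec}k[t,v,\theta^{-1}].
\]
Equivalently, \(k[t,v,\theta^{-1}]\) is the degree-zero part of \(k[x_0,x_1,x_2]_{\Theta}\). This holds because \(x_0^{-1}=x_0^{N-1}\cdot(\text{unit})\cdot\Theta^{-1}\) after localizing at \(\Theta\).

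The only step that needs care is the identification of the Deligne--Lusztig variety \(\mathbf{X}(w)\) with the complement of the rational lines as a scheme, not merely on \(\bar k\)-points. That identification is what Proposition~\ref{split:Dellusdeflemma} supplies, via the map \((L\subset H)\mapsto L\) on flags with \(F(L)\subset H\). Granting it, the remaining argument is the elementary dehomogenization above.
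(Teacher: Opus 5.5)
Your proposal is correct and follows the same route as the paper. The paper gives no separate proof: it deduces the lemma directly from the identification $\mathbf{X}\simeq\Omega_V$ and the definition of $\Theta$, and your two observations, that $x_0$ is the normalized factor attached to $[1:0:0]$ and that $\Theta/x_0^{N}=\theta$, are exactly the details it leaves implicit.

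One small point concerns your ``equivalently'' remark. The identity $x_0^{-1}=x_0^{N-1}\cdot(\text{unit})\cdot\Theta^{-1}$ already assumes that $x_0$ has been inverted, so as a justification it is circular. The clean reason is simply that $x_0\mid\Theta$, which makes $x_0$ a unit in $k[x_0,x_1,x_2]_\Theta$.
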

    
    \bigskip
    
    We next describe the affine cover \(\mathbf{Y}\).  By
    \cite[\S2.2]{deligneRepresentationsReductiveGroups1976}, it is defined
    in \(\mathbb A_k^3\), with coordinates \(X_0,X_1,X_2\), by
    \[
    \Delta_p(X_0,X_1,X_2)^{p-1}=1.
    \]
    Since \(X_0\neq0\) on this hypersurface, set
    \[
    (t,v,z)=(X_1/X_0,X_2/X_0,1/X_0),
    \qquad (X_0,X_1,X_2)=(1/z,t/z,v/z).
    \]
    Homogeneity gives \(\Delta_p(X_0,X_1,X_2)=X_0^N\theta(t,v)\), and hence
    \begin{equation} \label{split:Ydefeqn}
    z^{p^3-1}=\theta(t,v)^{p-1}.
    \end{equation}
    The right action of \(\tau\) multiplies the first column by \(\mu\),
    so \(\tau^*z=\mu^{-1}z\) and \(\sigma^*z=\mu^{1-p}z=\bar\zeta z\).
    Because \(\mathbb F_p^\times\) is the set of roots of
    \(W^{p-1}-1\),
    \[
    W^{p-1}-1=\prod_{\ell\in\mathbb F_p^\times}(W-\ell).
    \]
    
    Writing \(N=p^2+p+1\), the defining polynomial factors exactly as
    \begin{equation}
    z^{p^3-1}-\theta^{p-1}
    =\prod_{\ell\in\mathbb F_p^\times}(z^N-\ell\theta).
    \label{eq:split-kummer-factorization}
    \end{equation}
    
    \medskip
    
    \begin{proposition} \label{split:Ynotirrprop}
    For \( \mathbf{G}=\mathrm{GL}_3\), the group \( T^{wF}\) acting on
    the right is cyclic of order \(p^3-1\).  The affine scheme
    \[
    \mathbf{Y}
    :=
    \operatorname{Spec}
    \frac{k[t,v,\theta^{-1},z]}{(z^{p^3-1}-\theta^{p-1})}
    \]
    is a finite \'{e}tale right \( T^{wF}\)-torsor over \(\mathbf{X}\), and it is the
    disjoint union of the \(p-1\) integral schemes \(\mathbf{Y}_\ell\):
    \begin{equation}
    \mathbf{Y}=\bigsqcup_{\ell\in\mathbb F_p^\times}\mathbf{Y}_\ell,
    \end{equation}
    where
    \begin{equation}
    \mathbf{Y}_\ell
    :=
    \operatorname{Spec}
    \frac{k[t,v,\theta^{-1},z]}{(z^N-\ell\theta)}.
    \end{equation}
    \end{proposition}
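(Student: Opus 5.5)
We split the statement into three claims: the cyclicity of \(T^{wF}\), the torsor property of \(\mathbf{Y}\), and the decomposition into the \(p-1\) integral pieces \(\mathbf{Y}_\ell\).

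\textbf{Cyclicity and the torsor property.} Cyclicity was already obtained from \eqref{eq:split-twisted-frobenius}. An element \(\diag(t_1,t_2,t_3)\) is \(wF\)-fixed exactly when \(t_2=t_1^p\), \(t_3=t_1^{p^2}\) and \(t_1^{p^3}=t_1\). Hence \(T^{wF}\simeq\mathbb F_{p^3}^\times\), which is cyclic of order \(p^3-1\).

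For the torsor property, we work directly with the presentation \eqref{split:Ydefeqn}. The group acts by \(z\mapsto a^{-1}z\) with \(a\in\mu_{p^3-1}\). The algebra \(k[t,v,\theta^{-1}][z]/(z^{p^3-1}-\theta^{p-1})\) is free of rank \(p^3-1\) over \(k[t,v,\theta^{-1}]\). Its derivative is \((p^3-1)z^{p^3-2}\), and this is a unit because \(p\nmid p^3-1\) and \(z\) is invertible (as \(z^{p^3-1}=\theta^{p-1}\) is a unit). So the algebra is étale over \(\mathbf{X}\). Over any geometric point the fibre consists of the \(p^3-1\) roots of \(z^{p^3-1}=c\) with \(c\neq0\), and these are permuted simply transitively by \(\mu_{p^3-1}\). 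Equivalently, the map \(\mathbf{Y}\times T^{wF}\to\mathbf{Y}\times_{\mathbf{X}}\mathbf{Y}\) is an isomorphism, since both sides are finite étale of the same degree and the map is bijective on geometric points. This matches the Deligne--Lusztig cover of \cite[\S2.2]{deligneRepresentationsReductiveGroups1976} under the change of variables already given.

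\textbf{Decomposition.} Write \(A=k[t,v,\theta^{-1}]\). Substituting \(W=z^N/\theta\) into \(W^{p-1}-1=\prod_{\ell}(W-\ell)\) gives \eqref{eq:split-kummer-factorization}, using \(N(p-1)=p^3-1\). For \(\ell\neq\ell'\) the factors \(z^N-\ell\theta\) and \(z^N-\ell'\theta\) differ by \((\ell'-\ell)\theta\), which is a unit in \(A\). So they generate the unit ideal, and the Chinese remainder theorem yields \(\mathbf{Y}=\bigsqcup_\ell\mathbf{Y}_\ell\).

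\textbf{Integrality of each \(\mathbf{Y}_\ell\) (the main obstacle).} Each \(\mathbf{Y}_\ell\) is étale over the integral scheme \(\mathbf{X}\) by the same derivative argument, so it is reduced. The real content is irreducibility, and for that it suffices to show that \(z^N-\ell\theta\) is irreducible over the fraction field \(k(t,v)\). Note that \(\ell\in k^\times\) is an \(N\)-th power in \(k\): since \(\gcd(N,p-1)=\gcd(3,p-1)\), one can absorb \(\ell\) if needed, or simply apply Capelli's criterion. By Capelli, \(z^N-c\) is irreducible unless \(c\) is a \(q\)-th power for some prime \(q\mid N\), or \(4\mid N\) and \(c\in-4K^4\). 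The second case is excluded because \(N\) is odd. For the first case, \(\theta(t,v)=\Theta(1,t,v)\) is a product of \(N\) pairwise non-associate linear factors, each occurring with multiplicity one, so \(\theta\) is squarefree in the UFD \(k[t,v]\). It follows that \(\ell\theta\) cannot be a \(q\)-th power for any \(q\ge2\). Hence \(z^N-\ell\theta\) is irreducible over \(k(t,v)\).

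Then \(\mathbf{Y}_\ell\) is a domain: its coordinate ring is a free \(A\)-module, hence torsion-free, so it embeds into \(k(t,v)[z]/(z^N-\ell\theta)\), which is a field. The same squarefreeness argument works verbatim over \(\bar k(t,v)\), which gives geometric integrality if that is wanted.
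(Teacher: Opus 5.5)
Your proof is correct. It differs from the paper's mainly in how you get irreducibility of \(z^N-\ell\theta\).

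The paper fixes a nonconstant linear factor \(L\) of \(\theta\) and notes \(\nu_L(\ell\theta)=1\). It then shows that any extension of \(\nu_L\) to \(K(\alpha)\), with \(\alpha^N=\ell\theta\), has ramification index \(e\) divisible by \(N\). Since \(e\le[K(\alpha):K]\le N\), the degree is \(N\). This is an Eisenstein-type argument, and it also gives total ramification along the line-type boundary divisors; the same valuation argument reappears in Lemma~\ref{lem:split-kummer-normal-character}.

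You instead combine Capelli's criterion for binomials with squarefreeness of \(\theta\). The underlying input is the same: a prime occurring to exponent one in \(\ell\theta\). Capelli is a heavier citation and gives no ramification information; Eisenstein at \(L\) would be the leanest form of either argument.

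Your torsor verification is more complete than the paper's. The paper only records that each \(\mathbf{Y}_\ell\to\mathbf{X}\) is finite \'{e}tale of degree \(N\) and adds up degrees, leaving the simply transitive action implicit in the Deligne--Lusztig construction. Your direct check that \(\mathbf{Y}\times T^{wF}\to\mathbf{Y}\times_{\mathbf{X}}\mathbf{Y}\) is an isomorphism of finite \'{e}tale \(\mathbf{Y}\)-schemes fills this in.

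Two small corrections:
\begin{itemize}
\item Your aside that every \(\ell\in k^\times\) is an \(N\)-th power in \(k\) is false when \(p\equiv1\pmod 3\), since then \(\gcd(N,p-1)=3\). Nothing in your argument depends on it, so it should simply be dropped.
\item \(\theta=\Theta(1,t,v)\) has only \(N-1\) nonconstant linear factors, because the factor coming from the rational line \(x_0=0\) becomes the constant \(1\). This does not affect squarefreeness.
\end{itemize}
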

    
    \begin{proof}
    The factors in \eqref{eq:split-kummer-factorization} are pairwise
    comaximal over \(k[t,v,\theta^{-1}]\): for \(\ell\ne\ell'\), their
    difference is \((\ell'-\ell)\theta\), which is a unit.  The Chinese
    remainder theorem therefore gives the displayed disjoint union.
    
    It remains to prove that each factor is integral.  Let \(K=k(t,v)\),
    fix \(\ell\in\mathbb F_p^\times\), and choose any nonconstant linear
    factor \(L\) of \(\theta\).  Its discrete valuation satisfies
    \[
    \nu_L(\ell\theta)=1.
    \]
    If \(\alpha^N=\ell\theta\) and \(\widetilde\nu\) is an extension of
    \(\nu_L\) to \(K(\alpha)\), with ramification index \(e\), then
    \[
    N\widetilde\nu(\alpha)
    =\widetilde\nu(\ell\theta)=e.
    \]
    Hence \(N\mid e\).  Since
    \[
    e\leq[K(\alpha):K]\leq N,
    \]
    all three quantities equal \(N\).  Thus \(z^N-\ell\theta\) is
    irreducible over \(K\), and Gauss's lemma shows that its quotient over
    the localization \(k[t,v,\theta^{-1}]\) is a domain. The same valuation
    argument over \(\bar{k}(t,v)\) proves that each \(\mathbf{Y}_\ell\) is geometrically integral. Finally,
    \(p\nmid N\) and \(z\) is invertible on this quotient, so its derivative
    \(Nz^{N-1}\) is invertible.  Each \(\mathbf{Y}_\ell\to \mathbf{X}\) is therefore finite
    \'{e}tale of degree \(N\), and their disjoint union has degree
    \((p-1)N=p^3-1\).
    \end{proof}
    
    \bigskip
    
    These presentations fit into the following commutative diagram.
    
    \[\begin{tikzcd}
    	{\mathbf{Y} = \mathrm{Spec} \Big( \frac{k[t,v,\theta^{-1},z]}{(z^{p^3-1}-\theta^{p-1})} \Big) = \bigsqcup_{\ell \in \mathbb F_p^{\times}}\mathbf{Y}_{\ell} } & {\overline{\mathbf{Y}} = \bigsqcup_{\ell \in \mathbb F_p^{\times}}  \overline{\mathbf{Y}}_{\ell} } \\
    	\\
    	{\mathbf{X} = \mathrm{Spec}(k[t,v,\theta^{-1}])} & {\overline{\mathbf{X}} = \operatorname{V}(f_0, f_1)= \mathrm{Bl}_Z \mathbb{P}^2 \subset \mathbb{P}^2 \times  \mathbb{P}^2 } \\
    	\\
    	{\genfrac{}{}{0pt}{}{\mathbb{P}^2}{[1;t;v]}} & {\mathbb{P}^2}
    	\arrow[hook, from=1-1, to=1-2]
    	\arrow["{\mathbb{Z}/(p^3-1)\mathbb{Z}}", from=1-1, to=3-1]
    	\arrow["{\mathbb{Z}/(p^3-1)\mathbb{Z}}", from=1-2, to=3-2]
    	\arrow[hook, from=3-1, to=3-2]
    	\arrow[hook', from=3-1, to=5-1]
    	\arrow[from=3-2, to=5-2]
    	\arrow["\cong", no head, from=5-1, to=5-2]
    \end{tikzcd}\]
    
    \begin{remark}
    The schemes \(\mathbf{Y}\) and \(\overline {\mathbf{Y}}\) are disjoint unions of \(p-1\)
    irreducible components.  The generator \(\tau\) of \( T^{wF}\) cyclically
    permutes them: it sends \(\mathbf{Y}_\ell\) to \(\mathbf{Y}_{\mu^{-N}\ell}\), and
    \(\mu^{-N}\) generates \(\mathbb F_p^\times\). For \(p>2\), its alternating
    Brauer trace on the cohomology of the whole disjoint union is zero.
    For \(p=2\), there is one component and \(\tau=\sigma\).  We therefore fix a component
    \(\overline {\mathbf{Y}}_\ell\) and compute
    \(L_{\mathrm{Br}}(\sigma,\overline {\mathbf{Y}}_\ell)\) for its stabilizer generator
    \(\sigma=\tau^{p-1}\).
    \end{remark}
    
 \medskip
    
    We use Langer's description
    \cite{langerBirationalGeometryCompactifications2019} of the smooth
    compactification \(\overline {\mathbf{X}}\) constructed in
    \cite[\S9.10]{deligneRepresentationsReductiveGroups1976}.  Let
    \(x_0,x_1,x_2\) and \(y_0,y_1,y_2\) be the homogeneous coordinates on
    \(\mathbb P(V)=\mathbb P^2\) and \(\mathbb P(V^*)=\mathbb P^2\),
    respectively.
    
    \medskip
    
    \begin{proposition}
    The surface \(\overline {\mathbf{X}}\) is isomorphic to the complete intersection
    \[
    \overline {\mathbf{X}}=\operatorname{V}(f_0,f_1)
    \subset \P{V}\times\P{V^*}
    =\mathbb P^2\times\mathbb P^2,
    \]
    where
    \[
    f_0=x_0y_0+x_1y_1+x_2y_2,
    \qquad
    f_1=x_0^py_0+x_1^py_1+x_2^py_2.
    \]
    \end{proposition}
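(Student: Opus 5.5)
Identify \(\overline{\mathbf X}\) with the Deligne--Lusztig chain model \(\{(\mathcal B_0,\mathcal B_1,\mathcal B_2):\mathcal B_2=F(\mathcal B_0)\}\) of Proposition~\ref{prop:prelim-smooth-compactification}, written in flag language. For \(w=s_1s_2\) and a flag \(\mathcal B_0=(L\subset H)\), the conditions \(\operatorname{inv}(\mathcal B_0,\mathcal B_1)\in\{1,s_1\}\) and \(\operatorname{inv}(\mathcal B_1,\mathcal B_2)\in\{1,s_2\}\) mean, by the table of Bruhat conditions, that \(\mathcal B_1\) has the same plane \(H\) as \(\mathcal B_0\) and the same line as \(\mathcal B_2=(F(L)\subset F(H))\). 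So \(\mathcal B_1=(F(L)\subset H)\), which forces \(F(L)\subseteq H\). The flag \(\mathcal B_1\) is then determined by \((L,H)\), and the chain is recovered from the pair consisting of a point \([x]=L\in\mathbb P(V)\) and a point \([y]\in\mathbb P(V^*)\) with \(H=\ker y\).

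The closed map will therefore be
\[
\overline{\mathbf X}\to\mathbb P(V)\times\mathbb P(V^*),\qquad(\mathcal B_0,\mathcal B_1,\mathcal B_2)\mapsto (L,H).
\]
Its image consists of the pairs with \(L\subset H\) and \(F(L)\subset H\). In coordinates these are \(f_0=\sum x_iy_i=0\) and \(f_1=\sum x_i^py_i=0\), since \(F\) acts on \(x\) by raising coordinates to the \(p\)-th power (\(F\) is the \(k\)-structure). Conversely, every point of \(V(f_0,f_1)\) gives a chain \((L\subset H),(F(L)\subset H),(F(L)\subset F(H))\) satisfying the relative-position conditions. Hence the map is a bijection on \(\bar k\)-points onto \(V(f_0,f_1)\), and its inverse is given by the evidently algebraic formulas above, so it is a morphism.

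To upgrade this to an isomorphism of schemes, I will show \(V(f_0,f_1)\) is a reduced smooth surface. First, it is a complete intersection of dimension two in the fourfold \(\mathbb P^2\times\mathbb P^2\). Indeed, over a fixed \([x]\), the fibre is the set of \(y\) killing both \(x\) and \(x^{(p)}\). This fibre is a single point when \(x\) is not \(k\)-rational, since then \(x\) and \(x^{(p)}\) are independent. It is a line \(\mathbb P^1\) when \(x\in\mathbb P^2(k)\). This also exhibits \(V(f_0,f_1)\) as \(\mathrm{Bl}_{\mathbb P^2(k)}\mathbb P^2\).

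Smoothness is the main obstacle. I would check the Jacobian of \((f_0,f_1)\) directly, using \(\partial f_1/\partial x_i=0\) and \(\partial f_1/\partial y_i=x_i^p\). The Jacobian in the \(y\)-variables has rows \(x\) and \(x^{(p)}\), which have rank two away from rational \(x\). At a rational \(x\), I would work in an affine chart, for example \(x=(1,0,0)\), where the \(x\)-partials of \(f_0\) (namely \(y_1,y_2\)) together with \(\partial_y f_1\) give rank two. Then \(V(f_0,f_1)\) is smooth, hence reduced, and it is irreducible because the bijection above is birational onto the irreducible \(\overline{\mathbf X}\). A bijective morphism between smooth varieties in characteristic \(p\) could still be purely inseparable, so the argument needs an explicit inverse morphism. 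Here the inverse \((L,H)\mapsto\) the chain is explicit and algebraic, which settles the isomorphism; alternatively, one can appeal to Zariski's Main Theorem once birationality is established on the open stratum \(\mathbf X\). Finally, the open stratum \(\mathbf X(w)=\Omega_V\) corresponds to \(y\) determined as \(x\times x^{(p)}\), matching Proposition~\ref{split:Dellusdeflemma}.
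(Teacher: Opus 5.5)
Your proof is correct, but it takes a different route from the paper. The paper gives no argument of its own and cites \cite[\S5, Proposition~5.2]{langerBirationalGeometryCompactifications2019}. You instead derive the equations from the Deligne--Lusztig chain model recalled in Section~\ref{sec:smooth-compactification-boundary}. The key step is your use of the Bruhat table to force \(\mathcal B_1=(F(L)\subset H)\). This makes projection to \(\mathcal B_0\) a closed embedding onto the naive closure \(\{F(L)\subseteq H\}\), and turns the two relative-position conditions into \(f_0=f_1=0\). Your version is self-contained and shows why no further blow-up appears for this Coxeter element. The citation is shorter and brings Langer's extra structure, namely the blow-up and wonderful-compactification description. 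You only assert that description, and the fibre count alone does not prove it, but it is not part of the statement.

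Two simplifications are available.

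\begin{enumerate}
\item The smoothness detour is unnecessary. The inverse
\[
(x,y)\mapsto\bigl((x,y),(x^{(p)},y),(x^{(p)},y^{(p)})\bigr)
\]
is defined on \(T\)-points of \(V(f_0,f_1)\) for every test scheme \(T\), even if \(V(f_0,f_1)\) were non-reduced. The three incidences it needs are exactly \(f_0=0\), \(f_1=0\) and \(f_0^p=0\). Both composites are therefore the identity functorially.
\item If you do want smoothness, the Jacobian needs no case split over rational and non-rational \(x\). The row of \(f_1\) is \((0\mid x^{(p)})\neq0\), and the row of \(f_0\) is \((y\mid x)\) with \(y\neq0\). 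These rows are always independent, so the rank is two at every point.
\end{enumerate}
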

    
    \begin{proof}
    See \cite[\S5, Proposition~5.2]{langerBirationalGeometryCompactifications2019}.
    \end{proof}
    
    \bigskip
    
    This is the wonderful compactification of the hyperplane arrangement in
    the sense of De Concini--Procesi
    \cite{deconciniWonderfulModelsSubspace1995}.  It is the blow-up of
    \(\mathbb P^2\) at all \(k\)-rational points
    \cite[\S5]{langerBirationalGeometryCompactifications2019}.
    
    \bigskip
    
    Recall that \(V=\mathbb F_p^3\).  Let \(\pi_1\) and \(\pi_2\) be the
    two projections from \(\overline {\mathbf{X}}\subset\mathbb P^2\times\mathbb P^2\).
    The morphism \(\pi_1\colon\overline {\mathbf{X}}\to\mathbb P^2\) is the blow-up at
    the finite set \(\mathbb P^2(\mathbb F_p)\).
    
    \bigskip
    
    We denote by \( \mathcal{L}(V) \) the set of all proper \( k \)-linear subspaces of \( \mathbb{P}(V) \). Every \( k \)-linear subspace \( L \in \mathcal{L}(V) \) is of the form \( \mathbb{P}(V/W) \) for some \( k \)-linear subspace \( W \subset V \). For \( L = \mathbb{P}(V/W) \subseteq \mathbb{P}(V) \) we set 
    \[
    L^{\perp} = \mathbb{P}(W^{*}) \subseteq \mathbb{P}(V^{*}). 
    \]

    If \( \eta_{L} \) denotes the generic point of \( L \), then we set
    \[
    E_{L} := \overline{\pi_{1}^{-1}(\eta_{L})} \subset \overline{\mathbf{X}}.
    \]
    
    \medskip
    
    We use the following description from
    \cite[\S5, Lemma~5.1]{langerBirationalGeometryCompactifications2019}.
    
    \medskip
    
    \begin{proposition}
        For any \( L \in \mathcal{L}(V) \) we have
    \[
    E_L = L \times_k L^{\perp} \quad \text{and} \quad \pi_{1}^{-1}(L) = \bigcup_{\substack{L' \in \mathcal{L}(V) \\ L' \subset L}} E_{L'}.
    \]
    In particular, each \(E_L\) is a smooth curve over \(k\), and
    \(\pi_1^{-1}(L)\) is of pure dimension \(1\).
    \end{proposition}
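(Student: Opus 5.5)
The plan is to verify both assertions directly from the bi-projective description \(\overline{\mathbf{X}}=\operatorname{V}(f_0,f_1)\subset\mathbb P(V)\times\mathbb P(V^*)\) of Langer's proposition. Concretely, I will show that a point \((x,y)\) lies over the generic point of a \(k\)-rational subspace \(L\) exactly when \(y\) ranges over \(L^\perp\).

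\textbf{Step 1: the fibres of \(\pi_1\).} Fix \(x\in\mathbb P(V)\). The fibre \(\pi_1^{-1}(x)\) is the linear subspace of \(\mathbb P(V^*)\) cut out by the two linear forms in \(y\):
\[
\textstyle\sum_i x_iy_i=0,\qquad \sum_i x_i^py_i=0.
\]
These forms are proportional if and only if \(x\) and \(F(x)\) span the same line. By Lang's theorem this happens exactly when \(x\in\mathbb P^2(k)\). So the fibre is a single point if \(x\notin\mathbb P^2(k)\), and a \(\mathbb P^1\) if \(x\in\mathbb P^2(k)\). More precisely, for \(x=\mathbb P(V/W)\) with \(W\) a \(k\)-hyperplane, the fibre is the line \(\mathbb P(W^*)\), i.e.\ \(\{x\}^\perp\). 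This recovers the description of \(\pi_1\) as the blow-up at the \(k\)-points.

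\textbf{Step 2: the two cases of \(L\).} \(L\) is either a \(k\)-point or a \(k\)-line.

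If \(L\) is a \(k\)-point, then \(\eta_L=L\), and Step 1 gives \(E_L=\pi_1^{-1}(L)=L\times L^\perp\cong\mathbb P^1\).

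If \(L\) is a \(k\)-line, then \(L^\perp\) is a single \(k\)-point \(y_L\), the equation of \(L\). The generic point \(\eta_L\) is not \(k\)-rational, so by Step 1 its fibre is one point. I will check that this point is \((\eta_L,y_L)\), using two facts:
\begin{itemize}
\item \(y_L\) is \(k\)-rational, so \(\sum x_i^p (y_L)_i=\bigl(\sum x_i(y_L)_i\bigr)^p\);
\item hence both \(f_0\) and \(f_1\) vanish on \(L\times\{y_L\}\).
\end{itemize}
The closure of the fibre is therefore \(L\times L^\perp\cong\mathbb P^1\). In both cases \(E_L\) is a product of a projective space with a point, hence a smooth curve.

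\textbf{Step 3: the preimage \(\pi_1^{-1}(L)\).} Set-theoretically, \(\pi_1^{-1}(L)\) is the union of three kinds of fibres:
\begin{itemize}
\item the fibres over the non-rational points of \(L\), which make up \(L\times L^\perp\) minus finitely many points when \(L\) is a line;
\item the exceptional lines \(E_{P}\) over the \(k\)-points \(P\subset L\);
\item the isolated points \(x\times y_L\) over those \(P\). These already lie in both \(L\times L^\perp\) and \(E_P\), because \(y_L\in P^\perp\).
\end{itemize}
Thus \(\pi_1^{-1}(L)=\bigcup_{L'\subset L}E_{L'}\) as sets. Each \(E_{L'}\) is a curve, so the preimage has pure dimension one.

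\textbf{Main obstacle.} The only delicate point is whether the equality should hold scheme-theoretically or only on reduced structures. I would state it for reduced subschemes, which suffices for the later intersection computations. If needed, the scheme structure can be checked in the local blow-up chart at a \(k\)-point, where \(\pi_1\) is the standard blow-up and the multiplicities are visible.
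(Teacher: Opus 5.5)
Your proof is correct, but it does not follow the paper. The paper gives no argument of its own for this statement; it imports it from Langer's Lemma~5.1. You instead verify it directly from the two bihomogeneous equations $f_0,f_1$.

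Your key observation is that the fibre of $\pi_1$ over a point $x$ is cut out by two linear forms in $y$, with coefficient vectors $x$ and $x^{(p)}$. So the fibre is the single point $[x\times x^{(p)}]$ unless $x$ is $k$-rational, in which case the two equations coincide and the fibre is the line $x^{\perp}$. Combined with $f_1(x,y_L)=f_0(x,y_L)^p$ for the rational covector $y_L$ of a $k$-line, this gives both identities and the dimension statement in a few lines. It also makes visible why $\pi_1$ is bijective away from $\mathbb P^2(k)$. The citation, by contrast, places the statement inside Langer's general treatment of the compactification and avoids any computation.

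A few small remarks:
\begin{itemize}
\item You do not need Lang's theorem. If $x^{(p)}=cx$, then $\lambda x$ has coordinates in $\mathbb F_p$ for any $\lambda$ with $\lambda^{p-1}=c^{-1}$, after a finite extension of the residue field. That is all you use.
\item Fibre data alone do not show that $\pi_1$ is the blow-up. The sentence claiming to recover this should be dropped; it is not needed here.
\item The third bullet of Step~3 is not a separate family of fibres. The points $(P,y_L)$ are points of $E_P$ that also lie on $L\times L^{\perp}$.
\item The scheme-theoretic worry can be settled directly. For a $k$-point $L$ the two equations coincide, so the fibre is already the reduced line. For a $k$-line the total transform is reduced. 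This follows from the relation $t=vw\beta$ in the chart computation of Lemma~\ref{lem:split-local-theta-factorization}, which gives $\operatorname{div}(t)=\{v=0\}+\{w=0\}$ near an incidence point, and from the fact that the multiplicity of each component is determined at its generic point.
\end{itemize}
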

    Therefore,
    \(
    \pi_{1}^{-1}(\mathbf{X})
    =\overline {\mathbf{X}}\setminus
    \bigcup_{L\in\mathcal L(V)}E_L.
    \)
    
    \begin{proposition}[The split boundary and its incidence count]
    \label{prop:split-boundary-count}
    Put \(N=p^2+p+1\).  The boundary
    \(\overline {\mathbf{X}}\setminus \mathbf{X}\) has \(2N\) irreducible components:
    \[
    \overline {\mathbf{X}}\setminus \mathbf{X}
    =
    \bigcup_{P\in\mathbb P^2(\mathbb F_p)}E_P
    \ \cup\!
    \bigcup_{L\in(\mathbb P^2)^\vee(\mathbb F_p)}\widetilde L.
    \]
    Here \(E_P\) is the exceptional curve over the rational point \(P\),
    and \(\widetilde L\) is the strict transform of the rational line
    \(L\).  Their self-intersections are
    \[
    E_P^2=-1,
    \qquad
    \widetilde L^{2}=1-(p+1)=-p.
    \]
    Moreover, \(E_P\) and \(\widetilde L\) meet transversely exactly when
    \(P\in L\).  There are \(N(p+1)\) such incidences.
    \end{proposition}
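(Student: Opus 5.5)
We identify the boundary with the \(\mathbb F_p\)-rational linear subspaces and then read off the intersection numbers from the blow-up description \(\pi_1\colon\overline{\mathbf{X}}\to\mathbb P^2\) of \(\mathbb P^2\) at the \(N\) points of \(\mathbb P^2(\mathbb F_p)\), as recalled from Langer.

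\textbf{The components.} By the displayed consequence of Langer's lemma,
\[
\overline{\mathbf{X}}\setminus\mathbf{X}=\bigcup_{L\in\mathcal L(V)}E_L .
\]
The proper \(k\)-linear subspaces of \(\mathbb P(V)\) are of two kinds.
\begin{itemize}
\item[(i)] The rational points \(P\), of which there are \(N\). Here \(E_P=P\times P^\perp\cong\mathbb P^1\) is the fibre \(\pi_1^{-1}(P)\), i.e.\ the exceptional curve.
\item[(ii)] The rational lines \(L\), of which there are \(N\) by duality. Here \(E_L=L\times L^\perp\), and \(L^\perp\) is a single point of \(\mathbb P(V^*)\). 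Hence \(E_L\) maps isomorphically onto \(L\) under \(\pi_1\), and it is the closure of \(\pi_1^{-1}(\eta_L)\), i.e.\ the strict transform \(\widetilde L\).
\end{itemize}
Each \(E_L\) is an irreducible curve (a product of a projective space with a point or with \(\mathbb P^1\)), and distinct subspaces give distinct curves. So the boundary has exactly \(2N\) irreducible components.

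\textbf{Self-intersections.} \(E_P^2=-1\) is the standard fact for the exceptional curve of a point blow-up on a smooth surface; the blown-up points are distinct, so the blow-ups are independent. For a line \(L\) we have
\[
\pi_1^*L=\widetilde L+\sum_{P\in L(\mathbb F_p)}E_P ,
\]
since \(L\) is smooth, hence of multiplicity one, at each of its \(p+1\) rational points. Using \(\pi_1^*L\cdot E_P=0\), \((\pi_1^*L)^2=L^2=1\) and \(\widetilde L\cdot E_P=1\) for \(P\in L\), we get
\[
\widetilde L^2=(\pi_1^*L)^2-2\sum_{P\in L}\pi_1^*L\cdot E_P+\sum_{P\in L} E_P^2=1-(p+1)=-p .
\]

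\textbf{Incidences.} If \(P\notin L\), then \(\widetilde L\) and \(E_P\) have disjoint images under \(\pi_1\), so they do not meet. If \(P\in L\), then \(\widetilde L\cap E_P\) is the single point of \(E_P\cong\mathbb P(T_P\mathbb P^2)\) given by the tangent direction of \(L\). In the product description this is the point \((P,L^\perp)\in P\times P^\perp\), which makes sense because \(L^\perp\in P^\perp\) exactly when \(P\in L\). The intersection is transverse because \(\widetilde L\cdot E_P=1\), computed from \(\pi_1^*L\cdot E_P=0\) together with \(E_P^2=-1\) and the fact that the \(E_{P'}\) are mutually disjoint; equivalently, the strict transform of a smooth curve through a blown-up point meets the exceptional curve transversally. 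Finally, the number of pairs \((P,L)\) with \(P\in L\) is \(N\) lines times \(p+1\) points on each, giving \(N(p+1)\).

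The only step needing care is the identification of \(E_L\) with \(\widetilde L\) and of \(E_P\) with the exceptional curve. Both follow directly from Langer's description \(E_L=L\times L^\perp\); the rest is standard blow-up intersection theory.
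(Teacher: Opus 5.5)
Your proposal is correct and follows the same route as the paper. Both arguments count the $N$ rational points and the $N$ rational lines, use $E_P^2=-1$ for a point blow-up, obtain $\widetilde L^2=1-(p+1)=-p$ because each rational line passes through $p+1$ blown-up points, and count $N(p+1)$ incidences. Your version adds detail the paper leaves implicit: the identification of the components via $E_L=L\times L^\perp$, the explicit pullback formula $\pi_1^*L=\widetilde L+\sum_{P\in L}E_P$, and the deduction of transversality from $\widetilde L\cdot E_P=1$.
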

    
    \begin{proof}
    There are \(N\) rational points and, by projective duality, \(N\)
    rational lines in \(\mathbb P^2_{\mathbb F_p}\).  Blowing up a point
    produces an exceptional curve of self-intersection \(-1\).  A rational
    line has self-intersection \(1\) in \(\mathbb P^2\) and contains exactly
    \(p+1\) rational points. Blowing up those points therefore changes its
    self-intersection to \(1-(p+1)=-p\).  The final assertion follows from the fact that every one of the
    \(N\) points lies on \((p+1)\)-many rational lines.
    \end{proof}
    
    \subsection{The Bonnaf\'e--Rouquier slice in the split case}
    \label{sec:br-slice-split}
    
    We now evaluate the slice mechanism of
    Section~\ref{sec:br-slice-singularities} for the split Frobenius.  On
    \(Y( T)\), the map \(F\) is multiplication by \(p\).  Put
    \[
    N=p^2+p+1.
    \]
    The solutions of \eqref{eq:br-cocharacter-equation} are
    \begin{equation}
    \lambda_1=(p+1,-1,-p),
    \qquad
    \lambda_2=(1,p,-p-1),
    \qquad
    m_1=m_2=N.
    \label{eq:br-split-lambdas}
    \end{equation}
    Indeed, since
    \[
    w(e_1)=e_2,
    \qquad
    w(e_2)=e_3,
    \qquad
    w(e_3)=e_1,
    \]
    we have
    \[
    \lambda_1-pw(\lambda_1)=N(e_1-e_2)
    \]
    and
    \[
    \lambda_2-pw(\lambda_2)=N(e_1-e_3)
    =N s_1(\alpha_2^\vee).
    \]
    Both \(\lambda_i\) are primitive, so the torsion-free conditions in
    \eqref{eq:br-cocharacter-equation} are satisfied.
    
    For \((z_1,z_2)\in(\mathbb G_m)^2\),
    \[
    \lambda_1(z_1)\lambda_2(z_2)
    =
    \diag\!\left(
    z_1^{p+1}z_2,
    z_1^{-1}z_2^p,
    z_1^{-p}z_2^{-p-1}
    \right).
    \]
    Substitution in \eqref{eq:br-deep-stabilizer} gives
    \begin{equation}
    K_{12}
    =
    \left\{
    (\zeta,\zeta^{-(p+1)}):\zeta^N=1
    \right\}
    \simeq\mu_N.
    \label{eq:br-split-stabilizer}
    \end{equation}
    Consequently, the action on the normal slice is
    \[
    \zeta(x,y)
    =
    (\zeta x,\zeta^{-(p+1)}y)
    =
    (\zeta x,\zeta^{p^2}y),
    \]
    because \(-(p+1)\equiv p^2\pmod N\).  Luna's theorem therefore gives
    the cyclic quotient singularity
    \begin{equation}
    \boxed{
    \frac1{p^2+p+1}(1,p^2)
    }.
    \label{eq:br-split-quotient-type}
    \end{equation}
    
    This agrees with the normalized toric cone computed below.  That cone is
    \[
    \Cone\bigl((1,0),(p+1,N)\bigr).
    \]
    With \(e_2=(1,0)\) and \(e_1=(1,1)\), its second primitive ray is
    \[
    (p+1,N)=Ne_1-p^2e_2.
    \]
    Thus its cyclic quotient parameters are also \((N,p^2)\).  Equivalently,
    since \(p^3\equiv1\pmod N\), the quotient may be written as
    \(\frac1N(1,p)\); this reverses the exceptional chain.  Finally,
    \[
    \frac{N}{p^2}
    =
    [[\underbrace{2,\ldots,2}_{p-1\text{ terms}},p+2]],
    \]
    which is the continued fraction appearing in the split resolution.

    \subsection[Local study of the normalization]{\texorpdfstring{Local study of the normalization \(\overline{\mathbf{Y}}_{\ell}\)}{Local study of the normalization}}\label{sec:split-normalization}

    The global surface \(\overline {\mathbf{Y}}_\ell(w)\) is the normalization of
    \(\overline {\mathbf{X}}(w)\) in the function field of a fixed component
    \(\mathbf{Y}_\ell(w)\).  We now compute its completed local rings over the
    zero-dimensional boundary strata.  These local calculations are used to determine
    the type of singularities that occur in \(\overline{\mathbf{Y}}_{\ell}(w)\).
    Consider the affine chart
    \[
    U_{10}=\{x_1\neq0,\ y_0\neq0\}
    \subset\mathbb P^2\times\mathbb P^2.
    \]
    The six charts \(U_{ij}=\{x_i\neq0,\ y_j\neq0\}\), with
    \(i\neq j\), cover \(\overline {\mathbf{X}}\). On \(U_{10}\), put
    \[
    t=\frac{x_0}{x_1},\qquad v=\frac{x_2}{x_1},\qquad
    r=\frac{y_1}{y_0},\qquad w=\frac{y_2}{y_0}.
    \]
    The equations of \(\overline {\mathbf{X}}\) become
    \[
    t+r+vw=0,
    \qquad
    t^p+r+v^pw=0.
    \]
    Eliminating \(r\) gives
    \begin{equation}\label{eq:split-U10-ring}
    R=k[U_{10}\cap\overline {\mathbf{X}}]
    =
    \frac{k[t,v,w]}{(t-t^p+vw-v^pw)}.
    \end{equation}
    
    % The homogeneous arrangement polynomial must be trivialized in the
    % \(x_1\neq0\) chart.  Accordingly, throughout the following local
    % calculation we write
    % \begin{equation}\label{eq:split-theta10}
    % \theta_{10}:=\Theta(t,1,v).
    % \end{equation}
    % This is the correct chart representative of \(\Theta\); the expression
    % \(\Theta(1,t,v)\) belongs to the different chart \(x_0\neq0\).

    % \bigskip

    The Kummer presentation of Proposition~\ref{split:Ynotirrprop} is
    written in the chart \(x_0\neq0\): there
    \(\theta=\Theta(1,x_1/x_0,x_2/x_0)\), and the Kummer generator is
    \(z=1/X_0\).  Near the boundary points studied below we work instead in
    the chart \(x_1\neq0\), so both the defining polynomial and the
    Kummer generator have to be changed.  With
    \(t=x_0/x_1\) and \(v=x_2/x_1\) as above, put
    \begin{equation}\label{eq:split-theta10}
    \theta_{10}:=\Theta(t,1,v).
    \end{equation}
    This is the chart representative of \(\Theta\) for \(x_1\neq0\); the
    expression \(\Theta(1,t,v)\) belongs to the chart \(x_0\neq0\).  Since
    \(\Theta\) is homogeneous of degree \(N=p^2+p+1\),
    \[
    \Theta(x_0,x_1,x_2)
    =x_0^{N}\,\Theta\!\Bigl(1,\frac{x_1}{x_0},\frac{x_2}{x_0}\Bigr)
    =x_1^{N}\,\Theta\!\Bigl(\frac{x_0}{x_1},1,\frac{x_2}{x_1}\Bigr),
    \]
    and hence, on the overlap of the two charts,
    \begin{equation}\label{eq:split-theta-chart-change}
    \theta=\Bigl(\frac{x_1}{x_0}\Bigr)^{N}\theta_{10}=t^{-N}\theta_{10}.
    \end{equation}
    Accordingly, we replace the Kummer generator \(z=1/X_0\) by
    \begin{equation}\label{eq:split-z10}
    z_{10}:=zt=\frac{1}{X_1}.
    \end{equation}
    The linear forms \(x_0\) and \(x_1\) are factors of \(\Theta\), so
    \(t\) is a unit on \(\mathbf{X}\) and \(z_{10}\) is a unit on \(\mathbf{Y}_\ell\).  By
    \eqref{eq:split-theta-chart-change}, the defining equation
    \(z^N=\ell\theta\) of \(\mathbf{Y}_\ell\) becomes
    \begin{equation}\label{eq:split-kummer-U10}
    z_{10}^{N}=\ell\,\theta_{10}.
    \end{equation}
    Equivalently, \eqref{eq:split-kummer-U10} is obtained by repeating the
    proof of Proposition~\ref{split:Ynotirrprop} with \(X_1\) in place of
    \(X_0\), using \(\Delta_p(X_0,X_1,X_2)=X_1^{N}\,
    \theta_{10}(X_0/X_1,X_2/X_1)\).  Since \(t\) is pulled back from
    \(\overline {\mathbf{X}}\), it is fixed by \(\sigma\), and therefore
    \[
    \sigma^*z_{10}=\bar\zeta\,z_{10},
    \]
    exactly as for \(z\).  All local calculations below use the generator
    \(z_{10}\) and the equation \eqref{eq:split-kummer-U10}. We proceed to construct the normalization of the local Kummer extension
    \[
    \frac{R[z_{10}]}{(z_{10}^{N}-\ell\theta_{10})},
    \qquad \ell\in\mathbb F_p^\times.
    \]

    % We proceed to construct the normalization of the local Kummer extension
    % \[
    % \frac{R[z]}{(z^{p^2+p+1}-\ell\theta_{10})},
    % \qquad \ell\in\mathbb F_p^\times.
    % \]
    
    % \bigskip

    \medskip

    \begin{lemma}\label{lem:split-local-theta-factorization}
    Let
    \[
    P=([0:1:0],[1:0:0])\in\overline {\mathbf{X}}(k).
    \]
    In the chart \(U_{10}\), the point \(P\) is the origin
    \(t=v=r=w=0\).  The elements
    \[
    u:=w,
    \qquad
    v:=\frac{x_2}{x_1}
    \]
    form a regular system of parameters at \(P\), and there is a unit
    \(\varepsilon\in\widehat{\mathcal O}_{\overline {\mathbf{X}},P}^{\times}\) such
    that
    \begin{equation}\label{eq:split-local-theta-factorization}
    \theta_{10}=\varepsilon v^{p+1}u.
    \end{equation}
    More explicitly, if
    \[
    E(t,v):=\prod_{a,b\in\mathbb F_p}(1+at+bv)
    \]
    and
    \[
    \beta:=-\frac{1-v^{p-1}}{1-t^{p-1}},
    \]
    then \(t=vw\beta\) in the completed local ring and one may take
    \begin{equation}\label{eq:split-explicit-local-unit}
    \varepsilon
    =E(t,v)(1-v^{p-1})(1+w^{p-1}\beta^p).
    \end{equation}
    The same factorization, up to multiplication of \(\varepsilon\) by a
    nonzero scalar, holds at every \(k\)-rational point of \(\overline {\mathbf{X}}\).
    \end{lemma}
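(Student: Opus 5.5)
Work in the chart \(U_{10}\), where \(\overline{\mathbf{X}}\cap U_{10}=\operatorname{Spec}R\) with \(R\) as in \eqref{eq:split-U10-ring}. The plan is to factor \(\theta_{10}=\Theta(t,1,v)\) into the normalized linear forms indexed by \(\mathbb P^2(k)\), sort them by whether they vanish at \(P\) in \(\mathbb P^2\), and rewrite the vanishing ones in the local parameters \(u=w\) and \(v\).

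\textbf{Step 1: the point and its parameters.} At \(P\) we have \(t=v=0\) and \(y=[1:0:0]\), so \(r=w=0\), and \(P\) is the origin. The defining equation of \(R\) is
\[
t(1-t^{p-1})+vw(1-v^{p-1})=0.
\]
Its linear term is \(t\), so it is smooth at the origin. Hence \(\widehat{\mathcal O}_{\overline{\mathbf X},P}=k[[v,w]]\), and in that ring \(t\) is the unique solution of the equation. Solving gives
\[
t=vw\beta,\qquad \beta=-\frac{1-v^{p-1}}{1-t^{p-1}},
\]
where \(1-t^{p-1}\) is a unit. This is a fixed-point equation for \(t\), and it converges since \(t\in(v,w)\). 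This proves the claims that \(u,v\) are regular parameters and that \(t=vw\beta\).

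\textbf{Step 2: sorting the linear forms.} Dehomogenize \(\Theta\) at \(x_1=1\). The factors are the forms \(a_0t+a_1+a_2v\) with normalized representatives. They split as follows.
\begin{itemize}
\item[(i)] Forms with \(a_1\neq0\) do not vanish at \(t=v=0\), so they are units. There are \(p^2\) of them, and after rescaling each is \(a_1(1+at+bv)\). Their product is therefore \(cE(t,v)\) with \(c\in k^\times\).
\item[(ii)] Forms with \(a_1=0\) vanish at \(P\). They are \(t\) and \(v+at\) for \(a\in k\), which is \(p+1\) forms, the rational lines through \([0:1:0]\).
\end{itemize}
The product of the forms in (ii) is \(t\prod_{a}(v+at)\). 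Substituting \(t=vw\beta\) turns this into
\[
vw\beta\cdot v^p\prod_a(1+aw\beta).
\]
Since \(\prod_{a\in k}(1+aX)=1-X^{p-1}\cdot(\text{sign})\), we have \(\prod_a(1+aX)=1-X^{p-1}\) up to the convention on signs. So the product equals \(v^{p+1}u\,\beta\,(1-w^{p-1}\beta^{p-1})\), up to sign.

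\textbf{Step 3: matching the unit.} The resulting unit has to be compared with the stated \eqref{eq:split-explicit-local-unit}. I expect this bookkeeping to be the main obstacle, because the stated \(\varepsilon\) has the factors \((1-v^{p-1})(1+w^{p-1}\beta^p)\). To match them I will use \(\beta(1-t^{p-1})=-(1-v^{p-1})\) together with \(t^{p-1}=v^{p-1}w^{p-1}\beta^{p-1}\), to trade \(\beta\) for \(-(1-v^{p-1})\) times the inverse of \(1-t^{p-1}\). The claim only requires \emph{some} unit \(\varepsilon\). So if the exact form differs, the essential content, \(\theta_{10}=\varepsilon v^{p+1}u\), still follows, because \(\beta\) and \(1-w^{p-1}\beta^{p-1}\) are visibly units. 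I would recheck the displayed formula against this derivation, adjusting scalars in \(k^\times\) as needed.

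\textbf{Step 4: other rational points.} \(\mathrm{GL}_3(\mathbb F_p)\) acts on \(\overline{\mathbf X}\) transitively on the deepest stratum \(D_1\cap D_2\), the \(F\)-fixed flags, which are the \(k\)-points over which the local claim is made. It multiplies \(\Theta\) by a scalar, namely a determinant factor, since it permutes rational linear forms up to \(k^\times\). It also carries the chart parameters to regular parameters at the image point. Transporting the factorization therefore gives the same shape with \(\varepsilon\) changed by a nonzero scalar.
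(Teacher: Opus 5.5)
This is essentially the paper's proof. It uses the implicit function theorem at the origin of \(U_{10}\), splits the normalized forms of \(\theta_{10}\) by whether \(a_1=0\), rewrites the \(p+1\) vanishing forms in \(v,w\), and transports the result by the transitive action on rational flags. The only difference is the algebra for the vanishing product. You substitute \(t=vw\beta\) into each \(v+at\) and get \(v^{p+1}w\,\beta(1-w^{p-1}\beta^{p-1})\). The paper instead writes the product as \(-v(t^p-tv^{p-1})=-v(1-v^{p-1})(t+vw)\) and uses \(t+vw=t^p+v^pw\), which lands directly on the stated \(\varepsilon\).

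The Step~3 matching you defer closes exactly, with no scalar adjustment. First, \(\prod_{a\in k}(1+aX)=1-X^{p-1}\) holds exactly, with no sign ambiguity. Second, the constant in (i) is \(c=(\prod_{a_1\in k^\times}a_1)^p=-1\). Third, your identity \(\beta=-(1-v^{p-1})+v^{p-1}w^{p-1}\beta^p\) gives \(\beta(1-w^{p-1}\beta^{p-1})=-(1-v^{p-1})(1+w^{p-1}\beta^p)\). The two signs cancel, so \(\theta_{10}=E(1-v^{p-1})(1+w^{p-1}\beta^p)v^{p+1}w\), which is the stated formula.
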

    
    \begin{proof}
    Write
    \[
    f=t-t^p+vw-v^pw.
    \]
    At the origin, \(\partial f/\partial t=1\).  Thus the formal implicit
    function theorem applied to \eqref{eq:split-U10-ring} gives
    \[
    \widehat{\mathcal O}_{\overline {\mathbf{X}},P}\simeq k[[v,w]],
    \]
    so \(u=w\) and \(v=x_2/x_1\) are indeed a regular system of
    parameters.
    
    We next factor the defining polynomial in the correct chart.  From
    the definition
    \[
    \theta_{10}
    =
    \prod_{[a_0:a_1:a_2]\in\mathbb P^2(\mathbb F_p)}
    (a_0t+a_1+a_2v),
    \]
    the factors that vanish at \(P\) are precisely those with \(a_1=0\).
    They are
    \[
    t
    \quad\text{and}\quad
    at+v\qquad(a\in\mathbb F_p).
    \]
    Their product is \(-v(t^p-tv^{p-1})\). The factors with \(a_1\neq0\)
    have product \(-E(t,v)\): renormalizing them to \(a_1=1\) contributes
    \((\prod_{c\in\mathbb F_p^\times}c)^p=-1\).
    The polynomial \(E(t,v)\) is a unit because \(E(0,0)=1\).  Using
    \[
    \prod_{c\in\mathbb F_p}(T+cQ)=T^p-TQ^{p-1},
    \]
    we obtain the exact identity
    \begin{equation}\label{eq:split-theta-first-factorization}
    \theta_{10}
    =E(t,v)v(t^p-tv^{p-1}).
    \end{equation}
    
    The equation \(f=0\) gives
    \[
    t^p-t=vw(1-v^{p-1}),
    \]
    and hence
    \begin{equation}\label{eq:split-theta-middle-factorization}
    t^p-tv^{p-1}
    =(1-v^{p-1})(t+vw).
    \end{equation}
    It also gives
    \[
    t(1-t^{p-1})=-vw(1-v^{p-1}).
    \]
    Both parenthesized factors are units at \(P\), so
    \[
    t=vw\beta,
    \qquad
    \beta=-\frac{1-v^{p-1}}{1-t^{p-1}}\in
    \widehat{\mathcal O}_{\overline {\mathbf{X}},P}^{\times}.
    \]
    Finally, using \(t+vw=t^p+v^pw\), we find
    \begin{equation}\label{eq:split-t-plus-vw}
    t+vw
    =v^pw(1+w^{p-1}\beta^p).
    \end{equation}
    Substituting \eqref{eq:split-theta-middle-factorization} and
    \eqref{eq:split-t-plus-vw} into
    \eqref{eq:split-theta-first-factorization} yields
    \[
    \theta_{10}
    =
    E(t,v)(1-v^{p-1})(1+w^{p-1}\beta^p)v^{p+1}w
    =\varepsilon v^{p+1}u.
    \]
    Each of the three factors defining \(\varepsilon\) has nonzero constant
    term (in fact, constant term \(1\)), so \(\varepsilon\) is a unit.  This
    proves the claimed factorization at \(P\).
    
    The group \(\operatorname{PGL}_3(\mathbb F_p)\) acts transitively on the
    \(\mathbb F_p\)-rational flags parametrized by \(\overline {\mathbf{X}}(\mathbb
    F_p)\).  It permutes the linear factors of \(\Theta\), changing their
    product only by a nonzero scalar.  Transporting the calculation above
    therefore proves the final assertion.
    \end{proof}
    
    For the later toric notation, set \(s:=u=w\) and retain the coordinate
    \(v=x_2/x_1\).  If
    \(N=p^2+p+1\), then \(p\nmid N\) and
    \(\varepsilon\equiv1\pmod{(u,v)}\).  Hensel's lemma therefore gives a
    unique \(\eta\in1+(u,v)\) with \(\eta^N=\varepsilon\).  Replacing the
    Kummer coordinate by \(z/\eta\) removes the unit.  The remaining
    nonzero component scalar can be normalized to one up to an isomorphism
    of complete local \(k\)-algebras by rescaling a boundary parameter. Thus, we get the following equation:
    \begin{equation}\label{eq:split-local-toric-equation}
    z^N=v^{p+1}s.
    \end{equation}
    
    \subsubsection{Local calculations}
    \label{sec:split-local-to-global}
    
    The wonderful compactification \(\overline {\mathbf{X}}(w)\) is the blow-up of
    \(\mathbb P^2\) at all its \(\mathbb F_p\)-rational points
    \cite{langerBirationalGeometryCompactifications2019}.  Its boundary has
    one exceptional component \(E_P\) for every rational point \(P\) and one
    component \(\widetilde L\), the strict transform, for every rational line
    \(L\).  An \(\mathbb F_p\)-rational point of \(\overline {\mathbf{X}}(w)\) is a
    zero-dimensional boundary stratum corresponding to an incidence
    \(P\in L\): the exceptional curve over \(P\) parametrizes tangent
    directions at \(P\), and its \(\mathbb F_p\)-points are precisely the
    \(p+1\) rational lines through \(P\).  Thus, in an \'{e}tale
    neighborhood of such a point, the boundary is a pair of transverse
    branches
    \[
    E_P=(v=0),
    \qquad
    \widetilde L=(s=0),
    \]
    and the completed cover has the Kummer equation
    \[
    z^N=v^{p+1}s.
    \]
    % A geometric point on only one boundary component has only one local
    % boundary branch, while a point of the open Deligne--Lusztig variety has
    % none.
    
    The following calculation explains why the normal
    eigenvalue is constant within each of the two boundary families.
    
    \begin{lemma}[Normal character along a ramified divisor]
    \label{lem:split-kummer-normal-character}
    Let \(K\subset L\) be the degree-\(N\) extension of function fields
    induced by a chosen component of the cover.  Let \(D\) be a prime
    divisor on the base, let \(t\) be a uniformizer at its generic point,
    and suppose that in \(L\)
    \[
    z^N=\varepsilon t^a,
    \qquad
    \sigma(z)=\bar\zeta z,
    \]
    where \(\varepsilon\) is a unit, \(\sigma\) fixes \(K\),
    \(\bar\zeta\) has order \(N\), and \(\gcd(a,N)=1\).  Then there is a
    unique prime divisor \(D'\) above \(D\), and it is totally ramified.  If
    \(ab\equiv1\pmod N\), then \(\sigma\) acts on a local parameter of
    \(D'\), and hence on \(I_{D'}/I_{D'}^2\), by
    \(\bar\zeta^{\,b}\).  In the convention of
    Theorem~\ref{thm:lefschetz-rr}, the normal eigenvalue is therefore
    \[
    \nu_{D'}=\bar\zeta^{\,-b}.
    \]
    The same eigenvalue occurs along the strict transform of \(D'\) under
    any equivariant resolution that is an isomorphism at its generic point.
    \end{lemma}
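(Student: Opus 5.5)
The plan is to work in the complete discrete valuation ring at the generic point of \(D\) and its extension. Let \(A=\widehat{\mathcal O}_{K,D}\) with uniformizer \(t\). Let \(L_{D'}\) be the completion of \(L\) at any place \(D'\) above \(D\). In \(L_{D'}\) we have \(z^N=\varepsilon t^a\). Taking valuations gives \(N\,\widetilde\nu(z)=e\,a\), where \(e\) is the ramification index. Since \(\gcd(a,N)=1\), we get \(N\mid e\). Now \(e\le\sum e_if_i=[L:K]=N\), so \(e=N\), \(f=1\), and there is a single place \(D'\), totally ramified. This is the same valuation argument used in the proof of Proposition~\ref{split:Ynotirrprop}. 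It also shows that \(\widetilde\nu(z)=a\) in the normalized valuation of \(D'\).

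Next we construct an explicit uniformizer and compute the action of \(\sigma\) on it. Choose integers \(b,c\) with \(ab-cN=1\), and set
\[
\pi:=z^{b}t^{-c}.
\]
Then \(\widetilde\nu(\pi)=ab-cN=1\), so \(\pi\) is a local parameter of \(D'\). Since \(\sigma\) fixes \(K\), and in particular fixes \(t\), we get \(\sigma(\pi)=\bar\zeta^{\,b}\pi\). The class of \(b\) modulo \(N\) is determined by \(ab\equiv1\pmod N\), so this eigenvalue is well defined. The choice of \(c\) does not matter. Choosing a different uniformizer \(\pi'=\eta\pi\) with \(\eta\) a unit changes nothing on \(I_{D'}/I_{D'}^2\). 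The reason is that \(D'\) is fixed pointwise, because the inertia group is all of \(\langle\sigma\rangle\) by total ramification. So \(\sigma\) acts trivially on the residue field, \(\eta\equiv\sigma(\eta)\) modulo \(\pi\), and the induced action on \(\pi\) modulo \(\pi^2\) is exactly multiplication by \(\bar\zeta^{\,b}\). We have to be careful here with the direction of the pullback versus the pushforward convention. Here \(\sigma\) acts on functions as \(\sigma^*\), as in \(\sigma^*z=\bar\zeta z\) earlier. So the action on the conormal line \(I_{D'}/I_{D'}^2\) is by the scalar \(\bar\zeta^{\,b}\).

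Then we translate to the convention of Theorem~\ref{thm:lefschetz-rr}. The normal space \(N_{D'}=(I/I^2)^\vee\) carries the dual character. The conormal eigenvalue is \(\nu^{-1}\), as in the formula \(\prod(1-[\rho_i]^{-1})\) for \(\lambda_{-1}(N^\vee)\). Hence \(\nu_{D'}=\bar\zeta^{-b}\). The step I expect to be the main (if mild) obstacle is making this sign and dual bookkeeping consistent with how \(\sigma\) acts on tangent versus cotangent vectors. I will check it against the point formula \eqref{eq:brauer-point}, where the \(\lambda_i\) are cotangent eigenvalues.

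Finally, let \(f\colon\widetilde Z\to Z\) be an equivariant resolution that is an isomorphism at the generic point of \(D'\). Then \(f\) induces an equivariant isomorphism of the local rings at the generic points of \(D'\) and of its strict transform. Hence the uniformizer \(\pi\) and its eigenvalue carry over unchanged. Since the normal eigenvalue is locally constant along a connected fixed curve, this generic eigenvalue is the \(\nu_D\) entering \eqref{eq:brauer-curve}.
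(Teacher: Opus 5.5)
Your proposal is correct and follows the paper's proof essentially step for step. You use the valuation computation forcing \(e=N\), which gives a unique, totally ramified \(D'\) of residue degree one. You then take the uniformizer \(\pi=z^bt^{-c}\) with \(ab-cN=1\), on which \(\sigma\) acts by \(\bar\zeta^{\,b}\); the paper writes \(z^bt^c\) with \(ab+Nc=1\). Finally, you note that a resolution which is an isomorphism at the generic point preserves this discrete valuation ring.

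Your two extra checks are correct details that the paper leaves implicit. First, the character on \(I_{D'}/I_{D'}^2\) does not depend on the chosen uniformizer, because \(\sigma\) acts trivially on the residue field. Second, dualizing the conormal character gives \(\nu_{D'}=\bar\zeta^{\,-b}\) in the Lefschetz--Riemann--Roch convention.
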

    
    \begin{proof}
    Let \(v'\) extend the valuation of \(D\), and let \(e\) be its
    ramification index.  From \(Nv'(z)=ae\) and \(\gcd(a,N)=1\), one
    obtains \(N\mid e\).  Since \(e\leq[L:K]=N\), we have \(e=N\); the
    fundamental inequality then implies that \(D'\) is unique and has
    residue degree one.
    
    Choose \(c\in\mathbb Z\) such that \(ab+Nc=1\).  Then
    \(\pi=z^bt^c\) has valuation one, so it is a local parameter at
    \(D'\), and
    \[
    \sigma(\pi)=\bar\zeta^{\,b}\pi.
    \]
    Reducing modulo \(\pi^2\) gives the claimed character.  An equivariant
    resolution that is an isomorphism at the generic point preserves this
    discrete valuation ring and its character.
    \end{proof}

    In the present cover, every rational line occurs with multiplicity one
    in the boundary divisor, whereas the exceptional divisor over a
    rational point occurs with multiplicity \(p+1\), because exactly \(p+1\)
    rational lines pass through that point.  Consequently, at the generic
    points of the two boundary types one has
    \[
    a_{\widetilde L}=1,
    \qquad
    a_{E_P}=p+1.
    \]
    Lemma~\ref{lem:split-kummer-normal-character} therefore gives
    \[
    \nu_{\widetilde L'}=\bar\zeta^{-1},
    \qquad
    \nu_{\widetilde E'_P}=\bar\zeta^{p},
    \]
    because
    \[
    (-p)(p+1)+N=1.
    \]
    This proves the constancy across all curves in either family.  It also shows that the same
    Kummer multiplicities occur at every incidence.  Along an individual
    fixed curve, the eigenvalue is constant for the intrinsic reason that
    \(\sigma\) acts over the identity on its normal line bundle, hence by an
    invertible regular function and therefore is constant on a connected projective integral curve.
    
    % The self-intersection of a complete curve, in contrast, is not
    % determined by an \'{e}tale neighborhood of one point. 
    % On
    % \(\overline {\mathbf{X}}(w)\), one also has
    % \[
    % E_P^2=-1,
    % \qquad
    % \widetilde L^2=1-(p+1)=-p.
    % \]
    % The self-intersections of their lifts in \(\overline{\mathbf{Y}}(w)\) are obtained later from the
    % global pullback divisors and the projection formula.  Since every point
    % lies on \(p+1\) rational lines, every line contains \(p+1\) rational
    % points, and the same toric chain occurs at each incidence, those global
    % calculations are independent of \(P\) and \(L\) within the corresponding
    % family.
    
    \bigskip

    \subsection{Toric normalization, resolution, and fixed locus}
    \label{sec:split-toric-resolution}
    
    \subsubsection{Interpretation of the local model as a toric surface}
    
    We use the semigroup, normalization, and resolution conventions collected
    in Section~\ref{sec:toric-surface-preliminaries}.  Put
    \(N=p^2+p+1\).
    
    Introduce the affine algebraic surface
    \[
    Y_{\mathscr A}
    :=
    \operatorname{Spec}\frac{k[v,s,z]}{(z^N-v^{p+1}s)}.
    \]
    Its completion at the origin is the scalar-one algebra \(A'_P\) of the
    preceding subsection.  Let
    \[
    \nu_\sigma\colon
    U_\sigma:=Y_{\mathscr A}^{\mathrm{norm}}
    \longrightarrow Y_{\mathscr A}
    \]
    be its normalization, and let \(o_\sigma\in U_\sigma\) be the point
    above the origin.  Compatibility of normalization with completion give
    %and
    % \eqref{eq:split-completed-normalization} 
    \begin{equation}
    \widehat{\mathcal O}_{\overline {\mathbf{Y}},\overline P}
    \xrightarrow{\ \sim\ }
    \widehat{\mathcal O}_{U_\sigma,o_\sigma}.
    \label{eq:split-normal-toric-completion}
    \end{equation}
    
    \bigskip
    We first observe that we have an isomorphism of \(k\)-algebras
    \begin{equation} \label{split:iso of k-alg}
        \frac{k[v,s,z]}{(z^{N} -  v^{p+1} s)  }  \simeq k[X^pY, XY, XY^{p^2}]
    \end{equation}
    given by \( z \mapsto XY, v \mapsto X^pY  \) and \( s \mapsto XY^{p^2} \).
    The map is surjective. Both rings are two-dimensional domains, so its
    kernel is a prime ideal of height zero and hence is zero.
    
    \bigskip
    
    Let \(M\simeq\mathbb Z^2\) be the character lattice of the dense torus,
    put \(N_{\mathrm{lat}}=\operatorname{Hom}(M,\mathbb Z)\), and write
    \(\chi^{(a,b)}=t_1^at_2^b\).  Taking \(t_1=XY=z\) and
    \(t_2=X^pY=v\), the three monomials become
    \begin{equation}
    v=\chi^{(0,1)}=t_2,
    \qquad
    z=\chi^{(1,0)}=t_1,
    \qquad
    s=\chi^{(N,-p-1)}=t_1^Nt_2^{-p-1}.
    \label{split:changeofcordeqn}
    \end{equation}
    Consequently,
    \[
    \mathscr A=\{(0,1),(N,-p-1),(1,0)\}\subset M\simeq\mathbb Z^2,
    \]
    and the corresponding affine toric variety is
    \[
    Y_{\mathscr A}=\operatorname{Spec}k[S],
    \]
    where \(S\) is the semigroup generated by \(\mathscr A\).  This is the
    algebraic local model whose completion gives \(A'_P\); it is not being
    identified with a global surface over \(\overline {\mathbf{X}}(w)\).
    
    \bigskip
    
    By \eqref{eq:toric-semigroup-normalization}, the toric normalization
    introduced above is
    \[
    U_\sigma=\operatorname{Spec}k[S_\sigma],
    \]
    where \(S_\sigma\) is the saturation of \(S\) and \(\sigma\) is the
    dual cone of the cone generated by \(\mathscr A\).  Its completed local
    ring at the torus-fixed point \(o_\sigma\) is identified with the
    completed local ring of \(\overline {\mathbf{Y}}\) at \(\overline P\) by
    \eqref{eq:split-normal-toric-completion}.
    
    \bigskip
    
    \begin{lemma}
        For the cone \( \sigma = \mathrm{Cone}( (1,0) , (p+1, N))  \), the Hilbert basis \( \mathscr{H} \) is \[ \mathscr{H} = \{ u_0 := (1,0)  , \  u_1:= (1,1) , \  u_2 :=  (1,2) , \ \dots \ , \  u_{p}:= (1,p) , \ u_{p+1} := (p+1, N) \} \] 
    \end{lemma}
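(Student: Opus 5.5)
The plan is to show two things: the listed vectors generate the semigroup \(\sigma\cap\mathbb Z^2\), and each of them is irreducible in it. Together these identify the list with the Hilbert basis, the unique minimal generating set of \(\sigma\cap\mathbb Z^2\). The generation step will come from the Hirzebruch--Jung subdivision recalled in Section~\ref{sec:toric-surface-preliminaries}. Irreducibility will follow from elementary coordinate inequalities.

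\emph{Step 1 (regular subdivision).} As in Section~\ref{sec:br-slice-split}, take the basis \(e_2=(1,0)\), \(e_1=(1,1)\). In this basis \(\sigma=\operatorname{Cone}(e_2,Ne_1-p^2e_2)\) is in the normal form \eqref{eq:toric-cox-normal-form} with \((d,a)=(N,p^2)\). I will use the continued fraction \(N/p^2=[[2,\ldots,2,p+2]]\), with \(p-1\) twos, and run the wall relations \eqref{eq:toric-wall-relations} starting from \(u_0=(1,0)\) and \(u_1=(1,1)\).
\begin{itemize}
\item The relations \(u_{i-1}+u_{i+1}=2u_i\) for \(1\le i\le p-1\) give \(u_i=(1,i)\) for \(0\le i\le p\).
\item The last relation gives \(u_{p+1}=(p+2)u_p-u_{p-1}=(p+1,\,p^2+p+1)=(p+1,N)\).
\end{itemize}
This reproduces the extremal ray, which is a consistency check on the continued fraction. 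One could also check it directly: \(\det(u_{i-1},u_i)=1\) for \(i\le p\), and \(\det(u_p,u_{p+1})=N-p(p+1)=1\). Hence the cones \(\operatorname{Cone}(u_{i-1},u_i)\) subdivide \(\sigma\) into regular cones.

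\emph{Step 2 (generation).} Every lattice point of \(\sigma\) lies in some \(\operatorname{Cone}(u_{i-1},u_i)\). Since \(\{u_{i-1},u_i\}\) is a \(\mathbb Z\)-basis of \(\mathbb Z^2\), that point is a nonnegative integral combination of \(u_{i-1}\) and \(u_i\). So \(\mathscr H\) generates \(\sigma\cap\mathbb Z^2\).

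\emph{Step 3 (irreducibility).} The cone is \(\sigma=\{(x,y):x\ge0,\ 0\le (p+1)y\le Nx\}\). In particular, any lattice point of \(\sigma\) with \(x=0\) is the origin.
\begin{itemize}
\item For \(u_0,\ldots,u_p\), the first coordinate is \(1\). In any decomposition into two lattice points of \(\sigma\), one summand has \(x=0\), so that summand is zero.
\item For \(u_{p+1}=(p+1,N)\), write \(u_{p+1}=(x,y)+(p+1-x,N-y)\) with both summands in \(\sigma\). The two inequalities \((p+1)y\le Nx\) and \((p+1)(N-y)\le N(p+1-x)\) add up to an equality. Hence each holds with equality, and both summands lie on the ray through \((p+1,N)\).
\item This vector is primitive because \(\gcd(p+1,N)=1\). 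So each summand is an integer multiple of it, and one of them must be zero.
\end{itemize}

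Generation plus irreducibility of every element gives the minimal generating set, which is \(\mathscr H\). The only point requiring care is the last relation in Step 1, namely that the final continued-fraction entry \(p+2\) lands exactly on the extremal ray \((p+1,N)\). The determinant check confirms this directly.
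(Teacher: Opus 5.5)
Your proposal is correct and follows essentially the same route as the paper. Both use the consecutive determinants equal to $1$, so that the cones $\mathrm{Cone}(u_{i-1},u_i)$ form a unimodular subdivision and give generation; both note that the vectors with first coordinate $1$ are indecomposable; and both use that $u_{p+1}$ is the primitive generator of an extremal ray. Your wall-relation derivation and your explicit face argument for $u_{p+1}$, which adds the two inequalities and uses $\gcd(p+1,N)=1$, spell out what the paper leaves implicit.
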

    
    \begin{proof}
    The consecutive determinants are
    \[
    \det(u_i,u_{i+1})=1\quad(0\leq i<p),
    \qquad
    \det(u_p,u_{p+1})=N-p(p+1)=1.
    \]
    Thus every cone spanned by two consecutive vectors is smooth, and the
    union of these cones is \(\sigma\).  Every lattice point of \(\sigma\)
    therefore lies in one of these smooth cones and is a nonnegative
    integral combination of its two generators.  The vectors
    \(u_0,\ldots,u_p\), whose first coordinate is \(1\), are indecomposable
    in the semigroup \(\sigma\cap N_{\mathrm{lat}}\); the final vector
    \(u_{p+1}\) is the
    primitive generator of the other extremal ray.  Hence the displayed set
    is precisely the Hilbert basis.
    \end{proof}
    
    % \bigskip
    
    Take
    \(
    e_1=(1,1), e_2=(1,0).
    \)
    Then
    \(
    (p+1,N)=Ne_1-p^2e_2,
    \)
    so the cone is in Cox normal form
    \(    \sigma=\operatorname{Cone}(e_2,Ne_1-p^2e_2)
    \)
    with numerical parameters \(d=N\) and \(a=p^2\).  As recalled in
    Section~\ref{sec:toric-surface-preliminaries}, the Hirzebruch--Jung
    expansion of \(d/a\) determines the minimal regular subdivision and
    hence the minimal toric resolution \(\phi:X_\Sigma\to U_\sigma\).
    
    \begin{proposition}
    The Hirzebruch--Jung continued fraction is
    \[
    \frac{N}{p^2}
    =[[\underbrace{2,2,\dots,2}_{p-1\ \text{terms}},p+2]].
    \]
    \end{proposition}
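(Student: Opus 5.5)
The plan is to compute the negative continued fraction by an explicit induction on the number of leading \(2\)'s. Afterwards I will check it against the Hilbert basis \(\mathscr H\) found in the preceding lemma, using the wall relations \eqref{eq:toric-wall-relations}. No step is a real obstacle. The only care needed is to put the induction in the right normal form, namely to evaluate the fraction from the inside out.

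For \(k\geq0\), let \([[\underbrace{2,\ldots,2}_{k},p+2]]=A_k/B_k\), written in lowest terms. I will show by induction on \(k\) that
\[
A_k=(k+1)(p+1)+1,\qquad B_k=k(p+1)+1 .
\]
For \(k=0\) this gives \(A_0/B_0=(p+2)/1\), which is correct. For the inductive step, prepending a \(2\) replaces \(x=A_k/B_k\) by \(2-1/x\), which equals \((2A_k-B_k)/A_k\). So \(A_{k+1}=2A_k-B_k\) and \(B_{k+1}=A_k\). Substituting the closed forms gives \(2A_k-B_k=(k+2)(p+1)+1\) and \(A_k=(k+1)(p+1)+1\), as required. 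Coprimality is preserved because \(\gcd(A_{k+1},B_{k+1})=\gcd(2A_k-B_k,A_k)=\gcd(B_k,A_k)=1\), and \(\gcd(A_0,B_0)=1\).

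Now set \(k=p-1\). This gives \(A_{p-1}=p(p+1)+1=N\) and \(B_{p-1}=(p-1)(p+1)+1=p^2\), so \(N/p^2\) equals the stated fraction. All entries are at least \(2\), since \(p+2\geq 4\). Hence this is the Hirzebruch--Jung expansion in the sense of \eqref{eq:toric-hj-general}, and it is unique.

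As a consistency check with Section~\ref{sec:toric-surface-preliminaries}, the rays of the Hilbert basis satisfy the wall relations \eqref{eq:toric-wall-relations} with exactly these \(b_i\). For \(1\leq i\leq p-1\) we have \((1,i-1)+(1,i+1)=2(1,i)\). At the last interior ray, \((1,p-1)+(p+1,N)=(p+2,\,N+p-1)=(p+2)(1,p)\), because \(p(p+2)=N+p-1\). Thus the resolution has exactly \(p\) exceptional curves, with self-intersections \(-2\) (\(p-1\) times) followed by \(-(p+2)\), as in \eqref{eq:toric-exceptional-intersections}.
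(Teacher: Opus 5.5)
Your proof is correct. It is essentially the paper's computation run in the opposite direction. The paper applies the negative Euclidean algorithm to \(N/p^2\), tracks the remainders \(r_j=p^2-j(p+1)\), and checks \(r_{j-1}/r_j\in(1,2)\) to see that each quotient is \(2\). You instead evaluate the expansion from the inside out; your pairs \((A_k,B_k)\) are exactly the paper's consecutive remainders \((r_{p-2-k},r_{p-1-k})\), read backwards with \(r_{-1}:=N\). In exchange, you replace the paper's ceiling estimate by the standard uniqueness of expansions with all \(b_i\geq 2\).
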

    
    \begin{proof}
    The negative Euclidean algorithm begins with
    \[
    b_1=\Big\lceil \frac{p^2+p+1}{p^2}\Big\rceil
    =2,
    \qquad
    r_1=2p^2-N=p^2-p-1.
    \]
    Put \(r_0=p^2\).  Inductively,
    \[
    r_j=p^2-j(p+1)\quad(0\leq j\leq p-1).
    \]
    Indeed, for \(1\leq j\leq p-2\),
    \[
    \frac{r_{j-1}}{r_j}
    =1+\frac{p+1}{r_j}
    \in(1,2),
    \]
    so the next quotient is two and
    \[
    r_{j+1}=2r_j-r_{j-1}
    =p^2-(j+1)(p+1),
    \]
    as required.  Finally,
    \[
    r_{p-1}=p^2-(p-1)(p+1)
    =1,
    \qquad
    r_{p-2}=p+2.
    \]
    Thus the first \(p-1\) quotients are two and the last quotient is
    \(p+2\).
    \end{proof}
    
    \subsubsection[The minimal resolution]{The minimal resolution \(\phi\)}
    \label{split:minres}
    
    For the Hilbert-basis vectors \(u_0,\ldots,u_{p+1}\) above, put
    \[
    \sigma_i:=\operatorname{Cone}(u_{i-1},u_i)
    \qquad(1\leq i\leq p+1),
    \]
    and let \(\Sigma\) consist of these cones and their faces.  The
    determinant calculation in the preceding lemma shows that each
    \(\sigma_i\) is regular.  Since \(|\Sigma|=\sigma\), the subdivision
    defines a toric resolution
    \[
    \phi\colon X_\Sigma\longrightarrow U_\sigma.
    \]
    This morphism is projective; see
    \cite[\S10.2]{coxToricVarieties2011a}.  The exceptional curves
    \(D_i\), corresponding to the rays generated by \(u_i\) for
    \(1\leq i\leq p\), form a chain of smooth rational curves.  The
    relations
    \[
    u_{i-1}+u_{i+1}=2u_i\quad(1\leq i<p),
    \qquad
    u_{p-1}+u_{p+1}=(p+2)u_p
    \]
    give
    \[
    D_i^2=-2\quad(1\leq i<p),
    \qquad
    D_p^2=-(p+2).
    \]
    Thus \(\phi\) is the minimal resolution associated with the
    Hirzebruch--Jung expansion above, as recalled in
    Section~\ref{sec:toric-surface-preliminaries}.
    The surface \(X_\Sigma\) is covered by the \(p+1\) affine charts
    \(U_{\sigma_i}\), with
    \[
    U_{\sigma_i}\cap U_{\sigma_j}=U_{\sigma_i\cap\sigma_j};
    \]
    see \cite[Ch.~3, (3.1.3)]{coxToricVarieties2011a}.
    
    \medskip
    
    Let
    \(
    \rho_\ell\colon
    \widetilde{\overline {\mathbf{Y}}}_\ell(w)
    \longrightarrow\overline {\mathbf{Y}}_\ell(w)
    \)
    be the minimal resolution of the normal projective surface
    \(\overline {\mathbf{Y}}_\ell(w)\).  Its existence follows from resolution theory
    for normal surfaces.  Write
    \[
    \overline {\mathbf{Y}}:=\overline {\mathbf{Y}}_\ell(w),
    \qquad
    \widetilde{\overline {\mathbf{Y}}}:=\widetilde{\overline {\mathbf{Y}}}_\ell(w),
    \]
    and put
    \[
    E_{\overline P}:=\rho_\ell^{-1}(\overline P),
    \qquad
    E_\Sigma:=\phi^{-1}(o_\sigma).
    \]
    
    We compare the two resolutions on an \'{e}tale neighborhood.  Before
    the coordinate changes used in
    \eqref{eq:split-local-toric-equation}, the local Kummer equation is
    \[
    z_0^N=\ell\varepsilon v^{p+1}u,
    \qquad N=p^2+p+1.
    \]
    The formula in \eqref{eq:split-explicit-local-unit} shows that
    \(\varepsilon\) belongs to
    \(\mathcal O_{\overline {\mathbf{X}},P}^{\times}\): all its denominators are
    units at \(P\).  Thus this equation holds on an affine open
    neighborhood of \(P\).  Since \(p\nmid N\), adjoining a root
    \(\eta\) of \(\eta^N=\varepsilon\) gives a finite \'{e}tale cover
    of this neighborhood.  Choose a point \(P_V\) above \(P\) and an
    affine open neighborhood \(V\) of \(P_V\) in this cover.  On \(V\),
    put
    \[
    z:=z_0/\eta,
    \qquad s:=\ell u.
    \]
    The Kummer equation becomes
    \[
    z^N=v^{p+1}s.
    \]
    Since \(v,u\) are regular parameters at \(P\), after shrinking
    \(V\) the morphism
    \[
    \gamma\colon V\longrightarrow\mathbb A_k^2,
    \qquad \gamma=(v,s),
    \]
    is \'{e}tale.  We may also assume that \(P_V\) is the only point
    of \(V\) mapping to the origin.
    
    Normalization commutes with \'{e}tale base change; see the Stacks
    Project, Tag \href{https://stacks.math.columbia.edu/tag/03GV}{03GV}.
    Consequently, the preceding change of coordinates induces an
    isomorphism
    \[
    W:=\overline {\mathbf{Y}}\times_{\overline {\mathbf{X}}}V
    \xrightarrow{\ \sim\ }
    U_\sigma\times_{\mathbb A_k^2}V.
    \]
    In particular, the projections
    \[
    a\colon W\longrightarrow\overline {\mathbf{Y}},
    \qquad
    b\colon W\longrightarrow U_\sigma
    \]
    are \'{e}tale.  The scheme \(W\) is affine, since it is finite over
    \(V\).  Let \(P_W\) be its unique point above \(P_V\).  Then
    \[
    a(P_W)=\overline P,
    \qquad
    b(P_W)=o_\sigma,
    \qquad
    \kappa(P_W)=k.
    \]
    Since \(b\) is \'{e}tale and \(b^{-1}(o_\sigma)\) has only the
    point \(P_W\), the surface \(W\) is regular away from \(P_W\).
    
    The two base changes
    \[
    \widetilde{\overline {\mathbf{Y}}}\times_{\overline {\mathbf{Y}}}W\longrightarrow W,
    \qquad
    X_\Sigma\times_{U_\sigma}W\longrightarrow W
    \]
    are resolutions of \(W\).  Over \(P_W\), \'{e}tale base change
    preserves the exceptional curves and their self-intersections.
    Hence both resolutions are minimal.  Uniqueness of the minimal
    resolution gives an isomorphism over \(W\),
    \begin{equation}
    \widetilde{\overline {\mathbf{Y}}}\times_{\overline {\mathbf{Y}}}W
    \xrightarrow{\ \sim\ }
    X_\Sigma\times_{U_\sigma}W.
    \label{eq:split-etale-resolution-isomorphism}
    \end{equation}
    Put \(\widetilde W:=
    \widetilde{\overline {\mathbf{Y}}}\times_{\overline {\mathbf{Y}}}W\).  We obtain the
    commutative diagram
    \[
    \begin{tikzcd}[column sep=large,row sep=large]
    \widetilde{\overline {\mathbf{Y}}}
        \arrow[d,"\rho_\ell"']
    & \widetilde W
        \arrow[l]
        \arrow[r]
        \arrow[d]
    & X_\Sigma
        \arrow[d,"\phi"]
    \\
    \overline {\mathbf{Y}}
        \arrow[d,"\overline\pi_\ell"']
    & W
        \arrow[l,"a"']
        \arrow[r,"b"]
        \arrow[d]
    & U_\sigma
        \arrow[d]
    \\
    \overline {\mathbf{X}}
    & V
        \arrow[l]
        \arrow[r,"\gamma"']
    & \mathbb A_k^2,
    \end{tikzcd}
    \]
    in which all four squares are cartesian.
    
    The fibre of \(\widetilde W\to W\) over \(P_W\) identifies with
    both \(E_{\overline P}\) and \(E_\Sigma\).  Thus the exceptional
    curves above \(\overline P\) have the same intersection matrix as
    the curves \(D_1,\ldots,D_p\).  Moreover, the affine open subsets
    \[
    U_{\sigma_i}\times_{U_\sigma}W,
    \qquad 1\leq i\leq p+1,
    \]
    cover \(\widetilde W\), with transition maps obtained by base
    change from those of \(X_\Sigma\).
    
    \medskip
    
    Let \(G_\ell=\langle\sigma\rangle\) be the component stabilizer.
    It acts trivially on \(\overline {\mathbf{X}}\), and on the Kummer coordinate
    it acts by
    \[
    \sigma(z_0)=\bar\zeta z_0,
    \]
    where \(\bar\zeta\) is a primitive \(N\)-th root of unity.
    Give \(V\) the trivial \(G_\ell\)-action.  Since \(\eta\) is a
    function on \(V\), the change of coordinates above gives
    \[
    \sigma(z)=\bar\zeta z,
    \qquad
    \sigma(v)=v,
    \qquad
    \sigma(s)=s.
    \]
    The morphisms \(a\) and \(b\) are therefore
    \(G_\ell\)-equivariant.  Uniqueness of the minimal resolution
    lifts the actions on \(\overline {\mathbf{Y}}\) and \(U_\sigma\) to their
    resolutions and makes
    \eqref{eq:split-etale-resolution-isomorphism} equivariant.  The
    lift on \(X_\Sigma\) is the restriction of its torus action.
    Hence the toric charts compute the stabilizer characters on the
    tangent and normal spaces and the fixed locus near
    \(E_{\overline P}\).  The lower squares in the diagram also
    identify the two boundary branches and preserve their
    ramification indices.
    
    The number and orbits of the global boundary curves, their genera,
    the self-intersections of their strict transforms, and the total
    number of incidences require the global geometry of \(\overline {\mathbf{X}}\).
    Their intersection numbers are computed from pullback divisors and
    the projection formula.  These numbers, together with the local
    characters computed below, enter the Lefschetz--Riemann--Roch formula.

    \subsubsection{Affine charts of the resolution}
    
    We now write down these affine charts, compute the action of the component
    stabilizer \(G_\ell=\langle\sigma\rangle\simeq\mathbb Z/N\mathbb Z\),
    and analyze its fixed locus.
    
    \medskip
    
    The dual monoids must be computed cone by cone.  For the first cone the
    primitive dual generators are \((0,1)\) and \((1,-1)\).  For
    \(2\leq i\leq p\), the dual generators of
    \(\sigma_i=\operatorname{Cone}(u_{i-1},u_i)\) are
    \[
    (i,-1),
    \qquad
    (1-i,1),
    \]
    and for the last cone they are \((-p,1)\) and \((N,-p-1)\).  Each
    displayed pair is the basis of \(M\) dual to the two primitive ray
    generators of the corresponding unimodular cone.  Consequently, the
    charts for the local toric resolution \(X_\Sigma\) are as follows:
    
    \begin{equation*}
    \begin{aligned}
         U_{\sigma_1} & = \mathrm{Spec} \big( k[t_2, t_1 t_2^{-1}] \big) & \rightsquigarrow \text{ \ affine coordinates \ } a_{1,1} , a_{1,2} \\
        U_{\sigma_2} &= \mathrm{Spec} \big( k[t_1^2 t_2^{-1}, t_1^{-1}t_2] \big)  & \rightsquigarrow \text{ \ affine coordinates \ } a_{2,1} , a_{2,2} \\
         U_{\sigma_3} &= \mathrm{Spec} \big( k[t_1^3 t_2^{-1}, t_1^{-2}t_2] \big)   & \rightsquigarrow \text{ \ affine coordinates \ } a_{3,1} , a_{3,2} \\
      &    \vdots  & \vdots \\
         U_{\sigma_i} &= \mathrm{Spec} \big( k[t_1^i t_2^{-1}, t_1^{1-i}t_2] \big)  & \rightsquigarrow \text{ \ affine coordinates \ } a_{i,1} , a_{i,2} \\
        & \vdots & \vdots \\
         U_{\sigma_{p+1}} &= \mathrm{Spec} \big( k[t_1^{-p}t_2, t_1^N t_2^{-p-1}] \big)  & \rightsquigarrow \text{ \ affine coordinates \ } a_{p+1,1} , \ a_{p+1,2}
    \end{aligned}
    \end{equation*}
        
    \bigskip
    
    \subsubsection{Action of the component stabilizer and fixed locus}
    
    The subgroup \(\langle\sigma\rangle\leq T^{wF}\) that stabilizes a
    component is cyclic of order \(N\).  On the space \(
    \operatorname{Spec}k[v,s,z]/(z^N-v^{p+1}s)
    \), \(\sigma\) acts by
    \[
    \sigma(z)=\bar\zeta z,
    \qquad
    \sigma(v)=v,
    \qquad
    \sigma(s)=s,
    \]
    where \(\bar\zeta=\mu^{1-p}\) is a primitive \(N\)-th root.  Under
    \eqref{split:iso of k-alg}, this becomes
    \[
    \sigma(t_1)=\bar\zeta t_1,
    \qquad
    \sigma(t_2)=t_2.
    \]
    The action extends to \(\overline {\mathbf{Y}}_\ell(w)\) and lifts to its minimal
    resolution \(\widetilde{\overline {\mathbf{Y}}}_\ell(w)\).  On the local toric
    charts it has the following
    weights:
    \begin{align*}
    U_{\sigma_1}\colon\quad&
    (a_{1,1},a_{1,2})\longmapsto
    (a_{1,1},\bar\zeta a_{1,2}),\\
    U_{\sigma_i}\colon\quad&
    (a_{i,1},a_{i,2})\longmapsto
    (\bar\zeta^{\,i}a_{i,1},\bar\zeta^{\,1-i}a_{i,2})
    \quad(2\leq i\leq p),\\
    U_{\sigma_{p+1}}\colon\quad&
    (a_{p+1,1},a_{p+1,2})\longmapsto
    (\bar\zeta^{-p}a_{p+1,1},a_{p+1,2}).
    \end{align*}
    % The endpoint charts also identify the two global boundary types.  On
    % \(U_{\sigma_1}\),
    % \[
    % a_{1,1}=v,
    % \qquad
    % a_{1,2}=z/v,
    % \qquad
    % s=a_{1,1}^{p^2}a_{1,2}^N,
    % \]
    % so \(a_{1,2}=0\) is the strict lift of the line-type branch
    % \(s=0\), and its normal character is \(\bar\zeta^{-1}\).  On the last
    % chart,
    % \[
    % a_{p+1,1}=v/z^p,
    % \qquad
    % a_{p+1,2}=s,
    % \qquad
    % v=a_{p+1,2}^{p}a_{p+1,1}^{N},
    % \]
    % so \(a_{p+1,1}=0\) is the strict lift of the exceptional-type branch
    % \(v=0\), and its normal character is \(\bar\zeta^{p}\).

 \begin{figure}[H]
        \centering
        %\includegraphics[width=0.5\linewidth]{}
      
    %\tikzset{every picture/.style={line width=0.75pt}} %set default line width to 0.75pt        
    
    \resizebox{\textwidth}{!}{%
    \begin{tikzpicture}[x=0.75pt,y=0.75pt,yscale=-1,xscale=1]
    %uncomment if require: \path (0,458); %set diagram left start at 0, and has height of 458
    
    %Straight Lines [id:da491963583066367] 
    \draw [color={rgb, 255:red, 208; green, 2; blue, 27 }  ,draw opacity=1 ]   (145,54) -- (144,216.25) ;
    %Straight Lines [id:da2731506855667838] 
    \draw    (64,139.25) -- (236,139.25) ;
    %Straight Lines [id:da38136633525137076] 
    \draw    (461,54) -- (460,216.25) ;
    %Straight Lines [id:da8111860080389831] 
    \draw    (380,139.25) -- (552,139.25) ;
    
    %Straight Lines [id:da3002014879628081] 
    \draw    (313,250) -- (312,412.25) ;
    %Straight Lines [id:da04540432304872044] 
    \draw [color={rgb, 255:red, 208; green, 2; blue, 27 }  ,draw opacity=1 ]   (232,335.25) -- (404,335.25) ;
    %Shape: Free Drawing [id:dp8334792399432386] 
    \draw  [color={rgb, 255:red, 208; green, 2; blue, 27 }  ,draw opacity=1 ][line width=3] [line join = round][line cap = round] (460,139.75) .. controls (460,139.75) and (460,139.75) .. (460,139.75) ;
    %Shape: Free Drawing [id:dp030596974114721176] 
    \draw  [color={rgb, 255:red, 208; green, 2; blue, 27 }  ,draw opacity=1 ][line width=3] [line join = round][line cap = round] (460,138.75) .. controls (460.47,138.75) and (461,139.28) .. (461,139.75) ;
    
    % Text Node
    \draw (233,139.4) node [anchor=north west][inner sep=0.75pt]    {$a_{p+1,1}$};
    % Text Node
    \draw (153,44.4) node [anchor=north west][inner sep=0.75pt]    {$a_{p+1,2}$};
    % Text Node
    \draw (558,137.4) node [anchor=north west][inner sep=0.75pt]    {$a_{k,1}$};
    % Text Node
    \draw (469,39.4) node [anchor=north west][inner sep=0.75pt]    {$a_{k,2}$};
    % Text Node
    \draw (402,342.65) node [anchor=north west][inner sep=0.75pt]    {$a_{1,1}$};
    % Text Node
    \draw (319,240.4) node [anchor=north west][inner sep=0.75pt]    {$a_{1,2}$};
    % Text Node
    \draw (152,85.4) node [anchor=north west][inner sep=0.75pt]  [color={rgb, 255:red, 208; green, 2; blue, 27 }  ,opacity=1 ]  {$\{a_{p+1,1} \ =\ 0\}$};
    % Text Node
    \draw (224,345.4) node [anchor=north west][inner sep=0.75pt]  [color={rgb, 255:red, 208; green, 2; blue, 27 }  ,opacity=1 ]  {$\{a_{1,2} \ =\ 0\}$};
    % Text Node
    \draw (469,108.4) node [anchor=north west][inner sep=0.75pt]  [color={rgb, 255:red, 208; green, 2; blue, 27 }  ,opacity=1 ]  {$\{a_{k,1} \ =\ 0\ =\ a_{k,2}\}$};
    % Text Node
    \draw (76,44.4) node [anchor=north west][inner sep=0.75pt]    {$U_{\sigma }{}_{_{p+1}}$};
    % Text Node
    \draw (378,42.4) node [anchor=north west][inner sep=0.75pt]    {$\underset{2 \le k \le p}{U_{\sigma_k}}$};
    % Text Node
    \draw (231,249.4) node [anchor=north west][inner sep=0.75pt]    {$U_{\sigma }{}_{_{1}}$};

    \end{tikzpicture}%
    }
    
      \caption{The fixed loci in the \((p+1)\) affine charts of the local
      toric resolution \(X_\Sigma\)}
        \label{split:fig:fixed loci}

    \end{figure}
    
    In this local resolution there are exactly \(p-1\) isolated fixed points
    \begin{equation}\label{split:iso.fixpt}
    Q_{i-1}=\{a_{i,1}=a_{i,2}=0\},
    \qquad 2\leq i\leq p,
    \end{equation}
    and two fixed boundary branches
    \begin{equation}\label{split:fixcurve}
    \{a_{1,2}=0\},
    \qquad
    \{a_{p+1,1}=0\}.
    \end{equation}
    
    \begin{remark} \label{split:rem2.2}
        In the local Galois model
        \(Y_{\mathscr A}=\operatorname{Spec}k[v,s,z]/(z^N-v^{p+1}s)\),
        the stabilizer
        fixes \(\{v=0\}\) and \(\{s=0\}\) and acts on \(z\) through
        \(\bar\zeta\).  These are the two boundary branches through one
        incidence point.  As the incidence point varies, the first branches
        are the local branches of the strict transforms of the \(N\) curves
        \(E_P\), and the second branches are those of the strict transforms
        of the \(N\) curves \(\widetilde L\).  Thus the global fixed locus contains \(N\) curves
        \(\widetilde E'_P\) and \(N\) curves \(\widetilde L'_L\).
        % More precisely, the two boundary valuations in
        % \(z^N=v^{p+1}s\) are \(p+1\) and \(1\).  Since
        % \(\gcd(N,p+1)=1\), the cover is totally ramified at the generic point
        % of either branch.  Thus every base boundary component has one reduced
        % irreducible curve above it in a fixed normal component, that curve is
        % fixed pointwise by \(\langle\sigma\rangle\), and resolving the
        % incidence singularities does not change its generic point.
           
      %  \[ \mathrm{Spec} \left( \frac{k[v,s,z]}{(z^{N} -  v^{p+1} s) }  \right) \simeq \mathrm{Spec} \left( k[X^pY, XY, XY^{p^2}] \right) \simeq \mathrm{Spec} \left( k [t_2, t_1^N t_2^{-p-1} , t_1]   \right)   \]
    \end{remark}

    %\newpage
    
    \subsection{Intersection number calculations}\label{sec:split-intersections}
    
    \subsubsection{Local computations on the minimal resolution}
    
    The fan already contains the intersection data needed below.  By the
    wall relations and intersection formulas
    \eqref{eq:toric-wall-relations}--\eqref{eq:toric-exceptional-intersections},
    the following computation is entirely local.  Let
    \(D_i=V(\rho_i)\), where \(\rho_i=\operatorname{Cone}(u_i)\).  For the
    Hirzebruch--Jung coefficients
    \[
    (b_1,\ldots,b_p)=(\underbrace{2,\ldots,2}_{p-1\ \text{terms}},p+2),
    \]
    the wall relations
    \(
    u_{i-1}+u_{i+1}=b_i u_i
    \)
    give
    \begin{equation}
    D_i\cdot D_j=
    \begin{cases}
    -b_i,&i=j,\\
    1,&|i-j|=1,\\
    0,&|i-j|>1,
    \end{cases}
    \qquad 1\leq i,j\leq p;
    \label{split:intmateqn}
    \end{equation}
    see \cite[Theorem~10.2.5]{coxToricVarieties2011a}.  Thus the exceptional
    curves \(E_i:=D_i\), \(1\leq i\leq p\), form a negative-definite chain
    with
    \begin{equation}
    E_i^2=-2\quad(1\leq i<p),
    \qquad
    E_p^2=-p-2.
    \label{split:eq-intdiv}
    \end{equation}
    
  The exceptional locus is
\(\phi^{-1}(o_\sigma)=E_1\cup\cdots\cup E_p\).
    The boundary divisors \(D_0\) and \(D_{p+1}\) are noncomplete in the
    local toric surface; their global counterparts on
    \(\widetilde{\overline {\mathbf{Y}}}_\ell(w)\) are complete curves.
    
    For any character \(\chi^m\), its torus-invariant divisor is
    \begin{equation}
    \operatorname{div}(\chi^m) =\sum_{\rho\in\Sigma(1)}\langle m,u_\rho\rangle D_\rho. 
    \label{split:diveqn}
    \end{equation}
    This is the only general toric divisor formula used below; see
    \cite[Proposition~4.1.1]{coxToricVarieties2011a}.
    
    By Remark~\ref{split:rem2.2}, choose one curve of each global type and
    denote the representatives by \(\widetilde L'\) and \(\widetilde E'\).
    Locally over an incidence point of \(\overline {\mathbf{X}}\), they are the prime divisors
    \(D_0\) and \(D_{p+1}\), respectively. The characters
    \(\chi^{(N,-p-1)}=s\) and \(\chi^{(0,1)}=v\) give the total transforms
    of the two base boundary divisors.  The same local description holds
    for every curve of the corresponding type.
    
    % \medskip
    
  Equation~\eqref{split:diveqn} gives
\begin{align}
\operatorname{div}(\chi^{(N,-p-1)})
  &= N D_0 + \sum_{i=1}^{p}\bigl(N-(p+1)i\bigr)E_i,
  \label{split:diveqn2}\\
\operatorname{div}(\chi^{(0,1)})
  &= N D_{p+1} + \sum_{i=1}^{p} iE_i.
\end{align}
    
    \subsubsection{Global intersection number calculations}

    Over each of the \(N(p+1)\) boundary crossing points, equivalently each
    incident pair \((P,L)\) with \(P\in L\), there are \(p-1\) isolated fixed
    points.  Hence there are
    \(N(p+1)(p-1)=N(p^2-1)\) isolated fixed points globally.  In addition,
    there are \(N\) fixed curves \(\widetilde E'_P\), one above each
    exceptional boundary component, and \(N\) fixed curves
    \(\widetilde L'_L\), one above each line-type boundary component.  All
    are fixed pointwise by the component stabilizer
    \(\langle\sigma\rangle\simeq\mathbb Z/N\mathbb Z\).
    
    \medskip
    
    Let \(L\subset\mathbb P^2\) be a \(k\)-rational line, and let
    \(P\in L(k)\).  Write \(\widetilde L\) for the strict transform of
    \(L\) in \(\overline {\mathbf{X}}\) and \(\widetilde E_P\) for the exceptional
    curve above \(P\).  These curves meet transversely in one \(k\)-rational
    point, as shown in Figure~\ref{fig:split-boundary-p2}.  When \(P\) is
    understood, we abbreviate \(\widetilde E_P\) to \(\widetilde E\).
    
    \begin{figure}[H]
        \centering
        %\includegraphics[width=0.5\linewidth]{}
    %\tikzset{every picture/.style={line width=0.75pt}} %set default line width to 0.75pt        
    
   \resizebox{0.7\textwidth}{!}{%
    \begin{tikzpicture}[x=0.75pt,y=0.75pt,yscale=-1,xscale=1]
        % Your existing diagram code
 
    %Straight Lines [id:da49797362836176207] 
    \draw [color={rgb, 255:red, 189; green, 16; blue, 224 }  ,draw opacity=1 ]   (267,196) -- (367,296) ;
    %Straight Lines [id:da04531957030202827] 
    \draw    (374.5,198.88) -- (259.5,293.13) ;
    %Straight Lines [id:da05179352714164187] 
    \draw    (227,230.75) -- (405,259.75) ;
    %Straight Lines [id:da2945264711025648] 
    \draw [color={rgb, 255:red, 208; green, 2; blue, 27 }  ,draw opacity=1 ]   (290,10.75) -- (356,121.75) ;
    %Straight Lines [id:da8349525921367715] 
    \draw [color={rgb, 255:red, 189; green, 16; blue, 224 }  ,draw opacity=1 ]   (242,56) -- (369,16.75) ;
    %Straight Lines [id:da363269182745836] 
    \draw    (257,86) -- (384,46.75) ;
    %Straight Lines [id:da83410627174403] 
    \draw    (270,119) -- (397,79.75) ;
    %Straight Lines [id:da061900641532805833] 
    \draw    (310,134.75) -- (310.95,173) ;
    \draw [shift={(311,175)}, rotate = 268.58] [color={rgb, 255:red, 0; green, 0; blue, 0 }  ][line width=0.75]    (10.93,-3.29) .. controls (6.95,-1.4) and (3.31,-0.3) .. (0,0) .. controls (3.31,0.3) and (6.95,1.4) .. (10.93,3.29)   ;
    
    % Text Node
    \draw (249,182.4) node [anchor=north west][inner sep=0.75pt]  [color={rgb, 255:red, 189; green, 16; blue, 224 }  ,opacity=1 ]  {$L$};
    % Text Node
    \draw (223,40.4) node [anchor=north west][inner sep=0.75pt]  [color={rgb, 255:red, 189; green, 16; blue, 224 }  ,opacity=1 ]  {$\tilde{L}$};
    % Text Node
    \draw (274,4.4) node [anchor=north west][inner sep=0.75pt]  [color={rgb, 255:red, 208; green, 2; blue, 27 }  ,opacity=1 ]  {$\tilde{E}$};
    % Text Node
    \draw (65.6,70.09) node [anchor=north west][inner sep=0.75pt]    {$\overline{\mathbf{X}} \ =\ Bl_{Z}\left( \ \mathbb{P}^{2} \ \right)$};
    % Text Node
    \draw (120,235.4) node [anchor=north west][inner sep=0.75pt]    {$\mathbb{P}^{2}$};

    \end{tikzpicture}%
    }

        \caption{The divisors \(\widetilde L\) and \(\widetilde E_P\) for \(p=2\)}
        \label{fig:split-boundary-p2}
        \label{split:blowupXbarandP2}
    \end{figure}
    
    \bigskip
    
    Let
    \(
    \widetilde\pi_\ell:
    \widetilde{\overline {\mathbf{Y}}}_\ell(w)\longrightarrow\overline {\mathbf{X}}(w)
    \)
    be the generically finite morphism of degree \(N\) obtained by composing
    the minimal resolution \(\rho_\ell\) with the finite morphism
    \(\overline\pi_\ell:\overline {\mathbf{Y}}_\ell(w)\to\overline {\mathbf{X}}(w)\).
    Let \(\widetilde E'\) and \(\widetilde L'\) denote arbitrary
    representatives of the two types of strict transforms in
    \(\widetilde{\overline {\mathbf{Y}}}_\ell(w)\).  Both are complete curves isomorphic to
    \(\mathbb P^1\).  Every base boundary component contains exactly
    \(p+1\) incidence points, so the calculation from
    \eqref{split:diveqn2} is identical for every representative and gives
    \begin{align} \label{split:globaleqn}
    \widetilde\pi_\ell^{\,*}(\widetilde L)
    &=N\widetilde L'
    +\sum_{j=1}^{p+1}\sum_{i=1}^{p}
    \bigl(N-(p+1)i\bigr)E_{i,j},\\
    \widetilde\pi_\ell^{\,*}(\widetilde E)
    &=N\widetilde E'
    +\sum_{j=1}^{p+1}\sum_{i=1}^p iE_{i,j}.
    \end{align}
    In each formula, \(j\) indexes the \(p+1\) crossing points on the chosen
    base curve; the two formulas generally involve different exceptional chains.
    
    \bigskip
    
   The following two figures illustrate the exceptional
locus and the fixed locus of \(h\) on the smooth equivariant
resolution \(\widetilde {\mathbf{Y}} = \bigsqcup_{\ell} \widetilde{ Z_{\ell}} \to\overline {\mathbf{Y}} = \bigsqcup_{\ell} \widetilde{ \mathbf{Y}_{\ell}}\) described above,
with the fixed locus shown in red.
 
 \begin{figure}[H]
    \centering

 % \hspace*{-1cm} 

\begin{tikzpicture}[
    x=0.75pt,
    y=0.75pt,
    yscale=-1,
    xscale=1,
    scale=0.8,
    transform shape=false
]
% \begin{tikzpicture}[x=0.75pt,y=0.75pt,yscale=-1,xscale=1]
%uncomment if require: \path (0,410); %set diagram left start at 0, and has height of 410
\hspace{-1cm}

\draw [color={rgb, 255:red, 208; green, 2; blue, 27 }  ,draw opacity=1 ]   (172.98,91.15) .. controls (209.03,70.96) and (543.44,62.89) .. (579.5,42.7) ;
%Curve Lines [id:da45139308109649945] 
\draw    (197.31,67.6) .. controls (220.75,98.55) and (270.33,103.26) .. (276.64,104.61) ;
%Curve Lines [id:da6410229274767396] 
\draw    (266.72,87.78) .. controls (264.02,100.57) and (205.43,128.16) .. (189.2,128.83) ;
%Curve Lines [id:da5542053443846687] 
\draw    (186.5,101.92) .. controls (209.93,132.87) and (262.21,130.85) .. (268.52,132.2) ;
%Shape: Free Drawing [id:dp6233774636338095] 
\draw  [color={rgb, 255:red, 208; green, 2; blue, 27 }  ,draw opacity=1 ][line width=3] [line join = round][line cap = round] (254.1,99.9) .. controls (254.4,99.9) and (254.1,100.35) .. (254.1,100.57) ;
%Shape: Free Drawing [id:dp8240828806041681] 
\draw  [color={rgb, 255:red, 208; green, 2; blue, 27 }  ,draw opacity=1 ][line width=3] [line join = round][line cap = round] (213.54,121.43) .. controls (213.11,121.43) and (212.64,121.79) .. (212.64,122.1) ;
%Curve Lines [id:da6016325315525106] 
\draw    (266.72,122.1) .. controls (256.81,128.83) and (214.44,157.77) .. (188.3,160.46) ;
%Shape: Free Drawing [id:dp05714342391799454] 
\draw  [color={rgb, 255:red, 208; green, 2; blue, 27 }  ,draw opacity=1 ][line width=3] [line join = round][line cap = round] (253.2,130.85) .. controls (252.78,130.85) and (252.3,131.21) .. (252.3,131.52) ;
%Curve Lines [id:da7098329964291988] 
\draw    (190.1,145.66) .. controls (222.55,151.71) and (250.5,168.53) .. (256.81,169.88) ;
%Shape: Free Drawing [id:dp40989622553796934] 
\draw  [color={rgb, 255:red, 208; green, 2; blue, 27 }  ,draw opacity=1 ][line width=3] [line join = round][line cap = round] (215.34,151.71) .. controls (214.92,151.71) and (214.44,152.07) .. (214.44,152.38) ;
%Shape: Free Drawing [id:dp8534766020493187] 
\draw  [color={rgb, 255:red, 0; green, 0; blue, 0 }  ,draw opacity=1 ][line width=3] [line join = round][line cap = round] (220.75,186.7) .. controls (221.18,186.7) and (221.65,186.35) .. (221.65,186.03) ;
%Shape: Free Drawing [id:dp37045942885780136] 
\draw  [color={rgb, 255:red, 0; green, 0; blue, 0 }  ,draw opacity=1 ][line width=3] [line join = round][line cap = round] (220.75,200.16) .. controls (221.18,200.16) and (221.65,199.81) .. (221.65,199.49) ;
%Shape: Free Drawing [id:dp8174280474816694] 
\draw  [color={rgb, 255:red, 0; green, 0; blue, 0 }  ,draw opacity=1 ][line width=3] [line join = round][line cap = round] (220.75,173.25) .. controls (221.18,173.25) and (221.65,172.89) .. (221.65,172.57) ;
%Curve Lines [id:da6734496178012541] 
\draw    (185.6,228.42) .. controls (211.74,248.61) and (259.88,268.25) .. (272.5,268.25) ;
%Curve Lines [id:da5712551805618181] 
\draw    (259.51,206.89) .. controls (249.59,213.62) and (211.74,244.57) .. (185.6,247.27) ;
%Curve Lines [id:da9253609883277154] 
\draw    (192.81,202.18) .. controls (230.67,222.37) and (262.21,221.7) .. (268.52,223.04) ;
%Shape: Free Drawing [id:dp20609379844283393] 
\draw  [color={rgb, 255:red, 208; green, 2; blue, 27 }  ,draw opacity=1 ][line width=3] [line join = round][line cap = round] (242.38,219) .. controls (241.96,219) and (241.48,219.36) .. (241.48,219.68) ;
%Shape: Free Drawing [id:dp9298785181863807] 
\draw  [color={rgb, 255:red, 208; green, 2; blue, 27 }  ,draw opacity=1 ][line width=3] [line join = round][line cap = round] (206.33,240.54) .. controls (205.9,240.54) and (205.43,240.89) .. (205.43,241.21) ;
%Curve Lines [id:da07938028985907075] 
\draw    (318.1,58.85) .. controls (341.54,89.8) and (391.11,94.51) .. (397.42,95.86) ;
%Curve Lines [id:da28951804126653424] 
\draw    (387.51,79.04) .. controls (384.8,91.82) and (326.21,119.41) .. (309.99,120.08) ;
%Curve Lines [id:da18767917028084546] 
\draw    (307.28,93.17) .. controls (330.72,124.12) and (383,122.1) .. (389.31,123.45) ;
%Shape: Free Drawing [id:dp4328312236464926] 
\draw  [color={rgb, 255:red, 208; green, 2; blue, 27 }  ,draw opacity=1 ][line width=3] [line join = round][line cap = round] (374.89,91.15) .. controls (375.19,91.15) and (374.89,91.6) .. (374.89,91.82) ;
%Shape: Free Drawing [id:dp1955857199218458] 
\draw  [color={rgb, 255:red, 208; green, 2; blue, 27 }  ,draw opacity=1 ][line width=3] [line join = round][line cap = round] (334.32,112.68) .. controls (333.9,112.68) and (333.42,113.04) .. (333.42,113.36) ;
%Curve Lines [id:da20043330643232737] 
\draw    (387.51,113.36) .. controls (377.59,120.08) and (335.23,149.02) .. (309.09,151.71) ;
%Shape: Free Drawing [id:dp6643347261340744] 
\draw  [color={rgb, 255:red, 208; green, 2; blue, 27 }  ,draw opacity=1 ][line width=3] [line join = round][line cap = round] (373.99,122.1) .. controls (373.56,122.1) and (373.08,122.46) .. (373.08,122.78) ;
%Curve Lines [id:da22841188525021472] 
\draw    (310.89,136.91) .. controls (343.34,142.96) and (371.28,159.79) .. (377.59,161.13) ;
%Shape: Free Drawing [id:dp7033062462174565] 
\draw  [color={rgb, 255:red, 208; green, 2; blue, 27 }  ,draw opacity=1 ][line width=3] [line join = round][line cap = round] (336.13,142.96) .. controls (335.7,142.96) and (335.23,143.32) .. (335.23,143.64) ;
%Shape: Free Drawing [id:dp6417268948640699] 
\draw  [color={rgb, 255:red, 0; green, 0; blue, 0 }  ,draw opacity=1 ][line width=3] [line join = round][line cap = round] (341.54,177.96) .. controls (341.96,177.96) and (342.44,177.6) .. (342.44,177.28) ;
%Shape: Free Drawing [id:dp5602402244413223] 
\draw  [color={rgb, 255:red, 0; green, 0; blue, 0 }  ,draw opacity=1 ][line width=3] [line join = round][line cap = round] (341.54,191.41) .. controls (341.96,191.41) and (342.44,191.06) .. (342.44,190.74) ;
%Shape: Free Drawing [id:dp42736917653333273] 
\draw  [color={rgb, 255:red, 0; green, 0; blue, 0 }  ,draw opacity=1 ][line width=3] [line join = round][line cap = round] (341.54,164.5) .. controls (341.96,164.5) and (342.44,164.14) .. (342.44,163.82) ;
%Curve Lines [id:da22728875467966703] 
\draw    (306.38,219.68) .. controls (332.52,239.86) and (387.88,256.25) .. (400.5,256.25) ;
%Curve Lines [id:da7850462179951794] 
\draw    (380.29,198.14) .. controls (370.38,204.87) and (332.52,235.83) .. (306.38,238.52) ;
%Curve Lines [id:da9172343257692173] 
\draw    (313.59,193.43) .. controls (351.45,213.62) and (383,212.95) .. (389.31,214.29) ;
%Shape: Free Drawing [id:dp8081554942536521] 
\draw  [color={rgb, 255:red, 208; green, 2; blue, 27 }  ,draw opacity=1 ][line width=3] [line join = round][line cap = round] (363.17,210.26) .. controls (362.74,210.26) and (362.27,210.61) .. (362.27,210.93) ;
%Shape: Free Drawing [id:dp37243616309222904] 
\draw  [color={rgb, 255:red, 208; green, 2; blue, 27 }  ,draw opacity=1 ][line width=3] [line join = round][line cap = round] (327.11,231.79) .. controls (326.69,231.79) and (326.21,232.14) .. (326.21,232.46) ;
%Shape: Free Drawing [id:dp1480079194894004] 
\draw  [color={rgb, 255:red, 0; green, 0; blue, 0 }  ,draw opacity=1 ][line width=3] [line join = round][line cap = round] (418.15,154.4) .. controls (418.15,154.18) and (418.15,153.96) .. (418.15,153.73) ;
%Shape: Free Drawing [id:dp5084645946189612] 
\draw  [color={rgb, 255:red, 0; green, 0; blue, 0 }  ,draw opacity=1 ][line width=3] [line join = round][line cap = round] (433.48,154.4) .. controls (433.48,154.18) and (433.48,153.96) .. (433.48,153.73) ;
%Shape: Free Drawing [id:dp4416580064403587] 
\draw  [color={rgb, 255:red, 0; green, 0; blue, 0 }  ,draw opacity=1 ][line width=3] [line join = round][line cap = round] (449.7,153.73) .. controls (449.7,153.51) and (449.7,153.28) .. (449.7,153.06) ;
%Curve Lines [id:da2888162360992488] 
\draw    (499.28,47.41) .. controls (522.71,78.36) and (572.29,83.07) .. (578.6,84.42) ;
%Curve Lines [id:da12548494314415914] 
\draw    (568.68,67.6) .. controls (565.98,80.38) and (507.39,107.97) .. (491.16,108.65) ;
%Curve Lines [id:da14965655660488575] 
\draw    (488.46,81.73) .. controls (511.9,112.68) and (564.18,110.66) .. (570.49,112.01) ;
%Shape: Free Drawing [id:dp6723621627024006] 
\draw  [color={rgb, 255:red, 208; green, 2; blue, 27 }  ,draw opacity=1 ][line width=3] [line join = round][line cap = round] (556.06,79.71) .. controls (556.36,79.71) and (556.06,80.16) .. (556.06,80.38) ;
%Shape: Free Drawing [id:dp36653673275415377] 
\draw  [color={rgb, 255:red, 208; green, 2; blue, 27 }  ,draw opacity=1 ][line width=3] [line join = round][line cap = round] (515.5,101.24) .. controls (515.08,101.24) and (514.6,101.6) .. (514.6,101.92) ;
%Curve Lines [id:da8953354002389872] 
\draw    (568.68,101.92) .. controls (558.77,108.65) and (516.4,137.58) .. (490.26,140.27) ;
%Shape: Free Drawing [id:dp7812939580254268] 
\draw  [color={rgb, 255:red, 208; green, 2; blue, 27 }  ,draw opacity=1 ][line width=3] [line join = round][line cap = round] (555.16,110.66) .. controls (554.74,110.66) and (554.26,111.02) .. (554.26,111.34) ;
%Curve Lines [id:da24863273800334162] 
\draw    (492.07,125.47) .. controls (524.52,131.52) and (552.46,148.35) .. (558.77,149.69) ;
%Shape: Free Drawing [id:dp37815369342726113] 
\draw  [color={rgb, 255:red, 208; green, 2; blue, 27 }  ,draw opacity=1 ][line width=3] [line join = round][line cap = round] (517.3,131.52) .. controls (516.88,131.52) and (516.4,131.88) .. (516.4,132.2) ;
%Shape: Free Drawing [id:dp35327372291059367] 
\draw  [color={rgb, 255:red, 0; green, 0; blue, 0 }  ,draw opacity=1 ][line width=3] [line join = round][line cap = round] (522.71,166.52) .. controls (523.14,166.52) and (523.61,166.16) .. (523.61,165.84) ;
%Shape: Free Drawing [id:dp8517885056557342] 
\draw  [color={rgb, 255:red, 0; green, 0; blue, 0 }  ,draw opacity=1 ][line width=3] [line join = round][line cap = round] (522.71,179.97) .. controls (523.14,179.97) and (523.61,179.62) .. (523.61,179.3) ;
%Shape: Free Drawing [id:dp2887948946218132] 
\draw  [color={rgb, 255:red, 0; green, 0; blue, 0 }  ,draw opacity=1 ][line width=3] [line join = round][line cap = round] (522.71,153.06) .. controls (523.14,153.06) and (523.61,152.7) .. (523.61,152.38) ;
%Curve Lines [id:da8862758208940785] 
\draw    (487.56,208.24) .. controls (513.7,228.42) and (554.88,241.25) .. (567.5,241.25) ;
%Curve Lines [id:da8378048261903746] 
\draw    (561.47,186.7) .. controls (551.56,193.43) and (513.7,224.39) .. (487.56,227.08) ;
%Curve Lines [id:da0680035488608377] 
\draw    (494.77,181.99) .. controls (532.63,202.18) and (564.18,201.51) .. (570.49,202.85) ;
%Shape: Free Drawing [id:dp08532992512756299] 
\draw  [color={rgb, 255:red, 208; green, 2; blue, 27 }  ,draw opacity=1 ][line width=3] [line join = round][line cap = round] (544.35,198.82) .. controls (543.92,198.82) and (543.44,199.17) .. (543.44,199.49) ;
%Shape: Free Drawing [id:dp8405520109244589] 
\draw  [color={rgb, 255:red, 208; green, 2; blue, 27 }  ,draw opacity=1 ][line width=3] [line join = round][line cap = round] (508.29,220.35) .. controls (507.87,220.35) and (507.39,220.71) .. (507.39,221.02) ;
%Shape: Brace [id:dp9992909476483512] 
\draw   (172.53,144.14) .. controls (167.87,144.41) and (165.67,146.88) .. (165.94,151.53) -- (166.91,168.25) .. controls (167.3,174.9) and (165.16,178.36) .. (160.5,178.63) .. controls (165.16,178.36) and (167.68,181.56) .. (168.06,188.21)(167.89,185.22) -- (169.19,207.71) .. controls (169.46,212.37) and (171.93,214.56) .. (176.58,214.29) ;
%Shape: Brace [id:dp953419405355643] 
\draw   (495.67,27.9) .. controls (495.3,23.25) and (492.78,21.11) .. (488.13,21.48) -- (361.76,31.6) .. controls (355.11,32.13) and (351.6,30.07) .. (351.23,25.42) .. controls (351.6,30.07) and (348.47,32.67) .. (341.82,33.2)(344.81,32.96) -- (208.24,43.9) .. controls (203.59,44.27) and (201.45,46.78) .. (201.82,51.43) ;
%Straight Lines [id:da975565680530631] 
\draw [color={rgb, 255:red, 0; green, 0; blue, 0 }  ,draw opacity=1 ]   (329.02,284.12) -- (329.48,339.25) ;
\draw [shift={(329.5,341.25)}, rotate = 269.52] [color={rgb, 255:red, 0; green, 0; blue, 0 }  ,draw opacity=1 ][line width=0.75]    (10.93,-3.29) .. controls (6.95,-1.4) and (3.31,-0.3) .. (0,0) .. controls (3.31,0.3) and (6.95,1.4) .. (10.93,3.29)   ;
%Curve Lines [id:da16523123542192997] 
\draw [color={rgb, 255:red, 208; green, 2; blue, 27 }  ,draw opacity=1 ]   (164.98,277.15) .. controls (201.03,256.96) and (591.5,237.25) .. (610.5,217.25) ;

% Text Node
\draw (154.43,76.26) node [anchor=north west][inner sep=0.75pt]  [font=\scriptsize,color={rgb, 255:red, 208; green, 2; blue, 27 }  ,opacity=1 ]  {$\widetilde{L'}$};
% Text Node
\draw (13.08,157.61) node [anchor=north west][inner sep=0.75pt]  [font=\footnotesize,color={rgb, 255:red, 0; green, 0; blue, 0 }  ,opacity=1 ]  {$ \begin{array}{l}
p-many\ E_{i} 's\\
(p-1)-\text{points}\\
\end{array}$};
% Text Node
\draw (144.46,52.5) node [anchor=north west][inner sep=0.75pt]  [font=\scriptsize]  {$E_{1} \ =\ E_{1,1}$};
% Text Node
\draw (256.69,160.33) node [anchor=north west][inner sep=0.75pt]  [font=\scriptsize]  {$E_{i}$};
% Text Node
\draw (207.93,130.55) node [anchor=north west][inner sep=0.75pt]  [font=\scriptsize]  {$\textcolor[rgb]{0.82,0.01,0.11}{p}\textcolor[rgb]{0.82,0.01,0.11}{_{i}}$};
% Text Node
\draw (224.59,237.4) node [anchor=north west][inner sep=0.75pt]  [font=\scriptsize]  {$E_{p}$};
% Text Node
\draw (241.49,80.7) node [anchor=north west][inner sep=0.75pt]  [font=\scriptsize]  {$\textcolor[rgb]{0.82,0.01,0.11}{p}\textcolor[rgb]{0.82,0.01,0.11}{_{1}}$};
% Text Node
\draw (290.37,14.98) node [anchor=north west][inner sep=0.75pt]  [font=\scriptsize,rotate=-356.1]  {$( p+1) -many$};
% Text Node
\draw (475.91,28.94) node [anchor=north west][inner sep=0.75pt]  [font=\scriptsize]  {$E_{p+1,\ 1}$};
% Text Node
\draw (56.28,17.15) node [anchor=north west][inner sep=0.75pt]  [font=\normalsize,color={rgb, 255:red, 0; green, 0; blue, 0 }  ,opacity=1 ]  {$\tilde{\mathbf{Y}'}$};
% Text Node
\draw (173.43,280.26) node [anchor=north west][inner sep=0.75pt]  [font=\scriptsize,color={rgb, 255:red, 208; green, 2; blue, 27 }  ,opacity=1 ]  {$\widetilde{E_{P} '}$};

\end{tikzpicture}

\begin{tikzpicture}[
    scale=1.2,
    every node/.style={font=\footnotesize}
]

\vspace{1.5em}
% Exceptional divisors
\draw[red,thick] (0,-1.0) -- (0,1.0);
\draw[black,thick] (1.2,-1.0) -- (1.2,1.0);
\draw[black,thick] (2.4,-1.0) -- (2.4,1.0);
\draw[black,thick] (3.6,-1.0) -- (3.6,1.0);
\draw[black,thick] (4.8,-1.0) -- (4.8,1.0);

% Exceptional divisor labels
\node[below] at (0,-1.0) {$E_{P_0}$};
\node[below] at (1.2,-1.0) {$E_{P_1}$};
\node[below] at (2.4,-1.0) {$E_{P_2}$};
\node[below] at (3.6,-1.0) {$\cdots$};
\node[below] at (4.8,-1.0) {$E_{P_p}$};

% Strict transform
\draw[red,thick] (-0.45,0) -- (5.15,0);
\node[red,left] at (-0.45,0) {$\widetilde L$};

% Intersection points
\foreach \x in {0,1.2,2.4,3.6,4.8} {
    \fill[black] (\x,0) circle (2.1pt);
}

% Brace
\draw[decorate,decoration={brace,amplitude=5pt}]
    (0,1.35) -- (4.8,1.35)
    node[midway,above=7pt] {$(p+1)-$many exceptional divisors};

% Caption below, optional
\node[align=center] at (2.4,-2.45)
     {$\overline{\mathbf{X}}(w)= \mathrm{Bl}_{\mathbb{P}^2(\mathbb{F}_p)}(\mathbb{P}^2)$};

\end{tikzpicture}

\end{figure}
    
    \medskip
    
 The following calculation gives the two boundary self-intersections used
in the Brauer--Lefschetz formula.

\begin{lemma}[Self-intersections of the lifted boundary curves]
\label{lem:split-lifted-boundary-intersections}
The strict transforms on
\(\widetilde{\overline {\mathbf{Y}}}_\ell(w)\) satisfy
\[
    (\widetilde L')^2=-p,
    \qquad
    (\widetilde E')^2=-1.
\]
\end{lemma}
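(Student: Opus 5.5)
The plan is to apply the projection formula to the generically finite morphism \(\widetilde\pi_\ell\colon\widetilde{\overline {\mathbf{Y}}}_\ell(w)\to\overline {\mathbf{X}}(w)\) of degree \(N\). The inputs are the pullback formulas \eqref{split:globaleqn} and the base self-intersections \(\widetilde L^2=-p\), \(\widetilde E^2=-1\) from Proposition~\ref{prop:split-boundary-count}. For a curve \(C\) on the resolution and a divisor \(D\) on the base one has \(\widetilde\pi_\ell^{\,*}D\cdot C=D\cdot\widetilde\pi_{\ell*}C\). We take \(C=\widetilde L'\), \(D=\widetilde L\), and then \(C=\widetilde E'\), \(D=\widetilde E\).

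First we identify \(\widetilde\pi_{\ell*}\widetilde L'=\widetilde L\) and \(\widetilde\pi_{\ell*}\widetilde E'=\widetilde E\), i.e.\ the induced maps \(\widetilde L'\to\widetilde L\) and \(\widetilde E'\to\widetilde E\) have degree one. This follows from Lemma~\ref{lem:split-kummer-normal-character}: the boundary multiplicities are \(a=1\) and \(a=p+1\), both prime to \(N\). Hence each base boundary curve has a unique prime divisor above it, totally ramified with residue degree one, so the map of curves is birational. It is therefore an isomorphism, since both curves are \(\mathbb P^1\). The right-hand sides of the projection formula are then \(\widetilde L^2=-p\) and \(\widetilde E^2=-1\).

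Next we compute the left-hand sides from \eqref{split:globaleqn}. This needs the intersections of \(\widetilde L'\) (resp.\ \(\widetilde E'\)) with the exceptional curves \(E_{i,j}\). Locally over a crossing point, \(\widetilde L'\) is \(D_0\) and \(\widetilde E'\) is \(D_{p+1}\) in the toric model. By the fan structure (the rays \(u_0,u_1,\dots,u_{p+1}\) in order), \(D_0\) meets only \(E_1\), transversally in one point, and \(D_{p+1}\) meets only \(E_p\), also transversally once. The étale-local identification \eqref{eq:split-etale-resolution-isomorphism} transports these intersection numbers to the global surface. Globally, \(\widetilde L'\) meets only the chains over the \(p+1\) crossings lying on \(\widetilde L\); the other chains in \eqref{split:globaleqn} lie over points off \(\widetilde L\), and the same holds for \(\widetilde E'\). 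The coefficient of \(E_{1,j}\) in \(\widetilde\pi_\ell^{\,*}\widetilde L\) is \(N-(p+1)=p^2\), and the coefficient of \(E_{p,j}\) in \(\widetilde\pi_\ell^{\,*}\widetilde E\) is \(p\). Substituting, we get
\[
N(\widetilde L')^2+(p+1)p^2=-p,
\qquad
N(\widetilde E')^2+(p+1)p=-1 .
\]
These give \((\widetilde L')^2=-p(1+p+p^2)/N=-p\) and \((\widetilde E')^2=-(p^2+p+1)/N=-1\).

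The main obstacle is the bookkeeping in the middle step. One must check that the pullback of the line-type divisor \(\widetilde L\) involves only chains over crossings on \(\widetilde L\), with the coefficient pattern \(N-(p+1)i\) read off from the orientation of the chain in which \(D_0\) is adjacent to \(E_1\). One must also check that the same local orientation holds at every crossing point, as asserted after \eqref{split:diveqn2}. Once the adjacency \(D_0\leftrightarrow E_1\), \(D_{p+1}\leftrightarrow E_p\) is fixed, the rest is the arithmetic above.
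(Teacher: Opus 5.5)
Your proof is correct, but it applies the projection formula differently from the paper's proof of this lemma.

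The paper squares the pullback. It starts from $(\widetilde\pi_\ell^{\,*}\widetilde L)^2=N\,\widetilde L^2=-pN$ and expands the left side. That expansion needs the adjacency number $\widetilde L'\cdot E_{1,j}=1$, and also the self-intersection of the exceptional part, $\bigl(\sum_i(N-(p+1)i)E_{i,j}\bigr)^2=-p^2N$. This comes from the full chain quadratic form \eqref{eq:split-chain-quadratic-form}, and the analogous value $-pN$ is used for the exceptional-type divisor. In exchange, the paper never needs the degree of $\widetilde L'\to\widetilde L$.

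You instead pair the pullback with the strict transform. Then only the adjacency numbers and the two end coefficients ($p^2$ on $E_{1,j}$, $p$ on $E_{p,j}$) enter, and no chain self-intersections are needed. The price is the identification $\widetilde\pi_{\ell*}\widetilde L'=\widetilde L$ (and likewise for $\widetilde E'$). You correctly obtain it from the residue degree one established in Lemma~\ref{lem:split-kummer-normal-character}.

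This is the same route the paper takes in the non-split case (Section~\ref{sec:unitary-intersections}). The two computations agree because $\widetilde\pi_\ell^{\,*}\widetilde L\cdot E_{i,j}=0$ for every exceptional curve, which the wall relations confirm.

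One minor imprecision: $\widetilde L'\to\widetilde L$ is an isomorphism because it is finite and birational onto a normal curve, not because both curves are $\mathbb P^1$. In any case, only its having degree one is used in the projection formula.
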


\begin{proof}
For coefficients \(a_1,\ldots,a_p\), put
\[
    Q_j(a)=\sum_{i=1}^p a_iE_{i,j}.
\]
Using
\[
    E_{i,j}^2=-2\quad(i<p),
    \qquad
    E_{p,j}^2=-p-2,
    \qquad
    E_{i,j}\cdot E_{i+1,j}=1,
\]
one obtains, by direct expansion,
\begin{equation}
    Q_j(a)^2
    =
    -a_1^2
    -\sum_{i=1}^{p-1}(a_i-a_{i+1})^2
    -(p+1)a_p^2.
    \label{eq:split-chain-quadratic-form}
\end{equation}
The chains over distinct crossing points are disjoint. For the line-type divisor, set
\[
    c_i=N-(p+1)i.
\]
Then \(c_1=p^2\), \(c_p=1\), and
\(c_i-c_{i+1}=p+1\). Hence
\[
    Q_j(c)^2
    =-p^4-(p-1)(p+1)^2-(p+1)
    =-p^2N.
\]
Moreover,
\[
    \widetilde L'\cdot Q_j(c)=c_1=p^2.
\]
The projection formula and
\[
    \widetilde\pi_\ell^{\,*}\widetilde L
    =N\widetilde L'+\sum_{j=1}^{p+1}Q_j(c),
    \qquad
    \widetilde L^2=-p,
\]
give
\[
    -pN
    =N^2(\widetilde L')^2+Np^2(p+1),
\]
and therefore
\begin{equation}
    (\widetilde L')^2=-p.
    \label{split:selfLtilda}
\end{equation}

For the exceptional-type divisor, set \(e_i=i\). Equation
\eqref{eq:split-chain-quadratic-form} gives
\[
    Q_j(e)^2
    =-1-(p-1)-(p+1)p^2
    =-pN.
\]
Since \(\widetilde E'\) meets only \(E_{p,j}\), we have \(
    \widetilde E'\cdot Q_j(e)=p.
\) 

\medskip

Now using
\(
    \widetilde\pi_\ell^{\,*}\widetilde E
    =N\widetilde E'+\sum_{j=1}^{p+1}Q_j(e),\) and \(
    \widetilde E^2=-1
\)
the projection formula yields
\(
    -N
    =N^2(\widetilde E')^2+Np(p+1).
\)
Thus we obtain
\begin{equation}
    (\widetilde E')^2=-1.
    \label{split:selfEtilda}
\end{equation}
\end{proof}

\subsection{Brauer--Lefschetz calculation}
\label{sec:split-brauer-calculation}

Work over \(\bar{k}\), and let \(N=p^2+p+1\).
Recall that the group \( T^{wF}\) acting on the right is generated
by an element \(\tau\) of order \((p-1)N\), while
\[
    \sigma=\tau^{p-1}
\]
generates the stabilizer of a chosen component and has exact order \(N\).
The local covering coordinate \(z\) transforms as
\[
    \sigma(z)=\bar\zeta z,
    \qquad
    \operatorname{ord}(\bar\zeta)=N.
\]
Since \(N\mid p^3-1\), we may take
\(\bar\zeta\in\mathbb F_{p^3}^{\times}\). Put
\[
    \widehat\zeta=[\bar\zeta]\in W(\mathbb F_{p^3})^\times.
\]
Then \(\widehat\zeta^N=1\), its reduction modulo \(p\) is
\(\bar\zeta\), and
\[
    [\bar\zeta^m]=\widehat\zeta^m
    \qquad(m\in\mathbb Z).
\]
Because \(0<|m|\leq p<N\), each
\(1-\widehat\zeta^m\) occurring below is a unit in
\(W(\mathbb F_{p^3})\).

    \subsubsection{Fixed-locus data}
    
    The inputs to Lefschetz--Riemann--Roch for one split component are
    summarized below.  For an isolated point \(Q_{i-1}\) indexed by \(2\leq i\leq p\),
    the last column lists the two cotangent eigenvalues; for a curve it lists
    the normal eigenvalue.
    \[
    \begin{array}{c|c|c|c|c}
    \text{fixed stratum}&\text{number}&\text{genus}&\text{self-intersection}
    &\text{eigenvalue(s)}\\ \hline
    Q_{i-1}&N(p+1)\text{ for each }i&-&-&
    (\bar\zeta^i,\bar\zeta^{1-i})\\
    \widetilde L'&N&0&-p&\bar\zeta^{-1}\\
    \widetilde E'&N&0&-1&\bar\zeta^{p}
    \end{array}
    \]
    
    \subsubsection{Isolated fixed points}
    
    For each relevant base point, the toric resolution has the isolated fixed
    points
    \[
    Q_{i-1}\in
    \operatorname{Spec}
    k[t_1^it_2^{-1},t_1^{1-i}t_2],
    \qquad 2\leq i\leq p.
    \]
    In the local coordinates \(a_{i,1},a_{i,2}\), the action of \(\sigma\)
    is
    \[
    a_{i,1}\longmapsto\bar\zeta^{\,i}a_{i,1},
    \qquad
    a_{i,2}\longmapsto\bar\zeta^{\,1-i}a_{i,2}.
    \]
    Thus the lifted cotangent eigenvalues are
    \(\widehat\zeta^i\) and \(\widehat\zeta^{1-i}\), and
    Theorem~\ref{thm:lefschetz-rr} gives
    \begin{equation}\label{eq:split-brauer-local-point}
    a_{\mathrm{Br}}(Q_{i-1})
    =
    \frac{1}
    {(1-\widehat\zeta^i)(1-\widehat\zeta^{1-i})}.
    \end{equation}
    The contribution of the \(p-1\) points over one base point is therefore
    \[
    S_{\mathrm{iso}}
    =
    \sum_{i=2}^{p}
    \frac{1}
    {(1-\widehat\zeta^i)(1-\widehat\zeta^{1-i})}.
    \]
    For every summand,
    \[
    \frac{1}
    {(1-\widehat\zeta^i)(1-\widehat\zeta^{1-i})}
    =-
    \frac{1}{1-\widehat\zeta}
    \left(
    \frac{1}{1-\widehat\zeta^{i-1}}
    -
    \frac{1}{1-\widehat\zeta^i}
    \right).
    \]
    The sum now telescopes, giving
    \begin{equation}\label{eq:split-brauer-isolated-sum}
    \boxed{
    S_{\mathrm{iso}}
    =
    \frac{\widehat\zeta(\widehat\zeta^{p-1}-1)}
    {(1-\widehat\zeta)^2(1-\widehat\zeta^p)}.
    }
    \end{equation}
    
    \subsubsection{Fixed curves}
    
    There are \(N\) fixed curves \(\widetilde E'_P\) of exceptional type and
    \(N\) fixed curves \(\widetilde L'_L\) of line type.  Choose one
    representative of each type and abbreviate it to \(\widetilde E'\) and
    \(\widetilde L'\).  Both are isomorphic to \(\mathbb P^1\), and the
    intersection calculations above give
    \[
    (\widetilde E')^2=-1,
    \qquad
    (\widetilde L')^2=-p.
    \]
    Along \(\widetilde L'\), the normal eigenvalue is \(\bar\zeta^{-1}\).
    Consequently, \eqref{eq:brauer-curve} gives
    \begin{align}
    b_{\mathrm{Br}}(\widetilde L')
    &=
    \frac{1}{1-\widehat\zeta}
    -
    \frac{\widehat\zeta(-p)}{(1-\widehat\zeta)^2}
    \nonumber\\
    &=
    \boxed{\frac{1+(p-1)\widehat\zeta}
    {(1-\widehat\zeta)^2}}.
    \label{eq:split-brauer-L}
    \end{align}
    Along \(\widetilde E'\), the normal eigenvalue is
    \(\bar\zeta^{p}\).  Hence
    \begin{align}
    b_{\mathrm{Br}}(\widetilde E')
    &=
    \frac{1}{1-\widehat\zeta^{-p}}
    -
    \frac{\widehat\zeta^{-p}(-1)}{(1-\widehat\zeta^{-p})^2}
    \nonumber\\
    &=
    \boxed{
    \frac{\widehat\zeta^{2p}}{(1-\widehat\zeta^p)^2}
    }.
    \label{eq:split-brauer-E}
    \end{align}
    The first term in each line is essential: since the curves are rational,
    \(1-g_D=1\), not \(0\).
    The uniformity of these two normal eigenvalues was proved at the generic
    points of all boundary components in
    Lemma~\ref{lem:split-kummer-normal-character}.  In particular, it does
    not depend on choosing projective transformations that lift to
    \(\overline {\mathbf{Y}}_\ell(w)\).
    
    \subsubsection{Proof of the split main theorem}
    
    There are \(N(p+1)\) boundary crossing points, and the fixed locus over
    each one contributes the quantity in
    \eqref{eq:split-brauer-isolated-sum}.  There are also \(N\) curves of
    each boundary type.  Adding all fixed-curve contributions therefore
    gives
    \begin{align}
    L_{\mathrm{Br}}(\sigma,\widetilde{\overline {\mathbf{Y}}}_{\ell}(w))
    ={}&
    N(p+1)\left(
    \frac{\widehat\zeta(\widehat\zeta^{p-1}-1)}
    {(1-\widehat\zeta)^2(1-\widehat\zeta^p)}
    \right)\nonumber\\
    &+
    N\frac{1+(p-1)\widehat\zeta}
    {(1-\widehat\zeta)^2}
    +
    N\frac{\widehat\zeta^{2p}}{(1-\widehat\zeta^p)^2}.
    \label{eq:split-final-brauer}
    \end{align}
    This computes the right-hand side of
    \eqref{eq:intro-split-brauer} on the smooth resolution.  The passage to
    the normal component follows from
    \eqref{eq:common-brauer-resolution-invariance}.  Since \(\sigma\)
    preserves all \(p-1\) components and \(\tau\) identifies their
    Brauer characters, the whole-space value is \(p-1\) times the component
    value.  This proves the trace formulas in
    Theorem~\ref{thm:intro-split-brauer}.
    
    \subsection{Nonvanishing and Galois-module consequence}
    \label{sec:split-nonvanishing}
    
    \begin{proposition}
    \label{prop:split-main-value-nonzero}
    The split Brauer--Lefschetz numbers
    \[
    L_{\mathrm{Br}}(\sigma,\overline {\mathbf{Y}}_\ell(w))
    \quad\text{and}\quad
    L_{\mathrm{Br}}(\sigma,\overline {\mathbf{Y}}(w))
    \]
    are nonzero for every prime \(p\).
    \end{proposition}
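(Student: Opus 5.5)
The plan is to reduce everything to the nonvanishing of one explicit element of $W(\mathbb F_{p^3})$. The second assertion follows from the first: by Theorem~\ref{thm:intro-split-brauer},
\[
L_{\mathrm{Br}}(\sigma,\overline {\mathbf{Y}}(w))=(p-1)L_{\mathrm{Br}}(\sigma,\overline {\mathbf{Y}}_\ell(w)),
\]
and $p-1\neq0$ in the characteristic-zero field $\operatorname{Frac}W(\bar k)$. The factor $N$ in \eqref{eq:intro-split-brauer} is likewise invertible there. So it suffices to show that
\[
R(\widehat\zeta):=(p+1)\frac{\widehat\zeta(\widehat\zeta^{p-1}-1)}{(1-\widehat\zeta)^2(1-\widehat\zeta^p)}+\frac{1+(p-1)\widehat\zeta}{(1-\widehat\zeta)^2}+\frac{\widehat\zeta^{2p}}{(1-\widehat\zeta^p)^2}\neq0 .
\]
All denominators are units of $W(\mathbb F_{p^3})$ because $0<p<N$. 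Multiplying by the unit $(1-\widehat\zeta)^2(1-\widehat\zeta^p)^2$, the task becomes showing that
\[
P(\widehat\zeta)=(p+1)(\widehat\zeta^p-\widehat\zeta)(1-\widehat\zeta^p)+(1+(p-1)\widehat\zeta)(1-\widehat\zeta^p)^2+\widehat\zeta^{2p}(1-\widehat\zeta)^2
\]
is nonzero.

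Rather than working over $\mathbb C$ via Remark~\ref{lem:cyclotomic-comparison}, where one would have to rule out divisibility of $P$ by $\Phi_N$, I would reduce modulo $p$. If the image of $P(\widehat\zeta)$ in $\mathbb F_{p^3}$ is nonzero, then $P(\widehat\zeta)\neq0$. Modulo $p$ we have $p+1\equiv1$ and $p-1\equiv-1$, and $1-\bar\zeta^p=(1-\bar\zeta)^p$. Put $x:=\bar\zeta-1\neq0$. A short expansion turns the reduction into
\[
\bar P=x\bigl(x(x+1)^{2p}+x^p-x^{2p-1}-x^{2p}\bigr),
\]
and the next step is to show that the bracket is nonzero.

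The main obstacle is this last step. The element $x$ generates $\mathbb F_{p^3}$: since $N\nmid p-1$, $\bar\zeta\notin\mathbb F_p$, and $\mathbb F_{p^3}$ has no intermediate subfields. Because $x$ has degree only $3$ over $\mathbb F_p$, a degree-$(2p+1)$ polynomial in $x$ could a priori vanish at it, so a degree count alone will not suffice. I would instead rewrite every power $x^{p}$ and $(x+1)^{p}=\bar\zeta^{p}$ through the Frobenius conjugates $x^{(p)}=\bar\zeta^p-1$ and $x^{(p^2)}=\bar\zeta^{p^2}-1$. The relation $\bar\zeta^{1+p+p^2}=1$ gives $\bar\zeta^{p^2}=\bar\zeta^{-1-p}$. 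With this, the bracket becomes a Laurent expression in $\bar\zeta$ and $\bar\zeta^{p}$ of low bidegree. Its vanishing would force a polynomial relation of low degree between $\bar\zeta$ and its conjugate $\bar\zeta^p$. I would then rule out such a relation by applying Frobenius to obtain a small system in the three conjugates and comparing it with the minimal polynomial of $\bar\zeta$ over $\mathbb F_p$, which has degree $3$.

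If this reduction turns out to vanish for some small $p$, the fallback is to evaluate $R(\zeta_N)$ in $\mathbb C$ via Remark~\ref{lem:cyclotomic-comparison}. There I would bound the real part of each term using $|1-\zeta_N^m|$, or check $p=2$ (where $N=7$) directly.
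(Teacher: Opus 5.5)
Your preliminary reductions are correct. The whole-space statement follows from the component statement because $p-1$ and $N$ are invertible in $\operatorname{Frac}W(\bar k)$, and the denominators are units. Nonvanishing of the reduction of $P(\widehat\zeta)$ modulo $p$ is a legitimate sufficient test, and your formula for $\bar P$ checks out: with $y=\bar\zeta$, $z=\bar\zeta^{p}$ it is $Q(y,z)=1-2y-z+3yz+z^2-3yz^2+y^2z^2$.

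The gap is that the step you flag as the main obstacle is the entire content of the proposition. You only describe a strategy for it and hedge on whether it works. Your fallback would not rescue it as stated. At $x=e^{2\pi i/N}$, the real parts of the isolated-point term and of the $\widetilde L'$-term are each of size about $p^5/(4\pi^2)$, with opposite signs, and they cancel to leading order. For $p=5$ they are roughly $118.5$ and $-121.6$, so termwise bounds via $|1-\zeta_N^m|$ cannot decide the sign. The paper instead takes imaginary parts. Since $x/(1-x)^2$ and $x^p/(1-x^p)^2$ are real, the self-intersection terms drop out, and the cotangent identities plus telescoping give $\frac1N\operatorname{Im}L_{\mathrm{Br}}=\frac{p-1}{4}\bigl(\cot(pA)-\cot A\bigr)<0$ with $A=\pi/N$.

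Your mod-$p$ route can nevertheless be completed along the lines you sketch; the missing piece is an explicit elimination. Put $w=\bar\zeta^{p^2}=(yz)^{-1}$. Raising $Q(y,z)=0$ to the $p$-th power gives $Q(z,w)=0$. As quadratics in $y$ one has
$Q(y,z)=z^2y^2+(-3z^2+3z-2)y+(z^2-z+1)$ and
$y^2z^2Q(z,(yz)^{-1})=z^2(1-2z)y^2+z(3z-1)y+(z^2-3z+1)$.
Their resultant in $y$ is $4z^2(z-1)^4H(z)$ with $H(T)=T^4-7T^3+9T^2-5T+1$. For odd $p$, since $z\neq0,1$, this forces $H(z)=0$, hence $H(y)=0$ because $H(y)^p=H(z)$. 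The minimal polynomial $m_y$ of $y$ is a cubic dividing $H$ in $\mathbb F_p[T]$, with $m_y(0)=-y^{1+p+p^2}=-1$. Writing $H=(T-r)m_y$ and comparing constant terms gives $r=H(0)=1$, contradicting $H(1)=-1$. For $p=2$ one checks directly that $\bar P=1+\bar\zeta^2+\bar\zeta^3+\bar\zeta^4+\bar\zeta^5+\bar\zeta^6=\bar\zeta\neq0$.

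Completed this way, your argument gives more than the paper's. It shows that $L_{\mathrm{Br}}(\sigma,\overline{\mathbf{Y}}_\ell(w))$ is a $p$-adic unit, so already the ordinary alternating trace of $\sigma$ on $H^*(\overline{\mathbf{Y}}_\ell(w),\mathcal O)$ is nonzero in $\bar k$. The paper's complex argument is shorter but only yields nonvanishing in characteristic zero.
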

    
    \begin{proof}
    Let \(R_p(T)\in\mathbb Q(T)\) denote the rational function on the
    right-hand side of \eqref{eq:intro-split-brauer}.  For a primitive
    \(N\)-th root \(\omega\), neither \(\omega\) nor \(\omega^p\) is one,
    because \(p\nmid N\).  Hence every denominator in \(R_p(\omega)\) is
    nonzero.  Lemma~\ref{lem:cyclotomic-comparison}, with
    \(M=N\) and \(\widehat\omega=\widehat\zeta\), shows that it is enough to
    prove nonvanishing after the complex specialization
    \(\omega=e^{2\pi i/N}\).
    
    Put \(A=\pi/N\) and \(x=e^{2iA}\).  The elementary identities
    \[
    \operatorname{Im}\frac{1}{(1-x^i)(1-x^{1-i})}
    =\frac14\bigl(\cot(iA)-\cot((i-1)A)\bigr)
    \]
    and
    \[
    \operatorname{Im}\frac{1+(p-1)x}{(1-x)^2}
    =\frac12\cot A,
    \qquad
    \operatorname{Im}\frac{x^{2p}}{(1-x^p)^2}
    =-\frac12\cot(pA)
    \]
    give
    \[
    \frac1N\operatorname{Im}
    L_{\mathrm{Br}}(\sigma,\overline {\mathbf{Y}}_\ell(w))
    =\frac{p-1}{4}\bigl(\cot(pA)-\cot A\bigr)<0.
    \]
    Indeed, \(0<A<pA<\pi/2\), and \(\cot\) is strictly decreasing there.
    Hence the value for the irreducible component, and hence also for the entire variety is, nonzero.
    \end{proof}
    
    \begin{corollary}[Split Galois-module consequence]
    \label{cor:split-galois-module-not-free}
    Put \(\Gamma_{\mathrm{sp}}=\langle\sigma\rangle\).  Then
    \[
    \chi_{\Gamma_{\mathrm{sp}}}
    \bigl(\overline {\mathbf{Y}}_\ell(w),\mathcal O_{\overline {\mathbf{Y}}_\ell(w)}\bigr)
    \]
    is not virtually free over \(\bar{k}[\Gamma_{\mathrm{sp}}]\).  Moreover,
    \(
    \chi_{ T^{wF}}
    \bigl(\overline {\mathbf{Y}}(w),\mathcal O_{\overline {\mathbf{Y}}(w)}\bigr)
    \)
    is not virtually free over \(\bar{k}[ T^{wF}]\).
    \end{corollary}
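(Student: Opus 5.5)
The plan is to derive both assertions from the nonvanishing criterion \eqref{eq:nonzero-lefschetz-free-obstruction}, together with Proposition~\ref{prop:split-main-value-nonzero}.

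For the first assertion, take \(G=\Gamma_{\mathrm{sp}}=\langle\sigma\rangle\), which is cyclic of order \(N=p^2+p+1\). It acts on the normal component \(Z=\overline{\mathbf{Y}}_\ell(w)\), and \(N\) is prime to \(p\). Since \(N>1\), \(\sigma\) is a non-identity \(p\)-regular element. By Proposition~\ref{prop:split-main-value-nonzero}, \(L_{\mathrm{Br}}(\sigma,\overline{\mathbf{Y}}_\ell(w))\neq0\). Recall that the Brauer character of \(\chi_{\Gamma_{\mathrm{sp}}}(Z,\mathcal O_Z)\) is \(g\mapsto L_{\mathrm{Br}}(g,Z)\). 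If the class were \(m[\bar k[\Gamma_{\mathrm{sp}}]]\), its value at \(\sigma\) would vanish, because the regular character is zero off the identity. This contradiction gives the first statement directly from \eqref{eq:nonzero-lefschetz-free-obstruction}.

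For the second assertion, take \(G=T^{wF}\), acting on \(\overline{\mathbf{Y}}(w)=\bigsqcup_\ell\overline{\mathbf{Y}}_\ell\). The element \(\sigma=\tau^{p-1}\) is a non-identity element of \(T^{wF}\). Its Lefschetz number on the whole space is computed by additivity over the disjoint components, since \(H^i\) of a disjoint union is the direct sum. Because \(\sigma\) stabilizes each component, the Brauer trace of \(\sigma\) is the sum of the component traces. Conjugation by \(\tau\), which commutes with \(\sigma\), identifies the \(\sigma\)-modules \(H^i(\overline{\mathbf{Y}}_\ell)\) and \(H^i(\overline{\mathbf{Y}}_{\ell'})\). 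Therefore
\[
L_{\mathrm{Br}}(\sigma,\overline{\mathbf{Y}}(w))=(p-1)\,L_{\mathrm{Br}}(\sigma,\overline{\mathbf{Y}}_\ell(w)),
\]
as stated in Theorem~\ref{thm:intro-split-brauer}. This is nonzero by Proposition~\ref{prop:split-main-value-nonzero}, since \(p-1\neq0\) in characteristic zero. Alternatively, one may use the induction formula \eqref{eq:component-induction-euler-class}: the induced character at the central element \(\sigma\in\Gamma_0\) equals \([\Gamma:\Gamma_0]\) times the component value. Applying \eqref{eq:nonzero-lefschetz-free-obstruction} again, now with \(G=T^{wF}\), finishes the proof.

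No step is a real obstacle, since all the work lies in the earlier nonvanishing computation. The only point needing care is the compatibility of the component decomposition with the \(\sigma\)-action. Here commutativity of \(T^{wF}\) ensures that the transported actions agree, so that the \((p-1)\) component contributions are all equal and none of them cancel.
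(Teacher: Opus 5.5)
Your proposal is correct and follows the paper's own argument: $\sigma$ is a nonidentity element whose Brauer--Lefschetz numbers on $\overline{\mathbf{Y}}_\ell(w)$ and on $\overline{\mathbf{Y}}(w)$ are nonzero by Proposition~\ref{prop:split-main-value-nonzero}, so the obstruction \eqref{eq:nonzero-lefschetz-free-obstruction} applies, first to $\Gamma_{\mathrm{sp}}$ and then to $T^{wF}$. Your extra justification of the factor $p-1$, via translation by $\tau$ or via the induction formula \eqref{eq:component-induction-euler-class}, is sound but not needed, since the Proposition already asserts the whole-space nonvanishing directly.
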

    
    % \begin{proof}
    % The element \(\sigma\) is nonidentity.  The two statements follow from
    % Proposition~\ref{prop:split-main-value-nonzero} and the obstruction
    % \eqref{eq:nonzero-lefschetz-free-obstruction}, first for the component
    % stabilizer and then for the full \( T^{wF}\)-action.
    % \end{proof}
    
    \section{Non-Split Case}\label{part:unitary}
    
    \subsection{Unitary case notation and setup} \label{unitary:notation}
    Let \( \mathbf{G}=\mathrm{GL}_3\) over
    \(\bar{k}\).  We equip \( \mathbf{G}\) with the \textit{unitary
    Frobenius} \(F\) defined by
    \begin{equation}
    F(g)=J(g^{(p)})^{-t}J^{-1},
    \qquad
    J=\begin{pmatrix}
    0&0&1\\
    0&-1&0\\
    1&0&0
    \end{pmatrix},
    \label{eq:unitary-frobenius-initial}
    \end{equation}
    where \(g^{(p)}\) is obtained by raising each entry to the \(p\)-th power
    and \((g^{(p)})^{-t}=((g^{(p)})^{-1})^t\).  The corresponding finite group of Lie type is
    \( \mathbf{G}^F=\mathrm U_3(\mathbb F_p)\).  The choice of the matrix \(J\) is explained in Section~\ref{unitary:choice of J}.
    
    Let \( B\) be the upper triangular Borel subgroup,
    \( U=R_u( B)\) its upper unitriangular radical, and
    \[
     T
    =\{\diag(t_1,t_2,t_3):t_i\in\bar{k}^\times\}
    \simeq(\mathbb G_m)^3
    \]
    the diagonal torus.  On \( T\),
    \[
    F\bigl(\diag(t_1,t_2,t_3)\bigr)
    =\diag(t_3^{-p},t_2^{-p},t_1^{-p}).
    \]
    The Weyl group \(W=N_{ \mathbf{G}}( T)/ T\) is \(S_3\).
    Writing \(\chi_i(\diag(t_1,t_2,t_3))=t_i\), take
    \[
    \alpha_1=\chi_1-\chi_2,
    \qquad
    \alpha_2=\chi_2-\chi_3.
    \]
    Then \(F(\alpha_1)=p\alpha_2\) and \(F(\alpha_2)=p\alpha_1\).  If
    \(e_i\) denotes the cocharacter in the \(i\)-th diagonal entry, the
    corresponding simple coroots are
    \[
    \alpha_1^\vee=e_1-e_2,
    \qquad
    \alpha_2^\vee=e_2-e_3,
    \]
    and
    \[
    F(e_1)=-pe_3,
    \qquad
    F(e_2)=-pe_2,
    \qquad
    F(e_3)=-pe_1.
    \]
    
    \subsubsection{The Frobenius and the order of the acting group}\label{sec:unitary-setup}
    
    Fix the Coxeter element \(w=s_1s_2=(123)\).  The action on the
    cocharacter lattice gives
    \[
    | T^{wF}|=|\det(I-wF\mid Y( T))|
    =(p+1)(p^2-1).
    \]
    The explicit decomposition of this finite abelian group and the
    stabilizer of each irreducible component are determined in
    Section~\ref{unitary:finite group TwF} and
    Proposition~\ref{prop:unitary-components-and-stabilizer}.
    
    % Writing the matrix $M_{wF}$ in the basis $\{\lambda_1, \lambda_2, \lambda_3\}$:
    % \[ M_{wF} = \begin{pmatrix} -p & 0 & 0 \\ 0 & 0 & -p \\ 0 & -p & 0 \end{pmatrix} \]
    % Evaluating the determinant:
    % \[ \det(I - M_{wF}) = \det \begin{pmatrix} 1+p & 0 & 0 \\ 0 & 1 & p \\ 0 & p & 1 \end{pmatrix} = (1+p)(1 - p^2) \]
    % Taking the absolute value yields $(p+1)(p^2-1)$.
    
 \subsection{Explicit description of the Deligne--Lusztig varieties}
\label{unitary:choice of J}

\subsubsection{The Hermitian form}

Let \(V=\mathbb F_{p^2}^3\), equipped with its standard basis, and put
\(V_{\bar k}=V\otimes_{\mathbb F_{p^2}}\bar k\). Vectors are written as
columns. For a matrix \(g\) with entries in \(\bar k\) we denote by
\(g^{(p)}\) the matrix obtained by raising each entry of \(g\) to the
\(p\)-th power, and we write \(g^{-t}=(g^{-1})^t\). Put
\[
    J_+=\antidiag(1,1,1),
    \qquad
    J_-=\antidiag(1,-1,1).
\]
For \(J\in\{J_+,J_-\}\) we have \(J=J^t=J^{-1}\), and we consider the
Hermitian form on \(V\) (extended to \(V_{\bar k}\) by the same formula)
and the isogeny of \(\mathrm{GL}_{3,\bar k}\) given by
\[
    \langle v,v'\rangle_J=v^{(p)t}Jv',
    \qquad
    F_J(g)=J\,(g^{(p)})^{-t}J^{-1}.
\]
G\"ortz and Schr\"oer~\cite{gortzDeligneLusztigVarietiesWhose2026}
describe the unitary Frobenius and its action on the flag variety using
\(J_+\); in this paper we work with \(J_-\) instead. For
\(z=(z_0,z_1,z_2)^t\in V_{\bar k}\) put
\[
    \mathcal H_\pm(z):=\langle z,z\rangle_{J_\pm}
    =z_0z_2^p\pm z_1^{p+1}+z_2z_0^p,
\]
so that \(\mathcal H_\pm=0\) is the Hermitian curve in
\(\mathbb P(V_{\bar k})\) attached to \(J_\pm\).

The two conventions are equivalent. Since the norm map
\(\mathbb F_{p^2}^\times\to\mathbb F_p^\times\) is surjective, there is
\(\varepsilon\in\mathbb F_{p^2}^\times\) with \(\varepsilon^{p+1}=-1\);
for \(p=2\) we take \(\varepsilon=1\), and then \(J_+=J_-\). With
\(D=\diag(1,\varepsilon,1)\) we have
\begin{equation}
    D^{(p)t}J_+D=J_-.
    \label{eq:J-plus-minus-isometry}
\end{equation}
Hence \(D\) is an isometry from
\((V,\langle\,\cdot\,,\cdot\,\rangle_{J_-})\) onto
\((V,\langle\,\cdot\,,\cdot\,\rangle_{J_+})\); in particular,
\(\mathcal H_+(Dz)=\mathcal H_-(z)\). Moreover,
\eqref{eq:J-plus-minus-isometry} implies
\(F_{J_+}(DgD^{-1})=D\,F_{J_-}(g)\,D^{-1}\), and therefore
\[
    (Dg)^{-1}F_{J_+}(Dg)
    =g^{-1}F_{J_-}(g)\cdot D^{-1}F_{J_+}(D)
    \in g^{-1}F_{J_-}(g)\,T,
\]
where \(T\subset\mathrm{GL}_{3,\bar k}\) denotes the diagonal torus.
Consequently, left multiplication by \(D\) identifies the
Deligne--Lusztig varieties attached to \(F_{J_-}\), together with their
compactifications, with those attached to \(F_{J_+}\). This allows us to
apply the results of~\cite{gortzDeligneLusztigVarietiesWhose2026} in our
setting.

Following~\cite[(7.2)]{gortzDeligneLusztigVarietiesWhose2026}
and~\cite{langerBirationalGeometryCompactifications2019}, we regard the
flag variety as an incidence subvariety
\[
    \operatorname{Flag}(V_{\bar k})
    \hookrightarrow
    \mathbb P^2_{\bar k}\times\mathbb P^2_{\bar k},
    \qquad
    (L\subset P)\longmapsto([x],[y]),
\]
where \(L=\bar kx\) and \(P=\{v\in V_{\bar k}:y^tv=0\}\) for nonzero
column vectors \(x=(x_0,x_1,x_2)^t\) and \(y=(y_0,y_1,y_2)^t\). Thus
\([x]\) is the point of \(\mathbb P(V_{\bar k})\) defined by \(L\), and
\([y]\) is the coefficient vector of a linear equation for \(P\); in
other words, we identify the dual projective plane with
\(\mathbb P^2_{\bar k}\) by means of the standard basis. The image is the
incidence divisor
\[
    x^ty=x_0y_0+x_1y_1+x_2y_2=0.
\]
The isometry \(D\) sends a flag \(L\subset P\) to \(DL\subset DP\).
Since \(DP\) is the kernel of the covector \(y^tD^{-1}\), in coordinates
this reads
\begin{equation}
    ([x],[y])\longmapsto([Dx],[D^{-t}y]).
    \label{eq:J-coordinate-change-flags}
\end{equation}
This map preserves the incidence equation and satisfies
\(\mathcal H_+(Dx)=\mathcal H_-(x)\) and
\(\mathcal H_+(D^{-t}y)=\mathcal H_-(y)\); thus it carries the Hermitian
equations for \(J_-\) to those for \(J_+\).

From now on we fix \(J=J_-\) and write
\(\langle\,\cdot\,,\cdot\,\rangle=\langle\,\cdot\,,\cdot\,\rangle_{J}\),
\(F=F_J\) and \(\mathcal H=\mathcal H_-\); explicitly,
\[
    F(g)=J(g^{(p)})^{-t}J,
    \qquad
    \mathcal H(z)=z_0z_2^p-z_1^{p+1}+z_2z_0^p.
\]
Let \(B\subset\mathrm{GL}_{3,\bar k}\) be the Borel subgroup of upper
triangular matrices and \(U\) its unipotent radical; the isogeny \(F\)
preserves \(B\), \(U\) and \(T\). Let \(w=s_1s_2\), which corresponds to
the \(3\)-cycle \((1\,2\,3)\in S_3\), and choose the representative
\[
    \dot w=
    \begin{pmatrix}
        0&0&1\\
        1&0&0\\
        0&1&0
    \end{pmatrix}.
\]
The Deligne--Lusztig variety and its covering variety are then
\[
    \mathbf{X}(w)=\{gB:g^{-1}F(g)\in B\dot wB\},
    \qquad
    \mathbf{Y}(w)=\{gU:g^{-1}F(g)\in U\dot wU\},
\]
and the covering map \(\mathbf{Y}(w)\to \mathbf{X}(w)\) is \(gU\mapsto gB\).

\subsubsection[Geometry and boundary of the compactification]
{The compactification \(\overline {\mathbf{X}}(w)\) and its boundary}
\label{sec:unitary-base-boundary}

For a subspace \(W\subseteq V_{\bar k}\) put
\(W^\perp=\{v'\in V_{\bar k}:\langle v,v'\rangle=0
\text{ for all }v\in W\}\).
By~\cite[Lemma~7.1]{gortzDeligneLusztigVarietiesWhose2026}, the
Frobenius acts on flags by
\[
    F(L\subset P)=(P^\perp\subset L^\perp).
\]
Two flags \((L\subset P)\) and \((L'\subset P')\) are in relative
position \(w=s_1s_2\) if and only if \(L\not\subset P'\) and
\(L'\subset P\)~\cite[\S7.3]{gortzDeligneLusztigVarietiesWhose2026}.
Hence
\(\mathbf{X}(w)=\{(L\subset P):P^\perp\subset P,\ L\not\subset L^\perp\}\), and
the smooth compactification attached to the reduced expression
\(s_1s_2\) is the closure
\begin{equation}
    \overline {\mathbf{X}}(w)=\{(L\subset P):P^\perp\subset P\}.
    \label{eq:unitary-compactification-flags}
\end{equation}
It is the union of the four Deligne--Lusztig strata \(\mathbf{X}(1)\),
\(\mathbf{X}(s_1)\), \(\mathbf{X}(s_2)\) and \(\mathbf{X}(w)\). Its boundary has two irreducible
components,
\begin{align}
    B_u&:=\overline {\mathbf{X}}(s_1)=\{(L\subset P):P=L^\perp\},
    \label{eq:unitary-Bu-flags}\\
    B_v&:=\overline {\mathbf{X}}(s_2)=\{(L\subset P):L=P^\perp\},
    \label{eq:unitary-Bv-flags}
\end{align}
which meet transversally along \(\mathbf{X}(1)=B_u\cap B_v\). The latter
consists of the \(p^3+1\) flags \(L\subset L^\perp\) with \(L\) an
\(\mathbb F_{p^2}\)-rational isotropic line. Thus
\begin{equation}
    \mathbf{X}(w)=\overline {\mathbf{X}}(w)\setminus(B_u\cup B_v).
    \label{eq:unitary-open-as-complement}
\end{equation}
These statements follow from the relative-position description
and~\cite[\S7.3, especially Proposition~7.2]{gortzDeligneLusztigVarietiesWhose2026};
the transversality can also be read off from the local computation
\eqref{eq:unitary-h-factorization} below.

We now pass to the coordinates \(([x],[y])\). Since
\(\langle x,v\rangle=(Jx^{(p)})^tv\), the plane \(L^\perp\) has
coefficient vector \(Jx^{(p)}\). As \(v'\) runs through \(P\), the
vector \(v'^{(p)}\) runs through the kernel of \(y^{(p)t}\); hence
\(P^\perp=\bar k\,Jy^{(p)}\). Since \(J\) is symmetric,
\(\mathcal H(y)=y^tJy^{(p)}\), so that \(P^\perp\subset P\) if and only
if \(\mathcal H(y)=0\). We
recover~\cite[Proposition~7.2]{gortzDeligneLusztigVarietiesWhose2026}:
\begin{equation}
    \overline {\mathbf{X}}(w)=
    \left\{([x],[y]):
    \begin{array}{l}
        x_0y_0+x_1y_1+x_2y_2=0,\\
        y_0y_2^p-y_1^{p+1}+y_2y_0^p=0
    \end{array}
    \right\},
    \label{eq:unitary-compactification-equations}
\end{equation}
and the boundary curves are
\begin{align}
    B_u&=\{([x],[y])\in\overline {\mathbf{X}}(w):[y]=[Jx^{(p)}]\},
    \label{eq:unitary-Bu-coordinates}\\
    B_v&=\{([x],[y])\in\overline {\mathbf{X}}(w):[x]=[Jy^{(p)}]\}.
    \label{eq:unitary-Bv-coordinates}
\end{align}
Moreover, \(L\subset L^\perp\) if and only if \(\mathcal H(x)=0\), so
\begin{equation}
    \mathbf{X}(w)=\overline {\mathbf{X}}(w)\cap\{\mathcal H(x)\neq0\},
    \qquad
    B_u\cup B_v=\overline {\mathbf{X}}(w)\cap\{\mathcal H(x)=0\}.
    \label{eq:unitary-boundary-hermitian}
\end{equation}

Let
\[
    C_x=\{\mathcal H(x)=0\}\subset\mathbb P^2_x,
    \qquad
    C_y=\{\mathcal H(y)=0\}\subset\mathbb P^2_y
\]
be the two copies of the Hermitian curve; they are smooth plane curves
of degree \(p+1\), hence of genus \(p(p-1)/2\). The second projection
exhibits \(\overline {\mathbf{X}}(w)\) as a \(\mathbb P^1\)-bundle over \(C_y\),
the fibre over \([y]\) being the line \(\{[x]:x^ty=0\}\). By
\eqref{eq:unitary-Bv-coordinates}, \(B_v\) is the graph of the morphism
\(C_y\to\mathbb P^2_x\), \([y]\mapsto[Jy^{(p)}]\), so the second
projection restricts to an isomorphism
\[
    B_v\xrightarrow{\ \sim\ }C_y,
\]
and \(B_v\) is a section of the ruling. By
\eqref{eq:unitary-Bu-coordinates} and the identity
\(x^tJx^{(p)}=\mathcal H(x)\), the first projection identifies \(B_u\)
with \(C_x\), and under this identification the morphism
\(B_u\to C_y\) factors as
\[
    C_x\xrightarrow{F_{C_x/k}}C_x^{(p)}
    \xrightarrow[\sim]{\ J\ }C_y,
\]
where \(F_{C_x/k}\) is the relative Frobenius and the second arrow is
the linear isomorphism induced by \(J\). Consequently, \(B_u\to C_y\) is
finite and purely inseparable of degree \(p\), whereas \(B_v\to C_y\) has degree
one: \(B_v\) is a section and \(B_u\) is a purely inseparable
\(p\)-section of the ruling. In particular, both boundary components are
Hermitian curves of genus \(p(p-1)/2\).

\subsubsection[The finite acting group]{The finite group \(T^{wF}\)}
\label{unitary:finite group TwF}

For \(t=\diag(t_1,t_2,t_3)\in T\) we have
\[
    F(t)=\diag(t_3^{-p},t_2^{-p},t_1^{-p}).
\]
Put \(F_w=\Ad(\dot w)\circ F\) and
\(T^{wF}=\{t\in T:F_w(t)=t\}\).
Since \(\dot w\diag(a_1,a_2,a_3)\dot w^{-1}=\diag(a_3,a_1,a_2)\), we get
\(F_w(t)=\diag(t_1^{-p},t_3^{-p},t_2^{-p})\). Hence \(t\in T^{wF}\) if
and only if \(t_1=t_1^{-p}\), \(t_2=t_3^{-p}\) and \(t_3=t_2^{-p}\),
that is,
\[
    t_1^{p+1}=1,
    \qquad
    t_2^{p^2-1}=1,
    \qquad
    t_3=t_2^{-p}.
\]
Therefore
\[
    T^{wF}
    =\left\{\diag(\zeta,a,a^{-p}):\zeta^{p+1}=1,\ a^{p^2-1}=1\right\}
    \cong\mu_{p+1}\times\mu_{p^2-1}.
\]
The group \(T^{wF}\) acts on \(\mathbf{Y}(w)\) from the right by
\(gU\cdot t=gtU\); this is well defined because \(t\) normalizes \(U\)
and \(t^{-1}\dot wF(t)=\dot w\). The action is free, and
\(\mathbf{Y}(w)\to \mathbf{X}(w)\) is a \(T^{wF}\)-torsor.

\subsubsection[Coordinates on \(\mathbf{Y}(w)\)]{Coordinates on \(\mathbf{Y}(w)\)}

We now describe the \(T^{wF}\)-torsor \(\mathbf{Y}(w)\to \mathbf{X}(w)\) explicitly. Let
\(g\in\mathrm{GL}_3(\bar k)\) have columns \(v_1,v_2,v_3\), and put
\[
    x:=v_1,
    \qquad
    y:=v_1\times v_2,
    \qquad
    \delta:=\det(g),
\]
where \(\times\) denotes the usual cross product on \(\bar k^3\). Right
multiplication by an element of \(U\) replaces \(v_2\) by
\(v_2+c_1v_1\) and \(v_3\) by \(v_3+c_2v_1+c_3v_2\) for some
\(c_1,c_2,c_3\in\bar k\), which changes none of \(x\), \(y\) and
\(\delta\). Hence the entries of \(x\) and \(y\), and \(\delta\), are
regular functions on \(\mathrm{GL}_{3,\bar k}/U\).

By Cramer's rule,
\(g^{-t}=\delta^{-1}(v_2\times v_3,\ v_3\times v_1,\ v_1\times v_2)\).
Applying this to \(g^{(p)}\), and noting that right multiplication by
\(J\) moves the third column to the first position, we find that the
first column of \(F(g)\) is
\begin{equation}
    F(g)_1=\delta^{-p}\,Jy^{(p)}.
    \label{eq:unitary-first-column}
\end{equation}

\begin{proposition}
\label{prop:unitary-Y-coordinates}
Let \(Z^\circ\subset\mathbb A^3\times\mathbb A^3\times\mathbb G_m\) be
the locally closed subscheme of triples \((x,y,\delta)\) with
\(y\neq0\) and
\begin{equation}
    x\times Jy^{(p)}=\delta^py,
    \qquad
    \mathcal H(x)=1,
    \qquad
    \delta^{p+1}=1.
    \label{eq:unitary-Y-equations}
\end{equation}
Then \(gU\mapsto(x,y,\delta)\) defines an isomorphism
\(\mathbf{Y}(w)\xrightarrow{\sim}Z^\circ\).
\end{proposition}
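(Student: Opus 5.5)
The plan is to check that the map \(gU\mapsto(x,y,\delta)\) lands in \(Z^\circ\), to construct an inverse morphism \(Z^\circ\to \mathbf{Y}(w)\) explicitly, and to verify that the two are mutually inverse on \(R\)-points for every \(\bar k\)-algebra \(R\). Doing everything on \(R\)-points makes the isomorphism scheme-theoretic and not merely bijective on \(\bar k\)-points. All matrix identities below are identities of polynomial functions, so they hold functorially. The basic tool is a matrix description of the double coset. An element \(h\in\mathrm{GL}_3\) lies in \(U\dot wU\) if and only if \(h\in B\dot wB\) and its "torus part" equals that of \(\dot w\). In terms of flags and minors, this means the following: \(h e_1\in \bar k e_2+\bar k e_1\) with coefficient \(1\) on \(e_2\), the appropriate \(2\times2\) minor normalisations hold, and \(\det h=1\).

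\emph{Step 1 (the map lands in \(Z^\circ\)).} Suppose \(F(g)=g\,u\,\dot w\,u'\) with \(u,u'\in U\). Since \(u'e_1=e_1\), \(\dot we_1=e_2\) and \(ue_2=e_2+ce_1\), the first column of \(F(g)\) equals \(v_2+cv_1\). Comparing with \eqref{eq:unitary-first-column} gives \(\delta^{-p}Jy^{(p)}=v_2+cv_1\). Taking the cross product with \(x=v_1\) then yields \(x\times Jy^{(p)}=\delta^p y\). Taking determinants gives \(\det F(g)=\det(J)^2\det(g^{(p)})^{-1}=\delta^{-p}\), while \(\det(u\dot wu')=1\). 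Hence \(\delta^{-p}=\delta\), that is, \(\delta^{p+1}=1\). For \(\mathcal H(x)=1\), I would look at the \((1,1)\)-type pairing entry of \(g^{-1}F(g)\). Concretely, pair the first column of \(F(g)\) against the row of \(g^{-1}\) given by \(\delta^{-1}(v_2\times v_3)^t\), or equivalently compute \(\langle x,x\rangle\) using \(F(g)=J(g^{(p)})^{-t}J\). This should express \(\mathcal H(x)\) as a specific matrix entry of \(u\dot wu'\), namely the \((3,1)\)-entry of \(g^{-t}\)-type products, and that entry equals \(1\) for elements of \(U\dot wU\). The condition \(y\neq0\) holds because \(v_1,v_2\) are linearly independent.

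\emph{Step 2 (the inverse).} Given \((x,y,\delta)\in Z^\circ(R)\), set \(v_1=x\) and \(v_2=\delta^{-p}Jy^{(p)}\). The first equation gives \(v_1\times v_2=y\), which is nonzero, so \(v_1,v_2\) span a rank-\(2\) direct summand. Choose \(v_3\) with \(\det(v_1,v_2,v_3)=\delta\). Any two such choices differ by adding an element of \(Rv_1+Rv_2\), so the class \(gU\) is well defined. Locally a choice of \(v_3\) exists, and these local choices glue because the ambiguity is exactly an element of \(U\). The same computation as in Step 1, run backwards, shows that the first column of \(g^{-1}F(g)\) is \(e_2\), that its determinant is \(1\) (using \(\delta^{p+1}=1\)), and that the remaining normalising entry is \(1\) (using \(\mathcal H(x)=1\)). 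With the criterion above, this places \(g^{-1}F(g)\) in \(U\dot wU\). That the two constructions are mutually inverse is then immediate from the definitions.

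\emph{Main obstacle.} The delicate point is the exact criterion for \(U\dot wU\) as opposed to \(B\dot wB\), together with matching each of its torus conditions to one of the three equations \eqref{eq:unitary-Y-equations}. Membership in \(B\dot wB\) is only the flag condition \(L'\subset P\), \(L\not\subset P'\), whereas we need equations, not just open conditions. I would first prove a small lemma: for \(h\in\mathrm{GL}_3\), \(h\in U\dot wU\) if and only if \(h_{21}=1\), \(h_{31}=0\), the lower-left \(2\times2\) minor equals \(1\), and \(\det h=1\). These are the standard Bruhat-cell minor conditions for the \(3\)-cycle. I would then compute these entries of \(g^{-1}F(g)\) via Cramer's rule as above. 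The entries \(h_{21}\) and \(h_{31}\) together encode \(x\times Jy^{(p)}=\delta^py\). I expect the minor condition to reduce to \(\mathcal H(x)=1\), after using \(\delta^{p+1}=1\) to clear powers of \(\delta\); keeping track of these powers of \(\delta\) is where I expect sign and exponent errors to arise.
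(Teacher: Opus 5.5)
Your outline is the paper's proof. You use the four-condition description of \(U\dot wU\); your lower-left minor condition is the paper's \(A_{32}=1\) once \(A_{21}=1\) and \(A_{31}=0\). You use the first-column formula \eqref{eq:unitary-first-column} to get \(x\times Jy^{(p)}=\delta^py\), determinants to get \(\delta^{p+1}=1\), and an inverse built by choosing \(v_3\) with \(y^tv_3=\delta\). Your inverse is slightly cleaner than the paper's. The canonical choice \(v_2=\delta^{-p}Jy^{(p)}\) makes \(v_1\times v_2=y\) immediate, and it makes the first column of \(g^{-1}F(g)\) literally \(e_2\). It also removes the paper's local choice of \(v_2\) and the need to first derive \(x^ty=0\) and \(x\neq0\).

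What is missing is the step you yourself flag as the main obstacle, and your first concrete route for it would fail. Pairing the first column of \(F(g)\) with the row \(\delta^{-1}(v_2\times v_3)^t\) of \(g^{-1}\) computes \(A_{11}\) for \(A=g^{-1}F(g)\). That entry is free on \(U\dot wU\), so it says nothing about \(\mathcal H(x)\). The right computation goes through \(A^{-1}\):

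\begin{enumerate}
\item Since \(J=J^t=J^{-1}\), one has \(F(g)^{-1}=Jg^{(p)t}J\), hence \(A^{-1}=Jg^{(p)t}Jg\).
\item The third row of \(J\) is \((1,0,0)\), so \((A^{-1})_{31}=v_1^{(p)t}Jv_1=\mathcal H(x)\) exactly, with no powers of \(\delta\).
\item The cofactor formula \((A^{-1})_{31}=(A_{21}A_{32}-A_{22}A_{31})/\det A\) shows that your lower-left minor equals \(\det A\cdot\mathcal H(x)=\delta^{-(p+1)}\mathcal H(x)\).
\item Hence, once \(\delta^{p+1}=1\), the minor condition is exactly \(\mathcal H(x)=1\).
\end{enumerate}

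In your Step 1 you can also read this off the factorization: \((A^{-1})_{31}=\bigl((u')^{-1}\dot w^{-1}u^{-1}\bigr)_{31}=1\). With this identity inserted (it is precisely the paper's argument), together with the routine check of your \(U\dot wU\) lemma by multiplying out \(u\dot wu'\), both directions of your proof go through.
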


\begin{proof}
Put \(A=g^{-1}F(g)\). Multiplying out \(u\dot wu'\) with
\(u,u'\in U\) shows that
\[
    U\dot wU
    =\left\{
    A=\begin{pmatrix}
        *&*&*\\
        1&*&*\\
        0&1&*
    \end{pmatrix}
    :\det A=1
    \right\}.
\]
We translate these conditions on \(A\) into conditions on
\((x,y,\delta)\). First, \(\det F(g)=\det(g^{(p)})^{-1}=\delta^{-p}\),
so \(\det A=\delta^{-(p+1)}\), and \(\det A=1\) if and only if
\(\delta^{p+1}=1\). Second, \(A_{21}=1\) and \(A_{31}=0\) mean that
\(F(g)_1\in v_2+\bar kv_1\). Since \(v_1\times v_2\) and
\(v_1\times v_3\) are linearly independent, this is equivalent to
\(x\times F(g)_1=y\), that is, by \eqref{eq:unitary-first-column}, to
\(x\times Jy^{(p)}=\delta^py\). Third,
\(A^{-1}=F(g)^{-1}g=Jg^{(p)t}Jg\); since the last row of \(J\) is
\((1,0,0)\), the \((3,1)\)-entry of \(A^{-1}\) is
\(v_1^{(p)t}Jv_1=\mathcal H(x)\). On the other hand, the cofactor
formula gives \((A^{-1})_{31}=(A_{21}A_{32}-A_{22}A_{31})/\det A\).
Hence, once the first two conditions hold, \(A_{32}=\mathcal H(x)\),
and \(A_{32}=1\) if and only if \(\mathcal H(x)=1\).

It follows that \(gU\in \mathbf{Y}(w)\) if and only if \((x,y,\delta)\)
satisfies \eqref{eq:unitary-Y-equations}. Since \(v_1\) and \(v_2\) are
linearly independent, \(y\neq0\), so we obtain a morphism
\(\mathbf{Y}(w)\to Z^\circ\). Conversely, let \((x,y,\delta)\in Z^\circ\).
Taking the dot product of the first equation with \(x\) gives
\(\delta^p\,x^ty=0\), so \(x^ty=0\); moreover \(x\neq0\) because
\(\mathcal H(x)=1\). Hence there is a vector \(v_2\) with
\(x\times v_2=y\), unique modulo \(\bar kx\), and, as \(y\neq0\), a
vector \(v_3\) with \(y^tv_3=\delta\), unique modulo
\(\bar kx+\bar kv_2\). The matrix \(g=(x,v_2,v_3)\) has determinant
\(y^tv_3=\delta\neq0\), its class \(gU\) does not depend on the choices,
and \(gU\in \mathbf{Y}(w)\) by what we have just shown. Locally on \(Z^\circ\)
the choices can be made by regular functions: for instance,
\(v_2=x_0^{-1}(0,y_2,-y_1)^t\) on \(\{x_0\neq0\}\), and
\(v_3=(\delta/y_0,0,0)^t\) on \(\{y_0\neq0\}\), and similarly on the
other standard opens. This defines a morphism \(Z^\circ\to \mathbf{Y}(w)\)
inverse to the one above.
\end{proof}

Explicitly, since \(Jy^{(p)}=(y_2^p,-y_1^p,y_0^p)^t\), the equations
\eqref{eq:unitary-Y-equations} read
\begin{align*}
    x_1y_0^p+x_2y_1^p&=\delta^py_0,\\
    x_2y_2^p-x_0y_0^p&=\delta^py_1,\\
    -x_0y_1^p-x_1y_2^p&=\delta^py_2,\\
    x_0x_2^p-x_1^{p+1}+x_2x_0^p&=1,\\
    \delta^{p+1}&=1.
\end{align*}
% The open condition \(y\neq0\) cannot be dropped: these equations also
% have the extraneous solutions \((x,0,\delta)\) with \(\mathcal H(x)=1\)
% and \(\delta^{p+1}=1\).

\begin{lemma}[Coordinate form of the projection]
\label{lem:unitary-coordinate-projection}
Under the identification of
Proposition~\textup{\ref{prop:unitary-Y-coordinates}}, the projection
\(\mathbf{Y}(w)\to \mathbf{X}(w)\) is given by
\[
    (x,y,\delta)\longmapsto([x],[y]).
\]
\end{lemma}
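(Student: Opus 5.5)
The plan is to trace what each map does to a matrix representative \(g=(v_1,v_2,v_3)\) and compare the two sides. By definition the covering map sends \(gU\) to \(gB\). In the flag description fixed in Section~\ref{unitary:choice of J}, \(gB\) is the flag \(L\subset P\) with \(L=\bar k v_1\) and \(P=\bar k v_1+\bar k v_2\). The Borel subgroup \(B\) is upper triangular, so right multiplication by \(B\) rescales \(v_1\), and replaces \(v_2\) by a combination of \(v_1\) and \(v_2\) with nonzero \(v_2\)-coefficient. Hence the flag, and the projective points \([x]\), \([y]\), depend only on \(gB\).

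First step: identify the coordinates of \(gB\) in \(\mathbb P^2_x\times\mathbb P^2_y\). The line \(L\) gives \([x]=[v_1]\). For the plane, I will use the convention that \(P\) is the kernel of the covector \(y^t\). The vector \(v_1\times v_2\) is nonzero because \(v_1,v_2\) are independent. It is orthogonal to \(v_1\) and \(v_2\) for the dot product, so \(P=\{v: (v_1\times v_2)^t v=0\}\). Therefore the coefficient vector of \(P\) is \([v_1\times v_2]=[y]\).

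Second step: combine this with the definitions in Proposition~\ref{prop:unitary-Y-coordinates}, namely \(x=v_1\) and \(y=v_1\times v_2\). Composing the isomorphism \(\mathbf{Y}(w)\xrightarrow{\sim}Z^\circ\) with \(gU\mapsto gB\) then sends \((x,y,\delta)\) to \(([x],[y])\).

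Two checks remain. The map must be a morphism of schemes and not just a map on points. This holds because the formula \(([x],[y])\) is given by regular functions: \(x\neq 0\) since \(\mathcal H(x)=1\), and \(y\neq0\) by the definition of \(Z^\circ\). The image must also lie in \(\mathbf{X}(w)\). This follows from \eqref{eq:unitary-boundary-hermitian}, since \(\mathcal H(x)=1\neq0\).

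The only point needing care is the dual-plane identification, so that \([y]\) really is the coefficient vector of \(P\) and not its image under \(J\) or a Frobenius twist. I expect this bookkeeping to be the main obstacle, though a minor one, and it is settled by the cross-product orthogonality argument above.
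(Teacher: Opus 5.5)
Your proposal is correct and follows essentially the same route as the paper: you identify \(gB\) with the flag \(\bar k v_1\subset \bar k v_1+\bar k v_2\), note \([x]=[v_1]\), and use that \(y=v_1\times v_2\) is a nonzero covector killing \(v_1\) and \(v_2\), so its kernel is \(P\) by a dimension count. Your extra checks are harmless additions that the paper leaves implicit: independence of the representative of \(gB\), regularity of the formula, and that the image lies in \(\mathbf{X}(w)\).
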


\begin{proof}
Let \(g=(v_1,v_2,v_3)\). The point \(gB\) is the flag
\(L=\bar kv_1\subset P=\bar kv_1+\bar kv_2\), and \(L=\bar kx\). The
vector \(y=v_1\times v_2\) is nonzero and satisfies
\(y^tv_1=y^tv_2=0\). Hence \(P\subseteq\{v:y^tv=0\}\), with equality
because both spaces have dimension two. Thus the flag \(gB\) has
coordinates \(([x],[y])\).
\end{proof}

Since \(p\nmid p+1\), the group scheme \(\mu_{p+1}\) is finite \'etale
over \(\bar k\), with \(p+1\) points, all lying in
\(\mathbb F_{p^2}\). The function \(\delta\colon \mathbf{Y}(w)\to\mu_{p+1}\)
therefore yields a decomposition into open and closed subschemes
\begin{equation}
    \mathbf{Y}(w)=\coprod_{\delta_0\in\mu_{p+1}}\mathbf{Y}_{\delta_0}(w),
    \qquad
    \mathbf{Y}_{\delta_0}(w):=\{\delta=\delta_0\}.
    \label{eq:unitary-affine-component-decomposition}
\end{equation}
Proposition~\ref{prop:unitary-components-and-stabilizer} below shows
that each \(\mathbf{Y}_{\delta_0}(w)\) is irreducible and determines its
stabilizer in \(T^{wF}\).

\subsection{Local equations and normalization of the cover}
\label{sec:unitary-normalization}

Throughout this subsection we abbreviate \(\mathbf{X}=\mathbf{X}(w)\) and
\(\overline {\mathbf{X}}=\overline {\mathbf{X}}(w)\), and we put \(N_{\mathrm{un}}=p^2-1\).

\subsubsection[An affine chart]{An affine chart of \(\overline {\mathbf{X}}\)}

Let \(\overline {\mathbf{X}}_{0,2}=\overline {\mathbf{X}}\cap\{x_0\neq0,\ y_2\neq0\}\), with
affine coordinates \(t=x_1/x_0\), \(r=x_2/x_0\) and \(s=y_1/y_2\). The
incidence equation gives \(y_0/y_2=-(ts+r)\), so the points of
\(\overline {\mathbf{X}}_{0,2}\) are
\[
    ([x],[y])=\bigl([1:t:r],\,[-(ts+r):s:1]\bigr).
\]
Substituting into the Hermitian equation for \(y\) gives
\(\mathcal H(-(ts+r),s,1)=-\Phi\), where
\[
    \Phi=s^{p+1}+ts+t^ps^p+r+r^p.
\]
Hence
\begin{equation}
    \overline {\mathbf{X}}_{0,2}=\Spec\bar k[t,r,s]/(\Phi).
    \label{eq:unitary-chart}
\end{equation}
Since \(\partial\Phi/\partial r=1\), the projection
\(\overline {\mathbf{X}}_{0,2}\to\mathbb A^2_{t,s}\) is \'etale. Put
\[
    h:=\mathcal H(1,t,r)=r+r^p-t^{p+1},
\]
so that \(\mathcal H(x)=x_0^{p+1}h\) on \(\overline {\mathbf{X}}_{0,2}\). By
\eqref{eq:unitary-boundary-hermitian},
\[
    \mathbf{X}\cap\overline {\mathbf{X}}_{0,2}=\Spec R,
    \qquad
    R:=\bar k[t,r,s,h^{-1}]/(\Phi).
\]

\subsubsection[The boundary parameters]{The boundary parameters \(u\) and \(v\)}

Define functions on \(\overline {\mathbf{X}}_{0,2}\) by
\[
    u:=t^p+s,
    \qquad
    v:=-(t+s^p).
\]
Using \(\Phi=0\) to eliminate \(r+r^p\), we obtain
\begin{equation}
    h=-(s^{p+1}+ts+t^ps^p+t^{p+1})
    =-(t+s^p)(t^p+s)
    =uv.
    \label{eq:unitary-h-factorization}
\end{equation}
Comparing with \eqref{eq:unitary-Bu-coordinates} and
\eqref{eq:unitary-Bv-coordinates}, one checks that
\[
    B_u\cap\overline {\mathbf{X}}_{0,2}=V(u),
    \qquad
    B_v\cap\overline {\mathbf{X}}_{0,2}=V(v).
\]
For instance, a point of \(\overline {\mathbf{X}}_{0,2}\) lies on \(B_v\) if and
only if \([1:t:r]=[1:-s^p:-(ts+r)^p]\); the second coordinates give
\(v=0\), and the third coordinates then agree because \(\Phi=0\). The
argument for \(B_u\) is the same.

Let \(o=([1:0:0],[0:0:1])\). It is the point \(t=r=s=0\) of
\(\overline {\mathbf{X}}_{0,2}\), and it lies in \(\mathbf{X}(1)=B_u\cap B_v\). Since the
projection to \(\mathbb A^2_{t,s}\) is \'etale,
\(\widehat{\mathcal O}_{\overline {\mathbf{X}},o}=\bar k[[t,s]]\). The linear parts
of \(u\) and \(v\) are \(s\) and \(-t\), so \((u,v)\) is again a regular
system of parameters:
\begin{equation}
    \widehat{\mathcal O}_{\overline {\mathbf{X}},o}=\bar k[[u,v]].
    \label{eq:unitary-completed-local-ring}
\end{equation}
Thus, near \(o\), the boundary \(V(h)=B_u\cup B_v\) consists of the two
smooth branches \(V(u)=B_u\) and \(V(v)=B_v\), which meet
transversally. In particular, \(u\) is a uniformizer of the discrete
valuation ring \(\mathcal O_{\overline {\mathbf{X}},B_u}\) and \(v\) is a unit in
it, and symmetrically for \(B_v\).

\subsubsection{The covering over the chart}

Fix \(\delta_0\in\mu_{p+1}\). Put \(x'=(1,t,r)^t\) and
\(y'=(-(ts+r),s,1)^t\). By
Proposition~\ref{prop:unitary-Y-coordinates} and
Lemma~\ref{lem:unitary-coordinate-projection}, the points of
\(\mathbf{Y}_{\delta_0}(w)\) over \(\mathbf{X}\cap\overline {\mathbf{X}}_{0,2}\) are the triples
\((x_0x',\,y_2y',\,\delta_0)\) with \(x_0,y_2\in\mathbb G_m\)
satisfying \eqref{eq:unitary-Y-equations}. A direct computation, using
\(\Phi=0\) for the first two coordinates, gives
\[
    x'\times Jy'^{(p)}=v\,y'.
\]
Hence \eqref{eq:unitary-Y-equations} becomes
\[
    x_0^{p+1}h=1,
    \qquad
    x_0y_2^{p-1}v=\delta_0^p.
\]
On \(\mathbf{X}\) the function \(h=uv\) is a unit, hence so is \(v\). Solving the
second equation for \(x_0\), substituting into the first, and using
\(\delta_0^{p(p+1)}=1\), we obtain \(y_2^{N_{\mathrm{un}}}v^{p+1}=h=uv\),
that is,
\begin{equation}
    y_2^{N_{\mathrm{un}}}=u\,v^{-p}.
    \label{eq:unitary-local-cover}
\end{equation}
Conversely, every solution of \eqref{eq:unitary-local-cover} determines
\(x_0=\delta_0^py_2^{1-p}v^{-1}\). Therefore
\[
    \mathbf{Y}_{\delta_0}(w)\times_{\mathbf{X}}(\mathbf{X}\cap\overline {\mathbf{X}}_{0,2})
    \cong
    \Spec R[y_2]\big/\bigl(v^py_2^{N_{\mathrm{un}}}-u\bigr),
\]
an \'etale Kummer covering of degree \(N_{\mathrm{un}}\).

\subsubsection{Components and their stabilizer}

\begin{proposition}[Components and their stabilizer]
\label{prop:unitary-components-and-stabilizer}
Each subscheme \(\mathbf{Y}_{\delta_0}(w)\) in
\eqref{eq:unitary-affine-component-decomposition} is irreducible.
Consequently, \(\mathbf{Y}(w)\) has exactly \(p+1\) irreducible components.
Right multiplication by \(t=\diag(\zeta,a,a^{-p})\in T^{wF}\) acts on
the coordinates by
\begin{equation}
    (x,y,\delta)
    \longmapsto
    (\zeta x,\ \zeta a\,y,\ \zeta a^{1-p}\delta).
    \label{eq:unitary-torus-action-on-xy-delta}
\end{equation}
Hence the stabilizer of every component is the cyclic group
\begin{align}
    H
    &=\ker\bigl(\det\colon T^{wF}\longrightarrow\mu_{p+1}\bigr)
    \nonumber\\
    &=\left\{\diag(a^{p-1},a,a^{-p}):a^{p^2-1}=1\right\}
    \cong\mu_{N_{\mathrm{un}}},
    \label{eq:unitary-component-stabilizer}
\end{align}
and \(T^{wF}/H\cong\mu_{p+1}\) acts simply transitively on the set of
components.
\end{proposition}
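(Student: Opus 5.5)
The plan is to read off the action formula from the coordinates of Proposition~\ref{prop:unitary-Y-coordinates}, derive the stabilizer and the transitivity from it by elementary group theory, and prove irreducibility with the Kummer valuation argument of Proposition~\ref{split:Ynotirrprop}, applied to the local equation \eqref{eq:unitary-local-cover}.

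\emph{The action formula.} For \(t=\diag(\zeta,a,a^{-p})\in T^{wF}\), right multiplication \(gU\mapsto gtU\) rescales the columns of \(g\) by \(v_1\mapsto\zeta v_1\), \(v_2\mapsto a v_2\) and \(v_3\mapsto a^{-p}v_3\). Hence \(x=v_1\) becomes \(\zeta x\), and \(y=v_1\times v_2\) becomes \(\zeta a\,y\) by bilinearity of the cross product. Also \(\delta=\det g\) becomes \(\det(t)\,\delta=\zeta a^{1-p}\delta\). This is \eqref{eq:unitary-torus-action-on-xy-delta}, and it shows that \(t\) maps \(\mathbf{Y}_{\delta_0}(w)\) onto \(\mathbf{Y}_{\det(t)\delta_0}(w)\).

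\emph{Stabilizer and transitivity.} By the previous step, \(t\) stabilizes \(\{\delta=\delta_0\}\) if and only if \(\det t=\zeta a^{1-p}=1\), that is, \(\zeta=a^{p-1}\). Given \(a^{p^2-1}=1\), the element \(\zeta=a^{p-1}\) satisfies \(\zeta^{p+1}=a^{p^2-1}=1\) automatically. So \(\ker(\det)\) is exactly the group \(H\) of \eqref{eq:unitary-component-stabilizer}, and \(a\mapsto\diag(a^{p-1},a,a^{-p})\) identifies it with \(\mu_{p^2-1}\). The map \(\det\colon T^{wF}\to\mu_{p+1}\) is surjective: taking \(a=1\) gives \(\det=\zeta\), which is arbitrary. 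Since \(\mathbf{Y}(w)\) is nonempty, \(T^{wF}\) permutes the sets \(\mathbf{Y}_{\delta_0}(w)\) through \(T^{wF}/H\cong\mu_{p+1}\), and it does so simply transitively. In particular every \(\mathbf{Y}_{\delta_0}(w)\) is nonempty. Moreover, because \(\mathbf{Y}(w)\to\mathbf{X}\) is a \(T^{wF}\)-torsor, each \(\mathbf{Y}_{\delta_0}(w)\to\mathbf{X}\) is a finite étale \(H\)-torsor of degree \(N_{\mathrm{un}}=p^2-1\).

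\emph{Irreducibility.} First, \(\mathbf{X}\) is irreducible. It is dense open in \(\overline{\mathbf{X}}\), and \(\overline{\mathbf{X}}\) is a \(\mathbb P^1\)-bundle over the irreducible smooth Hermitian curve \(C_y\). Since \(\mathbf{Y}_{\delta_0}(w)\to\mathbf{X}\) is finite étale, every connected component of \(\mathbf{Y}_{\delta_0}(w)\) maps onto \(\mathbf{X}\). It therefore suffices to show that the generic fibre is the spectrum of a field. Over the chart \(\overline{\mathbf{X}}_{0,2}\), with \(K=\bar k(\mathbf{X})\), the computation leading to \eqref{eq:unitary-local-cover} gives the generic fibre as \(K[y_2]/(y_2^{N_{\mathrm{un}}}-uv^{-p})\). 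I then apply the valuation argument to the discrete valuation of \(B_u\). The generic point of \(B_u\) lies in the chart because \(o\in B_u\cap\overline{\mathbf{X}}_{0,2}\), and there \(u\) is a uniformizer and \(v\) is a unit, so \(\nu_{B_u}(uv^{-p})=1\). If \(\alpha^{N_{\mathrm{un}}}=uv^{-p}\) and \(e\) is the ramification index of an extension of \(\nu_{B_u}\), then \(N_{\mathrm{un}}\nu(\alpha)=e\). Hence \(N_{\mathrm{un}}\mid e\), while \(e\le[K(\alpha):K]\le N_{\mathrm{un}}\). Thus the polynomial is irreducible, the generic fibre is a field, and \(\mathbf{Y}_{\delta_0}(w)\) is irreducible. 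Combined with the decomposition \eqref{eq:unitary-affine-component-decomposition}, this shows that \(\mathbf{Y}(w)\) has exactly \(p+1\) irreducible components, each with stabilizer \(H\).

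The main point needing care is the reduction to the generic fibre. We must know that the chart computation sees every component, which is exactly what finite étaleness over the irreducible \(\mathbf{X}\) provides. We must also know that the boundary valuation \(\nu_{B_u}\) is a valuation of \(K\) at which \(uv^{-p}\) has valuation one, which is where the transversality from \eqref{eq:unitary-h-factorization} enters.
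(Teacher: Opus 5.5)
Your proposal is correct and follows essentially the same route as the paper: you read off the action from the column rescaling, identify the stabilizer as the kernel of the surjective map \(\det\colon T^{wF}\to\mu_{p+1}\), and prove irreducibility by showing that the generic fibre \(K[y_2]/(y_2^{N_{\mathrm{un}}}-uv^{-p})\) is a field via the valuation of \(B_u\). The differences are minor: you use the ramification-index bound where the paper invokes Eisenstein's criterion for the same valuation, and you explicitly check that each \(\mathbf{Y}_{\delta_0}(w)\) is nonempty, which the paper leaves implicit.
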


\begin{proof}
Since \(\mathbf{Y}(w)\to \mathbf{X}\) is a \(T^{wF}\)-torsor, the morphism
\(\mathbf{Y}_{\delta_0}(w)\to \mathbf{X}\) is finite \'etale. As \(\mathbf{X}\) is normal and
irreducible, \(\mathbf{Y}_{\delta_0}(w)\) is normal and each of its irreducible
components dominates \(\mathbf{X}\); hence \(\mathbf{Y}_{\delta_0}(w)\) is irreducible
if its generic fibre is a field. By \eqref{eq:unitary-local-cover}, the
generic fibre is \(K[T]/(T^{N_{\mathrm{un}}}-uv^{-p})\), where
\(K=\bar k(\mathbf{X})\). Since \(u\) is a uniformizer of
\(\mathcal O_{\overline {\mathbf{X}},B_u}\) and \(v\) is a unit there, this
polynomial is Eisenstein, hence irreducible. This proves that
\(\mathbf{Y}_{\delta_0}(w)\) is irreducible. The same argument shows that the
covering is totally ramified along \(B_u\); it is also totally ramified
along \(B_v\), because \(\operatorname{ord}_{B_v}(uv^{-p})=-p\) is
prime to \(N_{\mathrm{un}}\).

Right multiplication by \(t\) multiplies the first two columns of \(g\)
by \(\zeta\) and \(a\), respectively. Hence \(x=v_1\) is multiplied by
\(\zeta\), \(y=v_1\times v_2\) by \(\zeta a\), and \(\delta=\det(g)\)
by \(\det(t)=\zeta a^{1-p}\); this proves
\eqref{eq:unitary-torus-action-on-xy-delta}. In particular, \(t\) maps
\(\mathbf{Y}_{\delta_0}(w)\) onto \(\mathbf{Y}_{\det(t)\delta_0}(w)\). The homomorphism
\(\det\colon T^{wF}\to\mu_{p+1}\) is surjective (take \(a=1\)), and its
kernel is obtained by setting \(\zeta=a^{p-1}\). This proves
\eqref{eq:unitary-component-stabilizer} and the last assertion.
\end{proof}

\subsubsection{The normalization at a boundary point}

Let \(\overline {\mathbf{Y}}(w)\) be the normalization of \(\overline {\mathbf{X}}\) in
\(\mathbf{Y}(w)\). By \eqref{eq:unitary-affine-component-decomposition},
\begin{equation}
    \overline {\mathbf{Y}}(w)
    =\coprod_{\delta_0\in\mu_{p+1}}\overline {\mathbf{Y}}_{\delta_0}(w),
    \label{eq:unitary-normal-component-decomposition}
\end{equation}
where \(\overline {\mathbf{Y}}_{\delta_0}(w)\) is the normalization of
\(\overline {\mathbf{X}}\) in \(\mathbf{Y}_{\delta_0}(w)\). We fix \(\delta_0\) from now on
and write \(\overline {\mathbf{Y}}^\circ(w):=\overline {\mathbf{Y}}_{\delta_0}(w)\).

In \(\widehat{\mathcal O}_{\overline {\mathbf{X}},o}=\bar k[[u,v]]\) the function
\(v\) is not a unit, so \eqref{eq:unitary-local-cover} does not exhibit
\(y_2\) as integral over \(\widehat{\mathcal O}_{\overline {\mathbf{X}},o}\). The
function \(z:=vy_2\), however, satisfies
\[
    z^{N_{\mathrm{un}}}=u\,v^{N_{\mathrm{un}}-p}=u\,v^{p^2-p-1}.
\]
The polynomial \(T^{N_{\mathrm{un}}}-uv^{p^2-p-1}\) is Eisenstein with
respect to the prime \(u\) of \(\bar k[[u,v]]\), hence irreducible.
Since \(\overline {\mathbf{X}}\) is excellent, normalization commutes with
completion. It follows that \(\overline {\mathbf{Y}}_{\delta_0}(w)\) has a unique
point \(o'\) over \(o\), and that
\begin{equation}
    \widehat{\mathcal O}_{\overline {\mathbf{Y}}_{\delta_0}(w),o'}
    \cong
    \bigl(A_{\mathrm{Kum}}\bigr)^{\mathrm{norm}},
    \qquad
    A_{\mathrm{Kum}}
    :=\bar k[[u,v,z]]\big/\bigl(z^{p^2-1}-u\,v^{p^2-p-1}\bigr).
    \label{eq:unitary-fixed-component-kummer}
\end{equation}

Let \(\alpha\in\mathbb F_{p^2}^\times\) be a generator, so that
\(\alpha\) has order \(N_{\mathrm{un}}\), and put
\[
    \tau=\diag(\alpha^{p-1},\alpha,\alpha^{-p})\in H.
\]
Then \(\tau\) generates \(H\). By
\eqref{eq:unitary-torus-action-on-xy-delta}, \(\tau\) fixes \(\delta\)
and multiplies \(y\) by \(\alpha^p\). Hence \(\tau\) fixes \(u\) and
\(v\), which are pulled back from \(\overline {\mathbf{X}}\), multiplies \(y_2\) by
\(\alpha^p\), and acts on the Kummer coordinate by
\[
    \tau(z)=\bar\xi z,
    \qquad
    \bar\xi:=\alpha^p.
\]
Since \(\gcd(p,N_{\mathrm{un}})=1\), \(\bar\xi\) is again a primitive
\(N_{\mathrm{un}}\)-th root of unity. This is the action of the
component stabilizer used in all toric and Lefschetz computations
below.

\subsection{The Bonnaf\'e--Rouquier slice in the non-split case}
\label{sec:br-slice-unitary}

We next evaluate the slice construction of
Section~\ref{sec:br-slice-singularities} for the unitary Frobenius. On
the cocharacter lattice
\(Y(T)=\mathbb Ze_1\oplus\mathbb Ze_2\oplus\mathbb Ze_3\) we have
\[
    F(e_1)=-pe_3,
    \qquad
    F(e_2)=-pe_2,
    \qquad
    F(e_3)=-pe_1,
\]
and \(w=s_1s_2\) acts by \(e_1\mapsto e_2\mapsto e_3\mapsto e_1\).
Hence \(wF=-ps_2\) on \(Y(T)\). With \(N_{\mathrm{un}}=p^2-1\) as
before, the solutions of \eqref{eq:br-cocharacter-equation} are
\begin{equation}
    \lambda_1=(p-1,1,-p),
    \qquad
    \lambda_2=(p-1,-p,1),
    \qquad
    m_1=m_2=N_{\mathrm{un}}.
    \label{eq:br-unitary-lambdas}
\end{equation}
Indeed,
\begin{align*}
    \lambda_1-wF(\lambda_1)
    &=\lambda_1+ps_2(\lambda_1)
    =N_{\mathrm{un}}(e_1-e_2)
    =N_{\mathrm{un}}\,\alpha_1^\vee,\\
    \lambda_2-wF(\lambda_2)
    &=\lambda_2+ps_2(\lambda_2)
    =N_{\mathrm{un}}(e_1-e_3)
    =N_{\mathrm{un}}\,s_1(\alpha_2^\vee).
\end{align*}
As in the split case, both cocharacters are primitive. For
\((z_1,z_2)\in\mathbb G_m^2\),
\[
    \lambda_1(z_1)\lambda_2(z_2)
    =\diag\bigl((z_1z_2)^{p-1},\ z_1z_2^{-p},\ z_1^{-p}z_2\bigr),
\]
and \eqref{eq:br-deep-stabilizer} becomes
\begin{equation}
    K_{12}
    =\left\{(\zeta,\zeta^p):\zeta^{N_{\mathrm{un}}}=1\right\}
    \cong\mu_{N_{\mathrm{un}}}.
    \label{eq:br-unitary-stabilizer}
\end{equation}
Thus \(\zeta\in\mu_{N_{\mathrm{un}}}\) acts on the two-dimensional slice
through \(\diag(\zeta,\zeta^p)\). Consequently, at every point lying
over the deepest boundary stratum \(\mathbf{X}(1)\), each irreducible component
of the normal unitary compactification has a cyclic quotient
singularity of type
\begin{equation}
    \frac{1}{p^2-1}(1,p).
    \label{eq:br-unitary-quotient-type}
\end{equation}
This agrees with the direct computation in
Section~\ref{sec:unitary-toric-interpretation}, where the normalization
of the local Kummer model is identified with the affine toric surface
of the cone \eqref{eq:unitary-cox-normal-form}, whose Cox normal form
has parameters \((p^2-1,p)\). Since \((p^2-1)/p=[[p,p]]\), both
descriptions lead to the same minimal resolution, a chain of two
\((-p)\)-curves.

\subsection{Toric normalization, resolution, and fixed locus}
\label{sec:unitary-toric-interpretation}

We use the conventions for normalizations and toric resolutions
collected in Section~\ref{sec:toric-surface-preliminaries}, including
its divisor and character-coordinate formulas.

\subsubsection{The toric normalization}

Since normalization commutes with completion for excellent rings,
\eqref{eq:unitary-fixed-component-kummer} shows that it suffices to
normalize the affine surface
\[
    Y_{\mathrm{Kum}}=\Spec R_{\mathrm{Kum}},
    \qquad
    R_{\mathrm{Kum}}
    =\bar k[u,v,z]\big/\bigl(z^{p^2-1}-uv^{p^2-p-1}\bigr),
\]
and to complete at the origin. The polynomial
\(z^{p^2-1}-uv^{p^2-p-1}\) has degree one in \(u\) and coprime
coefficients, so it is irreducible and \(R_{\mathrm{Kum}}\) is a domain.
On the dense torus \(\{vz\neq0\}\) the relation gives
\(u=z^{p^2-1}v^{-(p^2-p-1)}\); hence
\[
    R_{\mathrm{Kum}}
    \cong
    \bar k\bigl[v,\ z,\ z^{p^2-1}v^{-(p^2-p-1)}\bigr]
    \subset\bar k[v^{\pm1},z^{\pm1}].
\]
Let \(M\cong\mathbb Z^2\) be the character lattice of this torus, with
\(\chi^{(m_1,m_2)}=v^{m_1}z^{m_2}\), and let \(N\) be the dual lattice.
Then
\[
    v=\chi^{(1,0)},
    \qquad
    z=\chi^{(0,1)},
    \qquad
    u=\chi^{m_u},
    \quad
    m_u:=\bigl(-(p^2-p-1),\,p^2-1\bigr),
\]
and \(R_{\mathrm{Kum}}=\bar k[S]\) for the semigroup
\[
    S=\mathbb N(1,0)+\mathbb N(0,1)+\mathbb N m_u\subset M.
\]
Since
\[
    (0,1)=\frac{p^2-p-1}{p^2-1}(1,0)+\frac1{p^2-1}\,m_u,
\]
the vector \((0,1)\) lies in the interior of
\(\Cone\bigl((1,0),m_u\bigr)\), which is therefore the cone spanned by
\(S\). Its dual cone is
\[
    \sigma=\bigl\{(n_1,n_2)\in N_{\mathbb R}:
    n_1\geq0,\ -(p^2-p-1)n_1+(p^2-1)n_2\geq0\bigr\}
    =\Cone(u_0,u_3),
\]
where \(u_0=(0,1)\) and \(u_3=(p^2-1,p^2-p-1)\). By
\eqref{eq:toric-semigroup-normalization}, the normalization of
\(Y_{\mathrm{Kum}}\) is the affine toric surface
\[
    Y_{\mathrm{tor}}:=Y_{\mathrm{Kum}}^{\mathrm{norm}}
    =\Spec\bar k[\sigma^\vee\cap M].
\]

\subsubsection{The minimal toric resolution}

The vectors \(e_1=(1,1)\) and \(e_2=(0,1)\) form a basis of \(N\), and
\(u_3=(p^2-1)e_1-pe_2\). Hence
\begin{equation}
    \sigma=\Cone\bigl(e_2,\,(p^2-1)e_1-pe_2\bigr)
    \label{eq:unitary-cox-normal-form}
\end{equation}
is in Cox normal form with \(d=p^2-1\) and \(a=p\), where
\(\gcd(d,a)=1\). Thus \(Y_{\mathrm{tor}}\) is a cyclic quotient
singularity of type \(\frac1{p^2-1}(1,p)\), in accordance with
\eqref{eq:br-unitary-quotient-type}. By \eqref{eq:toric-hj-general},
the relevant Hirzebruch--Jung continued fraction is
\[
    \frac{p^2-1}{p}=p-\frac1p=[[p,p]],
\]
so \(r=2\) and \(b_1=b_2=p\). The recursion
\(P_i=b_iP_{i-1}-P_{i-2}\), \(Q_i=b_iQ_{i-1}-Q_{i-2}\) with
\((P_0,Q_0)=(1,0)\) and \((P_1,Q_1)=(b_1,1)\) gives
\[
    (P_0,P_1,P_2)=(1,\,p,\,p^2-1),
    \qquad
    (Q_0,Q_1,Q_2)=(0,\,1,\,p),
\]
and the rays \(u_i=P_{i-1}e_1-Q_{i-1}e_2\) for \(1\leq i\leq3\),
together with \(u_0=e_2\), are
\[
    u_0=(0,1),
    \qquad
    u_1=(1,1),
    \qquad
    u_2=(p,p-1),
    \qquad
    u_3=(p^2-1,p^2-p-1).
\]
They satisfy the wall relations \eqref{eq:toric-wall-relations},
\[
    u_0+u_2=pu_1,
    \qquad
    u_1+u_3=pu_2,
\]
they form the Hilbert basis of \(\sigma\cap N\), and
\(\det(u_{i-1},u_i)=-1\) for \(i=1,2,3\). By
\eqref{eq:toric-smoothness-determinant}, the fan \(\Sigma\) with maximal
cones
\[
    \sigma_1=\Cone(u_0,u_1),
    \qquad
    \sigma_2=\Cone(u_1,u_2),
    \qquad
    \sigma_3=\Cone(u_2,u_3)
\]
is smooth, and \(\widetilde Y_{\mathrm{tor}}:=X_\Sigma\to
Y_{\mathrm{tor}}\) is the minimal resolution. Its exceptional curves are
the divisors corresponding to the rays \(u_1\) and \(u_2\), and both
have self-intersection \(-p\).

\subsubsection{Pullbacks of the boundary parameters}

For \(0\leq i\leq3\), let \(D_i\) be the torus-invariant divisor of
\(\widetilde Y_{\mathrm{tor}}\) corresponding to \(u_i\). By
\eqref{eq:toric-character-divisor},
\(\divisor(\chi^m)=\sum_i\langle m,u_i\rangle D_i\). The pairings of
the exponents of \(v\), \(u\) and \(z\) with the rays are
\begin{center}
\renewcommand{\arraystretch}{1.2}
\begin{tabular}{c|cccc}
    & \(u_0\) & \(u_1\) & \(u_2\) & \(u_3\)\\
    \hline
    \(v=\chi^{(1,0)}\) & \(0\) & \(1\) & \(p\) & \(p^2-1\)\\
    \(u=\chi^{m_u}\) & \(p^2-1\) & \(p\) & \(1\) & \(0\)\\
    \(z=\chi^{(0,1)}\) & \(1\) & \(1\) & \(p-1\) & \(p^2-p-1\)
\end{tabular}
\end{center}
and therefore
\begin{equation}
    \begin{aligned}
        \divisor(v)&=D_1+pD_2+(p^2-1)D_3,\\
        \divisor(u)&=(p^2-1)D_0+pD_1+D_2,\\
        \divisor(z)&=D_0+D_1+(p-1)D_2+(p^2-p-1)D_3.
    \end{aligned}
    \label{eq:unitary-toric-divisors}
\end{equation}
In particular, the strict transforms of the reduced curves
\(\{u=0\}\) and \(\{v=0\}\) are
\[
    \widetilde{\{u=0\}}=D_0,
    \qquad
    \widetilde{\{v=0\}}=D_3,
\]
while \(D_1\) and \(D_2\) are the exceptional curves.

\subsubsection{The global resolution}

Let
\(
    \rho_{\delta_0}\colon
    \widetilde{\overline {\mathbf{Y}}}_{\delta_0}(w)
    \longrightarrow
    \overline {\mathbf{Y}}_{\delta_0}(w)
\)
be the minimal resolution of the chosen component. Away from \(\mathbf{X}(1)\),
the branch locus \(B_u\cup B_v\) is smooth and the covering is tamely
ramified, so \(\overline {\mathbf{Y}}_{\delta_0}(w)\) is smooth there. The finite
group \(\mathrm{GL}_3(\bar k)^F\cap\mathrm{SL}_3(\bar k)\) acts on
\(\mathbf{Y}(w)\) by left multiplication; this action commutes with that of
\(T^{wF}\), preserves \(\delta\), and is compatible with its action on
\(\overline {\mathbf{X}}\), which is transitive on \(\mathbf{X}(1)\). Hence every point of
\(\overline {\mathbf{Y}}_{\delta_0}(w)\) over \(\mathbf{X}(1)\) looks like \(o'\), and
\(\rho_{\delta_0}\) is, in completed local form, the toric resolution
\(\widetilde Y_{\mathrm{tor}}\to Y_{\mathrm{tor}}\) at each of these
\(p^3+1\) points. As in the split case, uniqueness of the minimal
resolution lifts the action of \(H\) to
\(\widetilde{\overline {\mathbf{Y}}}_{\delta_0}(w)\).

\medskip

We fix the following notation. Write \(I:=p^3+1\) and
\(\mathbf{X}(1)=\{o_1,\dots,o_I\}\). Let \(C_u\) and \(C_v\) be the strict
transforms in \(\widetilde{\overline {\mathbf{Y}}}_{\delta_0}(w)\) of the prime
divisors of \(\overline {\mathbf{Y}}_{\delta_0}(w)\) lying over \(B_u\) and
\(B_v\); these are unique by
Proposition~\ref{prop:unitary-components-and-stabilizer}. Over \(o_j\),
let \(E_{1,j}\) and \(E_{2,j}\) be the exceptional curves corresponding
to \(D_1\) and \(D_2\), and let \(Q_j=E_{1,j}\cap E_{2,j}\). Near the
fibre over \(o_j\), the curves \(C_u\) and \(C_v\) correspond to \(D_0\)
and \(D_3\), so that \(\widetilde{\overline {\mathbf{Y}}}_{\delta_0}(w)\) contains
the chains
\[
    C_u-E_{1,j}-E_{2,j}-C_v
    \qquad(1\leq j\leq I)
\]
of transversally intersecting curves.

\subsubsection{Affine charts and the fixed locus}

By Section~\ref{sec:unitary-normalization}, \(H=\langle\tau\rangle\) acts on the local
model by
\[
    \tau(z)=\bar\xi z,
    \qquad
    \tau(u)=u,
    \qquad
    \tau(v)=v,
\]
so that \(\tau(\chi^{(m_1,m_2)})=\bar\xi^{m_2}\chi^{(m_1,m_2)}\): the
character \(\chi^{(m_1,m_2)}\) has weight \(m_2\) modulo \({p^2-1}\), in
accordance with the character-coordinate rule of
Section~\ref{sec:toric-surface-preliminaries}. Each chart
\(U_{\sigma_i}\) is an affine plane whose coordinates are the
characters dual to \(u_{i-1},u_i\). In terms of \(u\), \(v\) and \(z\),
these coordinates, the divisors they cut out, and their weights are as
follows:
\begin{center}
\renewcommand{\arraystretch}{1.3}
\begin{tabular}{cccc}
    chart & coordinates & zero loci & weights\\
    \hline
    \(U_{\sigma_1}\) & \(z/v,\ v\) & \(D_0,\ D_1\) & \(1,\ 0\)\\
    \(U_{\sigma_2}\) & \(z^pv^{1-p},\ v/z\) & \(D_1,\ D_2\) & \(p,\ -1\)\\
    \(U_{\sigma_3}\) & \(u,\ v^{p-1}z^{-p}\) & \(D_2,\ D_3\) & \(0,\ -p\)
\end{tabular}
\end{center}
Since \(\gcd(p,p^2-1)=1\), the weights \(\pm1\), \(p\) and \(-p\) are
nonzero modulo \(p^2-1\). Hence the fixed locus of \(H\) is
\(D_0\cap U_{\sigma_1}\) in the first chart, the origin
\(D_1\cap D_2\) in the second, and \(D_3\cap U_{\sigma_3}\) in the
third. In particular, \(D_1\) and \(D_2\) are not fixed pointwise, and
\(D_1\cap D_2\) is the only fixed point on \(D_1\cup D_2\) that does
not lie on \(D_0\cup D_3\).

Globally, \(H\) acts freely over \(\mathbf{X}\), and it fixes \(C_u\) and \(C_v\)
pointwise, because the covering is totally ramified along \(B_u\) and
\(B_v\), so that the inertia groups of \(C_u\) and \(C_v\) are all of
\(H\). Together with the chart computation this gives
\begin{equation}
    \bigl(\widetilde{\overline {\mathbf{Y}}}_{\delta_0}(w)\bigr)^H
    =C_u\sqcup C_v\sqcup\{Q_1,\dots,Q_{p^3+1}\}.
    \label{eq:unitary-fixed-locus}
\end{equation}

\subsection{Boundary intersection calculations}
\label{sec:unitary-intersections}

As before, \(B_u\) and \(B_v\) denote the boundary curves of the base
\(\overline {\mathbf{X}}=\overline {\mathbf{X}}(w)\), and \(C_u\) and \(C_v\) their strict
transforms on the resolved component
\(\widetilde{\overline {\mathbf{Y}}}_{\delta_0}(w)\).

\subsubsection{Boundary curves on the base compactification}

Let
\[
    A=\mathcal O_{\mathbb P^2_x}(1)|_{\overline {\mathbf{X}}},
    \qquad
    B=\mathcal O_{\mathbb P^2_y}(1)|_{\overline {\mathbf{X}}}.
\]
By \eqref{eq:unitary-compactification-equations}, the surface
\(\overline {\mathbf{X}}\) is a complete intersection of bidegrees \((1,1)\) and
\((0,p+1)\) in \(\mathbb P^2_x\times\mathbb P^2_y\). Computing in the
Chow ring of \(\mathbb P^2_x\times\mathbb P^2_y\), we obtain
\begin{equation}
    A^2=A\cdot B=p+1,
    \qquad
    B^2=0.
    \label{eq:unitary-base-hyperplane-intersections}
\end{equation}
By Section~\ref{sec:unitary-base-boundary}, the first projection maps
\(B_u\) isomorphically onto \(C_x\) and the second maps it to \(C_y\)
with degree \(p\), while the second projection maps \(B_v\)
isomorphically onto \(C_y\) and the first maps it to \(C_x\) with degree
\(p\). Hence
\[
    A\cdot B_u=p+1,
    \quad
    B\cdot B_u=p(p+1),
    \qquad
    A\cdot B_v=p(p+1),
    \quad
    B\cdot B_v=p+1.
\]
Since \(\overline {\mathbf{X}}\) is a \(\mathbb P^1\)-bundle over \(C_y\), the numerical divisor class
group \(\operatorname{Num}(\overline {\mathbf{X}})\) has rank two, and since the
intersection matrix of \(A\) and \(B\) is non-singular, \(A\) and \(B\)
form a basis of \(\operatorname{Num}(\overline {\mathbf{X}})_{\mathbb Q}\). Solving
for the classes of the boundary curves yields
\begin{equation}
    B_u\equiv pA-(p-1)B,
    \qquad
    B_v\equiv A+(p-1)B,
    \label{eq:unitary-base-boundary-classes}
\end{equation}
and consequently
\begin{equation}
    \begin{aligned}
        B_u^2&=p(p+1)(2-p),\\
        B_v^2&=(p+1)(2p-1),\\
        B_u\cdot B_v&=p^3+1.
    \end{aligned}
    \label{eq:unitary-base-boundary-intersections}
\end{equation}
The last equality agrees with the fact that \(B_u\) and \(B_v\) meet
transversally in the \(p^3+1\) points of \(\mathbf{X}(1)\).

\subsubsection{Strict transforms on the resolved cover}

Let
\[
    \widetilde\pi_{\delta_0}\colon
    \widetilde{\overline {\mathbf{Y}}}_{\delta_0}(w)
    \xrightarrow{\ \rho_{\delta_0}\ }
    \overline {\mathbf{Y}}_{\delta_0}(w)
    \longrightarrow
    \overline {\mathbf{X}}
\]
be the composite of the minimal resolution with the normal covering,
which has degree \(N_{\mathrm{un}}=p^2-1\). Since the covering is totally
ramified along \(B_u\) and \(B_v\)
(Proposition~\ref{prop:unitary-components-and-stabilizer}), the divisors
\eqref{eq:unitary-toric-divisors} of the local equations \(u\) and \(v\)
give the global pullback formulas
\begin{align}
    \widetilde\pi_{\delta_0}^{*}B_u
    &=N_{\mathrm{un}}C_u
      +p\sum_{j=1}^{I}E_{1,j}
      +\sum_{j=1}^{I}E_{2,j},
    \label{eq:unitary-pullback-Bu}\\
    \widetilde\pi_{\delta_0}^{*}B_v
    &=\sum_{j=1}^{I}E_{1,j}
      +p\sum_{j=1}^{I}E_{2,j}
      +N_{\mathrm{un}}C_v.
    \label{eq:unitary-pullback-Bv}
\end{align}
Here \(C_u\) meets each \(E_{1,j}\) transversally in one point and is
disjoint from the \(E_{2,j}\) and from \(C_v\); symmetrically for
\(C_v\).

Total ramification also implies that \(C_u\to B_u\) and \(C_v\to B_v\)
have degree one. By the projection formula,
\(\widetilde\pi_{\delta_0}^{*}B_u\cdot C_u
=B_u\cdot\widetilde\pi_{\delta_0*}C_u=B_u^2\), while
\eqref{eq:unitary-pullback-Bu} gives
\(\widetilde\pi_{\delta_0}^{*}B_u\cdot C_u=N_{\mathrm{un}}C_u^2+pI\).
Arguing in the same way for \(B_v\), we obtain
\[
    B_u^2=N_{\mathrm{un}}C_u^2+pI,
    \qquad
    B_v^2=N_{\mathrm{un}}C_v^2+pI.
\]
Substituting \eqref{eq:unitary-base-boundary-intersections} and
\(I=p^3+1\) yields
\begin{equation}
    C_u^2=-p(p+1),
    \qquad
    C_v^2=-(p^2-1).
    \label{eq:unitary-resolved-fixed-self-intersections}
\end{equation}
Finally, \(C_u\to B_u\) and \(C_v\to B_v\) are finite birational
morphisms onto smooth curves, hence isomorphisms. Therefore
\begin{equation}
    g(C_u)=g(C_v)=\frac{p(p-1)}2.
    \label{eq:unitary-resolved-fixed-genera}
\end{equation}
    
    We conclude with the following figure:
    
    \begin{figure}[ht]
        \centering

    \tikzset{every picture/.style={line width=0.75pt}} %set default line width to 0.75pt        
    
    \begin{tikzpicture}[x=0.75pt,y=0.75pt,yscale=-1,xscale=1]
    %uncomment if require: \path (0,300); %set diagram left start at 0, and has height of 300
    
    %Curve Lines [id:da030171443257922426] 
    \draw [color={rgb, 255:red, 208; green, 2; blue, 27 }  ,draw opacity=1 ]   (17.9,106.05) .. controls (31.15,129.03) and (314.15,52.6) .. (335.22,69.58) ;
    %Curve Lines [id:da31144763834058997] 
    \draw    (93.53,57.67) .. controls (106.56,73.56) and (187.32,127.63) .. (190.83,128.32) ;
    %Curve Lines [id:da2802605627780218] 
    \draw [color={rgb, 255:red, 0; green, 0; blue, 0 }  ,draw opacity=1 ]   (183.63,108.04) .. controls (182.13,114.6) and (95.82,187.62) .. (96.42,201.11) ;
    %Shape: Free Drawing [id:dp40318410857968856] 
    \draw  [color={rgb, 255:red, 0; green, 0; blue, 0 }  ,draw opacity=1 ][line width=3] [line join = round][line cap = round] (217.06,118.75) .. controls (217.06,118.73) and (217.06,118.71) .. (217.06,118.69) ;
    %Shape: Free Drawing [id:dp8367839133145111] 
    \draw  [color={rgb, 255:red, 0; green, 0; blue, 0 }  ,draw opacity=1 ][line width=3] [line join = round][line cap = round] (225.57,118.94) .. controls (225.57,118.92) and (225.57,118.9) .. (225.58,118.88) ;
    %Shape: Free Drawing [id:dp5944790594226712] 
    \draw  [color={rgb, 255:red, 0; green, 0; blue, 0 }  ,draw opacity=1 ][line width=3] [line join = round][line cap = round] (234.59,119.08) .. controls (234.59,119.06) and (234.59,119.04) .. (234.59,119.01) ;
    %Shape: Brace [id:dp20746905221998546] 
    \draw   (274.65,63.02) .. controls (273.83,58.43) and (271.12,56.54) .. (266.53,57.36) -- (218.63,65.91) .. controls (212.07,67.08) and (208.38,65.37) .. (207.56,60.78) .. controls (208.38,65.37) and (205.51,68.25) .. (198.95,69.42)(201.9,68.9) -- (148.32,78.46) .. controls (143.73,79.28) and (141.84,81.99) .. (142.66,86.58) ;
    %Curve Lines [id:da3664970103882268] 
    \draw    (265.98,40.54) .. controls (279,56.43) and (340.4,135.39) .. (337.27,122.97) ;
    %Curve Lines [id:da7723451401153737] 
    \draw [color={rgb, 255:red, 0; green, 0; blue, 0 }  ,draw opacity=1 ]   (326.05,98.7) .. controls (324.55,105.26) and (241.89,192.55) .. (256.34,208.6) ;
    %Curve Lines [id:da9551685447222913] 
    \draw [color={rgb, 255:red, 208; green, 2; blue, 27 }  ,draw opacity=1 ] [dash pattern={on 3.75pt off 3pt on 7.5pt off 1.5pt}]  (316.07,170.07) .. controls (278.26,186.79) and (46.32,160.43) .. (25.13,176.49) ;
    %Straight Lines [id:da6705756695503318] 
    \draw    (128.32,218.8) -- (128.32,270.25) ;
    \draw [shift={(128.32,272.25)}, rotate = 270] [color={rgb, 255:red, 0; green, 0; blue, 0 }  ][line width=0.75]    (10.93,-3.29) .. controls (6.95,-1.4) and (3.31,-0.3) .. (0,0) .. controls (3.31,0.3) and (6.95,1.4) .. (10.93,3.29)   ;
    %Shape: Free Drawing [id:dp44529755529313464] 
    \draw  [color={rgb, 255:red, 208; green, 2; blue, 27 }  ,draw opacity=1 ][line width=3] [line join = round][line cap = round] (175.29,116.96) .. controls (174.6,116.96) and (173.91,116.96) .. (173.22,116.96) .. controls (172.68,116.96) and (173.74,117.71) .. (173.74,118.12) .. controls (173.74,118.4) and (173.61,117.35) .. (173.22,117.35) ;
    %Shape: Free Drawing [id:dp7651663995367923] 
    \draw  [color={rgb, 255:red, 208; green, 2; blue, 27 }  ,draw opacity=1 ][line width=3] [line join = round][line cap = round] (321.26,106.05) .. controls (320.57,106.05) and (319.88,106.05) .. (319.2,106.05) .. controls (318.65,106.05) and (319.71,106.79) .. (319.71,107.2) .. controls (319.71,107.48) and (319.58,106.43) .. (319.2,106.43) ;
    
    % Text Node
    \draw (41.08,196.21) node [anchor=north west][inner sep=0.75pt]  [font=\footnotesize,color={rgb, 255:red, 208; green, 2; blue, 27 }  ,opacity=1 ,rotate=-358.09]  {$C_v$};
    % Text Node
    \draw (47.42,84.55) node [anchor=north west][inner sep=0.75pt]  [font=\footnotesize,color={rgb, 255:red, 208; green, 2; blue, 27 }  ,opacity=1 ]  {$C_u$};
    % Text Node
    \draw (97.67,108.86) node [anchor=north west][inner sep=0.75pt]  [font=\footnotesize]  {$E_{1} \ =\ E_{1,1}$};
    % Text Node
    \draw (153.15,45.89) node [anchor=north west][inner sep=0.75pt]  [font=\footnotesize,rotate=-356.1]  {\(p^{3}+1\) branches};
    % Text Node
    \draw (253.87,81.28) node [anchor=north west][inner sep=0.75pt]  [font=\footnotesize]  {$E_{1,\ p^{3}+1}$};
    % Text Node
    \draw (109.7,185.74) node [anchor=north west][inner sep=0.75pt]  [font=\footnotesize]  {$E_{2} \ =\ E_{2,1}$};
    % Text Node
    \draw (22.14,147.22) node [anchor=north west][inner sep=0.75pt]  [font=\footnotesize]  {$\ E_{2,\ p^{3}+1}$};
    % Text Node
    \draw (1.19,37.04) node [anchor=north west][inner sep=0.75pt]  [font=\normalsize,color={rgb, 255:red, 0; green, 0; blue, 0 }  ,opacity=1 ]  {$\widetilde{\overline {\mathbf{Y}}}_{\delta_0}$};

    \end{tikzpicture}

       \begin{tikzpicture}[
        scale=1.1,
        line cap=round,
        line join=round,
        boundary/.style={draw=black, line width=0.9pt},
        horizontaldiv/.style={draw=black!65, line width=0.55pt},
        verticaldiv/.style={
            draw=black!65,
            line width=0.55pt,
            dash pattern=on 5pt off 4pt on 10pt off 2pt
        },
        labelstyle/.style={font=\footnotesize}
    ]
    
    % ------------------------------------------------------------
    % Boundary of the ruled surface
    % ------------------------------------------------------------
    
    \def\surfacepath{
      (-3.0,1.4) .. controls (-1.7,1.8) and (-0.6,1.2) .. (0.6,1.45)
                 .. controls (1.7,1.7) and (2.5,1.35) .. (3.0,1.55)
      --
      (3.0,-1.05) .. controls (2.45,-1.35) and (1.8,-0.85) .. (0.7,-1.20)
                   .. controls (-0.6,-1.55) and (-1.8,-0.9) .. (-3.0,-1.3)
      -- cycle
    }
    
    % ------------------------------------------------------------
    % One single connected horizontal divisor
    % ------------------------------------------------------------
    
    \draw[horizontaldiv]
      % top horizontal curve, left to right
      (-2.85,0.70)
        .. controls (-1.5,0.95) and (-0.4,0.45) .. (0.8,0.72)
        .. controls (1.8,0.95) and (2.35,0.65) .. (2.9,0.82)
    
      % wrap around right side to middle horizontal curve
        .. controls (3.35,0.63) and (3.35,0.30) .. (2.9,0.12)
    
      % middle horizontal curve, right to left
        .. controls (2.35,-0.05) and (1.85,0.15) .. (0.75,-0.05)
        .. controls (-0.55,-0.28) and (-1.55,0.20) .. (-2.85,-0.10)
    
      % wrap around left side to bottom horizontal curve
        .. controls (-3.35,-0.28) and (-3.35,-0.60) .. (-2.85,-0.80)
    
      % bottom horizontal curve, left to right
        .. controls (-1.65,-0.50) and (-0.55,-1.00) .. (0.75,-0.78)
        .. controls (1.85,-0.55) and (2.30,-0.85) .. (2.9,-0.65);
    
    % ------------------------------------------------------------
    % One single connected vertical divisor, dashed 5-4-10-2
    % ------------------------------------------------------------
    
    \draw[verticaldiv]
      % first vertical curve, top to bottom
      (-2.30,1.55)
        .. controls (-2.15,0.70) and (-2.25,-0.20) .. (-2.35,-1.05)
    
      % wrap around bottom to second vertical curve
        .. controls (-2.12,-1.42) and (-1.78,-1.42) .. (-1.60,-1.15)
    
      % second vertical curve, bottom to top
        .. controls (-1.50,-0.20) and (-1.42,0.65) .. (-1.55,1.55)
    
      % wrap around top to third vertical curve
        .. controls (-1.25,1.86) and (-0.98,1.72) .. (-0.80,1.35)
    
      % third vertical curve, top to bottom
        .. controls (-0.67,0.45) and (-0.83,-0.55) .. (-0.75,-1.38)
    
      % wrap around bottom to fourth vertical curve
        .. controls (-0.45,-1.64) and (-0.15,-1.62) .. (0.05,-1.35)
    
      % fourth vertical curve, bottom to top
        .. controls (-0.05,-0.45) and (0.12,0.55) .. (0.00,1.35)
    
      % wrap around top to fifth vertical curve
        .. controls (0.25,1.62) and (0.52,1.72) .. (0.75,1.48)
    
      % fifth vertical curve, top to bottom
        .. controls (0.67,0.48) and (0.72,-0.35) .. (0.80,-1.15)
    
      % wrap around bottom to sixth vertical curve
        .. controls (1.05,-1.38) and (1.35,-1.30) .. (1.55,-0.95)
    
      % sixth vertical curve, bottom to top
        .. controls (1.45,-0.05) and (1.62,0.75) .. (1.55,1.63)
    
      % wrap around top to seventh vertical curve
        .. controls (1.78,1.83) and (2.05,1.75) .. (2.30,1.45)
    
      % seventh vertical curve, top to bottom
        .. controls (2.42,0.50) and (2.18,-0.35) .. (2.25,-1.25);
    
    % ------------------------------------------------------------
    % Draw boundary last
    % ------------------------------------------------------------
    
    \draw[boundary] \surfacepath;
    
    % ------------------------------------------------------------
    % Projection arrow to the base curve and the base curve
    % ------------------------------------------------------------
    
    % Arrow from the ruled surface down to the base curve
    \draw[-{Stealth[length=6pt,width=6pt]}, line width=0.7pt]
      (0,-1.85) -- (0,-2.75)
      node[midway,right] {$\pi_2$};
    
    % Base curve downstairs
    \draw[black, line width=0.8pt]
      (-2.4,-3.05)
        .. controls (-1.4,-2.75) and (-0.4,-3.35) .. (0.55,-3.05)
        .. controls (1.35,-2.78) and (2.05,-3.20) .. (2.65,-2.95);
    
    % Label for the base curve
    \node[labelstyle] at (0,-3.45) {$\overline{\mathbf{X}}(s_2)$};
    
    % ------------------------------------------------------------
    % Labels
    % ------------------------------------------------------------
    
    \node[labelstyle] at (-2.75,2.25) {$\overline{\mathbf{X}}(s_1 s_2)$};
    
    \node[labelstyle] at (1.35,1.95) {$B_u = \overline{\mathbf{X}}(s_1)$};
    
    \node[labelstyle, anchor=west] at (3.15,0.35) {$B_v = \overline{\mathbf{X}}(s_2)$};
    \end{tikzpicture}
    \caption{The surface \( \overline{\mathbf{X}}(s_1 s_2)\) ruled over \( \overline{\mathbf{X}}(s_2)\)}
    %\smallskip
    %{\footnotesize The surface \( \overline{\mathbf{X}}(s_1 s_2)\) ruled over \( \overline{\mathbf{X}}(s_2)\)}
    %\end{minipage}
    %\hfill
    %\begin{minipage}{0.48\textwidth}
    \end{figure}
    
    % \newpage

    \subsection{Lefschetz number calculations}
    \label{sec:unitary-brauer-calculation}

    We employ the same methods discussed in Section~\ref{split:minres}.
    Put \(N_{\mathrm{un}}=p^2-1\), and let \(\tau\) generate
    \(
    H=\ker(\det\colon T^{wF}\to\mu_{p+1})
    \simeq\mu_{N_{\mathrm{un}}}.
    \)
    On the local cyclic cover,
    \[
    \tau(z)=\bar\xi z,
    \qquad
    \operatorname{ord}(\bar\xi)=N_{\mathrm{un}},
    \qquad
    \widehat\xi=[\bar\xi]\in W(\mathbb F_{p^2})^\times.
    \]
    The factors \(1-\widehat\xi\) and
    \(1-\widehat\xi^p\) are units.  Set
    \begin{equation}\label{eq:unitary-integral-coefficients}
    c_0:=\frac{(p+1)(p-2)}2,
    \qquad
    c_u:=\frac{(p+1)(3p-2)}2,
    \qquad
    c_v:=\frac{(p+1)(3p-4)}2.
    \end{equation}
    All three coefficients are integers.
    
    \subsubsection{Fixed-locus data}
    
    The table separates the fixed curves on the resolved cover from their
    images on the base.  The last column gives cotangent eigenvalues at a point
    and the normal eigenvalue along a curve.
    \[
    \begin{array}{c|c|c|c|c}
    \text{fixed stratum}&\text{number}&\text{genus}&\text{self-intersection}
    &\text{eigenvalue(s)}\\ \hline
    Q& p^3+1&-&-&(\bar\xi^{-1},\bar\xi^p)\\
    C_u&1&\frac{p(p-1)}2&-p(p+1)&\bar\xi^{-1}\\
    C_v&1&\frac{p(p-1)}2&-(p^2-1)&\bar\xi^{p}
    \end{array}
    \]
    
    \subsubsection{Isolated fixed points}
    
    In the middle toric chart
    \[
    U_{\sigma_2}=\operatorname{Spec}\bar{k}[y_1,y_2],
    \qquad
    y_1=t_1t_2^{-1},
    \quad
    y_2=t_1^{-p+1}t_2^p,
    \]
    the lifted cotangent eigenvalues at the isolated fixed point \(Q\) are
    \(\widehat\xi^{-1}\) and \(\widehat\xi^p\).  Hence
    \begin{equation}
    a_{\mathrm{Br}}(Q)
    =-\frac{\widehat\xi}
    {(1-\widehat\xi)(1-\widehat\xi^p)}.
    \label{eq:unitary-brauer-point}
    \end{equation}
    There are \(p^3+1\) such points.
    
    \subsubsection{Fixed curves}
    
    By \eqref{eq:unitary-resolved-fixed-self-intersections} and
    \eqref{eq:unitary-resolved-fixed-genera},
    \[
    1-g(C_u)=1-g(C_v)=-c_0.
    \]
    
    \subsubsection*[The fixed curve Cu]{The curve \(C_u\)}
    
    In \(U_{\sigma_1}=\operatorname{Spec}\bar{k}[x_1,x_2]\), the curve \(C_u\)
    is given by \(x_2=0\), and the normal eigenvalue is \(\bar\xi^{-1}\).
    Formula \eqref{eq:brauer-curve}, together with
    \(C_u^2=-p(p+1)\), gives
    \begin{align}
    b_{\mathrm{Br}}(C_u)
    &=-\frac{c_0}{1-\widehat\xi}
    +\frac{p(p+1)\widehat\xi}{(1-\widehat\xi)^2}\nonumber\\
    &=\boxed{
    \frac{c_u\widehat\xi-c_0}
    {(1-\widehat\xi)^2}
    }.
    \label{eq:unitary-brauer-Cu}
    \end{align}
    
    \subsubsection*[The fixed curve Cv]{The curve \(C_v\)}
    
    In \(U_{\sigma_3}=\operatorname{Spec}\bar{k}[w_1,w_2]\), the curve \(C_v\)
    is given by \(w_1=0\), and the normal eigenvalue is
    \(\bar\xi^{p}\).  Since \(C_v^2=-(p^2-1)\),
    \begin{align}
    b_{\mathrm{Br}}(C_v)
    &=\frac{c_0\widehat\xi^p}
    {1-\widehat\xi^p}
    +\frac{(p^2-1)\widehat\xi^p}
    {(1-\widehat\xi^p)^2}\nonumber\\
    &=\boxed{
    \frac{\widehat\xi^p(c_v-c_0\widehat\xi^p)}
    {(1-\widehat\xi^p)^2}
    }.
    \label{eq:unitary-brauer-Cv}
    \end{align}
    
    \subsubsection{Proof of the non-split main theorem}
    
    The fixed locus of \(\tau\) on
    \(\widetilde{\overline {\mathbf{Y}}}_{\delta_0}(w)\) is the disjoint union of
    \(C_u\), \(C_v\), and the \(p^3+1\) isolated points.  Adding the three
    types of contribution gives
    \begin{align}
    L_{\mathrm{Br}}(\tau,\widetilde{\overline {\mathbf{Y}}}_{\delta_0}(w))
    ={}&
    \frac{c_u\widehat\xi-c_0}
    {(1-\widehat\xi)^2}
    -\frac{(p^3+1)\widehat\xi}
    {(1-\widehat\xi)(1-\widehat\xi^p)}\nonumber\\
    &+\frac{\widehat\xi^p(c_v-c_0\widehat\xi^p)}
    {(1-\widehat\xi^p)^2}.
    \label{eq:unitary-final-brauer}
    \end{align}
    By \eqref{eq:common-brauer-resolution-invariance}, this is the value on
    \(\overline {\mathbf{Y}}_{\delta_0}(w)\), so it proves
    \eqref{eq:intro-unitary-brauer}.  Translations by
    \( T^{wF}\) identify the \(p+1\) components and commute with
    \(\tau\).  Therefore
    \[
    L_{\mathrm{Br}}(\tau,\overline {\mathbf{Y}}(w))
    =(p+1)L_{\mathrm{Br}}(\tau,\overline {\mathbf{Y}}_{\delta_0}(w)).
    \]

    \subsection{Nonvanishing and Galois-module consequence}
    \label{sec:unitary-nonvanishing}
    
    \begin{proposition}
    \label{prop:unitary-main-value-nonzero}
    The non-split Brauer--Lefschetz numbers
    \[
    L_{\mathrm{Br}}(\tau,\overline {\mathbf{Y}}_{\delta_0}(w))
    \quad\text{and}\quad
    L_{\mathrm{Br}}(\tau,\overline {\mathbf{Y}}(w))
    \]
    are nonzero for every prime \(p\).
    \end{proposition}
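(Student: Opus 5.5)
The plan mirrors the proof of Proposition~\ref{prop:split-main-value-nonzero}. Since translations by \(T^{wF}\) identify the \(p+1\) components and commute with \(\tau\), we have \(L_{\mathrm{Br}}(\tau,\overline{\mathbf{Y}}(w))=(p+1)L_{\mathrm{Br}}(\tau,\overline{\mathbf{Y}}_{\delta_0}(w))\). It therefore suffices to treat one component, i.e.\ the right-hand side \(R_p(\widehat\xi)\) of \eqref{eq:intro-unitary-brauer}. Put \(M=p^2-1\), which is prime to \(p\). Because \(\gcd(p,M)=1\), for a primitive \(M\)-th root \(\omega\) neither \(\omega\) nor \(\omega^p\) equals \(1\), so all denominators are nonzero. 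Remark~\ref{lem:cyclotomic-comparison} then reduces the claim to \(R_p(e^{2\pi i/M})\neq0\) in \(\mathbb C\).

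Set \(A=\pi/M\) and \(x=e^{2iA}\), and take imaginary parts. The basic inputs are the elementary identities
\[
\frac{1}{1-x^m}=\tfrac12+\tfrac{i}{2}\cot(mA),
\qquad
\frac{x^m}{(1-x^m)^2}=-\frac{1}{4\sin^2(mA)}\in\mathbb R.
\]
From these, \(\operatorname{Im}\frac{1}{(1-x)^2}=\tfrac12\cot A\) and \(\operatorname{Im}\frac{x^{2p}}{(1-x^p)^2}=\operatorname{Im}\bigl(\tfrac{1}{1-x^p}-1\bigr)^2=-\tfrac12\cot(pA)\). For the point term, write \(\frac{x}{(1-x)(1-x^p)}=-\frac{1}{(1-x^{-1})(1-x^p)}\); multiplying out the two factors of the form \(\tfrac12(1+i\cot)\) gives imaginary part \(-\tfrac14\bigl(\cot(pA)-\cot A\bigr)\).

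The real parts \(c_u x/(1-x)^2\) and \(c_v x^p/(1-x^p)^2\) drop out. Collecting the remaining terms, I expect
\[
\operatorname{Im}R_p(x)
=-\frac{c_0}{2}\cot A+\frac{p^3+1}{4}\bigl(\cot A-\cot(pA)\bigr)+\frac{c_0}{2}\cot(pA)
=\bigl(\cot A-\cot(pA)\bigr)\frac{(p+1)(p^2-2p+3)}{4}.
\]
The final equality uses \(p^3+1-2c_0=(p+1)(p^2-p+1-p+2)\).

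To conclude, note that \(0<A<pA<\pi\), since \(p<p^2-1\) for every prime \(p\). As \(\cot\) is strictly decreasing on \((0,\pi)\), we get \(\cot A>\cot(pA)\), and \(p^2-2p+3>0\). Hence \(\operatorname{Im}R_p(x)>0\), which gives nonvanishing for the component and therefore for \(\overline{\mathbf{Y}}(w)\). The main obstacle is purely bookkeeping: getting the signs of the point term right, since its eigenvalues are \(\widehat\xi^{-1},\widehat\xi^{p}\), and keeping the factor \(x^p\) in the \(C_v\) term. I would double-check the final expression by evaluating it at \(p=2\), where \(M=3\) and \(x\) is a primitive cube root of unity.
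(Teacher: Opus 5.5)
Your strategy is the same as the paper's: reduce to \(x=e^{2\pi i/(p^2-1)}\) via Remark~\ref{lem:cyclotomic-comparison}, drop the real self-intersection terms, take imaginary parts using cotangent identities, and conclude from the strict monotonicity of \(\cot\) on \((0,\pi)\). Your individual identities are all correct, namely \(\operatorname{Im}\frac{1}{(1-x)^2}=\frac12\cot A\), \(\operatorname{Im}\frac{x^{2p}}{(1-x^p)^2}=-\frac12\cot(pA)\), and \(\operatorname{Im}\frac{x}{(1-x)(1-x^p)}=\frac14(\cot A-\cot(pA))\).

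The error is in collecting terms. In \eqref{eq:intro-unitary-brauer} the isolated points enter as \(-\frac{(p^3+1)\widehat\xi}{(1-\widehat\xi)(1-\widehat\xi^p)}\), since each \(a_{\mathrm{Br}}(Q)\) is \(-\frac{\widehat\xi}{(1-\widehat\xi)(1-\widehat\xi^p)}\). Their imaginary part is therefore \(-\frac{p^3+1}{4}(\cot A-\cot(pA))\), not \(+\frac{p^3+1}{4}(\cot A-\cot(pA))\). The correct total is
\[
\operatorname{Im}R_p(x)=-\frac{(p^3+1)+2c_0}{4}\bigl(\cot A-\cot(pA)\bigr)=-\frac{(p+1)(p^2-1)}{4}\bigl(\cot A-\cot(pA)\bigr)<0,
\]
which is exactly the paper's formula. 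Your displayed identity with the factor \((p+1)(p^2-2p+3)\) is false. For \(p=3\) the true imaginary part is \(-16\), while your formula gives \(12\). At \(p=2\), where \(c_0=0\), your formula gives \(+\tfrac{3\sqrt3}{2}\) against the true value \(-\tfrac{3\sqrt3}{2}\), so the \(p=2\) check you proposed would have caught the slip.

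The nonvanishing conclusion survives only because the erroneous coefficient \((p+1)(p^2-2p+3)\) also happens to be positive. Once you correct this sign, with \(\operatorname{Im}R_p(x)<0\) rather than \(>0\), the rest of your argument goes through unchanged and coincides with the paper's proof, including the reduction from \(\overline{\mathbf{Y}}(w)\) to one component via the factor \(p+1\).
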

    
    \begin{proof}
    Let \(R_p(T)\in\mathbb Q(T)\) denote the rational function on the
    right-hand side of \eqref{eq:intro-unitary-brauer}.  If \(\omega\) is a
    primitive \((p^2-1)\)-st root, then \(\omega\neq1\) and
    \(\omega^p\neq1\), because \(p\nmid p^2-1\).  Thus every denominator in
    \(R_p(\omega)\) is nonzero.  Lemma~\ref{lem:cyclotomic-comparison}, with
    \(M=p^2-1\) and \(\widehat\omega=\widehat\xi\), reduces the claim to
    nonvanishing at \(\omega=e^{2\pi i/(p^2-1)}\).
    
    Put
    \[
    A=\frac{\pi}{p^2-1},
    \qquad
    x=e^{2iA},
    \qquad
    K=p^3+1.
    \]
    The self-intersection parts of
    \eqref{eq:intro-unitary-brauer} are real, because
    \[
    \frac{x}{(1-x)^2}
    \qquad\text{and}\qquad
    \frac{x^p}{(1-x^p)^2}
    \]
    are real.  Applying the elementary cotangent identities to the remaining
    terms gives
    \[
    \operatorname{Im}L_{\mathrm{Br}}
    (\tau,\overline {\mathbf{Y}}_{\delta_0}(w))
    =\frac{K+2c_0}{4}\bigl(\cot(pA)-\cot A\bigr).
    \]
    Here
    \[
    K+2c_0=(p+1)(p^2-1)>0,
    \]
    whereas \(0<A<pA<\pi\).  Since \(\cot\) is strictly decreasing on
    \((0,\pi)\), the second factor is negative.  Hence the component value is
    nonzero.  The whole-space value is \(p+1\) times the component value, so
    it is nonzero as well.
    \end{proof}
    
    \begin{corollary}[Non-split Galois-module consequence]
    \label{cor:unitary-galois-module-not-free}
    Put \(A_{\mathrm{un}}=H=\langle\tau\rangle\).  Then
    \[
    \chi_{A_{\mathrm{un}}}
    \bigl(\overline {\mathbf{Y}}^\circ(w),\mathcal O_{\overline {\mathbf{Y}}^\circ(w)}\bigr)
    \]
    is not virtually free over \(\bar{k}[A_{\mathrm{un}}]\).  Moreover,
    \[
    \chi_{ T^{wF}}
    \bigl(\overline {\mathbf{Y}}(w),\mathcal O_{\overline {\mathbf{Y}}(w)}\bigr)
    \]
    is not virtually free over \(\bar{k}[ T^{wF}]\).
    \end{corollary}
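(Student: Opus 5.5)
The plan is to combine Proposition~\ref{prop:unitary-main-value-nonzero} with the obstruction \eqref{eq:nonzero-lefschetz-free-obstruction}, first for the component stabilizer and then for the whole covering group.

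For the first assertion, take \(\Gamma=A_{\mathrm{un}}=H\), \(Z=\overline{\mathbf{Y}}^\circ(w)=\overline{\mathbf{Y}}_{\delta_0}(w)\) and \(\mathcal E=\mathcal O_Z\). The group \(H\simeq\mu_{p^2-1}\) has order prime to \(p\). Hence \(\chi_H(Z,\mathcal O_Z)\in K_0(\bar k[H])\) is defined, and its Brauer character at \(\tau\) is \(L_{\mathrm{Br}}(\tau,Z)\) by \eqref{eq:def-brauer-lefschetz}. The generator \(\tau\) is nonidentity, since its order is \(p^2-1\geq3\). Proposition~\ref{prop:unitary-main-value-nonzero} gives \(L_{\mathrm{Br}}(\tau,Z)\neq0\), so \eqref{eq:nonzero-lefschetz-free-obstruction} shows that \(\chi_H(Z,\mathcal O_Z)\notin\mathbb Z\cdot[\bar k[H]]\).

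For the second assertion, take \(\Gamma=T^{wF}\) acting on \(\overline{\mathbf{Y}}(w)\). I regard \(\tau\in H\subset T^{wF}\) as a nonidentity element of \(T^{wF}\). The value of the Brauer character of \(\chi_{T^{wF}}(\overline{\mathbf{Y}}(w),\mathcal O)\) at \(\tau\) is \(L_{\mathrm{Br}}(\tau,\overline{\mathbf{Y}}(w))\). Proposition~\ref{prop:unitary-main-value-nonzero} gives that this value is nonzero. The argument for it is as follows. By \eqref{eq:unitary-normal-component-decomposition}, the cohomology splits as a direct sum over the \(p+1\) components. Since \(H\) is the common stabilizer and \(T^{wF}\) is abelian, each summand is \(\tau\)-stable. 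Translations by \(T^{wF}\) commute with \(\tau\), so all summands have the same Brauer trace. The total is therefore \((p+1)L_{\mathrm{Br}}(\tau,\overline{\mathbf{Y}}_{\delta_0}(w))\neq0\). Alternatively, one can use the induction formula \eqref{eq:component-induction-euler-class}: the induced character at \(\tau\in H\) equals \([T^{wF}:H]\) times the \(H\)-character value. Applying \eqref{eq:nonzero-lefschetz-free-obstruction} again finishes the proof.

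The only point needing care is the second assertion, namely checking that the Brauer character of the induced class at an element of the normal subgroup \(H\) is a positive multiple of the component value. This holds because \(T^{wF}\) is abelian, so the conjugates of \(\tau\) are \(\tau\) itself. This factor is already built into Proposition~\ref{prop:unitary-main-value-nonzero}, so no further work is expected.
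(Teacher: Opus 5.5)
Your proposal is correct and is exactly the argument the paper intends, mirroring the split-case corollary: the nonidentity element \(\tau\) has nonzero Brauer--Lefschetz number by Proposition~\ref{prop:unitary-main-value-nonzero}, and the obstruction \eqref{eq:nonzero-lefschetz-free-obstruction} is applied first to \(H\) acting on \(\overline{\mathbf{Y}}_{\delta_0}(w)\) and then to \(T^{wF}\) acting on \(\overline{\mathbf{Y}}(w)\). Your account of the factor \(p+1\) matches the paper's reasoning at the end of the proof of the non-split main theorem, whether phrased through commuting translations or through the induction formula \eqref{eq:component-induction-euler-class} for the abelian group \(T^{wF}\).
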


    \clearpage
    \bibliographystyle{amsalpha}
    \bibliography{references}
    
    \end{document}

%% file: preamble.tex
\usepackage[T1]{fontenc}
\usepackage[utf8]{inputenc}
\usepackage{amssymb,amsfonts}
\usepackage{mathrsfs}
\usepackage{mathtools}
\usepackage{xcolor}
\usepackage{float}
\usepackage[normalem]{ulem}
\usepackage{tikz,tikz-cd}
\usetikzlibrary{decorations.pathreplacing}
\usepackage{hyperref}

\hypersetup{
  colorlinks=true,
  linkcolor=blue!55!black,
  citecolor=green!40!black,
  urlcolor=blue!60!black,
  pdftitle={Galois module structure of Deligne--Lusztig varieties for GL3},
  pdfauthor={Adithya Sathyanarayana}
}

\allowdisplaybreaks
\numberwithin{equation}{section}
\tikzset{every picture/.style={line width=0.75pt}}

\theoremstyle{plain}
\newtheorem{theorem}{Theorem}[section]
\newtheorem{lemma}[theorem]{Lemma}
\newtheorem{corollary}[theorem]{Corollary}
\newtheorem{proposition}[theorem]{Proposition}

\theoremstyle{definition}
\newtheorem{definition}[theorem]{Definition}

\theoremstyle{remark}
\newtheorem{remark}{Remark}[section]

\renewcommand{\P}[1]{\mathbb{P}({#1})}

\newcommand{\Spec}{\operatorname{Spec}}
\newcommand{\diag}{\operatorname{diag}}
\newcommand{\Ad}{\operatorname{Ad}}
\newcommand{\antidiag}{\operatorname{antidiag}}

\newcommand{\Cone}{\operatorname{Cone}}
\newcommand{\divisor}{\operatorname{div}}

\title{Galois module structure of Deligne--Lusztig varieties for
\(\mathbf G=\mathrm{GL}_3\)}
\author{Adithya Sathyanarayana}
\date{}